\definecolor{lightblue}{rgb}{0.22,0.45,0.70}
\let\mylistof\listof
\renewcommand\listof[2]{\mylistof{algorithm}{Liste des algorithmes}}
\theoremstyle{plain}
\newtheorem{theo}{Theorem}[section]
\newtheorem{coro}[theo]{Corollary}
\newtheorem{rmk}[theo]{Remark}
\newtheorem{defi}[theo]{Definition}
\newcommand{\R}{\mathbb{R}}
\newcommand{\N}{\mathbb{N}}
\numberwithin{equation}{section}
\newtheorem*{rem*}{Remarque}
\numberwithin{equation}{section}
\newcommand\bB{\boldsymbol{B}}
\newcommand\bF{\boldsymbol{F}}
\newcommand\bG{\boldsymbol{G}}
\newcommand\bL{\boldsymbol{L}}
\newcommand\bV{\boldsymbol{V}}
\newcommand\bW{\boldsymbol{W}}
\newcommand\bX{\boldsymbol{X}}
\newcommand\bZ{\boldsymbol{Z}}
\newcommand\ff{\boldsymbol{f}}
\newcommand\bg{\boldsymbol{g}}
\newcommand\bu{\boldsymbol{u}}
\newcommand\bv{\boldsymbol{v}}
\newcommand\bx{\boldsymbol{x}}
\newcommand\by{\boldsymbol{y}}
\newcommand\bz{\boldsymbol{z}}
\newcommand\bc{\boldsymbol{c}}
\newcommand\bdd{\boldsymbol{d}}
\newcommand\bw{\boldsymbol{w}}
\newcommand{\seqna}{\begin{eqnarray}}
\newcommand{\eeqna}{\end{eqnarray}}
\renewcommand{\bar}{\overline}
\newcommand{\norm}[1]{\left\|#1\right\|}
\renewcommand\O{\Omega}
\renewcommand\L{\mathrm{L}}
\newcommand\vdiv{\mathop{\mathrm{div}}\nolimits}
\newcommand\ba{\boldsymbol{a}}
\newcommand\bb{\boldsymbol{b}}
\newcommand\bn{\boldsymbol{n}}
\newcommand\bH{\boldsymbol{H}}
\newcommand\bK{\boldsymbol{K}}
\newtheorem{theorem}{Theorem}[section]
\newtheorem{lemma}[theorem]{Lemma}
\newtheorem{proposition}[theorem]{Proposition}
\newtheorem{corollary}[theorem]{Corollary}
\newcommand\curl{\mathop{\mathbf{curl}}\nolimits}
\newcommand{\normT}[1]{{\left\vert\kern-0.25ex\left\vert\kern-0.25ex\left\vert #1 
    \right\vert\kern-0.25ex\right\vert\kern-0.25ex\right\vert}}
\newcommand{\abs}[1]{| #1 |}
\def\P{\mathbb P}
\begin{document}

\title{Regularity results for a model in magnetohydrodynamics with imposed pressure}

\author{{\sc Julien Poirier}\thanks{Normandie Univ., UNICAEN, CNRS, Laboratoire de Mathématiques Nicolas Oresme,  UMR CNRS 6139, 14000 Caen, France. E-mail:  
{\tt julien.poirier.prof@gmail.com}},   \ 
{\sc Nour Seloula}\thanks{Normandie Univ., UNICAEN, CNRS, Laboratoire de Mathématiques Nicolas Oresme,  UMR CNRS 6139, 14000 Caen, France. E-mail: {\tt nour-elhouda.seloula@unicaen.fr}}
}

\date{}
\maketitle

\begin{abstract}
\noindent 
The magnetohydrodynamics (MHD) problem is most often studied in a framework where
Dirichlet type boundary conditions on the velocity field is imposed. In this Note, we study the (MHD) system with pressure boundary condition, together with zero tangential trace for the velocity and the magnetic field. In a three-dimensional bounded possibly multiply connected domain, we first
prove the existence of weak solutions in the Hilbert case, and later, the regularity in $\bW^{1,p}(\O)$ for $p\geq 2$ and in $\bW^{2,p}(\O)$ for $p\geq 6/5$ using the regularity results for some Stokes and elliptic problems with this type of boundary conditions. Furthermore, under the condition of small data, we obtain the existence and uniqueness of solutions in $\bW^{1,p}(\O)$ for $3/2<p<2$ by using a fixed-point technique over a linearized (MHD) problem. 
\end{abstract}

\noindent
{\bf Key words}: Magnetohydrodynamic system, Pressure boundary conditions, Navier-Stokes, $L^p$-regularity.

\smallskip\noindent
{\bf Mathematics subject classifications (2010)}:  35J60, 35Q35, 35Q60

\section{Introduction}
Let $\O$ be an open bounded set of $\R^3$ of class $\mathcal{C}^{1,1}$. In this work, we consider the following incompressible stationary magnetohydrodynamics (MHD) system:
find the velocity field $\bu$, the pressure $P$, the magnetic field $\bb$ and the constant vector $\boldsymbol{\alpha}=(\alpha_1,\ldots,\alpha_I)$ such that for $1\leq i \leq I$:
\begin{equation}\label{MHD with constants}
	\left\lbrace
		\begin{aligned}
		&   -\nu \, \Delta \bu + (\curl \bu) \times \bu + \nabla P - \kappa (\curl \bb) \times \bb = \ff \quad \mathrm{and}\quad  \vdiv \bu = h  \quad \mathrm{in} \,\, \Omega, \\
        & \kappa \mu \, \curl \curl \bb - \kappa \curl (\bu \times \bb) = \bg  \quad \mathrm{and}\quad \vdiv \bb = 0 \quad \text{in} \,\, \Omega, \\
       & \bu \times \bn = \textbf{0} \quad \mathrm{and}\quad \bb \times \bn = \textbf{0}\quad \mathrm{on}\,\,\Gamma,\\
        & P = P_0 \quad \text{on} \, \,\Gamma_0\quad  \mathrm{and}\quad 
         P = P_0 + \alpha_i \,\,\,\text{on} \,\, \Gamma_i,\\
         & \langle\bu \cdot \bn, 1\rangle_{\Gamma_i} = 0, \quad \mathrm{and}\quad \langle\bb \cdot \bn, 1\rangle_{\Gamma_i} = 0, \, \, \forall 1 \leqslant i \leqslant I
		\end{aligned}\right.
		\end{equation}
		where $\Gamma$ is the boundary of $\O$ which is not necessary connected. Here $\Gamma=\bigcup_{i=0}^{I}{\Gamma_{i}}$ where $\Gamma_i$ are the connected components of $\Gamma$ with $\Gamma_0$ the exterior boundary which contains $\O$ and all the other boundaries. We denote by $\bn$ the unit vector normal to $\Gamma$. The constants $\nu$, $\mu $ and $\kappa$  are constant kinematic, magnetic viscosity and a coupling number respectively. We refer
to \cite{Davidson,Gerbeau-Bris} for further discussion of typical values for these parameters. The vector $\ff$ , $\bg$ and the scalar $h$ and $P_0$ are given. In this work, we assume that $\nu=\mu=\kappa=1$ for convenience. 
			Using the identity $\bu\cdot\nabla\bu=(\curl\bu)\times\bu+\frac{1}{2}\nabla\vert\bu\vert^2$, the classical nonlinear term $\bu\cdot\nabla\bu$ in the Navier-Stokes equations is replaced by $(\curl\bu)\times\bu$. The pressure $P=p+\frac{1}{2}\vert\bu\vert^2$ is then the Bernoulli (or dynamic) pressure, where $p$ is the kinematic pressure. The boundary conditions involving the pressure are used in various physical applications. For example, in hydraulic networks, as oil ducts, microfluidic channels or the blood circulatory system. Pressure driven flows occur also in the modeling of the cerebral venous network from three-dimensional angiographic images obtained by magnetic resonance. We note that the MHD system \eqref{MHD with constants} has been extensively studied by many authors. We note that most of the contributions are often given where Dirichlet type boundary conditions on the velocity field are imposed. At a continuous level, we can refer, for example to \cite{Zeng-Zhang, Am-Saliha} for the existence and the regularity of the solutions of \eqref{MHD with constants}, to \cite{Alekseev} for the global solvability of \eqref{MHD with constants} under mixed boundary conditions for the magnetic field. For the discretization approaches of \eqref{MHD with constants}, a few related contributions include mixed finite elements \cite{Hiptmair, Schozau, Schozau2}, discontinuous Galerkin finite elements \cite{Qiu-Shi} or iterative penalty finite element methods \cite{Deng} and so on. The boundary condition under the form $P=P_0+\alpha_i$ on $\Gamma_i$, $i=1,\ldots,I$ was first introduced in \cite{Pironneau, Conca-Pironeau} for the Stokes and the Navier-Stokes systems in steady hilbertian case. The authors studied  the differences $\alpha_i-\alpha_0$, $i=1\ldots I$ which represent the unknown pressure drop on inflow and outflow sections $\Gamma_i$ in a network of pipes. This work is extended to $L^p$-theory for $1<p<\infty$ in \cite{AS_DCDS}. 
			 In our work, we study the MHD system \eqref{MHD with constants} with pressure boundary condition, together with no tangential flow and no tangential magnetic field on the boundary. Up to our knowledge, with these type of booundary conditions, this work is the first one to give a complete $L^p$-theory for the MHD system \eqref{MHD with constants} not only for large values of $p\geq 2$ but also for small values $3/2<p<2$ in $\O\subset\R^{3}$ domain with a boundary $\Gamma$ not necessary connected. \medskip
			 
			 The work is organized as follows. We start with presenting the main results of our work in section 2. In section 3, we introduce
the necessary notations and some useful results. Section 4 is devoted to the study of the linearized MHD system in Hilbert space. Using Lax-Milgram theorem, we prove the existence and uniqueness of weak solution in $\bH^1(\O)\times \bH^1(\O)\times L^2(\O)$. Later, we study the $L^p$-theory for the linearized MHD system in section 5. In particular, the proof of the regularity $\bW^{2,\,p}(\O)$ with $1< p < \frac{6}{5}$ for a non-zero divergence condition is presented in the Appendix (see Section 7). Finally, the nonlinear MHD system is discussed in section 6. The proof of the existence of weak solution in the Hilbertian case is  based on the Leray-Schauder fixed point theorem. Then, we prove the regularity of the weak solution in $\bW^{1,\,p}(\O)$ with $p >2$, and $\bW^{2,\,p}(\O)$ with $p \geq \frac{6}{5}$. For this, we use the regularity results for the Stokes and some elliptic equations combining them with a bootstrap argument. The existence of a weak solution in $\bW^{1,p}(\O)$ with $\frac{3}{2}< p< 2$ is proved by applying Banach’s fixed-point theorem over the
linearized problem.\medskip

Some results of this work are announced in \cite{Julien-Nour}

\section{Main results}
In this section, we briefly discuss the main results, for which the following notations are needed:\\
For $p\in[1, \infty)$, $p'$ denotes the conjugate exponent of $p$, i.e. $\frac{1}{p'}=1-\frac{1}{p}$. We introduce the following space 
\begin{equation}\label{espace Hr,p main result} {\textbf{\textit{H}}}^{\,r,p}(\mathbf{curl},\Omega):=\left\lbrace \textbf{\textit{v}}\in\textbf{\textit{L}}^{r}(\Omega);\,\mathbf{curl}\,\, \textbf{\textit{v}}\in\textbf{\textit{L}}^{p}(\Omega)\right\rbrace,\quad \mathrm{with}\quad \frac{1}{r}=\frac{1}{p}+\frac{1}{3}
\end{equation}
equipped with the norm  $$\Vert\bv\Vert_{{\textbf{\textit{H}}}^{\,r,p}(\mathbf{curl},\Omega)}=\Vert\bv\Vert_{\bL^r(\O)}+\Vert\curl\bv\Vert_{\bL^p(\O)}.$$ 
The closure of $\boldsymbol{\mathcal{D}}(\O)$ in $ {\textbf{\textit{H}}}^{\,r,p}(\mathbf{curl},\Omega)$ is denoted by $ {\textbf{\textit{H}}}^{\,r,p}_{0}(\mathbf{curl},\Omega)$ with
$${\textbf{\textit{H}}}^{\,r,p}_{0}(\mathbf{curl},\Omega):=\left\lbrace \textbf{\textit{v}}\in {\textbf{\textit{H}}}^{\,r,p}(\mathbf{curl},\Omega);\,\,\, \bv\times\bn=\textbf{0}\,\,\,\mathrm{on}\,\,\Gamma  \right\rbrace.$$
The dual space of ${\textbf{\textit{H}}}^{\,r,p}_{0}(\mathbf{curl},\Omega)$ is denoted by $[{\textbf{\textit{H}}}^{\,r,p}_{0}(\mathbf{curl},\Omega)]'$ and its characterization is given in Proposition \ref{charac dual H^r,p}. We introduce also the kernel $$\bK^p_N(\O)=\{\bv\in\bL^p(\O);\,\,\vdiv\,\bv=0,\,\,\curl\bv=\textbf{0},\,\,\bv\times\bn=\textbf{0}\,\,\,\mathrm{on}\,\,\Gamma\},$$
which is spanned by the functions $\nabla q_i^N\in\bW^{1,q}(\O)$ for any $1< q < \infty$ \cite[Corollary 4.2]{AS_M3AS} and $q_i^N$ is the unique solution of the problem
\noindent\begin{eqnarray}\label{Definition des q_i^N}
\begin{cases}
 -\Delta q_{i}^{N}=0\quad\quad \mathrm{in}\,\,\Omega,\quad 
q_{i}^{N}\vert_{\Gamma_{0}}=0\quad\mathrm{and}\quad q_{i}^N\vert_{\Gamma_{k}}=\mathrm{constant},\,\,\,1\leq k \leq I\\
\left\langle \partial_{n}\,q_{i}^{N},\,1\right\rangle_{\Gamma_{k}}=\delta_{i\,k},\,\, 1\leq k \leq I,\quad\mathrm{and}\quad \left\langle \partial_{n}\,q_{i}^{N},\,1\right\rangle_{\Gamma_{0}}=-1.
\end{cases}
\end{eqnarray}
We will use the symbol $\sigma$ to represent a set of divergence free functions. For exemple the space $\bL^p_{\sigma}(\O)$ is the space of functions in $\bL^p(\O)$ with divergence free. We will denote by 
$C$ an unspecified positive constant which may depend on $\O$ and the dependence on other parameters will be specified if necessary.

The first theorem is concerned with the existence of weak solutions in the case of Hilbert spaces for the following MHD problem:
\begin{equation}\label{eqn:MHD}
 \tag{{\it MHD}}
	\left\lbrace
		\begin{aligned}
		&   - \, \Delta \bu + (\curl \bu) \times \bu + \nabla P -  (\curl \bb) \times \bb = \ff \quad \mathrm{and}\quad  \vdiv \bu = h  \quad \mathrm{in} \,\, \Omega, \\
        & \, \curl \curl \bb -  \curl (\bu \times \bb) = \bg  \quad \mathrm{and}\quad \vdiv \bb = 0 \quad \text{in} \,\, \Omega, \\
       & \bu \times \bn = \textbf{0} \quad \mathrm{and}\quad \bb \times \bn = \textbf{0}\quad \mathrm{on}\,\,\Gamma,\\
        & P = P_0 \quad \text{on} \, \,\Gamma_0\quad  \mathrm{and}\quad 
         P = P_0 + \alpha_i \,\,\,\text{on} \,\, \Gamma_i,\\
         & \langle\bu \cdot \bn, 1\rangle_{\Gamma_i} = 0 \quad \mathrm{and}\quad \langle\bb \cdot \bn, 1\rangle_{\Gamma_i} = 0, \, \, \forall 1 \leqslant i \leqslant I
		\end{aligned}\right.
		\end{equation}
The proof is given in Subsection \ref{subsection L2 theory nonlinear} (see Theorem \ref{thm:existence_weak_sol}). We note that in the case when $\partial\O$ is not connected, to ensure the solvability of problem \eqref{eqn:MHD}, we need to impose the conditions for $\bu$ and $\bb$ on the connected components $\Gamma_i$: $ \langle\bu \cdot \bn, 1\rangle_{\Gamma_i} = 0 \quad \mathrm{and}\quad \langle\bb \cdot \bn, 1\rangle_{\Gamma_i} = 0$ for $1\leq i \leq I$.
(See \cite{AS_M3AS} and \cite{AS_DCDS} for an equivalent form of these conditions). Of course, if $\partial \O$ is connected, the above conditions are no longer necessary.

\begin{theo}\emph{(Weak solutions of the \eqref{eqn:MHD} system in $\bH^{1}(\O)$)}.\label{Main results theo result sol H^1}
Let $ \ff, \bg \in [\bH_0^{6,2}(\curl, \Omega)]'$, $h=0$ and $P_0 \in H^{-\frac{1}{2}}(\Gamma)$ with the compatibility conditions 
\begin{eqnarray} 
\forall \, \bv \in \bK_N^2(\Omega),\quad\langle \bg, \bv \rangle_{\O_{_{6,2}}} = 0,  \label{Condition compatibilite K_N hilbert: main results} \\
\vdiv \bg = 0 \quad \mathrm{in}  \,\, \Omega,\label{Condition div nulle: main results1}
 \end{eqnarray}
 where $\langle\cdot,\cdot\rangle_{\O_{_{r,p}}}$ denotes the duality product between $[\bH_0^{r,p}(\curl, \Omega)]'$ and $\bH_0^{r,p}(\curl, \Omega)$. 
  Then the \eqref{eqn:MHD} problem has at least one weak solution $
(\bu, \bb, P, \boldsymbol{\alpha}) \in \bH^1(\Omega) \times \bH^1(\Omega) \times L^2(\Omega) \times \R^{I}$ such that
\begin{equation*}\label{estim weak H1 solution MHD}
 \norm{\bu}_{\bH^1(\O)}+\norm{\bb}_{\bH^1(\O)}+\norm{P}_{L^2(\O)}\leq M,
\end{equation*}
where $M=C(\norm{\ff}_{[\bH_0^{6,2}(\curl, \Omega)]'} + \norm{\bg}_{[\bH_0^{6,2}(\curl, \Omega)]'} + \norm{P_0}_{H^{-1/2}(\Gamma)}) $ and $ \boldsymbol{\alpha}=(\alpha_1,\ldots,\alpha_I)$ defined by
\begin{equation}\label{def alpha_i}
\alpha_i=\langle \ff, \nabla q_i^N \rangle_{\O_{_{6,2}}} - \langle P_0, \nabla q_i^N \cdot \bn\rangle_{\Gamma} +  \int_\Omega (\curl \bb) \times \bb \cdot \nabla q_i^N \, dx- \int_\Omega (\curl \bu) \times \bu \cdot \nabla q_i^N \, dx, 
 \end{equation}
where $\langle\cdot,\cdot\rangle_{\Gamma}$ denotes the duality product between $H^{-1/2}(\Gamma)$ and $H^{1/2}(\Gamma)$. 

In addition, suppose that $\ff$, $\bg$ and $P_0$ are small in the sense that 
\begin{eqnarray}\label{condition uniqueness sol H^1 nonlinear}
C_1C_2^2 M \leq \frac{2}{3 C_{\mathcal{P}} ^2},
\end{eqnarray}
where ${C_{\mathcal{P}}}$ is the constant in \eqref{Poincaré inequality} and $C_1$, $C_2$ are the constants defined in \eqref{def C_1 and C_2}. Then the weak solution $(\bu,\bb,P)$ of \eqref{eqn:MHD} is unique.
\end{theo}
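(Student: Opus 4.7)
\emph{Step 1 -- Reduction and weak formulation.} I would first recast \eqref{eqn:MHD} as a divergence-free variational problem. Testing the momentum equation against a solenoidal field $\bv$ with $\bv \times \bn = \textbf{0}$ on $\Gamma$ and $\langle \bv\cdot\bn,1\rangle_{\Gamma_i}=0$ eliminates the pressure (the boundary contribution $\int_\Gamma P\,\bv\cdot\bn$ vanishes because $P$ is constant on each $\Gamma_i$ and the flux through each component is zero). Then $(\bu,\bb)$ is sought in the Hilbert subspace $\bV \times \bV \subset \bH^1(\O)\times\bH^1(\O)$ of such functions, and the weak formulation reads: find $(\bu,\bb)\in \bV\times\bV$ such that for all $(\bv,\bc)\in\bV\times\bV$,
\begin{equation*}
\int_\O \curl\bu\cdot\curl\bv + \int_\O(\curl\bu\times\bu)\cdot\bv - \int_\O(\curl\bb\times\bb)\cdot\bv + \int_\O \curl\bb\cdot\curl\bc - \int_\O(\bu\times\bb)\cdot\curl\bc = \langle\ff,\bv\rangle+\langle\bg,\bc\rangle - \langle P_0,\bv\cdot\bn\rangle_\Gamma.
\end{equation*}
Once $(\bu,\bb)$ is found, the pressure $P$ is recovered via De Rham's theorem applied to the residual, and the constants $\alpha_i$ are identified by plugging $\bv=\nabla q_i^N$ into the momentum equation (this $\bv$ is \emph{not} in $\bV$, so it picks up the jumps of $P$): using $\Delta q_i^N=0$, $\vdiv\bu=0$, and the normalisation conditions in \eqref{Definition des q_i^N}, the Laplacian and solenoidal terms drop out and the pressure integration by parts produces exactly the formula \eqref{def alpha_i}.

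\emph{Step 2 -- Existence via Leray--Schauder.} I would define a compact operator $T$ on $\bV\times\bV$ by associating to $(\widetilde\bu,\widetilde\bb)$ the unique weak solution $(\bu,\bb)$ of the \emph{linearized} MHD system from Section 4, in which the nonlinear terms are frozen: $\ff$ is replaced by $\ff-(\curl\widetilde\bu)\times\widetilde\bu+(\curl\widetilde\bb)\times\widetilde\bb$ and $\bg$ by $\bg+\curl(\widetilde\bu\times\widetilde\bb)$. The Hilbert-case result of Section 4 guarantees that this linear map is well defined and bounded, while compactness follows from $\bH^1(\O)\hookrightarrow \bL^4(\O)$ being compact, which controls the quadratic source terms from lower regularity. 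The crucial a priori bound for $(\bu,\bb)=\lambda T(\bu,\bb)$, $\lambda\in[0,1]$, comes from testing against $(\bu,\bb)$ itself: the two trilinear terms $\int(\curl\bu\times\bu)\cdot\bu$ and $\int(\curl\bb\times\bb)\cdot\bu + \int(-\curl(\bu\times\bb))\cdot\bb$ either vanish by antisymmetry or cancel pairwise after an integration by parts using $\bb\times\bn=\textbf{0}$. This yields the uniform estimate $\|\bu\|_{\bH^1}+\|\bb\|_{\bH^1}\le M$, and Leray--Schauder delivers a fixed point, i.e.\ a weak solution. Verifying the compatibility conditions \eqref{Condition compatibilite K_N hilbert: main results}--\eqref{Condition div nulle: main results1} are preserved by the iteration -- so that the linearized problem is always solvable -- is the technical point to check carefully; divergence-freeness of the induced right-hand side in the $\bb$-equation follows from $\vdiv\curl=0$, and orthogonality to $\bK_N^2(\O)$ from $\int\curl(\widetilde\bu\times\widetilde\bb)\cdot\nabla q_i^N=0$.

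\emph{Step 3 -- Uniqueness under smallness.} For two solutions $(\bu_j,\bb_j,P_j)$, $j=1,2$, set $\bw=\bu_1-\bu_2$, $\bdd=\bb_1-\bb_2$, $\pi=P_1-P_2$. Subtracting and testing with $(\bw,\bdd)$, the ``diagonal'' nonlinear terms vanish (antisymmetry), and the residual cross-terms can be bounded in the form $C_1\|\bw\|_{\bL^4}\|(\bu_1,\bb_1)\|_{\bL^4}\|(\curl\bw,\curl\bdd)\|_{\bL^2}+\cdots$; invoking the Sobolev embedding $\bH^1\hookrightarrow\bL^4$ with constant $C_2$, the $\bH^1$ bound $M$ on $(\bu_1,\bb_1)$, and Poincaré's inequality $\|\cdot\|_{\bL^2}\le C_\mathcal{P}\|\curl(\cdot)\|_{\bL^2}$ on $\bV$, one obtains
\begin{equation*}
\bigl(1 - \tfrac{3}{2}C_1 C_2^2 C_\mathcal{P}^2 M\bigr)\bigl(\|\curl\bw\|_{\bL^2}^2+\|\curl\bdd\|_{\bL^2}^2\bigr)\le 0,
\end{equation*}
which forces $\bw=\bdd=\textbf{0}$ under the hypothesis \eqref{condition uniqueness sol H^1 nonlinear}, and then $\pi$ is determined up to the constants, which coincide by the formula \eqref{def alpha_i}.

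\emph{Main obstacle.} The delicate points are (i) verifying that the linearized compatibility conditions are propagated by $T$, so that Section 4 actually applies at every step of the iteration, and (ii) correctly handling the boundary term $\int_\Gamma P\,\bv\cdot\bn$ in the weak formulation when $\Gamma$ is disconnected -- this is where the flux constraints $\langle\bu\cdot\bn,1\rangle_{\Gamma_i}=0$ enter in an essential way, and where the identification \eqref{def alpha_i} of $\alpha_i$ must be performed on a test function orthogonal to $\bV$, namely $\nabla q_i^N$.
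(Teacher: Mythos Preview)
Your overall strategy is sound and yields the theorem, but the fixed-point setup you describe differs from the paper's and contains a small confusion worth flagging.

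You write that $T(\widetilde\bu,\widetilde\bb)$ is obtained by solving ``the linearized MHD system from Section~4'' with $\ff$ replaced by $\ff-(\curl\widetilde\bu)\times\widetilde\bu+(\curl\widetilde\bb)\times\widetilde\bb$ and $\bg$ by $\bg+\curl(\widetilde\bu\times\widetilde\bb)$. But the linearized system of Section~4 already carries the Oseen-type terms $(\curl\bw)\times\bu$, $(\curl\bb)\times\bdd$, $\curl(\bu\times\bdd)$; combining that with your modified sources would double-count the nonlinearity. What you really intend is the \emph{pure} Stokes$+$elliptic problem ($\bw=\bdd=\textbf{0}$ in Section~4's notation) with the full nonlinear terms pushed into the data, and then Leray--Schauder via the a~priori bound on solutions of $(\bu,\bb)=\lambda T(\bu,\bb)$ obtained by energy cancellation. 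That route is correct.

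The paper takes the other option: it defines $G(\bw,\bdd)=(\bu,\bb)$ as the solution of the Section~4 linearization \emph{with} $\bw,\bdd$ in the convection/coupling terms and the original $\ff,\bg$ as sources. The point of doing it this way is that the trilinear form satisfies $a_{\bw,\bdd}((\bu,\bb),(\bu,\bb))=0$, so the Lax--Milgram estimate for the linearized problem is \emph{uniform in $(\bw,\bdd)$}. This gives $G(\bB_M)\subset\bB_M$ directly, and plain Schauder (compactness via $\bH^1\hookrightarrow\bL^4$) suffices---no homotopy parameter needed. Your Leray--Schauder argument works too, but the paper's choice of linearization buys the ball-invariance for free and is slightly more direct. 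Your Steps~1 and~3 (weak formulation, recovery of $P$ and $\alpha_i$, uniqueness under \eqref{condition uniqueness sol H^1 nonlinear}) match the paper's treatment.
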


The next two theorems are concerned with generalized solutions in $\bW^{1,p}(\O)$ for $p>2$ and strong solutions in $\bW^{2,p}(\O)$ for $p\geq 6/5$. The existence of
weak solution in $\bW^{1,p}(\O)$ for $\frac{3}{2}< p <2$ is not trivial. We will precise this case later. 

\begin{theo}(Weak solutions in $\bW^{1,p}(\O)$ with $p>2$ for the \eqref{eqn:MHD} system)\label{Main results thm:regularity_weak_sol W^1,p p>2}. Let $p>2$. Suppose that $\ff, \bg\in [\bH_0^{r',p'}(\curl,\Omega)]'$, $h=0$ and $P_0 \in W^{1-\frac{1}{r},r}(\Gamma)$ with the compatibility condition \eqref{Condition div nulle: main results1} and 
\begin{eqnarray} 
\forall \, \bv \in \bK_{N}^{p'}(\Omega),\quad\langle \bg, \bv \rangle_{{\O}_{r',p'}} = 0.  \label{Condition compatibilite K_N Lp: main results} 
\end{eqnarray}
 Then the weak solution for the \eqref{eqn:MHD} system given by Theorem \ref{Main results theo result sol H^1} satisfies $$(\bu, \bb, P) \in \bW^{1,p}(\Omega) \times \bW^{1,p}(\Omega) \times W^{1,r}(\Omega).$$ Moreover, we have the following estimate:
\begin{eqnarray*}\label{estim weak p>2 MHD}
\begin{aligned}
\!\norm{\bu}_{\bW^{1,p}(\Omega)} \!+\! \norm{\bb}_{\bW^{1,p}(\Omega)}\!+\! \norm{P}_{W^{1,r}(\Omega)} \!\leqslant\! C (\norm{\ff}_{(\bH_0^{r',p'}(\curl, \Omega))'} \!+\! \norm{\bg}_{(\bH_0^{r',p'}(\curl, \Omega))'} \!+\! \norm{P_0}_{W^{1/r',r}(\Gamma)}) 
\end{aligned}
\end{eqnarray*}
\end{theo}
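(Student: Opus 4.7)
The plan is a bootstrap argument on the integrability exponent, exploiting the $L^{p}$-regularity theory established in Section~5 for the linearized (MHD) system with the nonlinear terms treated as sources. Starting from the weak solution $(\bu,\bb,P)\in\bH^{1}(\O)\times\bH^{1}(\O)\times L^{2}(\O)$ produced by Theorem~\ref{Main results theo result sol H^1}, one first rewrites \eqref{eqn:MHD} as the linear system
\begin{equation*}
\left\{\begin{aligned}
-\Delta\bu + \nabla P &= \bF := \ff + (\curl\bb)\times\bb - (\curl\bu)\times\bu,\quad \vdiv\bu=0,\\
\curl\curl\bb &= \bG := \bg + \curl(\bu\times\bb),\quad \vdiv\bb=0,
\end{aligned}\right.
\end{equation*}
supplemented with $\bu\times\bn=\cero$, $\bb\times\bn=\cero$, $P=P_{0}+\alpha_{i}$ on $\G_{i}$, and the prescribed flux conditions, and then invokes the regularity statements of Section~5 separately for the Stokes block and for the curl–curl block.

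The initial step uses the Sobolev embedding $\bH^{1}(\O)\hookrightarrow\bL^{6}(\O)$: the quadratic terms $(\curl\bu)\times\bu$ and $(\curl\bb)\times\bb$ belong to $\bL^{3/2}(\O)$ while $\bu\times\bb\in\bL^{3}(\O)$, and by Proposition~\ref{charac dual H^r,p} both embed continuously into $[\bH_{0}^{r_{1}',p_{1}'}(\curl,\O)]'$ for some first exponent $p_{1}>2$. The compatibility condition \eqref{Condition div nulle: main results1} transfers to $\bG$ since $\vdiv\curl(\bu\times\bb)=0$, and \eqref{Condition compatibilite K_N Lp: main results} propagates because, after integration by parts using $\bu\times\bn=\cero$ and $\bb\times\bn=\cero$, the quantity $\int_{\O}\curl(\bu\times\bb)\cdot\nabla q_{i}^{N}\,dx$ vanishes for every $q_{i}^{N}$ defined by \eqref{Definition des q_i^N}. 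Applying the Section~5 regularity thus upgrades $(\bu,\bb,P)$ to $\bW^{1,p_{1}}(\O)\times\bW^{1,p_{1}}(\O)\times W^{1,r_{1}}(\O)$ with $1/r_{1}=1/p_{1}+1/3$.

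Each subsequent iteration uses the improved integrability of $\bu$ and $\bb$ combined with Hölder's inequality and the Sobolev embeddings $\bW^{1,q}(\O)\hookrightarrow\bL^{q^{\ast}}(\O)$ to place the nonlinear sources in a strictly better dual space, which Section~5 converts into a larger regularity exponent for $(\bu,\bb,P)$; after finitely many steps the target exponent $p$ is attained. The final estimate is obtained by combining the linear regularity bounds with the $\bH^{1}$-bound of Theorem~\ref{Main results theo result sol H^1} to dominate the quadratic contributions uniformly by the data.

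The main obstacle, in my view, is the bookkeeping of the pair of exponents $(r',p')$ along the bootstrap: this pair evolves nonlinearly through the relation $1/r=1/p+1/3$, and at each step one must verify that the produced nonlinear sources genuinely lie in $[\bH_{0}^{r',p'}(\curl,\O)]'$ with the correct indices rather than in a slightly smaller or larger space. A secondary technical point is the transition across $p=3$, where $\bW^{1,p}(\O)\hookrightarrow\bL^{\infty}(\O)$ changes the character of the Hölder estimates for the quadratic terms; this is typically sidestepped by inserting an intermediate exponent just above $3$. Finally, the discrepancy between the $\bW^{1,p}$-regularity of $\bu,\bb$ and the mere $W^{1,r}$-regularity of $P$ is dictated by the Stokes estimate, which only gives $\nabla P$ the integrability of the source $\bF$, one Sobolev step below that of $\bu$, which is why the relation $1/r=1/p+1/3$ appears in the statement.
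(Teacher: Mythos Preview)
Your proposal is correct and follows essentially the same route as the paper: rewrite \eqref{eqn:MHD} as a decoupled Stokes problem $(\mathcal{S_N})$ for $(\bu,P)$ and an elliptic problem $(\mathcal{E_N})$ for $\bb$ with the quadratic terms pushed into the source, then bootstrap starting from the $\bH^{1}$-solution via $\bH^{1}\hookrightarrow\bL^{6}$.

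Two minor remarks on execution. First, the linear regularity tools actually invoked are the decoupled Stokes and elliptic results (Proposition~\ref{thm solution W1,p and W2,p Stokes divu=h} and Lemma~\ref{Lemme elliptique H r' p'}), not the coupled linearized MHD theory of Section~5; using the latter would needlessly import the $(\bw,\bdd)$-machinery. Second, the paper's bootstrap is sharper than a generic ``finitely many steps'': since $(\curl\bu)\times\bu,(\curl\bb)\times\bb\in\bL^{3/2}(\O)\hookrightarrow\bL^{r}(\O)$ whenever $r=\tfrac{3p}{3+p}\le\tfrac{3}{2}$, i.e.\ $p\le 3$, a single application of the Stokes/elliptic regularity already reaches the target for $2<p\le 3$. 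For $p>3$ one first lands in $\bW^{1,3}(\O)\hookrightarrow\bL^{q}(\O)$ for all $q<\infty$, which places the quadratic sources in $\bL^{m}(\O)$ for every $m<3$ and hence in $\bL^{r}(\O)$, so a second step finishes. This explicit two-step split is what replaces your anticipated ``transition across $p=3$'' and avoids any delicate exponent bookkeeping.
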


\begin{theo}(Strong solutions in $\bW^{2,p}(\O)$ with $p\geq\frac{6}{5}$ for the \eqref{eqn:MHD} system).\label{thm:regularity_Strong_sol_W2,p p>6/5: main results}
Let us suppose that $\O$ is of class $\mathcal{C}^{2,1}$ and $p\geq \frac{6}{5}$. Let $\ff$, $\bg$ and $P_0$ with the compatibility conditions \eqref{Condition div nulle: main results1} and \eqref{Condition compatibilite K_N Lp: main results} and  
$$\ff \in \bL^p(\Omega), \quad \bg \in \bL^p(\Omega), \quad h=0\quad \mathrm{and}\quad P_0 \in W^{1-\frac{1}{p},p}(\Gamma).$$
Then the weak solution  $(\bu, \bb, P)$ for the \eqref{eqn:MHD} system given by Theorem \ref{Main results theo result sol H^1} belongs to \\ $ \bW^{2,p}(\Omega) \times \bW^{2,p}(\Omega) \times W^{\,1,p}(\Omega)$ and satisfies the following estimate:
\begin{eqnarray*}\label{estim strong p>6/5 MHD}
\begin{aligned}
&\norm{\bu}_{\bW^{2,p}(\Omega)} + \norm{\bb}_{\bW^{2,p}(\Omega)}+ \norm{P}_{W^{1,p}(\Omega)} \leqslant C(\norm{\ff}_{\bL^p(\Omega)} + \norm{\bg}_{\bL^p(\Omega)} + \norm{P_0}_{W^{1-\frac{1}{p},p}(\Gamma)}) 
\end{aligned}
\end{eqnarray*}
\end{theo}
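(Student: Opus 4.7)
The plan is to run a bootstrap argument starting from the $\bH^1$-weak solution $(\bu,\bb,P)$ provided by Theorem \ref{Main results theo result sol H^1}, and to rely on the two linear $L^p$-regularity results proved in Section~5, namely for the Stokes problem with pressure boundary condition (applied to the pair $(\bu,P)$) and for the curl-curl elliptic problem with $\bb\times\bn=\cero$ on $\Gamma$ (applied to $\bb$). I would transfer each nonlinear term to the right-hand side and iterate as the integrability of $\bu$ and $\bb$ improves.

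Before iterating I would check that the transferred data remain admissible. For the Stokes half of the system the extra datum is $\ff-(\curl\bu)\times\bu+(\curl\bb)\times\bb$, for which no compatibility beyond integrability is needed. For the curl-curl half I would set $\bG:=\bg+\curl(\bu\times\bb)$; since $\curl(\bu\times\bb)$ is a curl, $\vdiv\bG=0$ automatically, and for $\bv\in\bK_N^{p'}(\O)$ the Green formula
\begin{equation*}
\int_\O \curl(\bu\times\bb)\cdot\bv\,dx-\int_\O (\bu\times\bb)\cdot\curl\bv\,dx=\int_\Gamma (\bu\times\bb)\cdot(\bv\times\bn)\,dS
\end{equation*}
vanishes identically, because $\curl\bv=\cero$ in $\O$ and $\bv\times\bn=\cero$ on $\Gamma$. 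The identity $\curl(\bu\times\bb)=(\bb\cdot\nabla)\bu-(\bu\cdot\nabla)\bb$, valid thanks to $\vdiv\bu=\vdiv\bb=0$, would then be used to control $\curl(\bu\times\bb)$ in $\bL^q$ by H\"older.

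The bootstrap itself consists of three stages. From $\bu,\bb\in\bH^1(\O)\hookrightarrow\bL^6(\O)$, H\"older's inequality gives $(\curl\bu)\times\bu$, $(\curl\bb)\times\bb$, and $(\bb\cdot\nabla)\bu-(\bu\cdot\nabla)\bb$ in $\bL^{3/2}(\O)$, so that both right-hand sides lie in $\bL^{\min(p,3/2)}(\O)$. The $W^{2,q}$-regularity theorems of Section~5 then yield $\bu,\bb\in\bW^{2,\min(p,3/2)}(\O)$ and $P\in W^{1,\min(p,3/2)}(\O)$, which already settles the range $6/5\le p\le 3/2$. For $3/2<p<3$, I would invoke $\bW^{2,3/2}(\O)\hookrightarrow\bW^{1,3}(\O)\cap\bL^q(\O)$ for every $q<\infty$, upgrading the nonlinear terms to $\bL^r(\O)$ for every $r<3$; a second application of the linear regularity results then gives $\bW^{2,r}$ for every $r<3$, covering the prescribed $p$. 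For $p\ge 3$, a third iteration exploits $\bW^{2,r}(\O)\hookrightarrow\bL^\infty(\O)\cap\bW^{1,q}(\O)$ for $r$ close to $3$ and arbitrary $q<\infty$, which upgrades the nonlinear data to $\bL^q(\O)$ for every $q<\infty$ and closes the argument.

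The two obstacles I anticipate are: (i) verifying the compatibility condition \eqref{Condition compatibilite K_N Lp: main results} at each step, for which the tangency conditions $\bu\times\bn=\bb\times\bn=\cero$ are essential and absorb all boundary traces in the integrations by parts; and (ii) producing the announced \emph{linear} a priori estimate, since the constants that arise at each iteration depend on norms of $(\bu,\bb)$ obtained at the previous step. The first point is handled by the Green formula above together with the fact that $\curl(\bu\times\bb)$ is divergence-free. The second is closed by invoking the $\bH^1$-estimate of Theorem \ref{Main results theo result sol H^1} at the top of the chain, so that the intermediate norms are ultimately dominated by $\norm{\ff}_{\bL^p(\O)}+\norm{\bg}_{\bL^p(\O)}+\norm{P_0}_{W^{1-1/p,p}(\Gamma)}$.
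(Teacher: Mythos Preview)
Your proposal is correct and follows essentially the same bootstrap mechanism as the paper: split the (MHD) system into a Stokes problem $(\mathcal{S_N})$ for $(\bu,P)$ and the curl--curl elliptic problem $(\mathcal{E_N})$ for $\bb$, push the nonlinear terms to the right, and iterate using the $\bW^{2,q}$ linear regularity results as the integrability of $(\bu,\bb)$ improves.

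The only organizational difference is the entry point of the iteration. The paper first invokes its own $\bW^{1,p}$ regularity theorem for the nonlinear system (Theorem \ref{thm:regularity_weak_sol1}) to upgrade the $\bH^1$ solution to $\bW^{1,p^*}(\O)$ with $p^*=\tfrac{3p}{3-p}\in(2,3]$ when $\tfrac{6}{5}\le p\le\tfrac{3}{2}$, and only then begins the $\bW^{2,p}$ bootstrap. You bypass this intermediate result and go directly from $(\bu,\bb)\in\bH^1\hookrightarrow\bL^6$ to $(\bu,\bb)\in\bW^{2,\min(p,3/2)}$ via the strong Stokes and elliptic estimates. Both routes work; yours is slightly more self-contained (it does not need the separate $\bW^{1,p}$ theorem), while the paper's route reuses machinery already in place. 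After this first step the two arguments coincide: the case $\tfrac{3}{2}<p<3$ via $\bW^{2,3/2}\hookrightarrow\bW^{1,3}\hookrightarrow\bL^q$ for all $q<\infty$, and the case $p\ge 3$ via one or two further passes through $\bW^{2,3-\varepsilon}\hookrightarrow\bL^\infty$. Your verification of the compatibility conditions for $\bG=\bg+\curl(\bu\times\bb)$ is exactly what is needed and matches the paper's (implicit) use of the same fact.
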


We refer to Theorem \ref{thm:regularity_weak_sol1} and Theorem \ref{thm:regularity_weak_sol1_W2p} for the proof of the above result, where we use the estimates obtained in the Hilbert case and a bootstrap argument using regularity results of some Stokes and elliptic problems in \cite{AS_M3AS} and \cite{AS_DCDS}. \\
To deal with the regularity of the solutions of the \eqref{eqn:MHD} system in $\bW^{1,p}(\O)$ with $\frac{3}{2}< p <2$, we need to study the following linearized MHD system: Find ($\bu$, $\bb$,  $\mathrm{P}$, $\bc$) with $\bc=(c_1,\ldots,c_I)$ such that for $1\leq i \leq I$:
\begin{equation}\label{linearized MHD-pressure: main results}
	\left\lbrace
		\begin{split}
		   - \, \Delta \bu + (\curl \bw) \times \bu + \nabla \mathrm{P} -  (\curl \bb) \times \bdd = \ff \quad &\mathrm{and}\quad  \vdiv \bu = h   \quad \mathrm{in} \,\, \Omega, \\
           \, \curl \curl \bb -  \curl (\bu \times \bdd) = \bg  \quad & \mathrm{and}\quad \vdiv \bb = 0 \quad \text{in} \,\, \Omega, \\
        \bu \times \bn = \textbf{0} \quad &\mathrm{and}\quad \bb \times \bn = \textbf{0}\quad \mathrm{on}\,\,\Gamma,\\
        \mathrm{P} = P_0 \quad \text{on} \, \,\Gamma_0\quad & \mathrm{and}\quad 
         \mathrm{P} = P_0 + c_i \,\,\,\text{on} \,\, \Gamma_i,\\
         \langle\bu \cdot \bn, 1\rangle_{\Gamma_i} = 0, \quad &\mathrm{and}\quad \langle\bb \cdot \bn, 1\rangle_{\Gamma_i} = 0.
		\end{split}\right.
\end{equation}

The next theorem gives existence of weak and strong solutions for the linearized problem \eqref{linearized MHD-pressure: main results}

\begin{theo}(Existence of weak and strong solutions of the linearized MHD problem)\label{thm weak ans strong solutions linearized system: main results}. Suppose that $$\ff, \bg\in [\bH_0^{r',p'}(\curl,\Omega)]',\quad P_0 \in W^{1-\frac{1}{r},r}(\Gamma),\quad h \in W^{1,r}(\Omega)$$ with the compatibility conditions \eqref{Condition div nulle: main results1} and \eqref{Condition compatibilite K_N Lp: main results}.\\
 \textbf{\emph{(1).}} For any $p\geq 2$, if  $\curl\bw\in\bL^{s}(\O)$, $\bdd\in \bW^{1,s}_{\sigma}(\O)$ where $s$ is given by
\begin{equation*}\label{def of s for weak p>2}
s= \frac{3}{2} \quad \mathrm{if} \,\,2<p < 3,\quad s>\frac{3}{2}\quad \mathrm{if}\,\,p=3\quad \mathrm{and}\quad s=r\quad \mathrm{if}\,\,p>3,
\end{equation*} then the linearized system \eqref{linearized MHD-pressure: main results}
has a unique solution $(\bu, \bb, P, \bc) \in \bW^{1,p}(\Omega) \times \bW^{1,p}(\Omega) \times W^{1,r}(\Omega) \times \R^I$ with $\bc=(c_1,\ldots,c_I)$. Moreover, we have the estimate: 
\small
\begin{eqnarray*}\label{estim W^,1,p p>2 div u= phi_u}
\begin{aligned}
&\norm{\bu}_{\bW^{1,p}(\Omega)} + \norm{\bb}_{\bW^{1,p}(\Omega)}+ \norm{P}_{W^{1,r}(\Omega)} \!\leq\!  C(1+\norm{\curl\bw}_{\bL^s(\Omega)} +\norm{\bdd}_{\bW^{1,s}(\O)}\!)\big(\norm{\ff}_{[\bH_0^{r',p'}(\curl,\Omega)]'} \\&+ \! \norm{P_0}_{W^{1-\frac{1}{r},r}(\Gamma)} +\!\norm{\bg}_{[\bH_0^{r',p'}(\curl, \Omega)]}\!+\!(1 \!+\!  \norm{\curl\bw}_{\bL^s(\Omega)}\! +\! \norm{\bdd}_{\bW^{1,s}(\O)})\norm{h}_{W^{1,r}(\O)}\big) 
\end{aligned}
\end{eqnarray*}
\textbf{\emph{(2).}} Let $\frac{3}{2}<p < 2$. If  $\curl\bw\in\bL^{3/2}(\O)$ and $\bdd\in\bW^{1,3/2}_{\sigma}(\O)$, then the linearized problem \eqref{linearized MHD-pressure: main results} has a unique solution $(\bu, \bb, P, \bc) \in \bW^{1,p}(\Omega) \times \bW^{1,p}(\Omega) \times W^{1,r}(\Omega) \times \R^I$. Moreover, we have the following estimates: 
\small
 \begin{eqnarray*}\label{estim u,b W^1,p for p<2}
 \begin{aligned} 
&\norm{\bu}_{\bW^{1,\,p}(\Omega)} + \norm{\bb}_{\bW^{1,\,p}(\Omega)}\leq  C(1 + \norm{\curl\bw}_{\bL^{3/2}(\Omega)} + \norm{\bdd}_{\bW^{1,3/2}(\O)})\Big(\norm{\ff}_{[\bH_0^{r',p'}(\curl,\Omega)]'}+  \norm{P_0}_{W^{1-\frac{1}{r},r}(\Gamma)}\\& +\norm{\bg}_{[\bH_0^{r',p'}(\curl, \Omega)]}+(1 +  \norm{\curl\bw}_{\bL^{3/2}(\Omega)} + \norm{\bdd}_{\bW^{1,3/2}(\O)})\norm{h}_{W^{1,r}(\O)}\Big) 
\end{aligned}
\end{eqnarray*}
and
\small
 \begin{eqnarray*}\label{estim P W^1,r for p<2}
 \begin{aligned} 
 &\norm{P}_{W^{1,r}(\Omega)} \leq C(1 + \norm{\curl\bw}_{\bL^{3/2}(\Omega)} + \norm{\bdd}_{\bW^{1,3/2}(\O)})^2\times\Big(\norm{\ff}_{[\bH_0^{r',p'}(\curl,\Omega)]'} +\norm{P_0}_{W^{1-\frac{1}{r},r}(\Gamma)}\\&+ \norm{\bg}_{[\bH_0^{r',p'}(\curl, \Omega)]}+(1+\norm{\curl\bw}_{\bL^{3/2}(\Omega)} + \norm{\bdd}_{\bW^{1,3/2}(\O)})\norm{h}_{W^{1,r}(\O)}\Big).
\end{aligned}
\end{eqnarray*} 
\textbf{\emph{(3).}} Also for any $p\in(1,\,\infty)$, if $\O$ is of class $\mathcal{C}^{2,1}$, $h=0$ in $\O$ and 
$$\ff\in \bL^p(\Omega),\,\ \bg \in \bL^p(\Omega),\,\,\curl\bw\in \bL^{3/2}(\O),\,\,\bdd\in\bW^{\,1,3/2}(\O),\,\,\,\mathrm{and} \,\,\,P_0 \in W^{1-\frac{1}{p}, p}(\Gamma)$$ with the compatibility conditions \eqref{Condition div nulle: main results1} and \eqref{Condition compatibilite K_N Lp: main results}, then $(\bu, \bb, P, \bc)$ belongs to $\bW^{2,p}(\Omega) \times \bW^{2,p}(\Omega) \times W^{1,p}(\Omega) \times \R^I$ and satisfies the estimate:
\begin{eqnarray*}
\begin{aligned}\label{estim u,b,P strong p>6/5}
\norm{\bu}_{\bW^{2,p}(\Omega)} + \norm{\bb}_{\bW^{2,p}(\Omega)}+ \norm{P}_{W^{1,p}(\Omega)} &\leq C(1 + \norm{\curl\bw}_{\bL^\frac{3}{2}(\Omega)} + \norm{\bdd}_{\bW^{1,3/2}(\Omega)}) \\
&\times \big(\norm{\ff}_{\bL^p(\Omega)} + \norm{\bg}_{\bL^p(\Omega)} + \norm{P_0}_{W^{1-\frac{1}{p},p}(\Gamma)}\big) 
\end{aligned}
\end{eqnarray*}
where $C=C(\O,p)$ if $p\geq 6/5$ and $C=C(\O,p)(1 + \norm{\curl\bw}_{\bL^\frac{3}{2}(\Omega)} + \norm{\bdd}_{\bW^{1,3/2}(\Omega)}) $ if $1<p<6/5$.
\end{theo}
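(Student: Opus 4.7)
The plan is to reduce the linearized MHD system to a pair of well-studied sub-problems: a Stokes problem with pressure boundary condition for $(\bu, P, \bc)$ with right-hand side $\ff - (\curl\bw)\times\bu + (\curl\bb)\times\bdd$, and a vector elliptic problem for $\bb$ with boundary conditions $\bb\times\bn = \textbf{0}$, $\vdiv\bb = 0$ and right-hand side $\bg + \curl(\bu\times\bdd)$. I will then invoke the $L^p$-regularity theory for these auxiliary problems (established in \cite{AS_M3AS, AS_DCDS} and Section 5) and close the coupling by a Banach fixed-point argument.

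For part (1), I first verify that under the hypotheses $\curl\bw \in \bL^s(\O)$ and $\bdd \in \bW^{1,s}(\O)$ with the specified $s$, the perturbation terms $(\curl\bw)\times\bu$, $(\curl\bb)\times\bdd$ and $\curl(\bu\times\bdd)$ belong to $[\bH_0^{r',p'}(\curl,\O)]'$ whenever $(\bu,\bb) \in \bW^{1,p}(\O)\times \bW^{1,p}(\O)$; this is a Hölder and Sobolev-embedding exercise, and the precise threshold ($s=3/2$ for $2<p<3$, $s>3/2$ at $p=3$, $s=r$ for $p>3$) is exactly what makes the embedding $\bW^{1,p}\hookrightarrow \bL^{3p/(3-p)}$ balance against $\bL^s$ in the right dual norm. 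I then define the solution operator $\cT \colon \bW^{1,p}(\O)\times\bW^{1,p}(\O) \to \bW^{1,p}(\O)\times\bW^{1,p}(\O)$ sending $(\widetilde{\bu}, \widetilde{\bb})$ to the solution of the two decoupled problems evaluated at $(\widetilde{\bu}, \widetilde{\bb})$. By the cited $L^p$-estimates, $\cT$ is continuous and affine; uniqueness (which follows from the Hilbert-case argument of Section 4 together with the integrability hypothesis on $\bw$ and $\bdd$) combined with a Fredholm alternative / continuity-method argument yields existence. The explicit estimate then follows by tracking constants through the Stokes and magnetic bounds.

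For part (2) with $\tfrac{3}{2} < p < 2$, the standard weak formulation fails since test functions in $\bW^{1,p'}$ are not smooth enough to pair with perturbations lying only in $[\bH_0^{r,p}(\curl,\O)]'$. I would therefore argue by duality: given data $(\boldsymbol{\phi}, \boldsymbol{\psi})$ in $\bL^{p'}(\O)\times\bL^{p'}(\O)$, solve the formally adjoint linearized problem, which falls under the regime of part (1) since $p' > 2$ while the coefficients $\bw, \bdd$ still only need to lie in the spaces assumed. This produces a bounded linear functional on the data and, by Riesz representation, yields $(\bu,\bb) \in \bW^{1,p}(\O)\times\bW^{1,p}(\O)$. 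The pressure $P$ and the constants $\bc$ are then recovered from the momentum equation combined with the inhomogeneous boundary conditions $P = P_0$ on $\Gamma_0$, $P = P_0 + c_i$ on $\Gamma_i$, using the compatibility conditions \eqref{Condition div nulle: main results1} and \eqref{Condition compatibilite K_N Lp: main results}. The squared factor in the estimate for $\norm{P}_{W^{1,r}(\O)}$ arises because the right-hand side of this pressure equation already contains quantities controlled by a factor $(1 + \norm{\curl\bw}_{\bL^{3/2}(\O)} + \norm{\bdd}_{\bW^{1,3/2}(\O)})$, which is then multiplied by the Stokes constant.

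For part (3), I bootstrap from parts (1)--(2) using the $\bW^{2,p}$-regularity for the Stokes problem with pressure boundary condition and for the vector Laplacian with $\bb\times\bn=\textbf{0}$, $\vdiv\bb=0$ (proved in Section 5 and the Appendix). Starting from $(\bu,\bb)$ in some intermediate $\bW^{1,q}$-space obtained from part (1) or (2), one plugs into the strong form of each sub-problem and reads off $\bW^{2,p}$-regularity whenever the right-hand side is in $\bL^p$. The threshold $p \geq 6/5$ is the embedding constant that closes the estimate in one step; below it the auxiliary step through $\bL^{3/2}$ costs an additional factor $(1 + \norm{\curl\bw}_{\bL^{3/2}(\O)} + \norm{\bdd}_{\bW^{1,3/2}(\O)})$. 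The hard part will be part (2): the duality argument requires careful verification that the adjoint problem carries the right compatibility data, that the functional thus defined actually identifies with an element of $\bW^{1,p}$, and that the pressure reconstruction is compatible with the unknown jump constants $c_i$; the two-step structure leading to the squared constant in the pressure bound is the source of the worst estimate in the paper.
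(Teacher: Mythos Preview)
Your outline for Part~(2) is correct and matches the paper: duality against the adjoint linearized problem (which lives in the regime $p'>2$ of Part~(1)) followed by Riesz representation and recovery of $P$ from the Dirichlet problem $\Delta P=\vdiv\ff+\vdiv((\curl\bb)\times\bdd-(\curl\bw)\times\bu)$ is exactly what the paper does in Theorem~5.7, and your explanation of the squared factor on $\|P\|_{W^{1,r}}$ is right.

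Parts~(1) and~(3), however, differ from the paper in both order and method, and your estimate arguments have a genuine gap. The paper proves the \emph{strong} $\bW^{2,p}$ result for $p\ge 6/5$ \emph{first} (Theorem~5.1), directly from the Hilbert theory, and only afterward derives the weak $\bW^{1,p}$ result for $p>2$ (Theorem~5.2) by a constructive splitting: solve the decoupled Stokes and elliptic problems for $(\bu_1,P_1)$ and $\bb_1$ with data $\ff,\bg$, then solve a full linearized MHD problem for $(\bu_2,\bb_2,P_2)$ with $\bL^r$ data $\ff_1=-(\curl\bw)\times\bu_1+(\curl\bb_1)\times\bdd$ and $\bg_1=\curl(\bu_1\times\bdd)$, invoking the already-proved $\bW^{2,r}$ theory. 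No fixed-point, Fredholm, or continuity argument appears. Your Fredholm route is plausible for \emph{existence} (compactness of the coupling in the right spaces is delicate but believable), yet it cannot yield the stated estimates by ``tracking constants'': the norm of a Fredholm inverse has no a~priori polynomial dependence on $\bw,\bdd$, and the naive a~posteriori bound
\[
\|\bu\|_{\bW^{1,p}}+\|\bb\|_{\bW^{1,p}}\le C\bigl(\text{data}\bigr)+C\bigl(\|\curl\bw\|_{\bL^s}+\|\bdd\|_{\bW^{1,s}}\bigr)\bigl(\|\bu\|_{\bW^{1,p}}+\|\bb\|_{\bW^{1,p}}\bigr)
\]
cannot be closed without smallness. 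The paper's essential device, which you omit, is a mollifier decomposition of the coefficients: write $\curl\bw=\by_1^\epsilon+\by_2^\epsilon$ with $\by_1^\epsilon$ smooth and $\|\by_2^\epsilon\|_{\bL^s}\le\epsilon$ (and similarly for $\bdd$, $\nabla\bdd$); the $\by_2^\epsilon$ contribution is absorbed into the left side, while the $\by_1^\epsilon$ contribution is bounded using the \emph{lower-regularity} Hilbert estimate on $\|\bu\|_{\bH^1}$ (which is already linear in the data and independent of $\bw,\bdd$). This is how the linear factor $(1+\|\curl\bw\|+\|\bdd\|)$ arises in both Theorem~5.1 and the optimal estimate of Theorem~5.2, and without it your bootstrap in Part~(3) from Part~(1) would produce a \emph{quadratic} factor for all $p$, contradicting the claim $C=C(\Omega,p)$ when $p\ge 6/5$.
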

We refer to Theorem \ref{thm weak sol W^1,p p>2 with phi_u and chi}, Theorem \ref{thm: weak W1,p for p<2}, Theorem \ref{thm strong solution p>6/5} and Theorem \ref{thm: strong W2,p for 1<p<6/5} for the proof of the above results. Note that in the above theorem, to prove the existence of
weak solutions in $\bW^{1,p}(\O)$ with $3/2< p < 2$, we use a duality argument.\\
We also note that we proved more general existence results in Corollary \ref{Corollary improvement pressure} where the regularity of the pressure is improved by supposing a data $P_0$ less regular. 

Finally, the next result shows the existence and uniqueness of weak solutions with $3/2 < p < 2$ for the nonlinear \eqref{eqn:MHD} problem (see
Theorem \ref{thm:regularity_weak_ MHD p<2}). The proof is essentially based on the estimates obtained above for the linearized problem \eqref{linearized MHD-pressure: main results}. 

\begin{theo}(Regularity $\bW^{1,p}(\O)$ with $\frac{3}{2}< p<2$ for the \eqref{eqn:MHD} system).\label{thm:regularity_weak_ MHD p<2: main results} Assume that $\frac{3}{2} < p < 2$ and $r$ with $\frac{1}{r}=\frac{1}{p}+\frac{1}{3}$. 
Let us consider $\ff, \bg \in [\bH_0^{r',p'}(\curl, \Omega)]'$, $P_0 \in W^{1-\frac{1}{r},r}(\Gamma)$ and $h\in W^{1,r}(\O)$ with the compatibility conditions \eqref{Condition div nulle: main results1} and \eqref{Condition compatibilite K_N Lp: main results}. 

\emph{\textbf{(i)}} There exists a constant $\delta_1$ such that, if 
\begin{eqnarray*}
\norm{\ff}_{[\bH_0^{r',p'}(\curl, \Omega)]'} + \norm{\bg}_{[\bH_0^{r',p'}(\curl,\Omega)]'} + \norm{P_0}_{W^{1-\frac{1}{r},r}(\Gamma)}+\norm{h}_{W^{1,r}(\O)} \leq \delta_1
\end{eqnarray*}
Then, the \eqref{eqn:MHD} problem has at least one solution $(\bu, \bb, P, \boldsymbol{\alpha}) \in \bW^{1,p}(\Omega) \times \bW^{1,p}(\Omega) \times W^{1,r}(\O) \times \R^I$. Moreover, we have the following estimates:
\begin{eqnarray} \label{estim u,b non linear p<2}
\begin{aligned}
\!\!\norm{\bu}_{\bW^{1,p}(\Omega)}\! +\! \norm{\bb}_{\bW^{1,p}(\Omega)}\! 
\leqslant \!C_1\! &\big( \norm{\ff}_{[\bH_0^{r',p'}(\curl, \Omega)]'} \!+ \!\norm{\bg}_{[\bH_0^{r',p'}(\curl,\Omega)]'}\! +\! \norm{P_0}_{W^{1-\frac{1}{r},r}(\Gamma)} \\
&\!+ \!\norm{h}_{W^{1,r}(\O)} \big)
\end{aligned}
\end{eqnarray}
\begin{eqnarray} \label{estim P non linear p<2}
\begin{aligned}
\norm{P}_{W^{1,r}(\O)}\!\!\leqslant \!C_1 (1+C^*\eta)&\big( \!\norm{\ff}_{[\bH_0^{r',p'}(\curl, \Omega)]'} \!+\! \norm{\bg}_{[\bH_0^{r',p'}(\curl,\Omega)]'} \!+\! \norm{P_0}_{W^{1-\frac{1}{r},r}(\Gamma)}\! \\
&+\!\norm{h}_{W^{1,r}(\O)}\!\! \big),
\end{aligned}
\end{eqnarray}
where $\delta_1=(2C^2C^*)^{-1}$, $C_1=C(1+C^*\eta)^2$ with $C>0$, $C^*>0$ are the constants given in \eqref{definition C^* preuve W1p p<2 non lin} and $\eta$ defined by \eqref{definition eta preuve W1p p<2 non lin}. Furthermore, we have for all $1 \leqslant i \leqslant I$
\begin{eqnarray*}
\begin{aligned}
&\alpha_i = \langle \ff, \nabla q_i^N \rangle_{\O}- \int_{\O} (\curl \bw) \times \bu\cdot \nabla q_i^N \,d \bx + \int_{\O}(\curl \bb) \times \bdd \cdot \nabla q_i^N \,d\bx+ \int_{\Gamma}(h -P_0) \nabla q^{N}_{i}\cdot\bn\,\,d\sigma  
\end{aligned}
\end{eqnarray*}
\emph{\textbf{(ii)}} Moreover, if the data satisfy that 
\begin{eqnarray*}
\norm{\ff}_{[\bH_0^{r',p'}(\curl, \Omega)]'} + \norm{\bg}_{[\bH_0^{r',p'}(\curl,\Omega)]'} + \norm{P_0}_{W^{1-\frac{1}{r},r}(\Gamma)}  +\norm{h}_{W^{1,r}(\O)} \leq \delta_2,
\end{eqnarray*}
for some $\delta_2\in]0,\,\delta_1]$, then the weak solution of \eqref{eqn:MHD} problem is unique.
\end{theo}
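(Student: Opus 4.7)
The strategy is to apply Banach's fixed-point theorem to a map built from the linearized system \eqref{linearized MHD-pressure: main results}. For $(\bw, \bdd)$ in an affine ball $B_R$ of $\bW^{1,p}(\O)\times\bW^{1,p}_\sigma(\O)$ of radius $R$ (with $\vdiv\bw=h$, $\vdiv\bdd=0$), I set $T(\bw,\bdd):=(\bu,\bb)$ where $(\bu,\bb,P,\bc)$ is the unique solution to \eqref{linearized MHD-pressure: main results} supplied by Theorem \ref{thm weak ans strong solutions linearized system: main results}(2). Since $p>3/2$, the Sobolev embedding $\bW^{1,p}(\O)\hookrightarrow\bW^{1,3/2}(\O)$ together with $\curl\bw\in\bL^p(\O)\hookrightarrow\bL^{3/2}(\O)$ ensure that the hypothesis $\curl\bw\in\bL^{3/2}(\O)$, $\bdd\in\bW^{1,3/2}_\sigma(\O)$ is automatic, so $T$ is well defined.

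Writing $\eta:=\norm{\ff}_{[\bH_0^{r',p'}(\curl,\O)]'}+\norm{\bg}_{[\bH_0^{r',p'}(\curl,\O)]'}+\norm{P_0}_{W^{1-1/r,r}(\Gamma)}+\norm{h}_{W^{1,r}(\O)}$ and letting $C^*$ be the embedding constant from $\bW^{1,p}\hookrightarrow\bW^{1,3/2}$, the estimate of Theorem \ref{thm weak ans strong solutions linearized system: main results}(2) applied on $B_R$ yields
$$\norm{T(\bw,\bdd)}_X \leq C(1+C^*R)\bigl[\eta+(1+C^*R)\eta\bigr] \leq 2C(1+C^*R)^2\eta,$$
where $X:=\bW^{1,p}(\O)\times\bW^{1,p}(\O)$. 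Taking $R=C_1\eta=C(1+C^*\eta)^2\eta$ and imposing $\eta\leq\delta_1:=(2C^2C^*)^{-1}$ yields $T(B_R)\subset B_R$ and the quantitative bound \eqref{estim u,b non linear p<2}.

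The next step is contraction. For $i=1,2$ set $(\bu_i,\bb_i,P_i)=T(\bw_i,\bdd_i)$; a direct subtraction shows that $(\bu_1-\bu_2,\bb_1-\bb_2,P_1-P_2)$ solves \eqref{linearized MHD-pressure: main results} with coefficients $(\bw_1,\bdd_1)$, zero boundary and divergence data, and right-hand sides
$$\bF=-\curl(\bw_1-\bw_2)\times\bu_2+(\curl\bb_2)\times(\bdd_1-\bdd_2), \qquad \bG=\curl\bigl(\bu_2\times(\bdd_1-\bdd_2)\bigr).$$
Combining Hölder's inequality with the Sobolev embedding $\bW^{1,p}(\O)\hookrightarrow\bL^{3p/(3-p)}(\O)$, the inclusion $\bL^r(\O)\hookrightarrow[\bH_0^{r',p'}(\curl,\O)]'$, and the duality identity $\langle\curl\bz,\boldsymbol{\varphi}\rangle=\langle\bz,\curl\boldsymbol{\varphi}\rangle$ for $\boldsymbol{\varphi}\in\bH_0^{r',p'}(\curl,\O)$, one bounds the norms of $\bF$ and $\bG$ in $[\bH_0^{r',p'}(\curl,\O)]'$ by $CR\,\norm{(\bw_1-\bw_2,\bdd_1-\bdd_2)}_X$; the threshold $p>3/2$ is precisely where these exponents fit. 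Reapplying Theorem \ref{thm weak ans strong solutions linearized system: main results}(2) gives
$$\norm{T(\bw_1,\bdd_1)-T(\bw_2,\bdd_2)}_X\leq C^2R(1+C^*R)\,\norm{(\bw_1-\bw_2,\bdd_1-\bdd_2)}_X,$$
and smallness of $\eta$ (hence of $R$) makes this prefactor strictly less than $1$. Banach's theorem furnishes the fixed point $(\bu,\bb)$, and the associated $P$ inherits estimate \eqref{estim P non linear p<2} from the second bound of Theorem \ref{thm weak ans strong solutions linearized system: main results}(2). The identity for $\alpha_i$ is then obtained by pairing the momentum equation with the harmonic generator $\nabla q_i^N$ of \eqref{Definition des q_i^N}, exploiting $q_i^N|_{\Gamma_k}=\mathrm{const}$ and $\langle\partial_n q_i^N,1\rangle_{\Gamma_k}=\delta_{ik}$ to isolate $\alpha_i$ from the boundary integral of $P\nabla q_i^N\cdot\bn$.

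For part (ii), any two weak solutions satisfying \eqref{estim u,b non linear p<2} have norms of order $\eta$, so their difference solves a linearized system with coefficients $(\bu_1,\bb_1)$ and a right-hand side bilinear in the difference and in $(\bu_2,\bb_2)$. Repeating the estimates above produces
$$\norm{\bu_1-\bu_2}_{\bW^{1,p}(\O)}+\norm{\bb_1-\bb_2}_{\bW^{1,p}(\O)}\leq C^2 C_1\eta(1+C^*C_1\eta)\,\bigl(\norm{\bu_1-\bu_2}_{\bW^{1,p}(\O)}+\norm{\bb_1-\bb_2}_{\bW^{1,p}(\O)}\bigr),$$
so any $\delta_2\in(0,\delta_1]$ making this factor smaller than $1$ forces the two solutions to coincide. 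The principal difficulty is the contraction step: the trilinear terms must be estimated in the rather weak dual space $[\bH_0^{r',p'}(\curl,\O)]'$, and $p>3/2$ is exactly the minimal regularity for which Hölder combined with $\bW^{1,p}\hookrightarrow\bL^{3p/(3-p)}$ lands inside $\bL^r(\O)$.
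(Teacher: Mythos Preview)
Your argument is correct and follows the same route as the paper: a Banach fixed-point scheme on the map $T(\bw,\bdd)=(\bu,\bb)$ built from the linearized estimates of Theorem~\ref{thm weak ans strong solutions linearized system: main results}(2), with the contraction obtained by subtracting two linearizations and estimating the resulting right-hand sides $\bF,\bG$ in $[\bH_0^{r',p'}(\curl,\Omega)]'$ via H\"older and $\bW^{1,p}\hookrightarrow\bL^{p^*}$. The only discrepancy is notational: in the paper $\eta$ denotes the \emph{ball radius} and $\gamma$ the data size (see \eqref{definition eta preuve W1p p<2 non lin}), whereas you have swapped these symbols, so your sentence ``taking $R=C_1\eta=C(1+C^*\eta)^2\eta$'' does not literally match the constants $C_1,\eta$ as defined in the statement; also the paper works in the linear ball $\bB_\eta\subset\bW^{1,p}\times\bW^{1,p}_\sigma$ rather than the affine slice $\{\vdiv\bw=h\}$, but since the range of $T$ lands there anyway this is immaterial.
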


\section{Notations and preliminary results}
Before studying the MHD problem \eqref{eqn:MHD}, we introduce some basic notations
and specific functional framewor. If we do not state otherwise, $\O$ will be considered as an open bounded domain of $\R^3$, which is not necessary connected, of class at least $\mathcal{C}^{1,1}$ and sometimes of class  $\mathcal{C}^{2,1}$. We denote by $\Gamma_{i}$, $0\leq i \leq I$, the connected components of $\Gamma$, $\Gamma_{0}$ being the boundary of the only unbounded connected component of $\mathbb{R}^{3}\backslash\overline{\Omega}$. 

The vector fields and matrix fields as well as the corresponding spaces are denoted by bold font. We will use $C$ to denote a generic positive constant which may depend on $\O$ and the dependence on other parameters will be specified if necessary. 
For $1<p<\infty$, $\bL^p(\O)$ denotes the usuel vector-valued $\bL^p$-space over $\O$. As usual, we denote  by $\bW^{\,m,p}(\O)$ the Sobolev space of functions
in $\bL^{p}(\O)$ whose weak derivatives of order less than or equal to $m$ are also in
$\bL^p(\O)$. In the case $p = 2$, we shall write $\bH^{m}(\O)$ instead to $\bW^{m,2}(\O)$. If $ p\in[1, \infty)$, $p'$ denotes the conjugate exponent of $p$, i.e. $\frac{1}{p'}=1-\frac{1}{p}$. We define the spaces  $$\bX^{p}(\O)=\{\bv\in\bL^{p}(\O);\,\vdiv\,\bv\in L^{p}(\O),\,\curl\bv\in\bL^{p}(\O)\},$$
which is equipped with the norm:
\begin{equation*}
 \norm{\bv}_{\bX^p(\O)}=\norm{\bv}_{\bL^p(\O)}+\norm{\curl \bv}_{\bL^p(\O)}+\norm{\vdiv \bv}_{L^p(\O)}.
\end{equation*}
The subspaces $\bX_{N}^{p}(\O)$ and $\bV_N^p(\O)$ are defined by
$$\bX_{N}^{p}(\O)=\{\bv\in\bL^{p}(\O);\,\vdiv\,\bv\in L^{p}(\O),\,\curl\bv\in\bL^{p}(\O),\,\bv\times\bn=\textbf{0}\,\,\,\mathrm{on}\,\Gamma\},$$
$$\bV_{N}^{p}(\O)=\{\bv\in\bX^{p}_N(\O);\,\,\vdiv\,\bv=0\,\,\,\mathrm{in}\,\O\}.
$$
When $p=2$, we will use the notation $\bX_N(\O)$ instead to $\bX_N^{2}(\O)$. We denote by $\boldsymbol{\mathcal{D}}(\O)$ the set of smooth functions (infinitely differentiable) with compact support in $\O$. 
For $p,r\in[1, \infty)$, we introduce the following space 
\begin{equation}\label{espace Hr,p} {\textbf{\textit{H}}}^{\,r,p}(\mathbf{curl},\Omega):=\left\lbrace \textbf{\textit{v}}\in\textbf{\textit{L}}^{r}(\Omega);\,\mathbf{curl}\,\, \textbf{\textit{v}}\in\textbf{\textit{L}}^{p}(\Omega)\right\rbrace,\quad \mathrm{with}\quad \frac{1}{r}=\frac{1}{p}+\frac{1}{3}
\end{equation}
equipped with the norm  $$\Vert\bv\Vert_{{\textbf{\textit{H}}}^{\,r,p}(\mathbf{curl},\Omega)}=\Vert\bv\Vert_{\bL^r(\O)}+\Vert\curl\bv\Vert_{\bL^p(\O)}.$$ 
It can be shown that $\boldsymbol{\mathcal{D}}(\overline{\O})$ is dense in ${\textbf{\textit{H}}}^{\,r,p}(\mathbf{curl},\Omega)$ (cf. \cite[Proposition 1.0.2]{Sel-thesis} for the case $r=p$).
The closure of $\boldsymbol{\mathcal{D}}(\O)$ in $ {\textbf{\textit{H}}}^{\,r,p}(\mathbf{curl},\Omega)$ is denoted by $ {\textbf{\textit{H}}}^{\,r,p}_{0}(\mathbf{curl},\Omega)$ with
$${\textbf{\textit{H}}}^{\,r,p}_{0}(\mathbf{curl},\Omega):=\left\lbrace \textbf{\textit{v}}\in {\textbf{\textit{H}}}^{\,r,p}(\mathbf{curl},\Omega);\,\,\, \bv\times\bn=\textbf{0}\,\,\,\mathrm{on}\,\,\Gamma  \right\rbrace.$$
 $\boldsymbol{\mathcal{D}}(\O)$ is dense in ${\textbf{\textit{H}}}^{\,r,p}_{0}(\mathbf{curl},\Omega)$ and its dual space denoted by $[ {\textbf{\textit{H}}}^{\,r,p}_{0}(\mathbf{curl},\Omega)]'$ can be
characterized as follows (cf. \cite[Lemma 2.4]{AS_DCDS}, \cite[Lemma 2.5]{AS_DCDS} and \cite[Proposition 1.0.6]{Sel-thesis} for the case $r=p$):
\begin{proposition}\label{charac dual H^r,p}
A distribution $\ff$ belongs to  $[{\textbf{\textit{H}}}^{\,r,p}_{0}(\mathbf{curl},\Omega)]'$  iff there exists $\bF\in\bL^{r'}(\O)$ and $\boldsymbol{\psi}\in\bL^{p'}(\O)$ such that $\ff=\bF+\curl\boldsymbol{\psi}$. Moreover, we have the estimate :
$$\Vert\ff\Vert_{[ {\textbf{\textit{H}}}^{\,r,p}_{0}(\mathbf{curl},\Omega)]'}\leq \inf_{\ff=\bF+\curl\boldsymbol{\psi}}\max \{\Vert\bF\Vert_{\bL^{r'}(\O)},\Vert\boldsymbol{\psi}\Vert_{\bL^{p'}(\O)}\}.$$
\end{proposition}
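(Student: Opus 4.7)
The proof splits into two directions, with the nontrivial one being the representation statement for an arbitrary element of the dual.

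\textbf{Sufficiency (easy direction).} Suppose $\ff=\bF+\curl\boldsymbol{\psi}$ with $\bF\in\bL^{r'}(\O)$ and $\boldsymbol{\psi}\in\bL^{p'}(\O)$. For $\bv\in\boldsymbol{\mathcal{D}}(\O)$, integration by parts gives
$$
\langle\ff,\bv\rangle \;=\;\int_\O \bF\cdot\bv\dx+\int_\O \boldsymbol{\psi}\cdot\curl\bv\dx.
$$
Applying Hölder's inequality with the conjugate pairs $(r,r')$ and $(p,p')$,
$$
|\langle\ff,\bv\rangle|\le \|\bF\|_{\bL^{r'}(\O)}\|\bv\|_{\bL^r(\O)}+\|\boldsymbol{\psi}\|_{\bL^{p'}(\O)}\|\curl\bv\|_{\bL^p(\O)}\le \max\bigl\{\|\bF\|_{\bL^{r'}(\O)},\|\boldsymbol{\psi}\|_{\bL^{p'}(\O)}\bigr\}\,\|\bv\|_{\bH^{r,p}(\curl,\O)}.
$$
Because $\boldsymbol{\mathcal{D}}(\O)$ is dense in $\bH^{r,p}_0(\curl,\O)$, the linear form $\ff$ extends uniquely and belongs to $[\bH^{r,p}_0(\curl,\O)]'$ with norm bounded by the claimed maximum. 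Taking the infimum over all admissible decompositions produces one side of the desired equality.

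\textbf{Necessity (representation).} The plan is a standard Hahn--Banach plus Riesz argument. Equip the product space $\bE:=\bL^r(\O)\times\bL^p(\O)$ with the norm $\|(\bv_1,\bv_2)\|_\bE=\|\bv_1\|_{\bL^r(\O)}+\|\bv_2\|_{\bL^p(\O)}$. Its topological dual is $\bL^{r'}(\O)\times\bL^{p'}(\O)$ endowed with the norm $\max\{\|\bF\|_{\bL^{r'}(\O)},\|\boldsymbol{\psi}\|_{\bL^{p'}(\O)}\}$, the duality pairing being
$$
\bigl\langle(\bF,\boldsymbol{\psi}),(\bv_1,\bv_2)\bigr\rangle=\int_\O\bF\cdot\bv_1\dx+\int_\O\boldsymbol{\psi}\cdot\bv_2\dx.
$$
Define the linear isometry $T:\bH^{r,p}_0(\curl,\O)\to \bE$ by $T\bv=(\bv,\curl\bv)$; by definition of the graph norm, $\|T\bv\|_\bE=\|\bv\|_{\bH^{r,p}(\curl,\O)}$, so $T$ is an isometric embedding and its image $T(\bH^{r,p}_0(\curl,\O))$ is closed in $\bE$. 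Consider the linear functional $\Lambda$ defined on this image by $\Lambda(T\bv):=\langle\ff,\bv\rangle$; it is continuous with $\|\Lambda\|=\|\ff\|_{[\bH^{r,p}_0(\curl,\O)]'}$.

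\textbf{Conclusion via Hahn--Banach.} Apply the Hahn--Banach theorem to extend $\Lambda$ to a continuous linear form $\widetilde{\Lambda}$ on all of $\bE$ without increasing its norm. By the Riesz representation recalled above, there exist $\bF\in\bL^{r'}(\O)$ and $\boldsymbol{\psi}\in\bL^{p'}(\O)$ such that
$$
\widetilde{\Lambda}(\bv_1,\bv_2)=\int_\O\bF\cdot\bv_1\dx+\int_\O\boldsymbol{\psi}\cdot\bv_2\dx,\qquad \max\bigl\{\|\bF\|_{\bL^{r'}(\O)},\|\boldsymbol{\psi}\|_{\bL^{p'}(\O)}\bigr\}=\|\ff\|_{[\bH^{r,p}_0(\curl,\O)]'}.
$$
Restricting to $\bv\in\boldsymbol{\mathcal{D}}(\O)$ and recognising the distributional curl on the right, I obtain $\langle\ff,\bv\rangle_{\boldsymbol{\mathcal{D}}',\boldsymbol{\mathcal{D}}}=\langle \bF+\curl\boldsymbol{\psi},\bv\rangle_{\boldsymbol{\mathcal{D}}',\boldsymbol{\mathcal{D}}}$, whence $\ff=\bF+\curl\boldsymbol{\psi}$ in $\boldsymbol{\mathcal{D}}'(\O)$; combining this particular decomposition with the sufficiency direction yields the infimum estimate of the statement.

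\textbf{Main obstacle.} The argument is classical, and the only delicate point is the identification of the dual norm of $\bE$: one must verify that the dual of the $\ell^1$-type sum $\|\bv_1\|_{\bL^r}+\|\bv_2\|_{\bL^p}$ is exactly the $\ell^\infty$-type maximum $\max\{\|\bF\|_{\bL^{r'}},\|\boldsymbol{\psi}\|_{\bL^{p'}}\}$, which is precisely what yields the "max" appearing in the infimum estimate rather than a sum. Everything else is a direct application of Hölder's inequality, density of $\boldsymbol{\mathcal{D}}(\O)$ in $\bH^{r,p}_0(\curl,\O)$ and the Hahn--Banach extension theorem.
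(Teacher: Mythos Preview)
Your proof is correct and complete. The paper itself does not supply a proof of this proposition but simply cites \cite[Lemma~2.4, Lemma~2.5]{AS_DCDS} and \cite[Proposition~1.0.6]{Sel-thesis} for the case $r=p$; the argument you give---isometric embedding $\bv\mapsto(\bv,\curl\bv)$ into $\bL^r(\O)\times\bL^p(\O)$, Hahn--Banach extension, and Riesz representation on the product space---is precisely the classical route used in those references, so there is no substantive difference to discuss. One cosmetic remark: the closedness of the image of $T$ is true but unnecessary, since Hahn--Banach applies to arbitrary subspaces.
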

Next we introduce the kernel $$\bK^p_N(\O)=\{\bv\in\bL^p(\O);\,\,\vdiv\,\bv=0,\,\,\curl\bv=\textbf{0},\,\,\bv\times\bn=\textbf{0}\,\,\,\mathrm{on}\,\,\Gamma\}.$$
			Thanks to \cite[Corollary 4.2]{AS_M3AS}, we know that this kernel is of finite dimension and spanned by the functions $\nabla q_i^N$, $1\leq i\leq I$, where $q_i^N$ is the unique solution of the problem
\noindent\begin{eqnarray}\label{Definition des q_i^N}
\begin{cases}
 -\Delta q_{i}^{N}=0\quad\quad \mathrm{in}\,\,\Omega,\quad 
q_{i}^{N}\vert_{\Gamma_{0}}=0\quad\mathrm{and}\quad q_{i}^N\vert_{\Gamma_{k}}=\mathrm{constant},\,\,\,1\leq k \leq I\\
\left\langle \partial_{n}\,q_{i}^{N},\,1\right\rangle_{\Gamma_{k}}=\delta_{i\,k},\,\, 1\leq k \leq I,\quad\mathrm{and}\quad \left\langle \partial_{n}\,q_{i}^{N},\,1\right\rangle_{\Gamma_{0}}=-1.
\end{cases}
\end{eqnarray}
	Moreover, the functions $\nabla q_i^N$, $1\leq i\leq I$, belong to $\bW^{1,q}(\O)$ for any $1< q < \infty$. 		
We will use also the symbol $\sigma$ to represent a set of divergence free functions. In other words, if $\bX$ is a Banach space, then $\bX_{\sigma}=\{\bv\in\bX;\,\,\,\vdiv\,\bv=0\,\,\mathrm{in}\,\O\}$.\\
We recall some useful results that play an important role in the proof of the regularity of solutions in this work. We begin with the following result (see \cite[Theorem 3.2.]{AS_M3AS})

\begin{theo}\label{injection continue X_N}
The space $\textbf{\textit{X}}^{\,p}_{N}(\Omega)$ is continuously embedded in $\textbf{\textit{W}}^{\,1,p}(\Omega)$ and there exists a constant $C$, such that for any $\textbf{\textit{v}}$ in $\textbf{\textit{X}}^{p}_{N}(\Omega)$: 
\begin{equation}\label{inegalite injection continue X_N}
 \Vert\textbf{\textit{v}}\Vert_{\textbf{\textit{W}}^{\,1,p}(\Omega)}\leq C\big( \Vert\textbf{\textit{v}}\Vert_{\textbf{\textit{L}}^{p}(\Omega)}+\Vert\vdiv\,\textbf{\textit{v}}\Vert_{L^{p}(\Omega)}+ \Vert\mathbf{curl}\,\textbf{\textit{v}} \Vert_{\textbf{\textit{L}}^{p}(\Omega)}+ \sum_{i=1}^{I}\vert\langle\textbf{\textit{v}}\cdot\textbf{\textit{n}},\,1\rangle_{\Gamma_{i}} \vert\big).
\end{equation}
 \end{theo}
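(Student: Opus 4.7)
The plan is to reduce the question to regularity of a divergence-free field with prescribed curl and vanishing tangential trace, via an auxiliary scalar Dirichlet problem that absorbs the divergence of $\bv$.

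\textbf{Step 1: remove the divergence.} Given $\bv \in \bX^p_N(\Omega)$, I first solve the scalar Dirichlet problem
\begin{equation*}
    -\Delta \chi = -\vdiv \bv \quad \text{in } \Omega, \qquad \chi = 0 \quad \text{on } \Gamma.
\end{equation*}
Since $\vdiv \bv \in L^p(\Omega)$ and $\Omega$ is of class $\mathcal{C}^{1,1}$, standard $L^p$ elliptic regularity yields $\chi \in W^{2,p}(\Omega)$ with $\|\chi\|_{W^{2,p}(\Omega)} \leq C \|\vdiv \bv\|_{L^p(\Omega)}$. Because $\chi$ vanishes on $\Gamma$, the gradient $\nabla \chi$ is normal to $\Gamma$, so $\nabla \chi \times \bn = 0$ on $\Gamma$.

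\textbf{Step 2: reduce to a divergence-free field.} Set $\bz := \bv - \nabla \chi$. Then $\bz \in \bL^p(\Omega)$ with
\begin{equation*}
  \vdiv \bz = 0, \qquad \curl \bz = \curl \bv, \qquad \bz \times \bn = \bv \times \bn - \nabla \chi \times \bn = \mathbf{0} \text{ on } \Gamma,
\end{equation*}
so $\bz \in \bV^p_N(\Omega)$. Moreover, since $-\Delta \chi = -\vdiv \bv$ and $\partial_n \chi$ has mean zero on each $\Gamma_i$ modulo fluxes (after testing against $q_i^N$), one obtains $\langle \bz \cdot \bn, 1\rangle_{\Gamma_i} = \langle \bv \cdot \bn, 1\rangle_{\Gamma_i}$ for $1 \leq i \leq I$. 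The inequality \eqref{inegalite injection continue X_N} for $\bv$ would then follow from the corresponding inequality for $\bz$, namely
\begin{equation*}
 \|\bz\|_{\bW^{1,p}(\Omega)} \leq C \Bigl( \|\bz\|_{\bL^p(\Omega)} + \|\curl \bz\|_{\bL^p(\Omega)} + \sum_{i=1}^I |\langle \bz \cdot \bn, 1\rangle_{\Gamma_i}| \Bigr),
\end{equation*}
combined with the $W^{2,p}$ estimate on $\chi$.

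\textbf{Step 3: vector potential for $\bz$.} This is the heart of the proof. The idea is to construct a vector potential $\boldsymbol{\psi}$ for $\bz$, modulo the finite-dimensional kernel $\bK^p_N(\Omega) = \mathrm{span}\{\nabla q_i^N\}_{i=1}^I$. Using the solvability of a vector Poisson problem with Dirichlet-type (or mixed curl/div) boundary data, I can write
\begin{equation*}
  \bz = \curl \boldsymbol{\psi} + \sum_{i=1}^I \langle \bz \cdot \bn, 1 \rangle_{\Gamma_i}\, \nabla q_i^N,
\end{equation*}
where $\boldsymbol{\psi}$ solves a div-curl elliptic system with right-hand side $\curl \bz \in \bL^p(\Omega)$. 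Elliptic regularity (Agmon–Douglis–Nirenberg) gives $\boldsymbol{\psi} \in \bW^{2,p}(\Omega)$, hence $\curl \boldsymbol{\psi} \in \bW^{1,p}(\Omega)$. Since each $\nabla q_i^N$ lies in $\bW^{1,q}(\Omega)$ for every $q < \infty$ (cited from \cite{AS_M3AS}), the decomposition yields $\bz \in \bW^{1,p}(\Omega)$ with the desired estimate.

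\textbf{Main obstacle.} Step 3 is the technical core. The construction of $\boldsymbol{\psi}$ in the multiply connected setting requires carefully disentangling the harmonic Neumann component $\sum_i \alpha_i \nabla q_i^N$ from the rest, and proving $L^p$ estimates for the associated vector Poisson problem on a $\mathcal{C}^{1,1}$ domain with general connectivity. One natural route is to first establish the $p=2$ case via the integration-by-parts identity
\begin{equation*}
  \int_\Omega |\nabla \bz|^2 \, dx = \int_\Omega |\curl \bz|^2 \, dx + \int_\Omega |\vdiv \bz|^2 \, dx + \text{boundary terms controlled by } \bz \times \bn = 0,
\end{equation*}
and then bootstrap to $p \neq 2$ by Calderón–Zygmund theory applied to the vector potential. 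Uniqueness of $\boldsymbol{\psi}$ modulo the kernel, and closability of the estimates, must be checked by a compactness/contradiction argument: if the inequality failed, a sequence $\bz_n$ in $\bV^p_N(\Omega)$ with vanishing right-hand side but $\|\bz_n\|_{\bW^{1,p}} = 1$ would, after extraction, converge to a nonzero element of $\bK^p_N(\Omega)$ with zero fluxes, contradicting $\dim \bK^p_N(\Omega) = I$.
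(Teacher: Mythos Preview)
The paper does not give its own proof of this theorem: it is quoted verbatim from \cite[Theorem 3.2]{AS_M3AS} as a preliminary result and no argument is supplied here. So there is nothing in the present paper to compare your proposal against.

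On the merits of your outline: Steps~1 and~2 are a sound reduction, with one correction. Your claim that $\langle \bz\cdot\bn,1\rangle_{\Gamma_i}=\langle \bv\cdot\bn,1\rangle_{\Gamma_i}$ is false in general: there is no reason for $\int_{\Gamma_i}\partial_n\chi$ to vanish when $\Omega$ is multiply connected (testing against $q_i^N$ gives a relation involving the constant traces $q_i^N|_{\Gamma_k}$, not the individual fluxes). This is harmless, however, since $|\langle\nabla\chi\cdot\bn,1\rangle_{\Gamma_i}|\le C\|\chi\|_{W^{2,p}(\Omega)}\le C\|\vdiv\bv\|_{L^p(\Omega)}$, so the flux terms for $\bz$ are still controlled by the right-hand side of \eqref{inegalite injection continue X_N}.

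The real content is your Step~3, and you correctly flag it as the obstacle. What you have written there is not yet a proof but an outline of the standard route in \cite{AS_M3AS}: one first proves the $p=2$ case via the Gaffney-type identity you wrote (the boundary terms involve the second fundamental form of $\Gamma$ and are controlled precisely because $\bz\times\bn=0$), obtains the embedding $\bX_N^2(\Omega)\hookrightarrow\bH^1(\Omega)$, and then upgrades to general $p$ by constructing a vector potential $\boldsymbol\psi\in\bW^{2,p}(\Omega)$ for the divergence-free part via the regularity of an auxiliary vector Laplacian with normal boundary conditions. The compactness/contradiction argument you sketch is indeed how one removes the $\|\bz\|_{\bL^p}$ term and handles the kernel $\bK_N^p(\Omega)$. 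None of these pieces is carried out in your proposal, so as it stands it identifies the right architecture but does not constitute a proof; the missing ingredient is precisely the $\bW^{2,p}$ solvability of the vector potential problem on a $\mathcal{C}^{1,1}$ multiply connected domain, which is the substance of the cited reference.
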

And more generally (see \cite[Corollary 5.3]{AS_M3AS})
\begin{coro}\label{injection in X_N non homogene cas general}
Let $m\in\N^{*}$ and $\Omega$ of class $\mathcal{C}^{m,1}$. Then the space $$ \textbf{\textit{X}}^{\,m,p}(\Omega)\!=\!\{\textbf{\textit{v}}\!\in\! \textbf{\textit{L}}^{p}(\Omega);\,\vdiv\,\textbf{\textit{v}}\!\in\!{W}^{\,m-1,p}(\Omega),\,\mathbf{curl}\,\textbf{\textit{v}}\!\in\!\textbf{\textit{W}}^{\,m-1,p}(\Omega),\,\,\textbf{\textit{v}}\cdot\textbf{\textit{n}}\!\in\!\textbf{\textit{W}}^{\,m-\frac{1}{p},p}(\Gamma)\}$$ is continuously embedded in $ \textbf{\textit{W}}^{\,m,p}(\Omega)$ and we have the following estimate: for any function $\textbf{\textit{v}}$ in $\textbf{\textit{W}}^{\,m,p}(\Omega)$,
 \begin{eqnarray}\label{inequality injection in X_N non homogene cas general}
\!\!\!\!\|{\textbf{\textit{v}}}\|_{\textbf{\textit{W}}^{\,m,p}(\Omega)}\!\leq\!
C\Big(\|\textbf{\textit{v}}\|_{\textbf{\textit{L}}^{p}(\Omega)}\!\!+\!\! \|\mathbf{curl}\,{\textbf{\textit{v}}}\|_{\textbf{\textit{W}}^{\,m-1,p}(\Omega)}\!\!+\!\!\|
\vdiv\,{\textbf{\textit{v}}}\|_{{W}^{\,m-1,p}(\Omega)}\!\!+\!\!\|\textbf{\textit{v}}\times\textbf{\textit{n}}\|_{\textbf{\textit{W}}^{\,m-\frac{1}{p},p}(\Gamma)}\!\Big)
\end{eqnarray} 
\end{coro}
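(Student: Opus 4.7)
The plan is to argue by induction on $m$. The base case $m=1$ will be handled by reducing to Theorem \ref{injection continue X_N} through a boundary-trace lifting, and the inductive step will proceed by differentiating and invoking elliptic regularity for the vector Laplacian with tangential boundary data.

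For the base case $m=1$, I would take $\bv\in\bX^{1,p}(\Omega)$ and use the surjectivity of the tangential trace map from $\bW^{1,p}(\Omega)$ onto $\bW^{1-\frac{1}{p},p}(\Gamma)$ to pick a lifting $\bw\in\bW^{1,p}(\Omega)$ with $\bw\times\bn=\bv\times\bn$ on $\Gamma$ and $\|\bw\|_{\bW^{1,p}(\Omega)}\leq C\|\bv\times\bn\|_{\bW^{1-\frac{1}{p},p}(\Gamma)}$. The corrected field $\bu:=\bv-\bw$ then lies in $\bX^{p}_{N}(\Omega)$, so Theorem \ref{injection continue X_N} yields $\bu\in\bW^{1,p}(\Omega)$ with the quantitative bound \eqref{inegalite injection continue X_N}. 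Writing $\bv=\bu+\bw$ and summing the two estimates delivers \eqref{inequality injection in X_N non homogene cas general} at level $m=1$; the flux terms $\langle\bu\cdot\bn,1\rangle_{\Gamma_i}$ appearing in \eqref{inegalite injection continue X_N} are absorbed using $\vdiv\bv\in L^p(\Omega)$ and the tangential trace of $\bv$ via integration by parts on each $\Gamma_i$.

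For the inductive step, assume the embedding at level $m-1$ on every $\mathcal{C}^{m-1,1}$ domain and let $\bv\in\bX^{m,p}(\Omega)$ with $\Omega$ of class $\mathcal{C}^{m,1}$. Applying the inductive hypothesis to $\bv$ (whose curl, divergence and tangential trace trivially satisfy the weaker assumptions at level $m-1$) already gives $\bv\in\bW^{m-1,p}(\Omega)$. To gain the last derivative, I would exploit the vector identity $-\Delta\bv=\curl\curl\bv-\nabla\vdiv\bv$, which shows that $\Delta\bv\in\bW^{m-2,p}(\Omega)$, and couple it with the boundary data $\bv\times\bn\in\bW^{m-\frac{1}{p},p}(\Gamma)$ together with the implicitly-determined normal component. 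Elliptic regularity for the vector Laplacian with tangential-trace boundary conditions, which is the scheme underlying the Stokes-type estimates developed in \cite{AS_M3AS}, then produces $\bv\in\bW^{m,p}(\Omega)$ together with the claimed estimate \eqref{inequality injection in X_N non homogene cas general}.

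The main obstacle I anticipate is the last step: the vector Laplacian with only the tangential part of the trace prescribed is not a classical Dirichlet problem, since the normal component of $\bv$ on $\Gamma$ is not a free datum but is fixed implicitly through $\vdiv\bv$. I would handle this either by (i) localizing near $\Gamma$ with a partition of unity and flattening the boundary so that tangential difference quotients preserve the $\bX^{p}$-structure and can be iterated to climb one Sobolev index at a time, or (ii) decomposing $\bv=\nabla\varphi+\bz$, where $\varphi$ solves $\Delta\varphi=\vdiv\bv$ with a suitable boundary condition so that $\bz$ is divergence-free with $\bz\times\bn$ inheriting the same trace regularity, thereby reducing the analysis to the homogeneous-divergence framework already treated in \cite{AS_M3AS}. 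Tracking the constants through both ingredients then gives the stated bound.
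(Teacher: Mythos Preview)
The paper does not prove this corollary at all: it is stated as a preliminary result quoted verbatim from \cite[Corollary~5.3]{AS_M3AS}, with no argument given. There is therefore no proof in the present paper against which to compare your proposal.

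On its own merits, your base case $m=1$ is sound: lifting the tangential trace and applying Theorem~\ref{injection continue X_N} is the natural reduction, and the flux terms are indeed controlled by $\|\bv\|_{\bL^p}+\|\vdiv\bv\|_{L^p}$ through the normal-trace theorem for $\bH^p(\vdiv,\Omega)$ (the tangential trace plays no role there, contrary to what you write). For the inductive step, your option~(ii) is the cleaner route: solving $\Delta\varphi=\vdiv\bv$ with $\varphi=0$ on $\Gamma$ yields $\nabla\varphi\in\bW^{m,p}(\Omega)$ and, since the tangential gradient of $\varphi$ vanishes on $\Gamma$, also $\nabla\varphi\times\bn=0$, so $\bz=\bv-\nabla\varphi$ inherits the same tangential trace while becoming divergence-free. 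After a further $\bW^{m,p}$ lifting of $\bz\times\bn$, one is reduced to the homogeneous embedding $\{\bv\in\bX^p_N:\curl\bv,\vdiv\bv\in\bW^{m-1,p}\}\hookrightarrow\bW^{m,p}$. That last step is not a consequence of your induction hypothesis and still requires a genuine regularity argument (difference quotients or the vector-Laplacian theory developed in \cite{AS_M3AS}); as written, your sketch ends precisely where the real difficulty begins, so you should make explicit that you are invoking the cited result rather than reproving it.
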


We also recall the following result (cf. \cite[Corollary 3.2]{AS_M3AS}) which gives a Poincar\'e inequality for every function $\bv\in\bW^{1,p}(\O)$ with $\bv\times\bn=\textbf{0}$ on $\Gamma$.

\begin{coro}\label{equivalence normes Poincaré}
  On the space $\textbf{\textit{X}}^{p}_{N}(\Omega)$, the seminorm
\begin{equation}\label{semi norm normal}
\textbf{\textit{v}}\mapsto\|\mathbf{curl}\,\textbf{\textit{v}}\|_{\textbf{\textit{L}}^{p}(\Omega)}+\|\rm{div}\,\textbf{\textit{v}}\|_{L^{p}(\Omega)}+\sum_{i=1}^{I}\vert\langle\textbf{\textit{v}}\cdot\textbf{\textit{n}},\,1\rangle_{\Gamma_{i}}\vert
\end{equation}
is equivalent to the norm $\|\cdot\|_{\textbf{\textit{X}}^{p}(\Omega)}$ for any $1<p<\infty$. In particular, we have the following Poincar\'e inequality for every function $\bv\in\bW^{1,p}(\O)$ with $\bv\times\bn=\textbf{0}$ on $\Gamma$:
\begin{eqnarray}\label{Poincaré inequality}
 \norm{\bv}_{\bW^{1,p}(\O)}\leq C_{\mathcal{P}} \big(\norm{\vdiv\bv}_{L^{p}(\O)}+\norm{\curl\bv}_{\bL^{p}(\O)}+\sum_{i=1}^{I}\vert\langle\textbf{\textit{v}}\cdot\textbf{\textit{n}},\,1\rangle_{\Gamma_{i}}\vert\big),
\end{eqnarray}
where $C_{\mathcal{P}}=C_{\mathcal{P}}(\O)>0$. Moreover, the norm \eqref{semi norm normal} is equivalent to the full norm $\norm{\,\cdot\,}_{\bW^{1,p}(\O)}$ on $\bX^{p}_{N}(\O)$. 
\end{coro}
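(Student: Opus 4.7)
\textbf{Proof proposal for Corollary \ref{equivalence normes Poincaré}.}

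The plan is to establish the norm equivalence on $\bX^p_N(\Omega)$ by reducing everything to bounding $\|\bv\|_{\bL^p(\Omega)}$ by the seminorm in \eqref{semi norm normal}; once this is done, the full $\bW^{1,p}$ Poincar\'e inequality and the equivalence with $\|\cdot\|_{\bW^{1,p}(\Omega)}$ follow immediately from Theorem \ref{injection continue X_N}. The easy direction of the equivalence (seminorm $\leq \|\cdot\|_{\bX^p(\Omega)}$) is just the definition of $\bX^p(\Omega)$ together with the continuity of the trace $\bv \mapsto \bv\cdot\bn$ from $\bX^p_N(\Omega)$ into $W^{-1/p,p}(\Gamma)$, so the duality pairings $\langle \bv\cdot\bn,1\rangle_{\Gamma_i}$ are controlled by $\|\bv\|_{\bX^p(\Omega)}$.

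The substantive content is the reverse bound: there exists $C>0$ such that for every $\bv\in\bX^p_N(\Omega)$
\begin{equation*}
\|\bv\|_{\bL^p(\Omega)}\leq C\Big(\|\curl\bv\|_{\bL^p(\Omega)}+\|\vdiv\,\bv\|_{L^p(\Omega)}+\sum_{i=1}^{I}|\langle\bv\cdot\bn,1\rangle_{\Gamma_i}|\Big).
\end{equation*}
I would argue by contradiction. Assume the bound fails; then there is a sequence $(\bv_n)\subset\bX^p_N(\Omega)$ with $\|\bv_n\|_{\bL^p(\Omega)}=1$ while $\|\curl\bv_n\|_{\bL^p}$, $\|\vdiv\,\bv_n\|_{L^p}$ and $\sum_i|\langle\bv_n\cdot\bn,1\rangle_{\Gamma_i}|$ all tend to zero. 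By Theorem \ref{injection continue X_N}, $(\bv_n)$ is bounded in $\bW^{1,p}(\Omega)$, hence by Rellich--Kondrachov it admits a subsequence converging strongly in $\bL^p(\Omega)$ to some $\bv$, with $\|\bv\|_{\bL^p(\Omega)}=1$. Passing to the limit in the distributional identities, $\curl\bv=\mathbf 0$, $\vdiv\,\bv=0$ in $\Omega$, and $\bv\times\bn=\mathbf 0$ on $\Gamma$ (this last passage uses that $\bv_n\to\bv$ in $\bX^p_N(\Omega)$-norm, which is guaranteed because $\curl\bv_n\to\mathbf 0$ and $\vdiv\,\bv_n\to 0$). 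Hence $\bv\in\bK^p_N(\Omega)$.

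The final step uses the explicit description of the kernel: by \cite[Corollary 4.2]{AS_M3AS} recalled just before the corollary, every element of $\bK^p_N(\Omega)$ can be written as $\bv=\sum_{i=1}^{I}c_i\,\nabla q_i^N$ with the $q_i^N$ defined by \eqref{Definition des q_i^N}. Testing with the duality pairing $\langle\cdot\cdot\bn,1\rangle_{\Gamma_k}$ and using $\langle\partial_n q_i^N,1\rangle_{\Gamma_k}=\delta_{ik}$ gives $c_k=\langle\bv\cdot\bn,1\rangle_{\Gamma_k}=\lim_n\langle\bv_n\cdot\bn,1\rangle_{\Gamma_k}=0$ for every $1\leq k\leq I$. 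Thus $\bv=\mathbf 0$, contradicting $\|\bv\|_{\bL^p(\Omega)}=1$. This proves the reverse inequality. Combining with Theorem \ref{injection continue X_N} yields both the stated Poincar\'e inequality \eqref{Poincaré inequality} (for any $\bv\in\bW^{1,p}(\Omega)$ with $\bv\times\bn=\mathbf 0$, which in particular belongs to $\bX^p_N(\Omega)$) and the equivalence of the seminorm with the full $\bW^{1,p}$-norm on $\bX^p_N(\Omega)$.

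The main obstacle I anticipate is making sure that the limit $\bv$ actually lies in $\bK^p_N(\Omega)$ with the correct boundary behavior: the convergence of $\bv_n\times\bn$ on $\Gamma$ must be justified, which is where the structural identity $\bX^p_N(\Omega)\hookrightarrow\bW^{1,p}(\Omega)$ from Theorem \ref{injection continue X_N} (hence continuity of the tangential trace) is crucial, together with the fact that the sequence $\bv_n$ is Cauchy in $\bX^p_N(\Omega)$ once $\bL^p$-convergence is secured. Everything else is a standard compactness-plus-kernel-characterization argument.
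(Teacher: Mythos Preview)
Your argument is correct and is the standard Peetre--Tartar compactness scheme for proving Poincar\'e-type inequalities: normalize, extract via Rellich--Kondrachov (legitimized by Theorem \ref{injection continue X_N}), identify the limit as an element of $\bK^p_N(\Omega)$, and kill it using the basis $\{\nabla q_i^N\}$ together with the vanishing fluxes. The only point worth tightening is the passage to the limit in the boundary conditions: rather than asserting that $(\bv_n)$ is Cauchy in $\bX^p_N(\Omega)$, it is cleaner to note that $(\bv_n)$ is bounded in $\bW^{1,p}(\Omega)$ (by Theorem \ref{injection continue X_N}), hence weakly convergent there up to a subsequence, and the tangential trace $\bv\mapsto\bv\times\bn$ and the normal trace $\bv\mapsto\bv\cdot\bn$ are both continuous (hence weakly continuous) from $\bW^{1,p}(\Omega)$ into the relevant boundary spaces; this gives $\bv\times\bn=\mathbf 0$ and $\langle\bv\cdot\bn,1\rangle_{\Gamma_k}=0$ directly.

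As for comparison with the paper: the paper does not prove this corollary at all. It is stated as a recalled result, with the reference \cite[Corollary 3.2]{AS_M3AS} given just before the statement. Your proof is precisely the kind of argument one expects to find in that reference, so there is nothing to contrast.
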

Let us consider the following Stokes problem:

\begin{equation*}\label{S_N}
 (\mathcal{S_N})
	\left\lbrace
		\begin{aligned}
		&   - \, \Delta \bu  + \nabla P = \ff \quad \mathrm{and}\quad  \vdiv \bu = h  \quad \mathrm{in} \,\, \Omega, \\
       & \bu \times \bn = \textbf{0} \quad  \mathrm{on}\,\,\Gamma,\\
        & P = P_0 \quad \text{on} \, \,\Gamma_0\quad  \mathrm{and}\quad 
         P = P_0 + c_i \,\,\,\text{on} \,\, \Gamma_i,\\
         & \langle\bu \cdot \bn, 1\rangle_{\Gamma_i} = 0,\,\, 1\leq i \leq I.
		\end{aligned}\right.
		\end{equation*}
Then, the following proposition is an extension of that in \cite[Theorem 5.7]{AS_M3AS} to the case of non-zero divergence condition ($h\neq 0$). It is concerned with the existence and uniqueness of the weak and strong solutions for the Stokes problem $(\mathcal{S_N})$. 
\begin{proposition}\label{thm solution W1,p and W2,p Stokes divu=h}
We assume that $\Omega$ is of class $\mathcal{C}^{\,2,1}$. Let $\textbf{\textit{f}}$, $h$ and $P_{0}$ such that
$$
\textbf{\textit{f}}\in[\textbf{\textit{H}}^{\,r',p'}_{0}(\mathbf{curl},\,\Omega)]',\quad h\in W^{1,r}(\O)\quad \mathrm{and}\quad P_{0}\in W^{\,1-1/r,r}(\Gamma),
$$
with $r\leq p$ and $\frac{1}{r}\leq \frac{1}{p}+\frac{1}{3}$.
Then, the problem  $(\mathcal{S_N})$ has a unique solution $(\textbf{\textit{u}},\,P)\in\textbf{\textit{W}}^{\,1,p}(\Omega)\times W^{\,1,r}(\Omega)$ and constants $c_{1},\ldots,c_{I}$ satisfying the estimate:
\begin{eqnarray}\label{estimation solution faible W^1,p x W^1,r pour S_N espaces H^r,p}
 \Vert\textbf{\textit{u}}\Vert_{\textbf{\textit{W}}^{\,1,p}(\Omega)}+\Vert P\Vert_{W^{\,1,r}(\Omega)}\leq C\big(\Vert\,\textbf{\textit{f}}\,\Vert_{[\textbf{\textit{H}}^{\,r',p'}_{0}(\mathbf{curl},\,\Omega)]'}+\Vert h\Vert_{W^{\,1,r}(\O)}+\Vert P_{0}\Vert_{W^{\,1-1/r,r}(\Gamma)}\big),
\end{eqnarray}
and $c_{1},\ldots,c_{I}$ are given by
\begin{equation}\label{constantes Stokes divu=h}
 c_i=\langle\textbf{\textit{f}},\,\nabla\,q_i^N\rangle_{[\textbf{\textit{H}}^{\,r',p'}_{0}(\mathbf{curl},\,\Omega)]'\times\textbf{\textit{H}}^{\,r',p'}_{0}(\mathbf{curl},\,\Omega)}+\int_{\Gamma}\,(h-P_0)\,\nabla\,q_i^N\cdot\textbf{\textit{n}} \, d\sigma.
 \end{equation}
 Moreover, if $\ff\in \bL^{p}(\O)$, $h\in W^{1,p}(\O)$ and $P_0\in W^{1-\frac{1}{p}, p}(\Gamma)$, then $(\bu,\,P)$ belongs to $\bW^{2,p}(\O)\times W^{1,p}(\O)$ and satisfies the estimate 
\begin{equation}\label{estim W^2,p Stokes}
 \norm{\bu}_{\bW^{\,2,p}(\Omega)} + \norm{P}_{W^{\,1,p}(\Omega)} \leqslant C_{S} \Big(\norm{\ff}_{\bL^{p}(\Omega)}  +\Vert h\Vert_{W^{\,1,p}(\O)}+ \norm{P_0}_{W^{1-\frac{1}{p}, p}(\Gamma)}+\sum_{i=1}^{I}\vert c_{i}\vert \Big),
 \end{equation}
 where $C_S=C_S(\O)>0$.
 \end{proposition}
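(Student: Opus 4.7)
The strategy is to reduce to the $h=0$ Stokes problem of Theorem 5.7 of \cite{AS_M3AS} by constructing a suitable divergence-lift. First, I would solve the Dirichlet problem $\Delta \chi_0 = h$ in $\Omega$ with $\chi_0 = 0$ on $\Gamma$, and then set
\begin{equation*}
\bz = \nabla \chi_0 + \sum_{i=1}^I d_i\, \nabla q_i^N, \qquad d_i := - \langle \partial_n \chi_0, 1\rangle_{\Gamma_i},
\end{equation*}
with $q_i^N$ defined by \eqref{Definition des q_i^N}. Because $\chi_0$ vanishes on $\Gamma$ and each $q_i^N$ is constant on every $\Gamma_k$, both gradients are parallel to $\bn$ on $\Gamma$, so $\bz \times \bn = \textbf{0}$ on $\Gamma$; the choice of $d_i$ together with $\langle \partial_n q_j^N, 1\rangle_{\Gamma_i} = \delta_{ij}$ produces $\langle \bz \cdot \bn, 1 \rangle_{\Gamma_i} = 0$ for $1 \leq i \leq I$; and $\vdiv \bz = \Delta \chi_0 = h$. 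The hypothesis $1/r \leq 1/p + 1/3$ gives the Sobolev embedding $W^{1,r}(\Omega) \hookrightarrow L^p(\Omega)$, hence $h \in L^p(\Omega)$, and standard elliptic regularity on the $\mathcal{C}^{2,1}$ domain then gives $\chi_0 \in W^{2,p}(\Omega)$; using $\nabla q_i^N \in \bW^{1,q}(\Omega)$ for any $1 < q < \infty$ one obtains $\bz \in \bW^{1,p}(\Omega)$.

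Setting $\bv := \bu - \bz$, the problem $(\mathcal{S_N})$ is equivalent to
\begin{equation*}
-\Delta \bv + \nabla P = \ff + \nabla h, \quad \vdiv \bv = 0 \text{ in } \Omega,
\end{equation*}
with $\bv \times \bn = \textbf{0}$, $\langle \bv \cdot \bn, 1\rangle_{\Gamma_i} = 0$, and the same pressure boundary conditions. By Proposition \ref{charac dual H^r,p} applied to the decomposition $\nabla h + \curl \textbf{0}$, the new datum $\ff + \nabla h$ lies in $[\bH^{r',p'}_0(\curl,\Omega)]'$, so I would invoke Theorem 5.7 of \cite{AS_M3AS} to obtain a unique triple $(\bv, P, c_1, \ldots, c_I) \in \bW^{1,p}(\Omega) \times W^{1,r}(\Omega) \times \R^I$ together with the corresponding estimate. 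Recovering $\bu = \bv + \bz$ and absorbing $\|\bz\|_{\bW^{1,p}(\Omega)} \leq C\|h\|_{W^{1,r}(\Omega)}$ gives the bound \eqref{estimation solution faible W^1,p x W^1,r pour S_N espaces H^r,p}.

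The formula \eqref{constantes Stokes divu=h} is obtained by testing the equation for $\bu$ against $\nabla q_i^N \in \bH^{r',p'}_0(\curl,\Omega)$. Writing $-\Delta \bu = \curl\curl \bu - \nabla h$, the $\curl\curl \bu$ term integrates to the boundary contribution $\int_\Gamma (\bn \times \curl \bu) \cdot \nabla q_i^N \, d\sigma$, which vanishes because $\nabla q_i^N$ is normal to $\Gamma$ and $\bn \times \curl \bu$ is tangential; the $-\nabla h$ term yields $-\int_\Gamma h\, \partial_n q_i^N\, d\sigma$ after integration by parts and use of $\Delta q_i^N = 0$; and the pressure term $\int_\Omega \nabla P \cdot \nabla q_i^N\, dx$ produces $\int_\Gamma P_0\, \partial_n q_i^N\, d\sigma + c_i$ by the normalization $\langle \partial_n q_i^N, 1\rangle_{\Gamma_k} = \delta_{ik}$. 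Rearranging gives precisely \eqref{constantes Stokes divu=h}.

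For the $\bW^{2,p}$-regularity, under the stronger hypotheses $\ff \in \bL^p(\Omega)$, $h \in W^{1,p}(\Omega)$ and $P_0 \in W^{1-1/p,p}(\Gamma)$, elliptic regularity on the $\mathcal{C}^{2,1}$ domain upgrades $\chi_0$ to $W^{3,p}(\Omega)$, so that $\bz \in \bW^{2,p}(\Omega)$ and $\Delta \bz = \nabla h \in \bL^p(\Omega)$. Then $\ff + \nabla h \in \bL^p(\Omega)$ and the strong-solution part of Theorem 5.7 of \cite{AS_M3AS} yields $\bv \in \bW^{2,p}(\Omega)$ and $P \in W^{1,p}(\Omega)$ with the estimate \eqref{estim W^2,p Stokes}, whence $\bu = \bv + \bz \in \bW^{2,p}(\Omega)$. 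The only real subtlety is tailoring the lift so that both $\bz \times \bn$ and each flux $\langle \bz \cdot \bn, 1\rangle_{\Gamma_i}$ vanish simultaneously; this is precisely the role of the correction by $\sum_i d_i \nabla q_i^N$ and is the reason why the multiply-connected geometry introduces no additional difficulty.
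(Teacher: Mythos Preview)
Your reduction to the divergence-free case is essentially identical to the paper's: both solve the Dirichlet problem $\Delta\theta=h$, $\theta=0$ on $\Gamma$, set the lift equal to $\nabla\theta$ corrected by a combination of the $\nabla q_i^N$ to kill the fluxes through $\Gamma_i$, and then invoke \cite[Theorem~5.7]{AS_M3AS} on the resulting $h=0$ problem with right-hand side $\ff+\nabla h$. Your derivation of \eqref{constantes Stokes divu=h} by testing against $\nabla q_i^N$ is correct and a bit more explicit than the paper, which simply reads the constants off the cited theorem after noting that $\int_\Omega \Delta\widetilde{\bw}\cdot\nabla q_i^N\,dx=0$.

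The only genuine difference is in the $\bW^{2,p}$ part. You continue the lifting route: upgrade $\chi_0$ to $W^{3,p}(\Omega)$, hence the lift to $\bW^{2,p}(\Omega)$, and then quote a strong-solution result for the $h=0$ Stokes problem. The paper instead abandons the lift and argues directly on $\bu$: it observes that $\curl\bu$ has zero normal trace (since $\bu\times\bn=\textbf{0}$), uses an $\bX$-type embedding to get $\curl\bu\in\bW^{1,p}(\Omega)$, and then applies Corollary~\ref{injection in X_N non homogene cas general} with $m=2$. Your route is perhaps more transparent but relies on $\nabla q_i^N\in\bW^{2,p}(\Omega)$, which follows from elliptic regularity on the $\mathcal{C}^{2,1}$ domain though the paper only records $\nabla q_i^N\in\bW^{1,q}(\Omega)$; the paper's route avoids this by never needing the lift to be better than $\bW^{1,p}$. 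Both arguments are valid.
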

 \begin{proof}
 To reduce the non vanishing divergence problem $(\mathcal{S_N})$ to the case where $\vdiv \bu=0$ in $\O$, we consider the problem 
$$\Delta\,\theta=h\quad \mathrm{in}\,\Omega\quad\,\mathrm{and}\quad\, \theta=0\quad \mathrm{on}\,\Gamma.$$
Since $h\in W^{1,r}(\Omega)$, it has a unique solution $\theta\in W^{\,3,r}(\Omega)\hookrightarrow W^{2,p}(\Omega)$, with (cf. \cite[Theorem 1.8]{GR})
\begin{equation}\label{estimation solution theta}
\Vert\theta\Vert_{W^{\,2,p}(\Omega)}\leq C\Vert h\Vert_{W^{1,r}(\Omega)}.
 \end{equation}
 Taking $\textbf{\textit{w}}=\nabla\,\theta$ and defining 
 \begin{equation}\label{definition de w tild}
\widetilde{\textbf{\textit{w}}}=\textbf{\textit{w}}-\sum_{i=1}^{I}\langle\textbf{\textit{w}}\cdot\textbf{\textit{n}},\,1\rangle_{\Gamma_{i}}\,\mathbf{grad}\,q_{i}^{N},
\end{equation}
we see that $\widetilde{\textbf{\textit{w}}}\in\textbf{\textit{W}}^{\,1,p}(\Omega)$ with $\vdiv\,\widetilde{\textbf{\textit{w}}}=h$, $\mathbf{curl}\,\widetilde{\textbf{\textit{w}}}=\bf{0}$ in $\Omega$, $\widetilde{\textbf{\textit{w}}}\times\textbf{\textit{n}}=\bf{0}$ on $\Gamma$ and $\langle\widetilde{\textbf{\textit{w}}}\cdot\textbf{\textit{n}},\,1\rangle_{\Gamma_{i}}=0$ for any $1\leq i \leq I$. Finally, taking $\textbf{\textit{z}}=\textbf{\textit{u}}-\widetilde{\textbf{\textit{w}}}$, we see that the problem $(\mathcal{S_N})$ can be reduced to the following problem for $\textbf{\textit{z}}$ and $P$:

\begin{equation}\label{S_N div=0}
	\left\lbrace
		\begin{aligned}
		&   - \, \Delta \bz  + \nabla P = \ff+\Delta\widetilde{\bw}\ \quad \mathrm{and}\quad  \vdiv \bz = 0  \quad \mathrm{in} \,\, \Omega, \\
       & \bz \times \bn = \textbf{0} \quad  \mathrm{on}\,\,\Gamma,\\
        & P = P_0 \quad \text{on} \, \,\Gamma_0\quad  \mathrm{and}\quad 
         P = P_0 + c_i \,\,\,\text{on} \,\, \Gamma_i,\\
         & \langle\bz \cdot \bn, 1\rangle_{\Gamma_i} = 0,\,\, 1\leq i \leq I.
		\end{aligned}\right.
		\end{equation}
Since $\textbf{\textit{w}}=\nabla\,\theta$ and $\Delta(\nabla q_i^N)=0$, it follows from \eqref{definition de w tild} that $\Delta\,\widetilde{\textbf{\textit{w}}}=\nabla\,(\Delta \theta)\in\bL^{r}(\O)\hookrightarrow [\textbf{\textit{H}}^{\,r',p'}_{0}(\mathbf{curl},\,\Omega)]'$ and $\int_{\O}\Delta\widetilde{\bw}\cdot \nabla q_{i}^{N}\,d\bx=0$, we deduce from \cite[Theorem 5.7]{AS_M3AS}, the existence of a unique solution $(\bz,\,P,\bc)\in \bW^{1,p}(\O)\times W^{1,r}(\O)\times\R^I$ of \eqref{S_N div=0} with $\bc=(c_1,\ldots,c_I)$ given by \eqref{constantes Stokes divu=h}. Moreover, using \eqref{estimation solution theta}, we have $\norm{\Delta \widetilde{\bw}}_{[\bH_0^{r',p'}(\curl, \Omega)]'} \leq C \norm{h}_{W^{1,r}(\O)}$ and then $(\bz,P)$ satisfies the estimate:
\begin{eqnarray}\label{estimation z du relevement de la div}
\Vert\textbf{\textit{z}}\Vert_{\textbf{\textit{W}}^{\,1,p}(\Omega)}+\norm{P}_{W^{1,r}(\O)}\leq C \Big(\Vert\textbf{\textit{f}}\Vert_{[\textbf{\textit{H}}^{\,r',p'}_{0}(\mathbf{curl},\,\Omega)]'}+\Vert h \Vert_{W^{\,1,r}(\Omega)}+\Vert P_{0}\Vert_{W^{\,1-1/r,r}(\Gamma)}\Big).
\end{eqnarray}
As a consequence, $(\textbf{\textit{u}},P)=(\textbf{\textit{z}}+\widetilde{\textbf{\textit{w}}},P)\in\textbf{\textit{W}}^{\,1,p}(\Omega)\times W^{1,r}(\O)$ is the unique solution of $(\mathcal{S_N})$ and the estimate \eqref{estimation solution faible W^1,p x W^1,r pour S_N espaces H^r,p} follows from \eqref{estimation solution theta} and \eqref{estimation z du relevement de la div}. \\
Now, we suppose that $\ff\in\bL^ p(\O)$, $h\in W^{1,p}(\O)$ and $P_0\in  W ^{1-1/p ,p}(\Gamma)$. We know that $(\bu,P)$ belongs to $\bW^{1,p}(\O)\times W^{1,r}(\O)$. We set $\bz = \curl \bu$. Since $\bu \times\bn=\textbf{0}$ on $\Gamma$, we have $\bz\cdot\bn=0$ on $\Gamma$ and then $\bz$ belongs to $\bX_N(\O)$. By Theorem \ref{injection continue X_N}, the function $\bz$ belongs to $\bW^{1,p}(\O)$. Then, $\bu$ satisfies
$$\bu \in \bL^p(\O),\,\,\vdiv\bu = h\in W^{1,p}(\O),\,\,\,
\curl\bu \in \bW^{1,p}(\O)\,\,\,\mathrm{and}\,\,\, \bu\times\bn=\textbf{0}\,\,\mathrm{on}\,\,\Gamma.$$
We deduce from Corollary \ref{injection in X_N non homogene cas general} (with $m=2$) that $\bu$ belongs to $\bW^{2,p}(\O)$.
 \end{proof}
 \noindent We need also some regularity results for the following elliptic problem
 \begin{eqnarray*} \label{Probleme elliptique}
(\mathcal{E_N})\begin{cases}
- \Delta \bb = \bg\quad \mathrm{and}\quad
\vdiv \bb = 0 \quad \mathrm{in} \,\, \Omega, \\
\bb \times \bn = \textbf{0} \quad \mathrm{on} \, \Gamma \\
\langle \bb \cdot \bn, 1 \rangle_{\Gamma_i} = 0, \quad \forall 1 \leqslant i \leqslant I,
\end{cases}
\end{eqnarray*} 
 which can be seen as a Stokes problem without pressure. We note that $(\mathcal{E}_N)$ is well-posed. Indeed, observe that the condition $\vdiv\bg=0$ in $\O$ is necessary to solve $(\mathcal{E}_N)$ and then we can verify that it is equivalent to the following problem:
\begin{equation}\label{E_N equivalent1}
\begin{cases}
-\Delta\,\textbf{\textit{b}}=\bg\,\,\,\,&\mathrm{in}\,\,\Omega\\
\vdiv\,\textbf{\textit{b}}=0,\quad\mathrm{and}\quad
\textbf{\textit{b}}\times\textbf{\textit{n}}=\textbf{0}\,\,\,\,&\mathrm{on}\,\Gamma,\\
\langle \bb \cdot \bn, 1 \rangle_{\Gamma_i} = 0, \quad \forall 1 \leqslant i \leqslant I,
\end{cases}
\end{equation}
where we have replaced the condition $\vdiv\,\textbf{\textit{b}}=0$ in $\Omega$ by $\vdiv\,\textbf{\textit{b}}=0$ on $\Gamma$. Next, we know that for any $\textbf{\textit{b}}\in\textbf{\textit{W}}^{\,1,p}(\Omega)$ such that $\vdiv\,\textbf{\textit{b}}\in W^{\,1,p}(\Omega)$,  we have (cf. \cite{Amrouche-Girault} or \cite{Heron} for $\textbf{\textit{b}}\in\textbf{\textit{W}}^{\,2,p}(\Omega)$):
\begin{equation}\label{formule sur la surface}
 \vdiv\,\textbf{\textit{b}}= \vdiv_{\Gamma}\,\textbf{\textit{b}}_{t} +\frac{\partial\,\textbf{\textit{b}}}{\partial\,\textbf{\textit{n}}}\cdot\textbf{\textit{n}}-2 K\textbf{\textit{b}}\cdot\textbf{\textit{n}}\quad\,\,\mathrm{on}\,\,\Gamma,
\end{equation}
where $\bb_t$ is the tangential component of $\bb$, $K$ denotes the mean curvature of $\Gamma$ and $\vdiv_{\Gamma}\,$ is the surface divergence.\medskip

\noindent Then, using \eqref{formule sur la surface}, the problem \eqref{E_N equivalent1} is equivalent to: find $\textbf{\textit{b}}\in\textbf{\textit{W}}^{\,1,p}(\Omega)$ such that
\begin{equation}\label{problem dans X_T avec la formule surfacique}
 \begin{cases}
   -\Delta\,\textbf{\textit{b}} =\bg& \mathrm{in}\,\Omega,\\
    \textbf{\textit{b}}\times\textbf{\textit{n}} =0 \quad\mathrm{and}\quad\dfrac{\partial\,\textbf{\textit{b}}}{\partial\,\textbf{\textit{n}}}\cdot\textbf{\textit{n}}-2 K\textbf{\textit{b}}\cdot\textbf{\textit{n}}=0&\mathrm{on}\,\Gamma,\\
\langle \bb \cdot \bn, 1 \rangle_{\Gamma_i} = 0, \quad \forall 1 \leqslant i \leqslant I,
\end{cases}
 \end{equation}
where the  condition $\dfrac{\partial\,\textbf{\textit{b}}}{\partial\,\textbf{\textit{n}}}\cdot\textbf{\textit{n}}-2 K\textbf{\textit{b}}\cdot\textbf{\textit{n}}=0\,\,\mathrm{on}\,\Gamma$ is a Fourier-Robin type boundary condition.\medskip
 
 We begin with the following regularity result for $(\mathcal{E_N})$ which can be found in \cite[Corollary 5.4.]{AS_M3AS}.
 
\begin{theorem}\label{thm strong soulution elliptic}
Assume that $\O$ is of class $\mathcal{C}^{2,1}$. Let $\bg\in \bL^p(\O)$ satisfying the
compatibility conditions
\begin{eqnarray} \label{Condition compatibilite K_N Lp} 
 \forall \, \bv \in \bK_{N}^{p'}(\Omega),\quad\int_{\O}\bg\cdot \bv \,d\bx= 0, \\
 \vdiv \bg = 0 \quad \mathrm{in}  \, \Omega.
 \end{eqnarray}
Then the elliptic problem $(\mathcal{E_N})$ has a unique solution $\bb\in \bW^{2,p}(\O)$  satisfying the estimate 
\begin{equation}\label{estim W^2,p elliptic}
  \norm{\bb}_{\bW^{\,2,p}(\Omega)} \leqslant C_{E} \norm{\bg}_{\bL^{p}(\Omega)}.
\end{equation}
 \end{theorem}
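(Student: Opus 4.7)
The plan is to derive this regularity result as a direct consequence of the Stokes estimate in Proposition~\ref{thm solution W1,p and W2,p Stokes divu=h}, by applying that proposition with carefully chosen data and then showing that the associated Stokes pressure vanishes identically, so that the velocity alone solves $(\mathcal{E_N})$.

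First I would apply Proposition~\ref{thm solution W1,p and W2,p Stokes divu=h} with $\ff := \bg$, $h := 0$ and $P_0 := 0$. Since $\bg \in \bL^p(\O)$ embeds into $[\bH_0^{r',p'}(\curl,\O)]'$, this yields a unique solution $(\bu,P) \in \bW^{2,p}(\O)\times W^{1,p}(\O)$ together with constants $c_1,\ldots,c_I$. With this choice of data, formula~\eqref{constantes Stokes divu=h} reduces to
$$c_i \;=\; \int_{\O} \bg \cdot \nabla q_i^N \, d\bx, \qquad 1\leq i\leq I.$$
Since $\nabla q_i^N \in \bK_N^{p'}(\O)$, the orthogonality condition in \eqref{Condition compatibilite K_N Lp} forces $c_i = 0$ for every $i$.

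Next I would show that $P \equiv 0$. Taking the divergence of $-\Delta \bu + \nabla P = \bg$ and using $\vdiv \bu = 0$ together with the second compatibility condition $\vdiv \bg = 0$, one obtains $\Delta P = 0$ in $\O$. Since $P \in W^{1,p}(\O)$ has trace $0$ on $\Gamma_0$ and trace $c_i = 0$ on each $\Gamma_i$, uniqueness for the Dirichlet problem of the Laplacian gives $P \equiv 0$ in $\O$. Consequently $\bb := \bu$ lies in $\bW^{2,p}(\O)$ and satisfies $-\Delta \bb = \bg$, $\vdiv \bb = 0$, $\bb \times \bn = \boldsymbol{0}$ on $\Gamma$ and $\langle \bb \cdot \bn, 1\rangle_{\Gamma_i} = 0$, i.e. is a solution of $(\mathcal{E_N})$. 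The estimate~\eqref{estim W^2,p elliptic} is then inherited from \eqref{estim W^2,p Stokes}, all of whose remaining terms vanish.

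For uniqueness, the difference $\bb$ of any two solutions of $(\mathcal{E_N})$ lies in $\bK_N^p(\O)$, which is spanned by $\nabla q_1^N,\ldots,\nabla q_I^N$; the flux conditions $\langle \bb\cdot\bn,1\rangle_{\Gamma_i}=0$ together with the normalisation $\langle \partial_n q_k^N, 1\rangle_{\Gamma_i}=\delta_{ik}$ in \eqref{Definition des q_i^N} then force all coefficients to vanish. The main point of the argument is the simultaneous cancellation of the Stokes pressure: this is exactly where both compatibility conditions on $\bg$ enter, the divergence-free condition making $P$ harmonic and the orthogonality to $\bK_N^{p'}$ killing the boundary constants $c_i$; if either were dropped, a nontrivial pressure would survive and the reduction to $(\mathcal{E_N})$ would break down.
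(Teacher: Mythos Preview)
Your reduction to the Stokes problem of Proposition~\ref{thm solution W1,p and W2,p Stokes divu=h} is correct and elegant: the paper itself does not prove this theorem but simply cites \cite[Corollary~5.4]{AS_M3AS}, so your argument provides an internal derivation from the paper's own toolbox. The existence part is clean --- the two compatibility conditions on $\bg$ are used exactly where they must be, to kill the constants $c_i$ and to make the Stokes pressure harmonic, and the estimate \eqref{estim W^2,p elliptic} then falls out of \eqref{estim W^2,p Stokes} with $C_E=C_S$.

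One small point on uniqueness: your claim that the difference of two solutions lies in $\bK_N^p(\O)$ requires showing that $\curl\bb=\mathbf 0$, not just $\curl\curl\bb=\mathbf 0$. This follows from the identity $\int_\O|\curl\bb|^2\,d\bx=\int_\O\curl\curl\bb\cdot\bb\,d\bx$ (using $\bb\times\bn=\mathbf 0$), but that pairing needs $\curl\bb\in\bL^2(\O)$, which holds only for $p\geq 6/5$; for smaller $p$ an extra regularity or duality argument would be needed. A simpler route, available for all $p$ and in the spirit of your reduction, is to observe that if $\bb$ solves $(\mathcal{E_N})$ with $\bg=\mathbf 0$ then $(\bb,0)$ solves $(\mathcal{S_N})$ with $\ff=\mathbf 0$, $h=0$, $P_0=0$, and uniqueness in Proposition~\ref{thm solution W1,p and W2,p Stokes divu=h} gives $\bb=\mathbf 0$ directly.
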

 
  We need also the following useful result for $\mathcal{(E_N)}$ which gives an improvement of that in \cite[Proposition 5.1]{AS_M3AS}. Indeed, we consider the dual space $[\bH_0^{r',p'}(\curl, \Omega)]'$  with $\frac{1}{r}=\frac{1}{p}+\frac{1}{3}$ (c.f. \eqref{espace Hr,p} and Proposition \eqref{charac dual H^r,p}) for data in the right-hand side instead of $[\bH_0^{p',p'}(\curl,\Omega)]'$. 

\begin{lemma} \label{Lemme elliptique H r' p'}
Let $\Omega$ of class $\mathcal{C}^{2,1}$. Let $\bg \in [\bH_0^{r',p'}(\curl,\Omega)]'$ satisfying the compatibility conditions 
\begin{eqnarray} \label{Condition compatibilite K_N Lp lemme} 
 \forall \, \bv \in \bK_{N}^{p'}(\Omega),\quad\langle \bg, \bv \rangle_{[\bH_0^{r',p'}(\curl, \Omega)]'\times \bH_0^{r',p'}(\curl, \Omega) } = 0, \\
 \vdiv \bg = 0 \quad \mathrm{in}  \, \Omega.
 \end{eqnarray}
Then, the elliptic problem $\mathcal{(E_N)}$ has a unique solution $\bb \in \bW^{1,p}(\Omega)$ satisfying the estimate: 
\begin{eqnarray} \label{Estimation lemme elliptique}
\norm{\bb}_{\bW^{1,p}(\Omega)} \leqslant C \norm{\bg}_{[\bH_0^{r',p'}(\curl, \Omega)]'} 
\end{eqnarray}
\end{lemma}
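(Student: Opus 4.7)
The approach is to upgrade \cite[Proposition 5.1]{AS_M3AS}, whose proof rests on a duality argument for data in $[\bH_0^{p',p'}(\curl,\Omega)]'$, by noticing that every admissible test function already lives in the smaller space $\bH_0^{r',p'}(\curl,\Omega)$. Indeed, Theorem \ref{injection continue X_N} gives $\bV_N^{p'}(\Omega)\hookrightarrow\bW^{1,p'}(\Omega)$, and the Sobolev embedding $\bW^{1,p'}(\Omega)\hookrightarrow \bL^{r'}(\Omega)$, which holds precisely because $\tfrac{1}{r'}=\tfrac{1}{p'}-\tfrac{1}{3}$, yields $\bV_N^{p'}(\Omega)\hookrightarrow \bH_0^{r',p'}(\curl,\Omega)$. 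Consequently, the natural weak formulation
\[
\int_\Omega\curl\bb\cdot\curl\bv\,d\bx \;=\;\langle \bg,\bv\rangle_{[\bH_0^{r',p'}(\curl,\Omega)]'\times\bH_0^{r',p'}(\curl,\Omega)},
\]
for $\bv\in\bV_N^{p'}(\Omega)$ with $\langle\bv\cdot\bn,1\rangle_{\Gamma_i}=0$, remains meaningful for any $\bg$ in this larger dual.

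The heart of the proof is an a priori $\bW^{1,p}$ estimate, obtained by duality. By Corollary \ref{equivalence normes Poincaré} together with $\vdiv\bb=0$, $\bb\times\bn=\textbf{0}$ and the vanishing fluxes, one has $\|\bb\|_{\bW^{1,p}(\Omega)}\le C\|\curl\bb\|_{\bL^p(\Omega)}$. To bound the right-hand side I would set $\boldsymbol{\phi}:=|\curl\bb|^{p-2}\curl\bb\in \bL^{p'}(\Omega)$, apply a Helmholtz-Weyl decomposition to replace $\boldsymbol{\phi}$ by its solenoidal, zero normal trace part $\boldsymbol{\phi}_0$ (the gradient piece contributes zero since $\vdiv(\curl\bb)=0$ in $\Omega$ and $\curl\bb\cdot\bn=0$ on $\Gamma$, a consequence of $\bb\times\bn=\textbf{0}$), and construct a vector potential $\boldsymbol{\eta}\in\bW^{1,p'}(\Omega)$ with $\curl\boldsymbol{\eta}=\boldsymbol{\phi}_0$, $\vdiv\boldsymbol{\eta}=0$, $\boldsymbol{\eta}\times\bn=\textbf{0}$ on $\Gamma$ and $\langle\boldsymbol{\eta}\cdot\bn,1\rangle_{\Gamma_i}=0$, satisfying $\|\boldsymbol{\eta}\|_{\bW^{1,p'}(\Omega)}\le C\|\boldsymbol{\phi}\|_{\bL^{p'}(\Omega)}$. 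Testing the weak formulation against $\boldsymbol{\eta}$ and applying $\bW^{1,p'}(\Omega)\hookrightarrow \bL^{r'}(\Omega)$ to bound $\|\boldsymbol{\eta}\|_{\bH_0^{r',p'}(\curl,\Omega)}$ then yields
\[
\|\curl\bb\|_{\bL^p(\Omega)}^{p} \;=\; \langle\bg,\boldsymbol{\eta}\rangle \;\le\; C\,\|\bg\|_{[\bH_0^{r',p'}(\curl,\Omega)]'}\,\|\curl\bb\|_{\bL^p(\Omega)}^{p-1},
\]
from which \eqref{Estimation lemme elliptique} follows.

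For existence, I would approximate $\bg$ in $[\bH_0^{r',p'}(\curl,\Omega)]'$ by a sequence $\bg_n\in\bL^p(\Omega)$ still satisfying the compatibility conditions \eqref{Condition compatibilite K_N Lp lemme}. Concretely, Proposition \ref{charac dual H^r,p} yields a decomposition $\bg=\bF+\curl\boldsymbol{\Psi}$ with $\bF\in\bL^r(\Omega)$ and $\boldsymbol{\Psi}\in\bL^p(\Omega)$, which can be mollified; a small linear correction built from the finite-dimensional basis $\{\nabla q_i^N\}$ then restores both the vanishing divergence and the orthogonality to $\bK_N^{p'}(\Omega)$ while preserving convergence in the dual norm. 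Applying Theorem \ref{thm strong soulution elliptic} to each $\bg_n$ produces a strong solution $\bb_n\in\bW^{2,p}(\Omega)$ that is uniformly bounded in $\bW^{1,p}(\Omega)$ by the a priori estimate; a weak subsequential limit delivers the required $\bb\in\bW^{1,p}(\Omega)$, and uniqueness is immediate from \eqref{Estimation lemme elliptique} applied to the difference of two solutions. I expect the main obstacle to be the construction of the vector potential $\boldsymbol{\eta}$ with all the prescribed divergence, tangential-trace and flux properties on a multiply connected $\Omega$; this ingredient, already isolated in \cite{AS_M3AS}, is what enables the duality step, and once it is granted the remainder is essentially an adaptation of \cite[Proposition 5.1]{AS_M3AS}.
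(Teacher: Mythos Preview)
Your proof is correct but takes a different route from the paper. The paper's argument is a short decomposition: using Proposition \ref{charac dual H^r,p} it writes $\bg=\bG+\curl\boldsymbol{\Psi}$ with $\bG\in\bL^r(\Omega)$ and $\boldsymbol{\Psi}\in\bL^p(\Omega)$, checks that $\bG$ inherits the compatibility conditions, applies the strong regularity result (Theorem \ref{thm strong soulution elliptic}) to obtain $\bb_1\in\bW^{2,r}(\Omega)$ from $\bG$, applies \cite[Proposition 5.1]{AS_M3AS} to obtain $\bb_2\in\bW^{1,p}(\Omega)$ from $\curl\boldsymbol{\Psi}$, and finishes via $\bW^{2,r}(\Omega)\hookrightarrow\bW^{1,p}(\Omega)$. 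No new a priori estimate is derived; everything is reduced to two existing black boxes plus a Sobolev embedding.

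Your approach instead re-opens the box: you establish the $\bW^{1,p}$ estimate directly by a duality/vector-potential argument, exploiting the key observation that $\bV_N^{p'}(\Omega)\hookrightarrow\bH_0^{r',p'}(\curl,\Omega)$, and then produce existence by approximation. This is essentially the variational route the paper itself sketches in the remark following the lemma (via the inf-sup condition \eqref{condition inf sup on V^pN}), and it has the advantage of needing only $\mathcal{C}^{1,1}$ regularity for the estimate, whereas the paper's decomposition route invokes Theorem \ref{thm strong soulution elliptic} and hence $\mathcal{C}^{2,1}$. The paper's proof is shorter and more modular; yours is more self-contained and in principle slightly sharper in the domain regularity, at the cost of re-doing the vector-potential construction and the approximation step.
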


\begin{proof}
Using the characterization of the dual space $[\bH_0^{r',p'}(\curl, \Omega)]'$ given in Proposition \ref{charac dual H^r,p}, we can write $\bg$ as: 
\begin{eqnarray} \label{Decompo G}
\bg = \bG + \curl \Psi,\quad \mathrm{where}\quad \bG \in \bL^r(\Omega)\quad \mathrm{and}\quad \Psi \in \bL^p(\Omega). 
\end{eqnarray}
Note that, from \eqref{Definition des q_i^N}, for any $1 \leqslant i \leqslant I$, $\langle \curl \Psi, \nabla q_i^N \rangle_\Omega = 0$, then it follows from \eqref{Condition compatibilite K_N Lp lemme} and \eqref{Decompo G} that $\bG$ also satisfies the compatibility condition \eqref{Condition compatibilite K_N Lp lemme}. Similarly, by \eqref{Decompo G}, we have $\vdiv \bG = 0$. Thanks to Theorem \ref{thm strong soulution elliptic}, the following problem: 
\begin{eqnarray*}
\begin{cases}
- \Delta \bb_1 = \bG \quad \mathrm{in} \, \Omega \quad \mathrm{and}\quad 
\vdiv \bb_1 = 0\quad \mathrm{in} \, \Omega, \\
\bb_1 \times \bn = 0 \quad \mathrm{on} \, \Gamma, \\
\langle \bb_1 \cdot \bn, 1 \rangle_{\Gamma_i} = 0, \quad \forall 1 \leqslant i \leqslant I
\end{cases}
\end{eqnarray*}
\noindent has a unique solution $\bb_1 \in \bW^{2,r}(\Omega)$ satisfying the estimate: 
\begin{eqnarray} \label{Estimation bb1 lemme elliptique}
\norm{\bb_1}_{\bW^{2,r}(\Omega)} \leqslant C \norm{\bG}_{\bL^r(\Omega)}.
\end{eqnarray}
\noindent Next, since $\curl \Psi \in [\bH_0^{p'}(\curl, \Omega)]'$ and satisfies the compatibility conditions \eqref{Condition compatibilite K_N Lp lemme}, by \cite[Proposition 5.1.]{AS_M3AS} the following problem 
\begin{eqnarray*}
\begin{cases}
- \Delta \bb_2 = \curl \Psi \quad \mathrm{in} \, \Omega \quad \mathrm{and}\quad
\vdiv \bb_2 = 0 \quad \mathrm{in} \, \Omega, \\
\bb_2 \times \bn = 0 \quad \mathrm{on} \, \Gamma,\\
\langle \bb_2 \cdot \bn, 1 \rangle_{\Gamma_i} = 0, \quad \forall 1 \leqslant i \leqslant I
\end{cases}
\end{eqnarray*}
\noindent has a unique solution $\bb_2 \in \bW^{1,p}(\Omega)$ satisfying the estimate: 
\begin{eqnarray} \label{Estimation bb2 lemme elliptique}
\norm{\bb_2}_{\bW^{1,p}(\Omega)} \leqslant C \norm{\curl \Psi}_{\bL^p(\Omega)}.
\end{eqnarray}
Since $\frac{1}{r}=\frac{1}{p} +\frac{1}{3}$, $\bW^{2,r}(\Omega) \hookrightarrow \bW^{1,p}(\Omega)$. Then, $\bb = \bb_1 + \bb_2$ belongs to $\bW^{1,p}(\Omega)$ and it is the unique solution of $(\mathcal{E_N})$. The estimate \eqref{Estimation lemme elliptique} follows from \eqref{Estimation bb1 lemme elliptique} and \eqref{Estimation bb2 lemme elliptique}. 
\end{proof}

%
%

\begin{rmk} ${}$
\\
\indent $\textit{(1).}$ We note that the regularity $\mathcal{C}^{2,1}$ in Lemma \ref{Lemme elliptique H r' p'} can be reduced to $\mathcal{C}^{1,1}$. Indeed, we can verify that the Stokes problem $(\mathcal{E_N})$ is equivalent to the following variational frmulation (c.f. \cite[Proposition 5.1]{AS_M3AS}): Find $\bb\in\bW^{1,p}(\O)$ such that for any $\ba\in\bV^{p'}_N(\O)$:
\begin{equation}\label{VF Stokes EN}
 \int_{\O}\curl\bb\cdot\curl\ba\,\mathrm{d}\bx =\langle \bg,\,\ba\rangle_{[\bH_0^{r',p'}(\curl, \Omega)]'\times \bH_0^{r',p'}(\curl, \Omega) }.
\end{equation}
Thanks to \cite[Lemma 5.1]{AS_M3AS}, if $\O$ is of class $\mathcal{C}^{1,1}$, the following infi-sup condition holds: there exists a constant $\beta>0$, such that:
\begin{equation}\label{condition inf sup on V^pN}
 \inf_{\substack{\textbf{\textit{a}}\in\textbf{\textit{V}}^{\,p'}_{N}(\Omega)\\\textbf{\textit{a}}\neq 0}}\sup_{\substack{\textbf{\textit{b}}\in\textbf{\textit{V}}^{\,p}_{N}(\Omega)\\\textbf{\textit{b}}\neq 0}}\dfrac{\int_{\Omega}\mathbf{curl}\,\textbf{\textit{b}}\cdot\mathbf{curl}\,\textbf{\textit{a}}\,\mathrm{d}\textbf{\textit{x}}}{\Vert \textbf{\textit{b}}\Vert_{\textbf{\textit{W}}^{\,1,p}(\Omega)}\Vert \textbf{\textit{a}}\Vert_{\textbf{\textit{W}}^{\,1,p'}(\Omega)}}\geq \beta.
\end{equation}
So, problem \eqref{VF Stokes EN} has a unique solution $\bu\in\bV^p_N(\O)\subset \bW^{1,p}(\O)$ since the right-hand sides defines an element of $(\bV^p_N(\O))'$.\\

$\textit{(2).}$ In the classical study of the Stokes and Navier-Stokes equations,
the pressure $P$ is obtained thanks to a variant of De Rham’s theorem (see \cite[Theorem 2.8]{Amrouche-Girault}). Indeed, let $\O\subset \R^{3}$ be a bounded Lipschitz domain and $\ff\in \bW^{−1,p}(\O)$, $1 < p < \infty$ satisfying 
\begin{equation*}
 \forall \bv\in \boldsymbol{\mathcal{D}}_{\sigma}(\O),\quad \langle\ff,\bv\rangle_{ \boldsymbol{\mathcal{D}}'(\O)\times  \boldsymbol{\mathcal{D}}(\O)}=0.
\end{equation*}
Then there exists $P\in L^{p}(\O)$ such that $\ff=\nabla P$. Unlike the case of Dirichlet boundary condition, the pressure in the \eqref{eqn:MHD} problem can be found independently of the velocity $\bu$ and the magnetic field $\bb$. Indeed, the pressure $P$ is a solution of the problem 
\begin{equation*}
\begin{cases}
\Delta P=\vdiv\ff-\vdiv((\curl \bw)\times \bu) +\vdiv( (\curl\bb)\times \bdd)\quad \mathrm{in}\,\, \O\\
 P=P_0 \quad \mathrm{on}\,\,\Gamma_0 \quad \mathrm{and} \quad P=P_0+\alpha_i \quad \mathrm{on }\,\,\Gamma_i.
\end{cases}
\end{equation*}
So, when we talk about the regularity $\bW^{1,p}(\O)$ or $\bW^{2,p}(\O)$ it concerns $(\bu, \bb)$ and we mean that
$(\bu, \bb, P)$ is the weak or strong solution of the \eqref{eqn:MHD} problem.
\end{rmk}

\section{The linearized MHD system: $L^{2}$-theory}
In this section we take $\bw$ and $\bdd$ such that:

\begin{eqnarray}
 \curl \bw\in \bL^{3/2}(\O),\quad \bdd\in \bL^{3}(\O),& \quad\vdiv\,\bdd=0\,\,\,\mathrm{in}\,\,\O,\label{hypothesis on w and d}
\end{eqnarray}
and we consider the following linearized MHD system: Find ($\bu$, $\bb$,  $\mathrm{P}$, $\bc$) with $\bc=(c_1,\ldots, c_I)$ such that for $1\leq i \leq I$:
\begin{equation}\label{linearized MHD-pressure}
	\left\lbrace
		\begin{split}
		   - \, \Delta \bu + (\curl \bw) \times \bu + \nabla \mathrm{P} -  (\curl \bb) \times \bdd = \ff \quad &\mathrm{and}\quad  \vdiv \bu = h   \quad \mathrm{in} \,\, \Omega, \\
           \, \curl \curl \bb -  \curl (\bu \times \bdd) = \bg  \quad & \mathrm{and}\quad \vdiv \bb = 0 \quad \text{in} \,\, \Omega, \\
        \bu \times \bn = \textbf{0} \quad &\mathrm{and}\quad \bb \times \bn = \textbf{0}\quad \mathrm{on}\,\,\Gamma,\\
        \mathrm{P} = P_0 \quad \text{on} \, \,\Gamma_0\quad & \mathrm{and}\quad 
         \mathrm{P} = P_0 + c_i \,\,\,\text{on} \,\, \Gamma_i,\\
         \langle\bu \cdot \bn, 1\rangle_{\Gamma_i} = 0, \quad &\mathrm{and}\quad \langle\bb \cdot \bn, 1\rangle_{\Gamma_i} = 0.
		\end{split}\right.
\end{equation}
The aim of this section is to show, under minimal regularity assumptions on $\ff$, $\bg$, $h$ and $P_0$, the existence and the uniqueness of weak solutions $(\bu,\bb,P,\bc)$ in $\bH^{1}(\O)\times \bH^{1}(\O)\times L^{2}(\O)\times \R^I$. Classically, the idea is to write an equivalent variational formulation and use Lax Milgram if the bilinear form involved in the variational formulation is coercive. It is natural to look for a solution $(\bu,\bb)$ in $\bV_{N}(\O)\times \bV_{N}(\O)$ with 
$$\bV_N(\Omega) := \{ \bv \in \bH^1(\Omega); \, \vdiv \bv = 0 \, \, \text{in} \, \Omega, \, \, \bv \times \bn = 0 \, \, \text{on} \, \Gamma, \, \langle \bv \cdot \bn, 1 \rangle_{\Gamma_i} = 0, \, \, \forall 1 \leqslant i \leqslant I \} .$$
Unlike the case of Dirichlet type boundary conditions, the space $\bH^{-1}(\O)$ is not suitable for source terms in the right hand side to find solutions in $\bH^{1}(\O)$. Let us analyse the case of $\ff$, it holds true also for $\bg$. Since $\bv\in\bV_{N}(\O)$, then we can firstly consider the duality pairing $\langle\ff,\,\bv\rangle_{[\bH_0^{2,2}(\curl, \Omega)]'\times\bH_0^{2,2}(\curl, \Omega)}$ in view to write an equivalent variational formulation. Then, we must suppose that $\ff$ belongs to $[\bH_0^{2,2}(\curl, \Omega)]'$. But, we have $\bv$ belongs to $\bH^{1}(\O)\hookrightarrow \bL^{6}(\O)$. Then, the previous hypothesis on $\ff$ can be weakened by considering the space $[\bH_0^{6,2}(\curl, \Omega)]'$ which is a subspace of $\bH^{-1}(\O)$. Indeed, thanks to the characterization 
given in Proposition \ref{charac dual H^r,p}, we have for $r=6$ and $p=2$, \begin{equation}\label{charac dual H^6,2}
[\bH_0^{6,2}(\curl, \Omega)]'=\{\bF+\curl\boldsymbol{\psi};\,\,\,\bF\in\bL^{6/5}(\O),\,\,\boldsymbol{\psi}\in \bL^{2}(\O)\}.\end{equation}
                                                                                                                            
Then, since  $\bV_{N}(\O)\hookrightarrow \bH_0^{6,2}(\curl, \Omega)$, the previous duality is replaced by 
$$\langle\ff,\,\bv\rangle_{[\bH_0^{6,2}(\curl, \Omega)]'\times\bH_0^{6,2}(\curl, \Omega)}=\int_{\O}\bF\cdot\bv\,d\bx+\int_{\O}\boldsymbol{\psi}\cdot\curl \bv\,d\bx.$$
 In the sequel, we will consider the space $[\bH_0^{6,2}(\curl,\Omega)]'$ for $\ff$ and $\bg$ to obtain solutions in $\bH^{1}(\O)$.
\begin{proposition} \label{Proposition Equivalence sol hilb sol var}
Let us suppose $h=0$. Let $\ff, \bg \in [\bH_0^{6,2}(\curl, \Omega)]'$ and $P_0 \in H^{-\frac{1}{2}}(\Gamma)$ with the compatibility conditions 
\begin{equation} \label{Condition compatibilite K_N hilbert} 
 \forall \, \bv \in \bK_N^2(\Omega),\quad\langle \bg, \bv \rangle_{\O_{_{6,2}} } = 0,  
 \end{equation}
 \begin{equation}\label{Condition div nulle}
\vdiv \bg = 0 \quad \mathrm{on}  \, \Omega,
\end{equation}
where $\langle\cdot,\cdot\rangle_{\O_{_{6,2}}}$ denotes the duality product between $[\bH_0^{6,2}(\curl, \Omega)]'$ and $\bH_0^{6,2}(\curl, \Omega)$.\\ 
Then the following two problems are equivalent: \\
(i) Find $(\bu, \bb, P, \bc) \in \bH^1(\Omega) \times \bH^1(\Omega) \times L^2(\Omega) \times \R^I$ solution of \eqref{linearized MHD-pressure}. \vspace{.1cm}\\
(ii) Find $(\bu, \bb) \in \bV_N(\Omega) \times \bV_N(\Omega)$ and $\bc\in\R^{I}$ such that: for all $(\bv, \boldsymbol{\Psi} ) \in \bV_N(\Omega) \times \bV_N(\Omega)$  
\small
\begin{equation*} 
 \int_\Omega \curl \bu \cdot \curl \bv \, dx  + \int_\Omega (\curl \bw) \times \bu \cdot \bv \, dx - \int_\Omega (\curl \bb) \times \bdd \cdot \bv \, dx +
   \int_\Omega \curl \bb \cdot \curl \boldsymbol{\Psi} \, dx\end{equation*}\vspace{-.49cm}\small
 \begin{equation}\label{Var Form}
  + \int_\Omega (\curl \boldsymbol{\Psi}) \times \bdd \cdot \bu \, dx  = \langle \ff, \bv \rangle_{\O_{_{6,2}}} + \langle \bg, \boldsymbol{\Psi} \rangle_{\O_{_{6,2}}} - \langle P_0, \bv \cdot \bn \rangle_{\Gamma_0} - \sum_{i=1}^I \langle P_0 + c_i, \bv \cdot \bn \rangle_{\Gamma_i}
\end{equation}
and $\bc = (c_1, \cdots, c_I)$ satisfying for $1\leq i \leq I$: 
\small
\begin{eqnarray} \label{Constantes form var}
\begin{aligned}
c_i &= \langle \ff, \nabla q_i^N \rangle_{\O_{_{6,2}}} - \langle P_0, \nabla q_i^N \cdot \bn \rangle_\Gamma - \int_\Omega (\curl \bw) \times \bu \cdot \nabla q_i^N \, dx +  \int_\Omega (\curl \bb) \times \bdd \cdot \nabla q_i^N \, dx,
\end{aligned}
\end{eqnarray}
where $\langle\cdot,\cdot\rangle_{\Gamma}$ denotes the duality product between $H^{-1/2}(\Gamma)$ and $H^{1/2}(\Gamma)$.
\end{proposition}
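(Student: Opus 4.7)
The claim is an equivalence, and my plan is to handle the two directions separately, with essentially all of the real work in the reverse direction.

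For $(i)\Rightarrow(ii)$, I would take a solution $(\bu,\bb,P,\bc)$ of (i) and test the first equation against an arbitrary $\bv\in\bV_N(\O)$ and the second against $\boldsymbol{\Psi}\in\bV_N(\O)$. Using the standard Green identities with the decomposition $-\Delta\bu=\curl\curl\bu-\nabla\vdiv\bu$ together with
\[
\int_\O\curl(\bu\times\bdd)\cdot\boldsymbol{\Psi}\,d\bx=-\int_\O(\curl\boldsymbol{\Psi})\times\bdd\cdot\bu\,d\bx+\int_\Gamma((\bu\times\bdd)\times\bn)\cdot\boldsymbol{\Psi}\,d\sigma,
\]
produces the desired bilinear form. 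Every boundary contribution involves the pairing of a tangential vector with a normal one and thus vanishes: $\bv\times\bn=\boldsymbol{\Psi}\times\bn=0$ on $\Gamma$ forces these test fields to be normal, while $\curl\bu\times\bn$, $\curl\bb\times\bn$ and $(\bu\times\bdd)\times\bn$ are tangential. The pressure term gives $\int_\O\nabla P\cdot\bv\,d\bx=\langle P,\bv\cdot\bn\rangle_\Gamma$, and the prescribed boundary values of $P$ combined with $\langle\bv\cdot\bn,1\rangle_{\Gamma_i}=0$ reproduce the right-hand side of \eqref{Var Form}. Formula \eqref{Constantes form var} is then obtained by testing the first strong equation against $\nabla q_i^N\in\bK_N^2(\O)$: since $\curl\nabla q_i^N=0$ and $\nabla q_i^N$ is normal on $\Gamma$, the Laplacian contribution drops, and $\int_\O\nabla P\cdot\nabla q_i^N=\langle P_0,\nabla q_i^N\cdot\bn\rangle_\Gamma+c_i$ by the normalization $\langle\partial_n q_i^N,1\rangle_{\Gamma_k}=\delta_{ik}$.

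For $(ii)\Rightarrow(i)$, I would first restrict to $\bv\in\boldsymbol{\mathcal{D}}_\sigma(\O)$ in the variational identity; the resulting distributional identity vanishes on all divergence-free test fields, so the De~Rham variant (\cite[Theorem 2.8]{Amrouche-Girault}) produces $P\in L^2(\O)$, unique up to an additive constant, with $-\Delta\bu+(\curl\bw)\times\bu-(\curl\bb)\times\bdd+\nabla P=\ff$ in $\O$. The analogous argument with $\boldsymbol{\Psi}\in\boldsymbol{\mathcal{D}}_\sigma(\O)$ in the second equation produces a spurious scalar $q$ with $\curl\curl\bb-\curl(\bu\times\bdd)+\nabla q=\bg$. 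The main obstacle is to show $q\equiv 0$: taking divergence and using \eqref{Condition div nulle} yields $\Delta q=0$; testing with $\boldsymbol{\Psi}\in\bV_N(\O)$ and comparing with the variational identity gives $\int_\Gamma q\,\boldsymbol{\Psi}\cdot\bn\,d\sigma=0$ for all admissible normal traces, so $q$ is constant $q_k$ on each $\Gamma_k$; after normalizing $q_0=0$, pairing the distributional equation with $\nabla q_i^N\in\bH_0^{6,2}(\curl,\O)$ and invoking \eqref{Condition compatibilite K_N hilbert} gives $q_i=\langle\bg,\nabla q_i^N\rangle_{\O_{_{6,2}}}=0$, hence $q\equiv 0$ and the second strong equation holds.

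The pressure boundary data are recovered last. Comparing the variational formulation for $\bv\in\bV_N(\O)$ with the now-established strong first equation integrated by parts, I obtain $\int_\Gamma(P-P_0)\,\bv\cdot\bn\,d\sigma=0$ for every such $\bv$. The admissible traces $\bv\cdot\bn$ are exactly the elements of $H^{1/2}(\Gamma)$ with zero mean on every connected component, so this forces $P-P_0$ to be constant on each $\Gamma_k$; fixing the De~Rham additive constant so that this constant vanishes on $\Gamma_0$, write $P=P_0+a_i$ on $\Gamma_i$. Pairing the first strong equation with $\nabla q_i^N$, exactly as in direction $(i)\Rightarrow(ii)$, produces an explicit value of $a_i$ which coincides with the right-hand side of \eqref{Constantes form var}, thereby identifying $a_i=c_i$. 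The principal difficulty throughout is the vanishing of the spurious pressure $q$, since this is the point where both compatibility hypotheses \eqref{Condition div nulle} and \eqref{Condition compatibilite K_N hilbert} are simultaneously used; a secondary technicality is interpreting the boundary pairings $\int_\Gamma P\,\bv\cdot\bn$ and $\int_\Gamma q\,\boldsymbol{\Psi}\cdot\bn$ when $P,q\in L^2(\O)$, which is handled via the $H^{-1/2}/H^{1/2}$ normal-trace duality once one observes that $\nabla P$ and $\nabla q$ belong to $[\bH_0^{6,2}(\curl,\O)]'$ by the strong equations themselves.
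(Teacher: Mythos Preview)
Your argument is correct, but it diverges from the paper's proof in two notable places.

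For the spurious scalar in the magnetic equation, the paper does not use the standard De~Rham theorem. Instead it invokes \cite[Lemma~2.2]{Pan}, a De~Rham-type result for functionals vanishing on tangentially constrained fields, which produces $\chi\in L^2(\O)$ together with the boundary condition $\chi=0$ on $\Gamma$ in one stroke; then $\Delta\chi=0$ with homogeneous Dirichlet data gives $\chi\equiv 0$ immediately. Your route---standard De~Rham, then $\Delta q=0$, then testing with $\boldsymbol{\Psi}\in\bV_N(\O)$ to get $q$ piecewise constant on $\Gamma$, then pairing with $\nabla q_i^N$ and using \eqref{Condition compatibilite K_N hilbert} to kill the constants---is more elementary in that it avoids the specialized lemma, but it is longer and makes more explicit use of both compatibility conditions.

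For the pressure boundary conditions, the paper first \emph{extends} the variational identity \eqref{Var Form} from $\bV_N(\O)\times\bV_N(\O)$ to $\bX_N(\O)\times\bX_N(\O)$ (dropping the zero-flux constraint) via the decomposition $\bv=\bv_0+\sum_i\langle\bv\cdot\bn,1\rangle_{\Gamma_i}\nabla q_i^N$; this is where the paper uses \eqref{Constantes form var} and \eqref{Condition compatibilite K_N hilbert}. The recovery of $P=P_0+c_i$ on $\Gamma_i$ is then done by comparing the extended identity with the strong equation tested on arbitrary $\bv\in\bX_N(\O)$. You bypass the extension by characterizing the normal traces of $\bV_N(\O)$ as the mean-zero elements of $H^{1/2}(\Gamma)$, which forces $P-P_0$ to be piecewise constant directly. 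Either device works; the paper's extension is a reusable intermediate statement, while your trace characterization is a one-shot shortcut.
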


\begin{proof}
Using the same arguments as in \cite[Lemma 5.5]{AS_M3AS}, we can prove that $\mathcal{D}_{\sigma}(\overline{\O})\times \mathcal{D}(\overline{\O})$ is dense in the space
 $$\mathcal{E}(\O)=\{(\bu,\,P)\in\bH^{1}_{\sigma}(\O)\times L^{2}(\O);\,\,\,\Delta \bu+\nabla P\in [ {\textbf{\textit{H}}}^{\,6,2}_{0}(\mathbf{curl},\Omega)]'\}.$$
  Moreover, we have the following Green formula: For any $(\textbf{\textit{u}},\,P)\in \mathcal{E}(\O)$ and $\boldsymbol{\varphi}\in\textbf{\textit{H}}^{\,1}_{\sigma}(\Omega)$ with $\boldsymbol{\varphi}\times\textbf{\textit{n}}=\textbf{0}$ on $\Gamma$:
\begin{equation}\label{Green Formula for VF}
 \langle-\Delta\,\textbf{\textit{u}}+\nabla\,P,\,\boldsymbol{\varphi}\rangle_{\O_{_{6,2}}}=\int_{\Omega}\mathbf{curl}\,\textbf{\textit{u}}\cdot\mathbf{curl}\,\boldsymbol{\varphi}\,\mathrm{d}\textbf{\textit{x}}+\langle P,\,\boldsymbol{\varphi}\cdot\textbf{\textit{n}}\rangle_{\Gamma}. 
\end{equation}

Using Green formula \eqref{Green Formula for VF}, we deduce that any $(\bu, \bb, P, \bc) \in \bH^1(\Omega) \times \bH^1(\Omega) \times L^2(\Omega) \times \R^I$ satisfying \eqref{linearized MHD-pressure} also solves \eqref{Var Form}.
It remains to recover the relation \eqref{Constantes form var}. Let us take $\bv \in \bH_\sigma^1(\Omega)$ with $\bv \times \bn = \boldsymbol{0}$ on $\Gamma$ and set: 
\begin{eqnarray}\label{decomposition of v}
\bv_0 = \bv - \sum_{i=1}^I \langle \bv \cdot \bn, 1 \rangle_{\Gamma_i} \nabla q_i^N 
\end{eqnarray}
Observe that $\bv_0\in \bH^{1}_{\sigma}(\O)$, $\bv_0\times\bn=\textbf{0}$ on $\Gamma$ and due to the properties of $q_i^N$, we have for all $1\leq i \leq I$, $\langle\bv_0\cdot\bn,\,1\rangle_{\Gamma_i}=0$. Then $\bv_0$ belongs to $\bV_{N}(\O)$. Multiplying the first equation on the left of the problem \eqref{linearized MHD-pressure} with $\bv=\bv_0+ \sum_{i=1}^I \langle \bv \cdot \bn, 1 \rangle_{\Gamma_i} \nabla q_i^N $, integrating by parts in $\Omega$, we obtain
\small
\begin{eqnarray*} 
\begin{aligned}
&\int_\Omega  \curl \bu \cdot \curl \bv \, dx + \int_\Omega (\curl \bw) \times \bu \cdot \bv \, dx 
-  \int_\Omega (\curl \bb) \times \bdd \cdot \bv \, dx - \langle \ff, \bv \rangle_\Omega+ \langle P_0, \bv \cdot \bn \rangle_{\Gamma_0}\\&+\sum_{i=1}^{I}\langle P_0+c_i, \bv \cdot \bn \rangle_{\Gamma_i}=  \int_\Omega \curl \bu \cdot \curl \bv_0 \, dx + \int_\Omega (\curl \bw) \times \bu \cdot \bv_0 \, dx -  \int_\Omega (\curl \bb) \times \bdd \cdot \bv_0 \, dx \\
&- \langle \ff, \bv_0 \rangle_\Omega + \langle P_0, \bv_0 \cdot \bn \rangle_{\Gamma_0}+\sum_{i=1}^{I}\langle P_0+c_i, \bv_0 \cdot \bn \rangle_{\Gamma_i}+\sum_{i=1}^I \langle \bv \cdot \bn, 1 \rangle_{\Gamma_i} \Big[ \int_\Omega (\curl \bw) \times \bu \cdot \nabla q_i^N \, dx \Big]\\
&+\sum_{i=1}^I \langle \bv \cdot \bn, 1 \rangle_{\Gamma_i}\Big[ - \int_\Omega (\curl \bb) \times \bdd \cdot \nabla q_i^N \, dx - \langle \ff, \nabla q_i^N \rangle_\Omega + \langle P_0, \nabla q_i^N \cdot \bn \rangle_{\Gamma} \Big]+\sum_{i=1}^I c_i \, \langle \bv \cdot \bn, 1 \rangle_{\Gamma_i}=0
\end{aligned}
\end{eqnarray*}
Comparing with the variational formulation $\eqref{Var Form}$ for the test function $(\bv_0, \boldsymbol{0})$, we obtain for all $1 \leqslant i \leqslant I$: 
\begin{eqnarray*}
\begin{aligned}
\sum_{i=1}^I c_i \, \langle \bv \cdot \bn, 1 \rangle_{\Gamma_i} &= \sum_{i=1}^I \langle \bv \cdot \bn, 1 \rangle_{\Gamma_i} \Big[ - \int_\Omega (\curl \bw) \times \bu \cdot \nabla q_i^N \, dx +  \int_\Omega (\curl \bb) \times \bdd \cdot \nabla q_i^N \, dx \\
&+ \langle \ff, \nabla q_i^N \rangle_\Omega - \langle P_0, \nabla q_i^N \cdot \bn \rangle_{\Gamma} \Big]
\end{aligned}
\end{eqnarray*} 
Finally, taking $\bv = \nabla q_j^N$, due to the properties of $q^N_i$ in \eqref{Definition des q_i^N}, we obtain the relation  \eqref{Constantes form var} for all $1 \leqslant j \leqslant I$.\medskip

Conversely, let $(\bu, \bb) \in \bV_N(\Omega) \times \bV_N(\Omega)$ be a solution of \eqref{Var Form} and $\bc = (c_1, \cdots , c_I)$ satisfying \eqref{Constantes form var}. We want to show it implies $(i)$. We note that $\boldsymbol{\mathcal{D}}_{\sigma}(\O)$ is not a subspace of $\bV_{N}(\O)$, so it is not possible to prove directly that \eqref{Var Form}-\eqref{Constantes form var} implies $(i)$. In particular, we can not apply the De Rham's lemma to recover the pressure. As a consequence, we need to extend \eqref{Var Form} for all divergence free functions $(\bv, \boldsymbol{\Psi}) \in \bX_N(\Omega) \times \bX_N(\Omega)$. For this purpose, let $(\bv, \boldsymbol{\Psi}) \in \bX_N(\Omega) \times \bX_N(\Omega)$ and we consider the decomposition \eqref{decomposition of v} for $\bv$ to obtain $\bv_0\in\bV_{N}(\O)$. Similarly, we set  \begin{eqnarray}\label{decomposition psi}{\boldsymbol{\Psi}_{0}} = \boldsymbol{\Psi} - \sum_{i=1}^I \langle \boldsymbol{\Psi} \cdot \bn, 1 \rangle_{\Gamma_i} \nabla q_i^N,\end{eqnarray}
which implies that ${\boldsymbol{\Psi}_{0}}$ is a function of $\bV_{N}(\O)$. Replacing in \eqref{Var Form}, we obtain: 
\small
\begin{eqnarray*}
 \int_\Omega \curl \bu \cdot \curl \bv \, dx + \int_\Omega (\curl \bw) \times \bu \cdot \bv\, dx -  \int_\Omega (\curl \bb) \times \bdd \cdot \bv \, dx 
+   \int_\Omega \curl \bb \cdot \curl \boldsymbol{\Psi} \, dx \qquad\quad \\+  \int_\Omega (\curl \boldsymbol{\Psi}) \times \bdd \cdot \bu \, dx - \langle \ff, \bv \rangle_\Omega
- \langle \bg, \boldsymbol{\Psi} \rangle_\Omega + \langle P_0, {\bv} \cdot \bn \rangle_{\Gamma_0} + \sum_{i=1}^I \langle P_0 + c_i, {\bv} \cdot \bn \rangle_{\Gamma_i} \qquad\qquad \qquad\quad  \qquad\\
= \sum_{i=1}^I \langle {\bv} \cdot \bn, 1 \rangle_{\Gamma_i}  \Big[\underbrace{ -  \int_\Omega (\curl \bb) \times \bdd \cdot \nabla q_i^N \, dx + \int_\Omega (\curl \bw) \times \bu \cdot \nabla q_i^N \, dx
- \langle \ff, \nabla q_i^N \rangle_\Omega + \langle P_0, \nabla q_i^N \cdot \bn \rangle_{\Gamma}}_{\Large{=\,-c_i}}\Big]\\ + \sum_{i=1}^I \langle {\bv} \cdot \bn, 1 \rangle_{\Gamma_i} c_i+ \sum_{i=1}^I \langle {\boldsymbol{\Psi}} \cdot \bn, 1 \rangle_{\Gamma_i} \langle \bg, \nabla q_i^N \rangle_\Omega, \qquad\qquad \qquad\qquad \qquad\qquad \qquad\qquad \qquad\qquad \qquad\qquad
\end{eqnarray*}
where we have used the fact that for all $1 \leqslant i \leqslant I$:  
$\sum_{j=1}^I c_j \,  \langle \nabla q_i^N \cdot \bn, 1 \rangle_{\Gamma_j} = c_i $. Note that the compatibility condition \eqref{Condition compatibilite K_N hilbert} implies that $\langle \bg, \nabla q_i^N \rangle_\Omega = 0$ for all $1 \leqslant i \leqslant I$.
Thus, the right hand side of the above relation is equal to zero and then for any  $(\bv, \boldsymbol{\Psi}) \in \bX_N(\Omega) \times \bX_N(\Omega)$, we have 
\begin{eqnarray} \label{Equation lemme equivalence form var}
 \int_\Omega \curl \bu \cdot \curl \bv \, dx + \int_\Omega (\curl \bw) \times \bu \cdot \bv\, dx -  \int_\Omega (\curl \bb) \times \bdd \cdot \bv \, dx 
+   \int_\Omega \curl \bb \cdot \curl \boldsymbol{\Psi} \, dx\nonumber \qquad\quad \\+  \int_\Omega (\curl \boldsymbol{\Psi}) \times \bdd \cdot \bu \, dx = \langle \ff, \bv \rangle_\Omega
+ \langle \bg, \boldsymbol{\Psi} \rangle_\Omega - \langle P_0, {\bv} \cdot \bn \rangle_{\Gamma_0} - \sum_{i=1}^I \langle P_0 + c_i, {\bv} \cdot \bn \rangle_{\Gamma_i}. \qquad\qquad \qquad\quad  \qquad
\end{eqnarray}
That means that problem \eqref{Equation lemme equivalence form var} and \eqref{Var Form} are equivalent. So, in the sequel, we will prove that problem \eqref{Equation lemme equivalence form var} implies $(i)$.\medskip

Choosing $(\bv, \boldsymbol{0})$ with $\bv \in \boldsymbol{\mathcal{D}}_\sigma(\Omega)$ as a test function in \eqref{Equation lemme equivalence form var}, we have
\begin{eqnarray*}
\langle - \Delta \bu + (\curl \bw) \times \bu -  (\curl \bb) \times \bdd - \ff, \bv \rangle_{\boldsymbol{\mathcal{D}'}(\Omega) \times \boldsymbol{\mathcal{D}}(\Omega)} = 0
\end{eqnarray*}
So by De Rham's theorem, there exists a distribution $P \in \mathcal{D}'(\Omega)$, defined uniquely up to an additive
constant such that
\begin{eqnarray} \label{Eq De Rham}
- \Delta \bu + (\curl \bw) \times \bu -  (\curl \bb) \times \bdd - \ff = - \nabla P\quad \mathrm{in}\,\,\O.
\end{eqnarray}
Since $\bu$ and $\bb$ belong to $\bH^{1}(\O)\hookrightarrow \bL^{6}(\O)$, the terms $(\curl \bw)\times \bu $ and $(\curl \bb) \times \bdd$  belong to $\bL^\frac{6}{5}(\Omega) \hookrightarrow \bH^{-1}(\Omega)$. As $\ff \in [\bH_0^{6,2}(\curl, \Omega)]' \hookrightarrow \bH^{-1}(\Omega)$, we deduce that $\nabla P\in H^{-1}(\O)$ and then $P \in L^2(\Omega)$ with a trace in $H^{-\frac{1}{2}}(\Gamma)$ (we refer to \cite{ARB_ARMA2011}).
Next, choosing $(\boldsymbol{0}, \boldsymbol{\Psi})$ with $\boldsymbol{\Psi} \in \boldsymbol{\mathcal{D}}_\sigma(\Omega)$ in \eqref{Equation lemme equivalence form var}, we have
\begin{eqnarray*}
\langle   \curl \curl \bb -  \curl(\bu \times \bdd) - \bg, \boldsymbol{\Psi} \rangle_{\boldsymbol{\mathcal{D}'}(\Omega) \times \boldsymbol{\mathcal{D}}(\Omega)}= 0 
\end{eqnarray*}
Then, applying \cite[Lemma 2.2]{Pan}, we have $\chi \in L^{2}(\Omega)$ defined uniquely up to an additive constant such that 
\begin{eqnarray*}
  \curl \curl \bb -  \curl (\bu \times \bdd) - \bg = \nabla \chi \quad \mathrm{in}\,\,\O \qquad \mathrm{and}\qquad \chi=0\quad \mathrm{on}\,\,\Gamma
\end{eqnarray*}
We note that the trace of $\chi$ is well defined and belongs to $H^{-1/2}(\Gamma)$. Taking the divergence of the above equation, the function $\chi$ is solution of the harmonic problem 
\begin{eqnarray*}
\Delta \chi = 0 \quad \mathrm{in} \, \, \Omega \qquad \mathrm{and} \qquad \chi = 0 \quad \mathrm{on} \, \, \Gamma. 
\end{eqnarray*}
So, we deduce that $\chi = 0$ in $\Omega$ which gives the second equation in \eqref{linearized MHD-pressure}. Moreover, by the fact that $\bu$ and $\bb$ belong to the space $\bV_{N}(\O)$, we have $\vdiv\bu=\vdiv\bb=0$ in $\O$ and $\bu\times\bn=\bb\times\bn=\mathbf{0}$ on $\Gamma$. \medskip

\noindent It remains to show the boundary conditions on the pressure. Multiplying equation \eqref{Eq De Rham} by $\bv \in \bX_N(\Omega)$, using the decomposition \eqref{decomposition of v} and integrating on $\O$, we obtain 
\small{
\begin{eqnarray*} \label{Num 5}
 \int_\Omega \curl \bu \cdot \curl \bv_0 \, dx + \int_\Omega (\curl \bw) \times \bu \cdot \bv_0 \, dx -  \int_\Omega (\curl \bb) \times \bdd \cdot \bv_0 \, dx 
- \langle \ff, \bv_0 \rangle_\Omega + \langle P, \bv_0 \cdot \bn \rangle_\Gamma \nonumber\\
= \sum_{i=1}^I \langle \bv \cdot \bn, 1 \rangle_{\Gamma_i} \Big[ - \int_\Omega (\curl \bw) \times \bu \cdot \nabla q_i^N \, dx +  \int_\Omega (\curl \bb) \times \bdd \cdot \nabla q_i^N \, dx 
+ \langle \ff, \nabla q_i^N \rangle_\Omega - \langle P, \nabla q_i^N \cdot \bn \rangle_\Gamma \Big]
\end{eqnarray*}}
Taking $(\bv, \boldsymbol{0})$ test function in \eqref{Var Form}, we have: 
\small
\begin{eqnarray*}
 \int_\Omega \curl \bu \cdot \curl \bv_0 \, dx + \int_\Omega (\curl \bw) \times \bu \cdot \bv_0 \, dx -  \int_\Omega (\curl \bb) \times \bdd \cdot \bv_0 \, dx 
- \langle \ff, \bv_0 \rangle_\Omega + \langle P_0, \bv_0 \cdot \bn \rangle_{\Gamma_0} \hspace{12cm}\\+ \sum_{i=1}^I \langle P_0 + c_i, \bv_0 \cdot \bn \rangle_{\Gamma_i}
= \sum_{i=1}^I \langle \bv \cdot \bn, 1 \rangle_{\Gamma_i} \Big[ - \int_\Omega (\curl \bw) \times \bu \cdot \nabla q_i^N \, dx +  \int_\Omega (\curl \bb) \times \bdd \cdot \nabla q_i^N \, dx 
\hspace{12cm}\\ + \langle \ff, \nabla q_i^N \rangle_\Omega- \langle P_0, \nabla q_i^N \cdot \bn \rangle_{\Gamma_0} - \sum_{j=1}^I \langle P_0 + c_j , \nabla q_i^N \cdot \bn \rangle_{\Gamma_j} \Big]\hspace{18cm}
\end{eqnarray*}
Substracting both equations and using again the decomposition \eqref{decomposition of v}, we obtain:
\begin{eqnarray*}
\begin{aligned}
&\underbrace{\langle P, \bv_0 \cdot \bn \rangle_\Gamma +\sum_{i=1}^I \langle \bv \cdot \bn, 1 \rangle_{\Gamma_i}\langle P, \nabla q_i^N \cdot \bn \rangle_\Gamma}_{\langle P, \bv \cdot \bn \rangle_\Gamma}  \\
&= \underbrace{\langle P_0, \bv_0 \cdot \bn \rangle_{\Gamma_0} + \sum_{i=1}^I \langle P_0 + c_i, \bv_0 \cdot \bn \rangle_{\Gamma_i}}_{\langle P_0, \bv_0 \cdot \bn \rangle_{\Gamma}+ c_i\sum_{i=1}^I \langle \bv_{0} \cdot \bn, 1 \rangle_{\Gamma_i} } +\sum_{i=1}^I \langle \bv \cdot \bn, 1 \rangle_{\Gamma_i} \Big[\underbrace{ \langle P_0, \nabla q_i^N \cdot \bn \rangle_{\Gamma_0} + \sum_{j=1}^I \langle P_0 + c_j, \nabla q_i^N \cdot \bn \rangle_{\Gamma_j} }_{\langle P_0, \nabla q_i^N \cdot \bn \rangle_{\Gamma}+c_i} \Big] 
\end{aligned}
\end{eqnarray*}
Since $\langle \bv_0 \cdot \bn, 1 \rangle_{\Gamma_i} =0$, $1\leq i\leq I$, we have
\begin{eqnarray*}
\langle P, \bv \cdot \bn \rangle_\Gamma = \langle P_0, \bv \cdot \bn \rangle_{\Gamma} + \sum_{i=1}^I \langle c_i, \bv \cdot \bn \rangle_{\Gamma_i} = \langle P_0, \bv \cdot \bn \rangle_{\Gamma_0} + \sum_{i=1}^I \langle P_0+c_i, \bv \cdot \bn \rangle_{\Gamma_i} 
\end{eqnarray*}
Next, the argmument to deduce that $P=P_0$ on $\Gamma_0$ and $P = P_0 + c_j$ on $\Gamma_j$ is very similar to that of \cite[Proposition 3.7]{AS_DCDS}, hence we omit it.
\end{proof}

\begin{rmk}
(i) Note that the compatibility condition \eqref{Condition compatibilite K_N hilbert} is necessary. Indeed, if we choose $\bv=\textbf{\emph{0}}$ and $\boldsymbol{\psi}=\nabla q_{i}^{N}$ in \eqref{Var Form}, we have $\langle\bg,\,\nabla q_{i}^ {N}\rangle_{\O_{_{6,2}} }=0$, $1\leq i \leq I$. Observe that since $\O$ is of class $\mathcal{C}^{1,1}$, the functions $q_{i}^{N}$ belong to $H^{2}(\O)$ and then the vectors $\nabla q_{i}^{N}$ belong to $\bH^{6,2}_{0}(\curl,\O)$. From the characterization \eqref{charac dual H^6,2}, this condition is actually written as $\int_{\O}\bF\cdot\nabla q_{i}^ {N}\,d\bx=0$, $1\leq i \leq I$. In the case where $\Omega$ is simply connected, the compatibility condition \eqref{Condition compatibilite K_N hilbert} is not necessary to solve \eqref{linearized MHD-pressure} because the kernel $\bK_N^2(\Omega)=\{ \textbf{\emph{0}} \}$. \medskip

(ii) If $\bg$ is the $\curl$ of an element $\boldsymbol{\xi}\in \bL^{2}(\O)$, then $\bg$ is still an element of $[\bH^{6,2}_{0}(\curl, \O)]'$. Moreover, since $\vdiv\bg = 0$ in $\O$, it always satisfies the compatibility condition \eqref{Condition compatibilite K_N hilbert}.
\end{rmk}

We now prove the solvability of the problem \eqref{Var Form}.

\begin{theorem} \label{thm weak solution H1 linear MHD}
Let $\O$ be $\mathcal{C}^{1,1}$ and we suppose $h=0$. Let 
$$\ff,\, \bg \in [\bH_0^{6,2}(\curl, \Omega)]' \quad \mathrm {and} \quad P_0 \in H^{-\frac{1}{2}}(\Gamma)$$ with the compatibility conditions \eqref{Condition compatibilite K_N hilbert}-\eqref{Condition div nulle}. Then the problem \eqref{linearized MHD-pressure} has a unique weak solution $(\bu, \bb, P, \bc) \in \bH^1(\Omega) \times \bH^1(\Omega) \times L^2(\Omega) \times \R^I$ which satisfies the estimates:
\small
\begin{eqnarray}\label{estim u,b H^1}
\norm{\bu}_{\bH^1(\Omega)} + \norm{\bb}_{\bH^1(\Omega)} \leqslant C (\norm{\ff}_{[\bH_0^{6,2}(\curl, \Omega)]'} + \norm{\bg}_{[\bH_0^{6,2}(\curl, \Omega)]'} + \norm{P_{0}}_{H^{-\frac{1}{2}}(\Gamma)})
\end{eqnarray}
\small
\begin{eqnarray}\label{estim P L^2}
\norm{P}_{L^{2}(\Omega)} \!\leqslant\! C \!(1\!+\!\norm{\curl\bw}_{\bL^{3/2}(\O)}\!+\!\norm{\bdd}_{\bL^{3}(\O)}\!)(\!\norm{\ff}_{[\bH_0^{6,2}(\curl, \Omega)]'} \!+\! \norm{\bg}_{[\bH_0^{6,2}(\curl, \Omega)]'} \!+\! \norm{P_{0}}_{H^{-\frac{1}{2}}(\Gamma)})
\end{eqnarray}
Moreover, if $\O$ is $\mathcal{C}^{2,1}$, $\ff, \bg \in \bL^{6/5}(\Omega)$ and $P_0\in W^{1/6,6/5}(\Gamma)$, then $(\bu, \bb, P) \in \bW^{2,\frac{6}{5}}(\Omega) \times \bW^{2,\frac{6}{5}}(\Omega) \times W^{1,\frac{6}{5}}(\Omega)$ and we have the
following estimate:

\begin{eqnarray}\label{estim u,b W^2,6/5 and P W^1,6/5}
\begin{aligned}
\norm{\bu}_{\bW^{2,\,6/5}(\Omega)} + \norm{\bb}_{\bW^{2,\,6/5}(\Omega)}+\norm{P}_{W^{1,\,6/5}(\Omega)} &\leqslant C (1+\norm{\curl\bw}_{\bL^{3/2}(\O)}+\norm{\bdd}_{\bL^{3}(\O)})\\&\times (\norm{\ff}_{\bL^{6/5}(\O)} + \norm{\bg}_{\bL^{6/5}(\O)} + \norm{P_{0}}_{W^{1/6,6/5}(\Gamma)})
\end{aligned}
\end{eqnarray}
\end{theorem}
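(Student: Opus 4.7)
The plan is to reduce \eqref{linearized MHD-pressure} to its equivalent variational formulation on $\bV_N(\Omega)\times \bV_N(\Omega)$ via Proposition \ref{Proposition Equivalence sol hilb sol var}, and then apply the Lax--Milgram theorem. The bilinear form is the left-hand side of \eqref{Var Form}. The key algebraic fact is that when tested against $(\bu,\bb)$ itself, the two off-diagonal terms $-\int_\Omega(\curl\bb)\times\bdd\cdot\bu$ and $+\int_\Omega(\curl\bb)\times\bdd\cdot\bu$ cancel exactly, while $\int_\Omega(\curl\bw)\times\bu\cdot\bu=0$ pointwise since $(\ba\times\bv)\cdot\bv=0$. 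What remains is $\|\curl\bu\|_{\bL^2}^2+\|\curl\bb\|_{\bL^2}^2$, and the Poincaré inequality \eqref{Poincaré inequality} on $\bV_N(\Omega)$ (from Corollary \ref{equivalence normes Poincaré}) delivers coercivity with constant $C_{\mathcal{P}}^{-2}$.

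Continuity of the bilinear form is handled by Hölder combined with $\bH^1(\Omega)\hookrightarrow\bL^6(\Omega)$: the $\bw$-terms are controlled by $\|\curl\bw\|_{\bL^{3/2}}\|\bu\|_{\bL^6}\|\bv\|_{\bL^6}$ and the $\bdd$-terms by $\|\curl\bb\|_{\bL^2}\|\bdd\|_{\bL^3}\|\bv\|_{\bL^6}$. For the right-hand side, the embedding $\bV_N(\Omega)\hookrightarrow\bH_0^{6,2}(\curl;\Omega)$ together with Proposition \ref{charac dual H^r,p} bounds the $\ff,\bg$ contributions, while $\langle P_0,\bv\cdot\bn\rangle_\Gamma$ is bounded by $\|P_0\|_{H^{-1/2}(\Gamma)}\|\bv\|_{\bH^1}$; the constants $c_i$ drop out because $\langle\bv\cdot\bn,1\rangle_{\Gamma_i}=0$ on $\bV_N(\Omega)$. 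Lax--Milgram then gives existence and uniqueness of $(\bu,\bb)\in\bV_N(\Omega)^2$; defining $\bc$ by \eqref{Constantes form var} and invoking Proposition \ref{Proposition Equivalence sol hilb sol var} produces the full solution with $P\in L^2(\Omega)$. Estimate \eqref{estim u,b H^1} follows from testing with $(\bu,\bb)$ and bounding the right-hand side.

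To obtain the $L^2$ bound on $P$ in \eqref{estim P L^2}, I would use the momentum equation to write $\nabla P=\ff+\Delta\bu-(\curl\bw)\times\bu+(\curl\bb)\times\bdd$ as an element of $\bH^{-1}(\Omega)$: the terms $(\curl\bw)\times\bu$ and $(\curl\bb)\times\bdd$ contribute the factor $(1+\|\curl\bw\|_{\bL^{3/2}}+\|\bdd\|_{\bL^3})$ via Hölder, and the $L^2$-norm of $P$ with prescribed Dirichlet trace $P_0\in H^{-1/2}(\Gamma)$ is recovered by the variant of the de Rham-type result cited after \eqref{Eq De Rham} (reference \cite{ARB_ARMA2011}), together with the already established $\bH^1$ bound on $(\bu,\bb)$.

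For the $\bW^{2,6/5}$ regularity under $\mathcal{C}^{2,1}$ smoothness, the $(\bu,P)$ block is straightforward: the Stokes source $\ff+(\curl\bb)\times\bdd-(\curl\bw)\times\bu$ belongs to $\bL^{6/5}(\Omega)$ thanks to the exponent arithmetic $\tfrac{2}{3}+\tfrac{1}{6}=\tfrac{5}{6}$ and $\tfrac{1}{2}+\tfrac{1}{3}=\tfrac{5}{6}$, and Proposition \ref{thm solution W1,p and W2,p Stokes divu=h} applied with $p=r=6/5$ and $h=0$ yields $\bu\in\bW^{2,6/5}(\Omega)$, $P\in W^{1,6/5}(\Omega)$ with the required estimate. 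The main obstacle is the $\bW^{2,6/5}$ regularity for $\bb$: the source $\bg+\curl(\bu\times\bdd)$ of the elliptic problem $(\mathcal{E_N})$ is not obviously in $\bL^{6/5}(\Omega)$, because $\bu\times\bdd\in\bL^2(\Omega)$ is only a curl of an $\bL^2$ function, hence merely in $[\bH_0^{6,2}(\curl;\Omega)]'$ and so Lemma \ref{Lemme elliptique H r' p'} alone gives only $\bb\in\bH^1(\Omega)$. I would address this by decomposing $\bb=\bb_1+\bb_2$, treating $-\Delta\bb_1=\bg$ by Theorem \ref{thm strong soulution elliptic} (obtaining $\bb_1\in\bW^{2,6/5}$) and then handling $-\Delta\bb_2=\curl(\bu\times\bdd)$ by exploiting the improved regularity $\bu\in\bW^{2,6/5}\hookrightarrow\bW^{1,2}$ from the Stokes step: writing $\curl(\bu\times\bdd)=(\bdd\cdot\nabla)\bu-\vdiv(\bdd\otimes\bu)$, the first term lies in $\bL^{6/5}$, and the divergence piece would be absorbed through a Stokes-like reformulation, giving $\bb_2$ and hence $\bb$ in $\bW^{2,6/5}$. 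Summing the Stokes and elliptic estimates yields \eqref{estim u,b W^2,6/5 and P W^1,6/5}.
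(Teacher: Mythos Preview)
Your approach for the existence, uniqueness, and the estimates \eqref{estim u,b H^1}--\eqref{estim P L^2} matches the paper's proof essentially: Lax--Milgram on $\bV_N(\Omega)\times\bV_N(\Omega)$ via Proposition~\ref{Proposition Equivalence sol hilb sol var}, with coercivity coming from the cancellation $a_{\bw,\bdd}((\bu,\bb),(\bu,\bb))=0$ and Corollary~\ref{equivalence normes Poincaré}, and the pressure estimate via $\nabla P\in\bH^{-1}(\Omega)$.

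Where you diverge from the paper is the $\bW^{2,6/5}$ regularity of $\bb$. The paper does \emph{not} decompose $\bb=\bb_1+\bb_2$; instead it simply expands the curl of a cross product,
\[
\curl(\bu\times\bdd)=(\bdd\cdot\nabla)\bu-(\bu\cdot\nabla)\bdd
\]
(using $\vdiv\bu=\vdiv\bdd=0$), and asserts that both terms lie in $\bL^{6/5}(\Omega)$: the first since $\bdd\in\bL^3$ and $\nabla\bu\in\bL^2$, the second since $\bu\in\bL^6$ and $\nabla\bdd\in\bL^{3/2}$. Then Theorem~\ref{thm strong soulution elliptic} applies directly to the elliptic problem for $\bb$ with right-hand side $\bg+\curl(\bu\times\bdd)\in\bL^{6/5}(\Omega)$, giving $\bb\in\bW^{2,6/5}(\Omega)$ in one step. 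Your concern that $\curl(\bu\times\bdd)$ is ``only a curl of an $\bL^2$ function'' is resolved not by a Stokes-like reformulation but by this pointwise identity for the curl of a cross product. Note that the paper's argument tacitly uses $\nabla\bdd\in\bL^{3/2}$, i.e.\ $\bdd\in\bW^{1,3/2}(\Omega)$, which is slightly stronger than the stated hypothesis \eqref{hypothesis on w and d} but is the assumption in force in every subsequent application of this theorem (cf.\ Theorem~\ref{thm strong solution p>6/5} and beyond). Under that reading no decomposition of $\bb$ is needed, and your vague ``Stokes-like reformulation'' for the divergence piece is both unnecessary and, as written, not a proof.
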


\begin{proof}
We know, according to Proposition \ref{Proposition Equivalence sol hilb sol var}, that the linearized problem \eqref{linearized MHD-pressure} is equivalent to \eqref{Var Form}-\eqref{Constantes form var}.
The existence and uniqueness of weak solution $(\bu,\bb)\in \bH^{1}(\O)\times\bH^{1}(\O)$ follow from Lax-Milgram theorem. Let us define the bilinear continuous forms  
$a : \bZ_N(\Omega) \times \bZ_N(\Omega) \rightarrow \R$ and $a_{\bw, \bdd} : \bZ_N(\Omega) \times \bZ_N(\Omega) \rightarrow \R$ as follows: 
\begin{eqnarray}
\begin{aligned}
&a((\bu, \bb), (\bv, \boldsymbol{\Psi})) =  \int_\Omega \curl \bu \cdot \curl \bv \, dx +   \int_\Omega \curl \bb \cdot \curl \boldsymbol{\Psi} \, dx \label{forms a}\\
&a_{\bw, \bdd}((\bu, \bb), (\bv, \boldsymbol{\Psi})) = \int_\Omega (\curl \bw) \times \bu \cdot \bv \, dx +  \int_\Omega (\curl \boldsymbol{\Psi}) \times \bdd \cdot \bu \, dx -  \int_\Omega (\curl \bb) \times \bdd \cdot \bv \, dx 
\end{aligned}
\end{eqnarray}
where $\bZ_N(\O)=\bV_{N}(\O)\times\bV_{N}(\O)$ equipped with the product norm 
\begin{equation}\label{norm on Z_N}
 \norm{(\bv,\,\boldsymbol{\Psi})}_{\bZ_N(\O)}^2=\norm{\bv}_{\bH^{1}(\O)}^2+\norm{\boldsymbol{\Psi}}_{\bH^1(\O)}^2.
\end{equation}
Next, we introduce the linear form $\mathcal{L} : \bZ_N(\Omega) \rightarrow \R$ defined as follows
\begin{eqnarray*}
\mathcal{L}(\bv, \boldsymbol{\Psi}) = \langle \ff, \bv \rangle_{\O_{_{6,2}} }+ \langle \bg, \boldsymbol{\Psi} \rangle_{\O_{_{6,2}} } - \langle P_0, \bv \cdot \bn \rangle_{\Gamma_0} - \sum_{i=1}^I \langle P_0 + c_i, \bv \cdot \bn \rangle_{\Gamma_i}
\end{eqnarray*}
So, the variational formulation \eqref{Var Form} can be rewritten as: for any $(\bv,\boldsymbol{\Psi})\in \bZ_{N}(\O)$
\begin{eqnarray}\label{Var form compact}
 \mathcal{A}((\bu, \bb), (\bv, \boldsymbol{\Psi}))=a((\bu, \bb), (\bv, \boldsymbol{\Psi})) + a_{\bw, \bdd}((\bu, \bb), (\bv, \boldsymbol{\Psi})) = \mathcal{L} (\bv, \boldsymbol{\Psi}) 
\end{eqnarray}
Since $((\curl\bw)\times\bu)\cdot \bu=\textbf{0}$, then we have  $a_{\bw, \bdd}((\bu, \bb), (\bu, \bb)) = 0$ for all $(\bu, \bb) \in \bZ_N(\Omega)$.\\ 
Since $\bv$ and $\boldsymbol{\Psi}$ belong to $\bV_{N}(\O)$, we have from \cite[Corollary 3.2.]{AS_M3AS} that the application $\bv\mapsto \norm{\curl \bv}_{\bL^{2}(\O)}$ (respectively $\boldsymbol{\Psi}\mapsto \norm{\curl \boldsymbol{\Psi}}_{\bL^{2}(\O)}$) is a norm on $\bV_{N}(\O)$ equivalent to the norm $\norm{\bv}_{\bH^{1}(\O)}$ (respectively  $\norm{\boldsymbol{\Psi}}_{\bH^{1}(\O)}$). As a consequence, 
\begin{eqnarray}\label{coercivity A}
\begin{aligned}
\mathcal{A}((\bu, \bb), (\bv, \boldsymbol{\Psi}))=\abs{a((\bv, \boldsymbol{\Psi}), (\bv, \boldsymbol{\Psi}))} &=  \norm{\curl \bv}_{\bL^2(\Omega)}^2 +   \norm{\curl \boldsymbol{\Psi}}_{\bL^2(\Omega)}^2 \\
&\geqslant\frac{2}{C_{\mathcal{P}}^2} \norm{(\bv, \boldsymbol{\Psi})}_{\bZ_N(\Omega)}^2 
\end{aligned}
\end{eqnarray}
where $C_{\mathcal{P}}$ is the constant given in \eqref{Poincaré inequality}. This shows that the bilinear form $\mathcal{A}(\cdot,\cdot)$ is coercive on $\bZ_{N}(\O)$. Moreover, applying Cauchy-Schwarz inequality, we have: 
\begin{eqnarray}\label{continuity form a}
\abs{a((\bu, \bb), (\bv, \boldsymbol{\Psi}))} &\leqslant&  \norm{\curl \bu}_{\bL^2(\Omega)} \norm{\curl \bv}_{\bL^2(\Omega)} +   \norm{\curl \bb}_{\bL^2(\Omega)} \norm{\curl \boldsymbol{\Psi}}_{\bL^2(\Omega)} \nonumber\\
&\leqslant & C\norm{(\bu, \bb)}_{\bZ_N(\Omega)} \norm{(\bv, \boldsymbol{\Psi})}_{\bZ_N(\Omega)} 
\end{eqnarray}
Now, using Hölder inequality, we have 
\small
\begin{eqnarray*} 
\abs{a_{\bw, \bdd}((\bu, \bb), (\bv, \boldsymbol{\Psi}))} &
\leqslant& \norm{\curl \bw}_{\bL^\frac{3}{2}(\Omega)} \norm{\bu}_{\bL^6(\Omega)} \norm{\bv}_{\bL^6(\Omega)} +  \norm{\curl \boldsymbol{\Psi}}_{\bL^2(\Omega)} \norm{\bu}_{\bL^6(\Omega)} \norm{\bdd}_{\bL^3(\Omega)}\nonumber  \\
&+&  \norm{\curl \bb}_{\bL^2(\Omega)} \norm{\bdd}_{\bL^3(\Omega)} \norm{\bv}_{\bL^6(\Omega)} 
\end{eqnarray*}
Now, using again the equivalence of norms $\norm{\cdot}_{\bV_N(\O)}$ and $\norm{\cdot}_{\bH^{1}(\O)}$, we obtain for $C_1>0$ the constant of the embedding $\bH^1(\Omega) \hookrightarrow \bL^6(\Omega) $
\begin{eqnarray}\label{Continuity of a_wd}
\abs{a_{\bw, \bdd}((\bu,\bb), (\bv, \boldsymbol{\Psi}))}
\leqslant\big( C_1^2 \norm{\curl \bw}_{\bL^\frac{3}{2}(\Omega)} +  C_1  \norm{\bdd}_{\bL^{3}(\Omega)}\big)\norm{(\bu,\,\bb)}_{\bZ_N(\O)}\norm{(\bv,\,\boldsymbol{\Psi})}_{\bZ_N(\O)}
\end{eqnarray}
From \eqref{continuity form a} and \eqref{Continuity of a_wd}, we can deduce that the form $\mathcal{A}(\cdot,\cdot)$ is continuous. Using similar arguments, we can verify that the right hand side of \eqref{Var form compact} defines an element in the dual space of $\bZ_{N}(\O)$. Thus, by Lax-Milgram lemma, there exists a unique $(\bu,\bb)\in \bZ_{N}(\O)$ satisfying \eqref{Var form compact}. So, due to Theorem \ref{injection continue X_N},  we obtain the existence of a unique weak solution $(\bu,\bb)\in\bH^{1}(\O)\times \bH^{1}(\O)$. Using \eqref{coercivity A}, the variational formulation and trace theorem, we obtain the estimate \eqref{estim u,b H^1}. The existence of the pressure follows from De Rham's theorem. Moreover for the pressure estimate, we can write
\small
\begin{eqnarray*}
 \norm{P}_{L^{2}(\O)}\!\leq\! C\norm{\nabla P}_{\bH^{-1}(\O)}\leq\! C\!\big( \norm{\ff}_{\bH^{-1}(\O)}\!+\!\norm{\Delta\bu}_{\bH^{-1}(\O)}\!+\!\norm{(\curl \bw)\times \bu}_{\bH^{-1}(\O)}\!+\!\norm{(\curl \bb)\times \bdd}_{\bH^{-1}(\O)}\big).
\end{eqnarray*}
We know that $\norm{\ff}_{\bH^{-1}(\O)}\leq C \norm{\ff}_{[\bH^{6,2}_{0}(\curl,\O)]'}$ and 
$\norm{\Delta\bu}_{\bH^{-1}(\O)}\leq C \norm{\bu}_{\bH^{1}(\O)}$. For the two remaining terms, we proceed as follows 
\small
\begin{eqnarray*}
\norm{(\curl \bw)\times \bu}_{\bH^{-1}(\O)}\leq C\norm{(\curl \bw)\times \bu}_{\bL^{6/5}(\O)}\leq C\norm{\curl \bw}_{\bL^{3/2}(\O)}\norm{\bu}_{\bL^{6}(\O)},
\end{eqnarray*}
so, we have 
\begin{equation*}\label {control curl w x u in H^-1}
 \norm{(\curl \bw)\times \bu}_{\bH^{-1}(\O)}\leq CC_1\norm{\curl \bw}_{\bL^{3/2}(\O)}\norm{\bu}_{\bH^{1}(\O)}.
\end{equation*}
Proceeding similarly, we get 
\begin{equation*}\label {control curl b x d in H^-1}
 \norm{(\curl \bb)\times \bdd}_{\bH^{-1}(\O)}\leq C\norm{\bdd}_{\bL^{3}(\O)}\norm{\bb}_{\bH^{1}(\O)}
\end{equation*}
Hence, using the above estimates together with the estimate \eqref{estim u,b H^1}, we deduce the pressure estimate \eqref{estim P L^2}.\\
\noindent Now, if $\ff, \bg \in \bL^\frac{6}{5}(\Omega)$ and $P_0 \in W^{1/6,6/5}(\Gamma)$, then we already know that $(\bu, \bb, P) \in \bH^1(\Omega) \times \bH^1(\Omega) \times L^2(\Omega)$ is solution of \eqref{linearized MHD-pressure}. We deduce that $(\curl \bw) \times \bu +  (\curl \bb) \times \bdd$ belongs to $\bL^{6/5}(\O)$. Similarly, we have  $ \curl (\bu \times \bdd) = (\bdd \cdot \nabla) \bu - (\bu \cdot \nabla) \bdd$ belongs to $\bL^{6/5}(\O)$. Observe that $(\bu,P,\bc)$ is solution of the following Stokes problem
\begin{eqnarray*}
\begin{cases}
-  \Delta \bu + \nabla P = \bF \quad \mathrm{and}\quad \vdiv \bu = 0\quad \text{in} \, \, \Omega, \\
\bu \times \bn = \textbf{0}\quad \text{on} \, \, \Gamma \quad  \mathrm {and}\quad P= P_0 \quad \text{on} \, \, \Gamma_0, \quad P=P_0 + c_i \quad \text{on} \, \, \Gamma_i, \\
\langle \bu \cdot \bn, 1 \rangle_{\Gamma_i} = 0, \quad \forall 1 \leqslant i \leqslant I,
\end{cases}
\end{eqnarray*}
with $\bF = \ff - (\curl \bw) \times \bu +  (\curl \bb) \times \bdd$ in $\bL^{6/5}(\O)$. Thanks to the regularity of Stokes problem $(\mathcal{S_N})$ (see Proposition \ref{thm solution W1,p and W2,p Stokes divu=h}), $(\bu,P)$ belongs to $\bW^{2,6/5}(\O)\times W^{1,6/5}(\O)$ with the corresponding estimate. Next, since $\bb$ is a solution of the following elliptic problem 
\begin{eqnarray*}
\begin{cases}
   \curl \curl \bb=\bG\quad \mathrm{and}\quad \vdiv \bb = 0\quad \text{in} \, \, \Omega, \\
\bb \times \bn = \textbf{0}\quad \text{on} \, \, \Gamma, \\
\langle \bb \cdot \bn, 1 \rangle_{\Gamma_i} = 0, \quad \forall 1 \leqslant i \leqslant I,
\end{cases}
\end{eqnarray*}
with $\bG = \bg +  \curl(\bu \times \bdd)$ in $\bL^{6/5}(\O)$ satisfying the compatibility conditions \eqref{Condition compatibilite K_N hilbert}-\eqref{Condition div nulle}, we deduce from Theorem \ref{thm strong soulution elliptic} that $\bb$ belongs to $\bW^{2,6/5}(\O)$. The estimate \eqref{estim u,b W^2,6/5 and P W^1,6/5} then follows from the regularity estimates of the above Stokes problem on $(\bu,P)$ and elliptic problem on $\bb$.
\end{proof}

\section{The linearized MHD system: $L^{p}$-theory}
After the study of weak solutions in the case of Hilbert spaces, we are interested in the study of weak and strong solutions in $L^p$-theory for the linearized system \eqref{linearized MHD-pressure}. We begin by studying strong solutions. If $p \geq 6/5$, it follows that $\bL^p(\O)\hookrightarrow \bL^{6/5}(\O)$, $ W^{1-1/p,p}(\Gamma)\hookrightarrow W^{1/6,6/5}(\Gamma)$. Then, due to  Theorem \ref{thm weak solution H1 linear MHD}, we have $(\bu,\bb)\in\bW^{2,6/5}(\O)\times\bW^{2,6/5}(\O)$. In the next subsection we will prove that this solution belongs to $\bW^{2,p}(\O)\times\bW^{2,p}(\O)$ for any $p>6/5$. 

\subsection{Strong solution in $\bW^{\,2,p}(\Omega)$ with $p\geq 6/5$}
The aim of this section is to give an answer to the question of the existence of a regular solution $(\bu,\bb,P)\in \bW^{2,p}(\O)\times\bW^{2,p}(\O)\times W^{1,p}(\O)$ for the linearized MHD problem \eqref{linearized MHD-pressure}. When $p<3$, we have the embedding $\bW^{2,p}(\O)\hookrightarrow \bW^{1,p^{*}}(\O)$ with $\frac{1}{p^*}=\frac{1}{p}-\frac{1}{3}$. Then, supposing $\bdd\in \bW^{\,1,3/2}(\O)\hookrightarrow \bL^{3}(\O)$ implies that the term $(\curl\bb)\times\bdd$ belongs to $\bL^{p}(\O)$. If $p<3/2$, $\bW^{1,p^*}(\O)\hookrightarrow \bL^{{p}^{**}}(\O)$ with $\frac{1}{p^{**}}=\frac{1}{p^*}-\frac{1}{3}$ and then supposing $\curl \bw \in \bL^{3/2}(\O)$ implies that the term $(\curl\bw)\times\bu$ belongs to $\bL^{p}(\O)$. So, we can use the well-known regularity of the Stokes problem $(\mathcal{S_N})$ in order to prove the regularity $ \bW^{2,p}(\O)\times W^{1,p}(\O)$ with $p<3/2$ for $(\bu,P)$ since the right hand side $\ff-(\curl\bw)\times\bu+ (\curl\bb)\times \bdd$ belongs to $\bL^{p}(\O)$. Similarly, the term $\curl(\bu\times\bdd)=(\bdd\cdot\nabla)\bu-(\bu\cdot\nabla)\bdd$ belongs to $\bL^{p}(\O)$ and we can use the regularity of the elliptic problem $(\mathcal{E}_N)$ to prove the regularity $\bW^{2,p}(\O)$ with $p<3/2$ for $\bb$. Now, if $3/2\leq p<3$, the terms $(\curl\bb)\times\bdd$ and $(\bdd\cdot\nabla)\bu$ still belong to $\bL^{p}(\O)$ but the situation is different for the terms $(\curl\bw)\times\bu$ and $(\bu\cdot\nabla)\bdd$ if $\curl\bw$ and $\nabla\bdd$ belong only to $\bL^{3/2}(\O)$. Indeed, we must suppose $\curl\bw\in\bL^{s}(\O)$ and $\nabla\bdd\in\bL^{s}(\O)$ with 
\begin{equation*}\label{def of s}
s= \frac{3}{2} \quad \mathrm{if} \,\,p < \frac{3}{2},\quad s>\frac{3}{2}\quad \mathrm{if}\,\,p=\frac{3}{2}\quad \mathrm{and}\quad s=p\quad \mathrm{if}\,\,p>\frac{3}{2}. 
\end{equation*}

Now, if $p\geq 3 $, the problem arises for the terms $(\curl\bb)\times\bdd$ and $(\bdd\cdot\nabla)\bu$ if we suppose only $\bdd$ in $\bL^{3}(\O)$. So, we must suppose that $\bdd\in\bL^{s'}(\O)$ with 
\begin{equation*}\label{def of s'}
s'= 3 \quad \mathrm{if} \,\,p < 3,\quad s'> 3\quad \mathrm{if}\,\,p=3\quad \mathrm{and}\quad s'=p\quad \mathrm{if}\,\,p> 3. 
\end{equation*}

So, to conserve the assumptions $\curl\bw\in\bL^{3/2}(\O)$ and $\bdd\in\bW^{1,3/2}(\O)$ and prove strong solutions $(\bu,\bb,P)$ in $\bW^{2,p}(\O)\times \bW^{2,p}(\O)\times W^{1,p}(\O)$, we first assume that $\bw$ and $\bdd$ are more regular and belong to $\boldsymbol{\mathcal{D}}(\overline{\O})$. We will then prove a priori estimates allowing to remove this latter regularity. We refer to  \cite[Theorem 2.4]{ARB_M2AN2014}  for a similar proof for the Oseen problem. The details are given in the following regularity result in a solenoidal framework.
\begin{theorem}\label{thm strong solution p>6/5}
Let $\O$ be $\mathcal{C}^{2,1}$ and $p\geq 6/5$. Assume that $h=0$, and let $\ff,\bg$, $\bw$, $\bdd$ and $P_0$ satisfying \eqref{Condition div nulle},
$$\ff\in \bL^p(\Omega),\,\ \bg \in \bL^p(\Omega),\,\,\curl\bw\in \bL^{3/2}(\O),\,\,\bdd\in\bW^{\,1,3/2}_{\sigma}(\O),\,\,\,\mathrm{and} \,\,\,P_0 \in W^{1-\frac{1}{p}, p}(\Gamma)$$
with the compatibility condition 
\begin{equation} \label{Condition compatibilite K_N Lp p>6/5} 
 \forall \, \bv \in \bK_{N}^{p'}(\Omega),\quad \int_{\O}\bg\cdot\bv \,d\bx = 0. 
 \end{equation}
Then, the weak solution  $(\bu, \bb, P)$ of the problem \eqref{linearized MHD-pressure} given by Theorem \ref{thm weak solution H1 linear MHD} belongs to $\bW^{2,p}(\Omega) \times \bW^{2,p}(\Omega) \times W^{1,p}(\Omega) $ which also satisfies
the estimate:
\begin{eqnarray}
\begin{aligned}\label{estim u,b,P strong p>6/5}
&\norm{\bu}_{\bW^{2,p}(\Omega)} + \norm{\bb}_{\bW^{2,p}(\Omega)}+ \norm{P}_{W^{1,p}(\Omega)} \leq C(1 + \norm{\curl\bw}_{\bL^\frac{3}{2}(\Omega)} + \norm{\bdd}_{\bW^{1,\frac{3}{2}}(\Omega)}) \\
&\times \big(\norm{\ff}_{\bL^p(\Omega)} + \norm{\bg}_{\bL^p(\Omega)} + \norm{P_0}_{W^{1-\frac{1}{p},p}(\Gamma)}\big) 
\end{aligned}
\end{eqnarray}
\end{theorem}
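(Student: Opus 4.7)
The proof has three stages. First I would regularize $\bw$ and $\bdd$ so that they lie in $\boldsymbol{\mathcal{D}}(\overline{\Omega})$, by mollification combined with a Helmholtz projection to preserve $\vdiv\,\bdd=0$. Second, for each such regularized pair I would establish a strong solution $(\bu,\bb,P)\in\bW^{2,p}(\Omega)\times\bW^{2,p}(\Omega)\times W^{1,p}(\Omega)$ via a bootstrap built on Proposition~\ref{thm solution W1,p and W2,p Stokes divu=h} and Theorem~\ref{thm strong soulution elliptic}. Third, I would prove a uniform a priori estimate depending only on $\|\curl\bw\|_{\bL^{3/2}}+\|\bdd\|_{\bW^{1,3/2}}$, and pass to the limit. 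The starting input is Theorem~\ref{thm weak solution H1 linear MHD}: since $p\geq 6/5$ gives $\bL^p(\Omega)\hookrightarrow\bL^{6/5}(\Omega)$ and $W^{1-1/p,p}(\Gamma)\hookrightarrow W^{1/6,6/5}(\Gamma)$, that theorem already supplies $(\bu,\bb,P)\in\bW^{2,6/5}\times\bW^{2,6/5}\times W^{1,6/5}$ together with the associated bounds.

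\textbf{Bootstrap in the regularized case.} With $\bw,\bdd\in\boldsymbol{\mathcal{D}}(\overline{\Omega})$ I would split the linearized system into a Stokes problem $(\mathcal{S_N})$ for $(\bu,P)$ with source $\ff-(\curl\bw)\times\bu+(\curl\bb)\times\bdd$, and, using $\vdiv\,\bb=0$ so that $\curl\curl\bb=-\Delta\bb$, an elliptic problem $(\mathcal{E_N})$ for $\bb$ with source $\bg+(\bdd\cdot\nabla)\bu-(\bu\cdot\nabla)\bdd$. Since $\curl\bw,\bdd,\nabla\bdd\in\bL^{\infty}$ at this stage and $(\bu,\bb)\in\bW^{2,6/5}\hookrightarrow \bL^{6}\cap\bW^{1,2}$, both sources lie in $\bL^2$; Proposition~\ref{thm solution W1,p and W2,p Stokes divu=h} and Theorem~\ref{thm strong soulution elliptic} then yield $(\bu,\bb)\in\bW^{2,2}\hookrightarrow\bL^{\infty}\cap\bW^{1,6}$. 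A second iteration raises the sources to $\bL^6$ and gives $(\bu,\bb)\in\bW^{2,6}\hookrightarrow C^{1}(\overline{\Omega})$; a third then reaches $\bW^{2,p}$ for any finite $p$. At each step the compatibility conditions on the magnetic source persist: $\curl(\bu\times\bdd)$ is divergence free and, because $\bu\times\bn=\mathbf 0$ on $\Gamma$, orthogonal to each $\nabla q_i^N\in\bK_N^{p'}(\Omega)$.

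\textbf{A priori estimate and limit.} This is the main obstacle, because the bound must depend only linearly on $\|\curl\bw\|_{\bL^{3/2}}+\|\bdd\|_{\bW^{1,3/2}}$ and not on the $\bL^{\infty}$ norms exploited during the bootstrap. Combining \eqref{estim W^2,p Stokes} and \eqref{estim W^2,p elliptic} with the Hölder estimates $\|(\curl\bw)\times\bu\|_{\bL^p}\leq \|\curl\bw\|_{\bL^{3/2}}\|\bu\|_{\bL^{q(p)}}$ with $q(p)=\tfrac{3p}{3-2p}$ and the analogous bounds using $\bdd\in\bL^3$ and $\nabla\bdd\in\bL^{3/2}$, one closes an estimate for $X:=\|\bu\|_{\bW^{2,p}}+\|\bb\|_{\bW^{2,p}}+\|P\|_{W^{1,p}}$ in the range $p\leq 3/2$ via the embedding $\bW^{2,p}\hookrightarrow\bL^{q(p)}$. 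For $p>3/2$ the product $(\curl\bw)\times\bu$ no longer belongs to $\bL^p$; one then uses the $[\bH_0^{r',p'}(\curl,\Omega)]'$ version of Proposition~\ref{thm solution W1,p and W2,p Stokes divu=h} to control the corresponding Stokes correction, interpolates against the $\bW^{2,6/5}$ bound from Theorem~\ref{thm weak solution H1 linear MHD}, and absorbs the resulting $X$ contribution by Young's inequality. Finally I would apply the uniform estimate to each approximating triple $(\bu_n,\bb_n,P_n)$, extract a weakly $\bW^{2,p}$-convergent subsequence, invoke strong $\bW^{1,p}$ compactness to pass to the limit in the bilinear terms, and transfer the bound \eqref{estim u,b,P strong p>6/5} to the limit by weak lower semicontinuity.
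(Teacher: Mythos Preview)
Your overall architecture---regularize $(\bw,\bdd)$, bootstrap via Proposition~\ref{thm solution W1,p and W2,p Stokes divu=h} and Theorem~\ref{thm strong soulution elliptic}, prove a uniform a priori bound, pass to the limit---is exactly the paper's two-step scheme, and your bootstrap argument in the regularized case is correct.

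The gap is in your a priori estimate. For $p\le 3/2$ you write $\|(\curl\bw)\times\bu\|_{\bL^p}\le \|\curl\bw\|_{\bL^{3/2}}\|\bu\|_{\bL^{q(p)}}$ and then invoke $\bW^{2,p}\hookrightarrow\bL^{q(p)}$; but this gives $\|\bu\|_{\bL^{q(p)}}\le C X$, so the resulting inequality is
\[
X\le C\big(\|\ff\|_{\bL^p}+\|\bg\|_{\bL^p}+\|P_0\|_{W^{1-1/p,p}}\big)+C\big(\|\curl\bw\|_{\bL^{3/2}}+\|\bdd\|_{\bW^{1,3/2}}\big)X,
\]
which cannot be absorbed without assuming $\|\curl\bw\|_{\bL^{3/2}}+\|\bdd\|_{\bW^{1,3/2}}$ small. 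Your remedy for $p>3/2$ (switching to $[\bH_0^{r',p'}(\curl,\Omega)]'$ data and interpolating against the $\bW^{2,6/5}$ bound) does not escape this: the interpolation still produces a term $\epsilon' X$ with coefficient proportional to $\|\curl\bw\|_{\bL^{3/2}}$, and choosing $\epsilon'$ to cancel that makes $C_{\epsilon'}$ depend on $\bw$, destroying the uniform bound.

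The paper's missing ingredient is a mollifier splitting of the \emph{coefficients}, not of the data: write $\curl\bw=\by_1^\epsilon+\by_2^\epsilon$ with $\by_1^\epsilon=\widetilde{\curl\bw}*\rho_{\epsilon/2}$ and $\|\by_2^\epsilon\|_{\bL^s}\le\epsilon$ (and similarly for $\bdd$, $\nabla\bdd$). The remainder piece gives $\|\by_2^\epsilon\times\bu\|_{\bL^p}\le C\epsilon\, X$ with $\epsilon$ chosen absolutely, hence absorbable regardless of $\bw$. The smooth piece $\by_1^\epsilon$ satisfies $\|\by_1^\epsilon\|_{\bL^q}\le C_\epsilon\|\curl\bw\|_{\bL^{3/2}}$ by Young's inequality for convolutions, and the remaining factor $\|\bu\|_{\bL^6}$ (or $\|\bu\|_{\bW^{1,m}}$ via an interpolation inequality when $p>6$) is controlled by the $\bH^1$ estimate~\eqref{estim u,b H^1}, which is already linear in the data and independent of $\bw,\bdd$. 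This is what produces the linear factor $(1+\|\curl\bw\|_{\bL^{3/2}}+\|\bdd\|_{\bW^{1,3/2}})$ in~\eqref{estim u,b,P strong p>6/5} without any smallness hypothesis.
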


\begin{proof}We prove it in two steps:\\
\underline{\textbf{ First step:}} We consider the case of $\bw\in \boldsymbol{\mathcal{D}}(\overline{\O})$ and $\bdd\in \boldsymbol{\mathcal{D}}_{\sigma}(\overline{\O})$. We know that for all $p \geq 6/5$ we have

$$\bL^p(\Omega)\hookrightarrow \bL^{6/5}(\O)\quad \mathrm{and} \quad W^{\,1-1/p,p}(\Gamma)\hookrightarrow H^{1/2}(\Gamma).$$
Thanks to Theorem \ref{thm weak solution H1 linear MHD}, there exists a unique solution $(\bu, \bb, P, \bc) \in  \bW^{2,\frac{6}{5}}(\Omega) \times \bW^{2,\frac{6}{5}}(\Omega) \times W^{1,\frac{6}{5}}(\Omega)\times \R^{I}$ verifying the estimates \eqref{estim u,b H^1}-\eqref{estim P L^2}.\\
Since $\bu\in \bW^{2,\frac{6}{5}}(\Omega) \hookrightarrow \bL^6(\Omega)$ and $\curl\bb \in \bW^{1,\frac{6}{5}}(\Omega) \hookrightarrow \bL^2(\Omega)$ , it follows that $(\curl\bw)\times \bu \in \bL^6(\Omega)$ and  $(\curl \bb) \times \bdd \in \bL^2(\Omega)$. Note that $\bL^{2}(\O)\hookrightarrow \bL^{p}(\O)$ if $p\leq 2$, then we have three cases:\\
{\textbf{Case $\frac{6}{5} < p \leqslant 2$:}} Since $\ff-(\curl\bw) \times \bu +(\curl \bb) \times \bdd\in \bL^{p}(\O) $, thanks to the existence of strong solutions
for Stokes equations (see Theorem \ref{thm solution W1,p and W2,p Stokes divu=h}), we have that $(\bu, P)\in  \bW^{\,2,p}(\Omega)\times W^{\,1,p}(\Omega)$. Moreover, we have $\bg+\curl(\bu\times\bdd)\in \bL^{p}(\O)$. Thanks to the regularity of elliptic problem (see Theorem \ref{thm strong soulution elliptic}), we have that $\bb\in\bW^{\,2,p}(\O)$.\\
{\textbf{Case $2 \leqslant p \leqslant 6$:}} From the previous case, $(\bu, \bb, P) \in \bH^2(\Omega) \times \bH^2(\Omega) \times H^1(\Omega)$. Since 
\begin{eqnarray*}
\bH^2(\Omega) \hookrightarrow \bW^{1,6}(\Omega) \hookrightarrow \bL^\infty(\Omega),
\end{eqnarray*}
then $(\curl\bw) \times \bu \in \bL^\infty(\Omega) $ and $(\curl \bb) \times \bdd \in \bL^6(\Omega)$. Hence we hava that $\ff-\curl\bw\times \bu +(\curl \bb) \times \bdd\in \bL^{p}(\O)$. Again, by Theorem \ref{thm solution W1,p and W2,p Stokes divu=h}, it follows that $(\bu,P) \in \bW^{2,p}(\Omega) \times W^{1,p}(\Omega)$. Moreover, we have that $\bg+\curl(\bu\times\bdd)\in \bL^{6}(\O)$. Thanks to Theorem  \ref{thm strong soulution elliptic}, we have that $\bb \in \bW^{2,p}(\Omega)$.\\ 
{\textbf{Case $p >6$:}} We know that $(\bu, \bb, P) \in \bW^{2,6}(\Omega) \times \bW^{2,6}(\Omega) \times W^{1,6}(\Omega)$. Since 
\begin{eqnarray*}
\bW^{2,6}(\Omega) \hookrightarrow \bW^{1,\infty}(\Omega)
\end{eqnarray*}
then $(\curl\bw) \times \bu\in \bL^{q}(\O)$,  $(\curl \bb) \times \bdd\in\bL^{q}(\O)$ and $\curl(\bu\times\bdd)\in\bL^{q}(\O)$ for any $q\geq 1$. Again, according to the regularity of
Stokes and elliptic problems, we have $(\bu, \bb, P) \in \bW^{2,p}(\Omega)\times \bW^{2,p}(\Omega) \times W^{1,p}(\Omega)$ and we have the following estimate:
\begin{eqnarray} \label{Estimation forte Stokes Elliptique avec Lambda}
\begin{aligned}
&\norm{\bu}_{\bW^{2,p}(\Omega)} +  \norm{\bb}_{\bW^{2,p}(\Omega)}+\norm{P}_{W^{1,p}(\Omega)} \\
&\leqslant C_{_{SE}} \Big( \norm{\ff}_{\bL^p(\Omega)} + \norm{(\curl\bw) \times \bu}_{\bL^p(\Omega)} + \norm{(\curl \bb) \times \bdd}_{\bL^p(\Omega)} + \norm{P_0}_{W^{1-\frac{1}{p}, p}(\Gamma)} \\
&+ \sum_{i=1}^I \vert c_i \vert + \norm{\bg}_{\bL^p(\Omega)} + \norm{\curl (\bu \times \bdd)}_{\bL^p(\Omega)} \Big),
\end{aligned}
\end{eqnarray}
where $C_{_{SE}} = \max(C_{_{S}}, C_{_{E}})$ with $C_{_{S}}$ the constant given in \eqref{estim W^2,p Stokes} and $C_{_{E}}$ the constant given in \eqref{estim W^2,p elliptic}. \\
To prove the estimate \eqref{estim u,b,P strong p>6/5},  we must bound the terms $\norm{(\curl\bw) \times \bu}_{\bL^p(\Omega)}$, $\norm{(\curl \bb) \times \bdd}_{\bL^p(\Omega)}$, $\norm{\curl (\bu \times \bdd)}_{\bL^p(\Omega)}$ and $\sum_{i=1}^I | c_i |$ in the right hand side of \eqref{Estimation forte Stokes Elliptique avec Lambda}.\\ For this, let  $\epsilon > 0$ and $\rho_{\epsilon/2} $ the classical mollifier. We consider $\widetilde{\by}=\widetilde{\curl \bw}$ and  $ \widetilde{\bdd}$ the extensions by $\mathbf{0}$ of $\by$ and $\bdd$ to $\R^3$, respectively. We decompose $\curl\bw$ and $\bdd$: 
\begin{eqnarray}
\curl\bw= \by_1^\epsilon + \by_{2}^\epsilon\quad \mathrm{where}&\quad \by_1^\epsilon = \widetilde{\curl\bw} \ast \rho_{\epsilon/2} \quad \mathrm{and}\quad \by_{2}^\epsilon = \curl\bw- \by_1^\epsilon,\label{decomposition y}\\
\bdd= \bdd_1^\epsilon + \bdd_{2}^\epsilon \quad \mathrm{where}& \bdd_1^\epsilon = \widetilde{\bdd} \ast \rho_{\epsilon/2} \quad {}\label{decomposition d}
\end{eqnarray}
\noindent  \textbf{(i) Estimate of the term} $\norm{(\curl\bw)\times \bu}_{\bL^p(\Omega)}$. 
First, we look for the estimate depending on $\by^{\varepsilon}_{2}$. 
Observe that $\bW^{2,p}(\Omega) \hookrightarrow \bL^m(\Omega)$ with $$\frac{1}{m}=\frac{1}{p}-\frac{2}{3}\,\,\,\mathrm{if} \,\,p<3/2,\,\,\,  \,\,m=\dfrac{3s}{2s-3} \in[1,\,\infty[\,\,\,\mathrm{if}\,\, p=3/2\quad \mathrm{and} \quad  \,\,m=\infty\,\,\, \mathrm{if}\,\,p>3/2.$$
Using the Hölder inequality and Sobolev embedding, we have
\begin{eqnarray*}
\norm{\by_{2}^\epsilon\times \bu}_{\bL^p(\Omega)} \leqslant \norm{\by_{2}^\epsilon}_{\bL^s(\Omega)} \norm{\bu}_{\bL^m(\Omega)} \leq C\norm{\by_{2}^\epsilon}_{\bL^s(\Omega)} \norm{\bu}_{\bW^{\,2,p}(\Omega)}
\end{eqnarray*}
where $\frac{1}{p}=\frac{1}{m}+\frac{1}{s}$ and $s$ the real number defined as:
\begin{equation}\label{def of s}
s= \frac{3}{2} \quad \mathrm{if} \,\,p < \frac{3}{2},\quad s>\frac{3}{2}\quad \mathrm{if}\,\,p=\frac{3}{2}\quad \mathrm{and}\quad s=p\quad \mathrm{if}\,\,p>\frac{3}{2}. 
\end{equation}
 Moreover, we have 
\begin{eqnarray*}
\norm{\by_{2}^\epsilon}_{\bL^s(\Omega)} =  \norm{\curl\bw - \widetilde{\curl\bw} \ast \rho_{\epsilon/2}}_{\bL^s(\O)}\leq \epsilon.
\end{eqnarray*}
Then, it follows that
\begin{eqnarray}\label{estim y_2}
\begin{aligned}
\norm{\by_{2}^\epsilon \times \bu}_{\bL^p(\Omega)}\leqslant C \epsilon \norm{\bu}_{\bW^{2,p}(\Omega)}.
\end{aligned}
\end{eqnarray}
To get the estimate depending on $\by_1^\epsilon$, we consider two steps (similar to \cite[Theorem 3.5]{AS_DCDS}):\\

$\bullet$ \,{\textbf{Case $\frac{6}{5}\leq p \leqslant 6\,$:}} there exists $q \in [\frac{3}{2},\infty]$ such that $\frac{1}{p}=\frac{1}{q}+\frac{1}{6}$. By Hölder inequality, we have 
\begin{eqnarray*}
\begin{aligned}
\norm{\by_1^\epsilon \times \bu}_{\bL^p(\Omega)} &\leqslant \norm{\by_1^\epsilon}_{\bL^q(\Omega)} \norm{\bu}_{\bL^6(\Omega)}.
\end{aligned}
\end{eqnarray*} 
Let $t\in[1,\,3]$ such that  $1+\frac{1}{q}=\frac{2}{3}+\frac{1}{t}$, we obtain 
\begin{eqnarray*}
\begin{aligned}
\norm{\by_1^\epsilon \times \bu}_{\bL^p(\Omega)} 
\leqslant \norm{\curl\bw}_{\bL^\frac{3}{2}(\Omega)} \norm{\rho_\epsilon}_{\bL^t(\Omega)} \norm{\bu}_{\bL^6(\Omega)}
\leq C_{\varepsilon}\norm{\curl \bw}_{\bL^\frac{3}{2}(\Omega)} \norm{\bu}_{\bL^6(\Omega)},
\end{aligned}
\end{eqnarray*} 
where $C_{\varepsilon}$ is the constant absorbing the norm of the mollifier. Since $\bH^1(\Omega) \hookrightarrow \bL^6(\Omega)$, it follows from \eqref{estim u,b H^1} that 
\begin{eqnarray*}
\norm{\by_1^\epsilon \times \bu}_{\bL^p(\Omega)} \leqslant C_2 C_\epsilon \norm{\curl \bw}_{\bL^\frac{3}{2}(\Omega)} \!\!\big( \norm{\ff}_{[\bH_0^{6,2}(\curl, \Omega)]'} + \norm{\bg}_{[\bH_0^{6,2}(\curl, \Omega)]'} + \norm{P_0}_{H^{-\frac{1}{2}}(\Gamma)} \big),
\end{eqnarray*}
where $C_{2}$ is the constant of the Sobolev embedding $\bH^1(\Omega) \hookrightarrow \bL^6(\Omega)$.
Since, $p \geqslant \frac{6}{5}$, we deduce that 
\begin{eqnarray}\label{first estim y ^1 epsilon}
\norm{\by_1^\epsilon \times \bu}_{\bL^p(\Omega)} \leqslant C_3 C_\epsilon \norm{\curl \bw}_{\bL^\frac{3}{2}(\Omega)} \Big( \norm{\ff}_{\bL^p(\Omega)} + \norm{\bg}_{\bL^p(\Omega)} + \norm{P_0}_{W^{1-\frac{1}{p},p}(\Gamma)} \Big),
\end{eqnarray}
where $C_{3}$ is a constant which depends on $C_2$, $\bL^p(\Omega) \hookrightarrow [\bH_0^{6,2}(\curl, \Omega)]'$ and $ W^{1-\frac{1}{p},p}(\Gamma) \hookrightarrow H^{-\frac{1}{2}}(\Gamma)$.\\
$\bullet$ \textbf{Case} $p > 6$: we know that the embedding 
$$\bW^{2,p}(\Omega) \hookrightarrow \bW^{1,m}(\Omega),$$
is compact for any $m\in[1,p^*[$ if $p<3$, for any $m\in[1,\,\infty[$ if $p=3$ and for $m\in[1,\infty]$ if $p>3$.\\We choose the exponent $m$ such that $6< m< +\infty$. So, we have:
$$
\bW^{2,p}(\Omega) \underset{\mathrm{compact}}{\hookrightarrow} \bW^{1,m}(\Omega) \underset{\mathrm{continuous}}{\hookrightarrow} \bL^{\,6}(\O).
$$                                                                             
Hence, for any $\varepsilon'>0$, we know that there exists a constant $C_{\varepsilon '}$ such that the following interpolation inequality holds:
\begin{eqnarray}\label{interpolation ineq}
\norm{\bu}_{\bW^{1,m}(\Omega)} \leqslant \varepsilon '\norm{\bu}_{\bW^{2,p}(\Omega)} + C_{\varepsilon '} \norm{\bu}_{\bH^1(\Omega)} 
\end{eqnarray}
For $t >2$ such that $1+\frac{1}{p}=\frac{2}{3}+\frac{1}{t}$, we have
\begin{eqnarray*}
\begin{aligned}
\norm{\by_1^\epsilon \times \bu}_{\bL^p(\Omega)} &\leqslant C\norm{\by_1^\epsilon}_{\bL^p(\Omega)} \norm{\bu}_{\bW^{\,1,m}(\Omega)} \\
&\leqslant C\norm{\curl\bw}_{\bL^\frac{3}{2}(\Omega)} \norm{\rho_{\epsilon/2}}_{\bL^t(\R^3)} \norm{\bu}_{\bW^{\,1,m}(\Omega)} .
\end{aligned}
\end{eqnarray*}
Using \eqref{interpolation ineq}, we obtain
\begin{eqnarray}\label{second estim y ^1 epsilon}
\begin{aligned}
\norm{\by_1^\epsilon \times \bu}_{\bL^p(\Omega)} &\leqslant C C_\epsilon \norm{\curl\bw}_{\bL^\frac{3}{2}(\Omega)} (\varepsilon ' \norm{\bu}_{\bW^{2,p}(\Omega)} + C_{\varepsilon^{'}} \norm{\bu}_{\bH^1(\Omega)} )
\end{aligned}
\end{eqnarray}
Thus, choosing $\varepsilon '>0$ small enough, we can deduce from \eqref{first estim y ^1 epsilon} or \eqref{second estim y ^1 epsilon} that
\begin{eqnarray}\label{final estim y ^1 epsilon}
\norm{\by_1^\epsilon \times \bu}_{\bL^p(\Omega)} \leqslant C C_\epsilon \norm{\curl \bw}_{\bL^\frac{3}{2}(\Omega)}(\varepsilon ' \norm{\bu}_{\bW^{2,p}(\Omega)} + C_{\varepsilon^{'}} \norm{\bu}_{\bH^1(\Omega)} ).
\end{eqnarray}
 \textbf{(ii) Estimate of the term} $\norm{(\curl \bb) \times \bdd}_{\bL^p(\Omega)}$. Using the decomposition \eqref{decomposition d}, as previously, we have for the part $\bdd_{2}^{\varepsilon}$:
\begin{eqnarray} \label{Estim d_2}
\norm{\bdd_{ 2}^\epsilon}_{\bL^{s'}(\Omega)} \leqslant \norm{\bdd-\widetilde{\bdd} \ast \rho_{\epsilon/2}}_{\bL^{s'}(\O)} \leq \epsilon.
\end{eqnarray}
Recall that $ \bW^{\,2,p}(\O) \hookrightarrow  \bW^{\,1,k}(\O)$ for $k=p^{*}=\dfrac{3p}{3-p}$ if $p<3$, for any $k\in[1,\infty[$ if $p=3$ and $k=\infty$ if $p > 3$.
Using the Hölder inequality and \eqref{Estim d_2}, we have
\begin{eqnarray}\label{estim d_ lambda, 2}
\begin{aligned}
\norm{(\curl \bb) \times \bdd_{ 2}^\epsilon}_{\bL^p(\Omega)} &\leqslant \norm{\curl \bb}_{\bL^ k(\Omega)} \norm{\bdd_{2}^\epsilon}_{\bL^{s'}(\Omega)}\leqslant C \epsilon \norm{\bb}_{\bW^{2,p}(\Omega)},
\end{aligned}
\end{eqnarray}
where $\frac{1}{p}=\frac{1}{k}+\frac{1}{s'}$ for $s'$ given by:
\begin{equation}\label{def of s'}
s'= 3 \quad \mathrm{if} \,\,p < 3,\quad s'> 3\quad \mathrm{if}\,\,p=3\quad \mathrm{and}\quad s'=p\quad \mathrm{if}\,\,p> 3. 
\end{equation}
It remains to prove the estimate depending on $\bdd_{1}^{\epsilon}$. We have three cases:

$\bullet$ \,{\textbf{Case $ p \leq 2$}}: Using the Hölder inequality, we have 
\begin{eqnarray*}
\begin{aligned}
\norm{(\curl \bb) \times \bdd_{1}^\epsilon}_{\bL^p(\Omega)} &\leqslant \norm{\bdd_{1}^\epsilon}_{\bL^{k}(\Omega)} \norm{\curl\bb}_{\bL^{2}(\Omega)},
\end{aligned}
\end{eqnarray*}
where $\frac{1}{p}=\frac{1}{k}+\frac{1}{2}$. Let $t\in[1,\,3/2]$ such that  $1+\frac{1}{k}=\frac{1}{3}+\frac{1}{t}$, we obtain (since $r\geq 3$)
\begin{eqnarray}\label{estim curlb x d_1 case p<2}
\begin{aligned}
\norm{(\curl \bb) \times \bdd_{1}^\epsilon}_{\bL^p(\Omega)} &\leqslant \norm{\bdd}_{\bL^3(\Omega)} \norm{\rho_{\epsilon/2}}_{\bL^t(\R^3)}\norm{\curl \bb}_{\bL^2(\Omega)} \\& \leq C_{\epsilon}\norm{\bdd}_{\bL^3(\Omega)}\norm{ \bb}_{\bH^{\,1}(\Omega)}.
\end{aligned}
\end{eqnarray}
%
where $C_4$ is a constant which depends on $\bL^p(\Omega) \hookrightarrow [\bH_0^{6,2}(\curl, \Omega)]'$ and $ W^{1-\frac{1}{p},p}(\Gamma) \hookrightarrow H^{-\frac{1}{2}}(\Gamma)$.\\
$\bullet$ \,{\textbf{Case $ 2<p < 3$}}: Assuming $2<q<p^{*}$, from the relation
\begin{equation*}\bW^{2,p}(\Omega) \underset{\mathrm{compact}}{\hookrightarrow}\bW^{1,q}(\Omega)\underset{\mathrm{continuous}}{\hookrightarrow} \bH^{\,1}(\O)\end{equation*}
we have for any $\epsilon'>0$, there exists a constant $C_{\epsilon}$ such that 
\begin{eqnarray}\label{interpolation ineq epsilon'}
\norm{\bb}_{\bW^{1,q}(\Omega)} \leqslant \varepsilon '\norm{\bb}_{\bW^{2,p}(\Omega)} + C_{\varepsilon '} \norm{\bb}_{\bH^1(\Omega)} 
\end{eqnarray}
Let $k$ be defined by  $\frac{1}{p} = \frac{1}{q}+\frac{1}{k}$ and $t \geqslant 1$ defined by  $1+\frac{1}{k} = \frac{1}{3} + \frac{1}{t}$. Thus, since $k>3$, the following estimate holds:
\begin{eqnarray*}
\begin{aligned}
\norm{(\curl \bb) \times \bdd_1^\epsilon}_{\bL^p(\Omega)} &\leqslant \norm{\bdd_1^\epsilon}_{\bL^k(\Omega)}\norm{\curl \bb}_{\bL^q(\Omega)} \leq \norm{\bdd}_{\bL^3(\Omega)} \norm{\rho_{\epsilon/2}}_{\bL^t(\R^3)} \norm{\curl \bb}_{\bL^q(\Omega)}.
\end{aligned}
\end{eqnarray*}

Next using \eqref{interpolation ineq epsilon'} yields,
\begin{eqnarray}\label{estim curlb x d_1 case 2<p<3}
\begin{aligned}
\norm{(\curl \bb) \times \bdd_1^\epsilon}_{\bL^p(\Omega)}\leq C_\epsilon\norm{\bdd}_{\bL^3(\Omega)}( \varepsilon '\norm{\bb}_{\bW^{2,p}(\Omega)} + C_{\varepsilon '} \norm{\bb}_{\bH^1(\Omega)} ).
\end{aligned}
\end{eqnarray}
$\bullet$ \,{\textbf{Case $ p \geq 3$}}: For $\frac{1}{p}=\frac{1}{s'}+\frac{1}{p*}$ with $s'$ defined in \eqref{def of s'}, we have
\begin{eqnarray*}
\begin{aligned}
\norm{(\curl \bb) \times \bdd_1^\epsilon}_{\bL^p(\Omega)} &\leqslant \norm{\bdd_1^\epsilon}_{\bL^{s'}(\Omega)}\norm{\curl \bb}_{\bL^{p*}(\Omega)}.
\end{aligned}
\end{eqnarray*}
Let $t$ be defined by $1+\frac{1}{s'}=\frac{1}{3}+\frac{1}{t}$. Thus, using \eqref{interpolation ineq epsilon'} with $q=p*$, we obtain:
\begin{eqnarray}\label{estim curlb x d_1 case p>3}
\begin{aligned}
\norm{(\curl \bb) \times \bdd_1^\epsilon}_{\bL^p(\Omega)} \leq C_\epsilon\norm{\bdd}_{\bL^3(\Omega)}( \varepsilon '\norm{\bb}_{\bW^{2,p}(\Omega)} + C_{\varepsilon '} \norm{\bb}_{\bH^1(\Omega)} ).
\end{aligned}
\end{eqnarray}
Choosing $\epsilon '>0$ small enough, we deduce from \eqref{estim curlb x d_1 case p<2}, \eqref{estim curlb x d_1 case 2<p<3} or \eqref{estim curlb x d_1 case p>3} that 
\begin{eqnarray}\label{final estim curlb x d_1}
\begin{aligned}
\norm{(\curl \bb) \times \bdd_1^\epsilon}_{\bL^p(\Omega)} \leq C_\epsilon\norm{\bdd}_{\bL^3(\Omega)}( \varepsilon '\norm{\bb}_{\bW^{2,p}(\Omega)} + C_{\varepsilon '} \norm{\bb}_{\bH^1(\Omega)} ).
\end{aligned}
\end{eqnarray}

\noindent  \textbf{(iii) Estimate of the term} $\norm{\curl (\bu\times \bdd)}_{\bL^p(\Omega)}$. Note that, since $\vdiv\,\bu=0$ and $\vdiv\,\bdd=0$ :

$$\curl\,(\bu\times\bdd)=\bdd\cdot\nabla\bu-\bu\cdot\nabla\bdd.$$
$\bullet$ \,{\textbf{The term}} $\norm{\bdd\cdot\nabla\bu}_{\bL^{p}(\O)}$. Using the decomposition \eqref{decomposition d} and exactly the same analysis as in \textbf{(ii)} for the term $(\curl\bb)\times\bdd$ with $\curl\bb$ replaced by $\nabla\bu$, we obtain the following estimates:
\begin{eqnarray}\label{estim d_2 for d.nabla u}
\begin{aligned}
\norm{ \bdd_{ 2}^\epsilon\cdot\nabla\bu}_{\bL^p(\Omega)} &\leqslant \norm{\nabla \bu}_{\bL^ k(\Omega)} \norm{\bdd_{2}^\epsilon}_{\bL^{s'}(\Omega)}\leqslant C \epsilon \norm{\bu}_{\bW^{2,p}(\Omega)},
\end{aligned}
\end{eqnarray}
where $s'$ is defined in \eqref{def of s'} and 
\begin{eqnarray}\label{final estim d_1 nabla u}
\begin{aligned}
\norm{\bdd_1^\epsilon\cdot \nabla \bu}_{\bL^p(\Omega)} \leq C_\epsilon\norm{\bdd}_{\bL^3(\Omega)}( \varepsilon '\norm{\bu}_{\bW^{2,p}(\Omega)} + C_{\varepsilon '} \norm{\bu}_{\bH^1(\Omega)} ).
\end{aligned}
\end{eqnarray}
$\bullet$ \,{\textbf{The term}} $\norm{\bu\cdot\nabla\bdd}_{\bL^{p}(\O)}$. The analysis is similar to the case \textbf{(i)}. We consider:
\begin{eqnarray}
\nabla\bdd= \bz_1^\epsilon + \bz_{2}^\epsilon &\quad\mathrm{where}&\quad \bz_1^\epsilon = \widetilde{\nabla\bdd} \ast \rho_{\epsilon/2}\quad\,\, \mathrm{and} \quad \bz_{2}^\epsilon = \nabla\bdd - \widetilde{\nabla\bdd} \ast \rho_{\epsilon/2},\label{decomposition nabla d}
\end{eqnarray}
 $\widetilde{\nabla\bdd}$ is the extension by zero of $\nabla\bdd$ to $\R^3$. Observe that 
\begin{eqnarray*} \label{Estim eta2 and z_2}
\norm{\bz_{ 2}^\epsilon}_{\bL^{s}(\Omega)} \leqslant \Vert\nabla\bdd-\widetilde{\nabla\bdd} \ast \rho_{\epsilon/2}\Vert_{\bL^{s}(\O)} \leq \epsilon,
\end{eqnarray*}
with $s$ given in \eqref{def of s}. Using the above estimates and the same arguments as in the case \textbf{(i)}, the influence of $\bz_2^\epsilon$ in the bound of $\norm{\bu\cdot\nabla\bdd}_{\bL^{p}(\O)}$ is given by:
\begin{eqnarray}\label{estim u z_2}
\norm{\bu\cdot\bz_{2}^\epsilon}_{\bL^p(\Omega)} \leq C\norm{\bz_{2}^\epsilon}_{\bL^s(\Omega)} \norm{\bu}_{\bL^{m}(\Omega)} \leq C \epsilon \norm{\bu}_{\bW^{2,p}(\Omega)}.
\end{eqnarray}
And for the bound depending on $\bz_1^\epsilon$, proceeding in the same way as in the case \textbf{(i)}, we derive:
\begin{eqnarray}\label{final estim z_1 u}
\norm{\bz_1^\epsilon \cdot \bu}_{\bL^p(\Omega)} \leqslant C C_\epsilon \norm{\nabla \bdd}_{\bL^\frac{3}{2}(\Omega)}(\varepsilon ' \norm{\bu}_{\bW^{2,p}(\Omega)} + C_{\varepsilon^{'}} \norm{\bu}_{\bH^1(\Omega)} )
\end{eqnarray}
\textbf{(iv) Estimate of the constants} $\sum_{i=1}^{I}\vert c_i\vert$. 
We note that \small{
\begin{eqnarray*}
\begin{aligned}
|c_i| &\leqslant \vert\int_{\O}\ff \cdot\nabla q_i^N \,d\bx| + |\int_{\O}(\curl \bb \times \bdd)\cdot \nabla q_i^N\,d\bx| + |\int_{\O}(\curl\bw \times \bu)\cdot \nabla q_i^N)\,d\bx | + |\int_{\Gamma} P_0 \nabla q_i^N \cdot \bn\,d\sigma | \\
&\leqslant \norm{\ff}_{\bL^\frac{6}{5}(\Omega)} \norm{\nabla q_i^N}_{\bL^6(\Omega)} + \norm{\curl \bb}_{\bL^2(\Omega)} \norm{\bdd}_{\bL^3(\Omega)} \norm{\nabla q_i^N}_{\bL^6(\Omega)}  \\
&+ \norm{\curl\bw}_{\bL^\frac{3}{2}(\Omega)} \norm{\bu}_{\bL^6(\Omega)} \norm{\nabla q_i^N}_{\bL^6(\Omega)} + \norm{P_0}_{H^{-\frac{1}{2}}(\Gamma)} \norm{\nabla q_i^N \cdot \bn}_{H^\frac{1}{2}(\Gamma)}
\end{aligned}
\end{eqnarray*}}
Thanks to \cite[Corollary 3.3.1]{Sel-thesis}, we know that the functions $\nabla q_i^N $ belong to $\bW^{\,1,q}(\O)$ for any $q\geq 2$ where each $q_{i}^{N}$ is the unique solution of the problem \eqref{Definition des q_i^N}.\\
Now, the estimate \eqref{estim u,b H^1} yields:
\begin{eqnarray}\label{estim c_i}
\begin{aligned}
\sum_{i=1}^{I}|c_i|
&\leqslant C( 1+\norm{\bdd}_{\bL^3(\Omega)}+ \norm{\curl\bw}_{\bL^\frac{3}{2}(\Omega)})(\norm{\ff}_{\bL^p(\Omega)} + \norm{\bg}_{\bL^p(\Omega)} + \norm{P_0}_{W^{1-\frac{1}{p},p}(\Gamma)})\end{aligned}
\end{eqnarray}
Using the embeddings $$\bW^{\,1,3/2}(\O)\hookrightarrow \bL^{3}(\O),\,\,\, \bL^p(\Omega) \hookrightarrow [\bH_0^{6,2}(\curl, \Omega)]',\,\,\,  W^{1-\frac{1}{p},p}(\Gamma) \hookrightarrow H^{-\frac{1}{2}}(\Gamma),$$
choosing $\epsilon$ and $\epsilon '$ such that $$\epsilon 'C_{_{SE}}C_{\epsilon}(\norm{\curl\bw}_{\bL^{3/2}(\O)}+\norm{\bdd}_{\bW^{1,\frac{3}{2}}(\O)})< \frac{1}{2},$$
we deduce from \eqref{estim y_2},\eqref{final estim y ^1 epsilon}, \eqref{estim curlb x d_1 case p<2}, \eqref{final estim curlb x d_1}-\eqref{final estim d_1 nabla u}, \eqref{estim u z_2}-\eqref{estim c_i}, the weak estimate \eqref{estim u,b H^1} and the embedding $\bW^{1,\frac{3}{2}}(\Omega) \hookrightarrow \bL^3(\Omega)$ that the estimate \eqref{estim u,b,P strong p>6/5} holds in all cases. \\

\noindent \underline{\textbf{ Second step:}} \textbf{The case of} $\curl\bw\in\bL^{3/2}(\O)$ \textbf{and} $\bdd\in \bW^{\,1,3/2}_{\sigma}(\O)$.\\
Let $\bw_{\lambda} \in \boldsymbol{\mathcal{D}}(\Omega)$ and $\bdd_\lambda\in \boldsymbol{\mathcal{D}}(\O)$ such that 
$\curl\bw_\lambda \rightarrow \curl \bw\,\,\text{in} \,\, \bL^{3/2}(\Omega)$ and 
$\bdd_\lambda \rightarrow \bdd \,\, \text{in} \,\, \bW^{1,3/2}(\Omega)$.\\
Consequently, the following problem: 
\begin{equation*}\label{linearized MHD-pressure lambda}
	\left\lbrace
		\begin{split}
		   - \, \Delta \bu_{\lambda} + (\curl\bw_{\lambda})\times \bu_{\lambda} + \nabla \mathrm{P}_{\lambda} -  (\curl \bb_{\lambda}) \times \bdd_{\lambda} = \ff \quad &\mathrm{and}\quad  \vdiv \bu_{\lambda} = 0   \quad \mathrm{in} \,\, \Omega, \\
           \, \curl \curl \bb_{\lambda} -  \curl (\bu_{\lambda} \times \bdd_{\lambda}) = \bg  \quad & \mathrm{and}\quad \vdiv \bb_{\lambda} = 0 \quad \text{in} \,\, \Omega, \\
        \bu_{\lambda} \times \bn = 0 \quad &\mathrm{and}\quad \bb_{\lambda} \times \bn = 0\quad \mathrm{on}\,\,\Gamma,\\
        \mathrm{P}_{\lambda} = P_0 \quad \text{on} \, \,\Gamma_0\quad & \mathrm{and}\quad 
         \mathrm{P}_{\lambda} = P_0 + c_i \,\,\,\text{on} \,\, \Gamma_i,\\
         \langle\bu_{\lambda} \cdot \bn, 1\rangle_{\Gamma_i} = 0, \quad &\mathrm{and}\quad \langle\bb_{\lambda} \cdot \bn, 1\rangle_{\Gamma_i} = 0,\quad 1\leq i\leq I.
		\end{split}\right.
\end{equation*}
has a unique solution $(\bu_\lambda, \bb_\lambda, P_\lambda, \bc_\lambda)\in  \bW^{\,2,p}(\Omega) \times \bW^{\,2,p}(\Omega) \times W^{\,1,p}(\Omega)\times \R^{I}$ and satisfies:
\begin{eqnarray}\label{estim u_lambda,b_lambda,P_lambda strong p>6/5}
\begin{aligned}
&\norm{\bu_\lambda}_{\bW^{2,p}(\Omega)} + \norm{\bb_\lambda}_{\bW^{2,p}(\Omega)}+ \norm{P_\lambda}_{W^{1,p}(\Omega)}  \leq  C(1 + \norm{\curl\bw_\lambda}_{\bL^\frac{3}{2}(\Omega)} + \norm{\bdd_\lambda}_{\bW^{1,\frac{3}{2}}(\Omega)}) \\
&\times\big(\norm{\ff}_{\bL^p(\Omega)} + \norm{\bg}_{\bL^p(\Omega)} + \norm{P_0}_{W^{1-\frac{1}{p},p}(\Gamma)}\big) ,
\end{aligned}
\end{eqnarray}
where $C$ is independent of $\lambda$. Finally, these uniform bounds enable us to pass to the limit $\lambda\rightarrow 0$. As a consequence, $(\bu_\lambda, \bb_\lambda, P_\lambda, \bc_\lambda)$ converges to $(\bu, \bb, P, \bc)$ the solution of the linearized MHD problem \eqref{linearized MHD-pressure} and satisfies the estimate \eqref{estim u,b,P strong p>6/5}.
\end{proof}


\subsection{Weak solution in $\bW^{\,1,p}(\Omega)$ with $1<p < +\infty$} 
In this subsection, we study the regularity $\bW^{\,1,p}(\Omega)$ of the weak solution for the linearized MHD problem \eqref{linearized MHD-pressure}. We begin with the case $p>2$. The next theorem will be improved in Corollary \ref{Corollary improvement pressure} where we consider a data $P_0$ less regular. \medskip

In the following, we denote by $\langle\cdot,\cdot\rangle_{\O_{_{r,p}}}$ the duality product between $[\bH_0^{r,p}(\curl, \Omega)]'$ and $\bH_0^{r,p}(\curl, \Omega)$. 

\begin{theorem} (\textbf{Generalized solution in} $\bW^{\,1,p}(\O)$ with $p>2$)\label{Solutions faibles p grand}. Suppose that $\O$ is of class $\mathcal{C}^{1,1}$ and $p > 2$. Assume that $h=0$, and let $\ff, \bg \in [\bH_0^{r',p'}(\curl, \Omega)]'$ and $P_0 \in W^{1-\frac{1}{r}, r}(\Gamma)$ with the compatibility condition 
\begin{eqnarray} 
 \forall \, \bv \in \bK_{N}^{p'}(\Omega),\quad\langle \bg, \bv \rangle_{\O_{_{r',p'}}}= 0,\label{Condition compatibilite K_N Lp}  \\
 \vdiv \bg = 0 \quad \mathrm{in}  \, \Omega.\label{condition div g=0 sol W^1,p linear MHD}
 \end{eqnarray} and 
\begin{eqnarray}\label{hypothesis on wurl w and d}
 \curl\bw\in\bL^{s}(\O),\,\,\, \bdd\in \bW^{1,s}_{\sigma}(\O),
\end{eqnarray}
with
\begin{equation}\label{def of s for weak p>2}
s= \frac{3}{2} \quad \mathrm{if} \,\,2<p < 3,\quad s>\frac{3}{2}\quad \mathrm{if}\,\,p=3\quad \mathrm{and}\quad s=r\quad \mathrm{if}\,\,p>3.
\end{equation}
\begin{equation}\label{def r}
 r\geq1 \qquad \mathrm{such \,\,that}\qquad \frac{1}{r}=\frac{1}{p}+\frac{1}{3}.
\end{equation}
Then the linearized MHD problem \eqref{linearized MHD-pressure} has a unique solution $(\bu, \bb, P, \bc) \in \bW^{1,p}(\Omega) \times \bW^{1,p}(\Omega) \times W^{1,r}(\Omega) \times \R^I$. Moreover, we have the following estimate:  
\small
\begin{eqnarray}\label{estim u,b W^1,p for p>2}
\begin{aligned}
&\norm{\bu}_{\bW^{1,\,p}(\Omega)} + \norm{\bb}_{\bW^{1,\,p}(\Omega)}+\norm{P}_{W^{1,r}(\Omega)} \leqslant C \big(1+\norm{\curl\bw}_{\bL^{s}(\O)}+\norm{\bdd}_{\bW^{1,s}(\O)}\big)\\&\times\big(\norm{\ff}_{[\bH_0^{r',p'}(\curl, \Omega)]'} + \norm{\bg}_{[\bH_0^{r',p'}(\curl, \Omega)]'} + \norm{P_{0}}_{W^{\,1-1/r,r}(\Gamma)}\big).
\end{aligned}
\end{eqnarray}
\end{theorem}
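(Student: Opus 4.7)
My plan is to follow the bootstrap strategy of Theorem~\ref{thm strong solution p>6/5} (strong $\bW^{2,p}$ regularity), adapted to the weak $\bW^{1,p}$ setting via Proposition~\ref{thm solution W1,p and W2,p Stokes divu=h} for the Stokes block and Lemma~\ref{Lemme elliptique H r' p'} for the elliptic block, together with a smooth approximation of $(\bw,\bdd)$ to produce uniform estimates. First I invoke Theorem~\ref{thm weak solution H1 linear MHD} to secure a weak solution $(\bu,\bb,P,\bc)\in\bH^1(\Omega)\times\bH^1(\Omega)\times L^2(\Omega)\times\R^I$; this is legitimate because, for $p\geq 2$, $\bL^6\hookrightarrow\bL^{r'}$ and $\bL^2\hookrightarrow\bL^{p'}$, hence $\bH_0^{6,2}(\curl,\Omega)\hookrightarrow\bH_0^{r',p'}(\curl,\Omega)$, and by duality $[\bH_0^{r',p'}(\curl,\Omega)]'\hookrightarrow[\bH_0^{6,2}(\curl,\Omega)]'$; the assumed compatibility conditions transfer immediately to those of the Hilbert setting. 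This yields the initial estimate \eqref{estim u,b H^1}.

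Next, I observe that $(\bu,P,\bc)$ is a solution of the Stokes problem $(\mathcal{S_N})$ with right-hand side $\ff-(\curl\bw)\times\bu+(\curl\bb)\times\bdd$ (and $h=0$), while $\bb$ solves the elliptic problem $(\mathcal{E_N})$ with right-hand side $\bg+\curl(\bu\times\bdd)=\bg+(\bdd\cdot\nabla)\bu-(\bu\cdot\nabla)\bdd$, using $\vdiv\bu=\vdiv\bdd=0$. Assuming first $\bw\in\boldsymbol{\mathcal{D}}(\overline{\Omega})$ and $\bdd\in\boldsymbol{\mathcal{D}}_\sigma(\overline{\Omega})$, the nonlinear terms gain integrability: when $2<p\leq 6$, $(\curl\bw)\times\bu\in\bL^6$, $(\curl\bb)\times\bdd\in\bL^2$ and $\bu\times\bdd\in\bL^6$, so the Stokes datum lies in $\bL^2\hookrightarrow\bL^r$ and $\curl(\bu\times\bdd)\in\curl\bL^6\subset\curl\bL^p$, both of which are continuously embedded in $[\bH_0^{r',p'}(\curl,\Omega)]'$; the compatibility condition for $(\mathcal{E_N})$ is preserved since $\int(\bu\times\bdd)\cdot\curl\bv=0$ for $\bv\in\bK_N^{p'}(\Omega)$. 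A single application of Proposition~\ref{thm solution W1,p and W2,p Stokes divu=h} and Lemma~\ref{Lemme elliptique H r' p'} then yields $(\bu,\bb,P)\in\bW^{1,p}\times\bW^{1,p}\times W^{1,r}$. When $p>6$, a first iteration at the value $p=6$ produces $\bu,\bb\in\bW^{1,6}\hookrightarrow\bL^\infty$; a second iteration then places every nonlinear term in $\bL^6$, which embeds into $[\bH_0^{r',p'}(\curl,\Omega)]'$ since $r'>6/5$ for $p>2$.

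To pass from smooth to general $(\bw,\bdd)$ and to obtain the estimate \eqref{estim u,b W^1,p for p>2} with the multiplicative factor $(1+\|\curl\bw\|_{\bL^s}+\|\bdd\|_{\bW^{1,s}})$ only, I repeat the mollifier decomposition used in the proof of Theorem~\ref{thm strong solution p>6/5}: write $\curl\bw=\by_1^\epsilon+\by_2^\epsilon$ and $\bdd=\bdd_1^\epsilon+\bdd_2^\epsilon$ with $\|\by_2^\epsilon\|_{\bL^s},\|\bdd_2^\epsilon\|_{\bW^{1,s}}\leq\epsilon$. The $\by_2^\epsilon,\bdd_2^\epsilon$ pieces contribute terms of the form $\epsilon(\|\bu\|_{\bW^{1,p}}+\|\bb\|_{\bW^{1,p}})$, absorbed into the left-hand side for $\epsilon$ small. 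The $\by_1^\epsilon,\bdd_1^\epsilon$ pieces, via an interpolation inequality of the type $\|\bu\|_{\bW^{1,q}}\leq\varepsilon'\|\bu\|_{\bW^{1,p}}+C_{\varepsilon'}\|\bu\|_{\bH^1}$, are controlled by $C_\epsilon(\|\curl\bw\|_{\bL^s}+\|\bdd\|_{\bW^{1,s}})$ times the $\bH^1$-norm of $(\bu,\bb)$, which in turn is dominated by the data norms via \eqref{estim u,b H^1}. The constants $c_i$ are handled as in step~(iv) of the proof of Theorem~\ref{thm strong solution p>6/5}. Approximating $(\bw,\bdd)$ by smooth $(\bw_\lambda,\bdd_\lambda)$ converging in the corresponding norms, the uniform bounds allow the passage to the limit $\lambda\to 0$. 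Uniqueness follows from linearity: the difference of two solutions solves the homogeneous problem (zero data) and therefore vanishes by the $\bH^1$-uniqueness of Theorem~\ref{thm weak solution H1 linear MHD}, since $\bW^{1,p}(\Omega)\hookrightarrow\bH^1(\Omega)$ for $p\geq 2$.

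The principal obstacle is obtaining the estimate with the sharp multiplicative factor $(1+\|\curl\bw\|_{\bL^s}+\|\bdd\|_{\bW^{1,s}})$ rather than a higher norm of the smooth approximations: this requires the delicate mollifier-interpolation tuning of the parameters $\epsilon,\varepsilon'$ exactly as in Theorem~\ref{thm strong solution p>6/5}, now carried through the weak regularity estimates of Proposition~\ref{thm solution W1,p and W2,p Stokes divu=h} and Lemma~\ref{Lemme elliptique H r' p'} rather than their $\bW^{2,p}$ counterparts.
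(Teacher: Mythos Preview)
Your proof is correct, but your route to existence differs from the paper's. You mimic the two-step structure of Theorem~\ref{thm strong solution p>6/5}: first assume $(\bw,\bdd)$ smooth, bootstrap the Hilbert solution directly through Proposition~\ref{thm solution W1,p and W2,p Stokes divu=h} and Lemma~\ref{Lemme elliptique H r' p'}, derive the sharp estimate via mollifier/interpolation, and finally pass to the limit $\lambda\to0$. The paper instead \emph{decomposes} the solution: it solves the pure Stokes problem $(\mathcal{S_N})$ with data $(\ff,P_0)$ to obtain $(\bu_1,P_1)\in\bW^{1,p}\times W^{1,r}$, solves the pure elliptic problem $(\mathcal{E_N})$ with datum $\bg$ to obtain $\bb_1\in\bW^{1,p}$, shows that the cross-terms $\ff_1=-(\curl\bw)\times\bu_1+(\curl\bb_1)\times\bdd$ and $\bg_1=\curl(\bu_1\times\bdd)$ lie in $\bL^r$, and then invokes the already-proven \emph{strong} solution Theorem~\ref{thm strong solution p>6/5} at the exponent $r\geq 6/5$ to produce a correction $(\bu_2,\bb_2,P_2)\in\bW^{2,r}\times\bW^{2,r}\times W^{1,r}\hookrightarrow\bW^{1,p}\times\bW^{1,p}\times W^{1,r}$. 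This yields existence directly for the given $(\bw,\bdd)$, without any smooth approximation, but with a suboptimal squared factor; the sharp estimate is then obtained in a separate second step via the same mollifier/interpolation machinery you describe, applied now to the already-constructed solution. Your approach is more self-contained in that it does not call Theorem~\ref{thm strong solution p>6/5} as a black box, whereas the paper's decomposition buys a cleaner separation between the existence argument (no approximation of $(\bw,\bdd)$ needed) and the subsequent refinement of the estimate.
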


\begin{proof}
\textbf{A) Existence:}
Applying Proposition \ref{thm solution W1,p and W2,p Stokes divu=h}, there exists a unique solution $(\bu_1, P_1, \boldsymbol{\alpha}^{(1)}) \in \bW^{1,p}(\Omega) \times W^{1,r}(\Omega) \times \R^I$ solution of the following problem: 
\begin{eqnarray*}
\begin{cases}
- \Delta \bu_1 + \nabla P_1 = \ff \quad \mathrm{and}\quad
\vdiv \bu_1 = 0 \quad \mathrm{in} \,\, \Omega, \\
\bu_1 \times \bn = \textbf{0} \quad \mathrm{on} \,\, \Gamma, \\
P_1 = P_0 \quad \mathrm{on} \,\, \Gamma_0, \, \, P_1 = P_0 + \alpha_i^{(1)} \quad \mathrm{on} \,\, \Gamma_i, \\
\langle \bu_1 \cdot \bn, 1 \rangle_{\Gamma_i} = 0, \, \, \forall 1 \leqslant i \leqslant I
\end{cases}
\end{eqnarray*}
where $\alpha_i^{(1)} = \langle \ff, \nabla q_i^N \rangle_{\O_{_{r',p'}}}- \displaystyle\int_{\Gamma} P_0 \nabla q_i^N \cdot \bn\,d\sigma $ and satisfying the estimate: 
\begin{eqnarray}\label{estim u_1,p_1}
\norm{\bu_1}_{\bW^{1,p}(\Omega)} + \norm{P_1}_{W^{1,r}(\Omega)} \leqslant C \Big( \norm{\ff}_{[\bH_0^{r',p'}(\curl, \Omega)]'} + \norm{P_0}_{W^{1-\frac{1}{r},r}(\Gamma)} \Big).
\end{eqnarray}
Next, since $\bg$ satisfies the compatibility conditions \eqref{Condition compatibilite K_N Lp}-\eqref{condition div g=0 sol W^1,p linear MHD}, due to Lemma \ref{Lemme elliptique H r' p'}, the following problem: 
\begin{eqnarray*}
\begin{cases}
- \Delta \bb_1 = \bg \quad \mathrm{and}\quad
\vdiv \bb_1 = 0 \quad \mathrm{in} \,\, \Omega, \\
\bb_1 \times \bn = \textbf{0} \quad \mathrm{on} \,\, \Gamma,\\
\langle \bb_1 \cdot \bn, 1 \rangle_{\Gamma_i} = 0 \quad \forall 1 \leqslant i \leqslant I
\end{cases}
\end{eqnarray*}
has a unique solution $\bb_1 \in \bW^{1,p}(\Omega)$ satisfying the estimate:
\begin{eqnarray}\label{estim b_1}
\norm{\bb_1}_{\bW^{1,p}(\Omega)} \leqslant C \norm{\bg}_{[\bH_0^{r',p'}(\curl, \Omega)]'}
\end{eqnarray}
Then, since $\curl \bw \in \bL^{s}(\Omega)$ and $\bu_1 \in \bW^{1,p}(\Omega) $, we have $(\mathbf{curl}\,\textbf{\textit{w}})\times\textbf{\textit{u}}_1\in\textbf{\textit{L}}^r(\Omega)$. Indeed, if $p<3$, then $ \textbf{\textit{W}}^{\,1,p}(\Omega)\hookrightarrow \textbf{\textit{L}}^{p*}(\Omega)$ with $\frac{1}{p*}=\frac{1}{p}-\frac{1}{3}$ and $\frac{1}{s}+\frac{1}{p*}=\frac{1}{r}$. If $p=3$, then  there exists $\varepsilon>0$ such that $\dfrac{1}{\frac{3}{2}+\varepsilon}+\frac{1}{p*}=\frac{2}{3}$. Finally, if $p>3$, then $p*=\infty$ and $r=s$. Next, since $s \geqslant \frac{3}{2}$, then $\bW^{1,s}(\Omega) \hookrightarrow \bL^3(\Omega)$ so $\bdd\in\bL^{3}(\O)$, and by the definition of $r$ in \eqref{def r}, we have  $(\curl \bb_1) \times \bdd \in \bL^r(\Omega)$. Then $\ff_{1}=-(\curl \bw) \times \bu_1+(\curl \bb_1) \times \bdd $ belongs to $\bL^r(\Omega)$. 
Furthermore, we set $\bg_1= \curl (\bu_1 \times \bdd) = (\bdd \cdot \nabla) \bu_1 - (\bu_1 \cdot \nabla) \bdd $. By the same way, using the definitions of $s$ and $r$, we can check that $\bg_1\in \bL^r(\Omega)$. Moreover, $\bg_1$  satisfies the compatibility conditions \eqref{Condition compatibilite K_N Lp}-\eqref{condition div g=0 sol W^1,p linear MHD}. Observe that with the values of $s$ given in \eqref{def of s for weak p>2} for $p>2$, we have  $r\in[\frac{6}{5},\,3)$ and satisfies
\begin{equation}\label{def of s for weak r>6/5}
s= \frac{3}{2} \quad \mathrm{if} \,\,\frac{6}{5}<r < \frac{3}{2},\quad s>\frac{3}{2}\quad \mathrm{if}\,\,r=\frac{3}{2}\quad \mathrm{and}\quad s=r\quad \mathrm{if}\,\,r>\frac{3}{2}.
\end{equation}
So, $s\geq \frac{3}{2}$ and then $\curl\bw$ is at least in $\bL^\frac{3}{2}(\Omega)$ and $\bdd$ is at least in $\bW^{1,\frac{3}{2}}(\Omega)$. We deduce from Theorem \ref{thm strong solution p>6/5} that the following problem:
\begin{equation}\label{linearized MHD u_2,b_2,P_2}
	\left\lbrace
		\begin{split}
		   - \, \Delta \bu_{2} + (\curl\bw)\times \bu_{2} + \nabla \mathrm{P}_{2} -  (\curl \bb_{2}) \times \bdd = \ff_{1} \quad &\mathrm{and}\quad  \vdiv \bu_{2} = 0   \quad \mathrm{in} \,\, \Omega, \\
           \, \curl \curl \bb_{2} -  \curl (\bu_{2} \times \bdd) = \bg_{1}  \quad & \mathrm{and}\quad \vdiv \bb_{2} = 0 \quad \mathrm{in} \,\, \Omega, \\
        \bu_{2} \times \bn = \textbf{0} \quad &\mathrm{and}\quad \bb_{2} \times \bn = \textbf{0}\quad \mathrm{on}\,\,\Gamma,\\
        \mathrm{P}_{2} = 0 \quad \mathrm{on} \, \,\Gamma_0\quad & \mathrm{and}\quad 
         \mathrm{P}_{2} = \alpha_i^{(2)} \,\,\,\mathrm{on} \,\, \Gamma_i,\\
         \langle\bu_{2} \cdot \bn, 1\rangle_{\Gamma_i} = 0, \quad &\mathrm{and}\quad \langle\bb_{2} \cdot \bn, 1\rangle_{\Gamma_i} = 0,\quad 1\leq i\leq I.
		\end{split}\right.
\end{equation}
has a unique solution $(\bu_2, \bb_2, P_2, \boldsymbol{\alpha}^{(2)}) \in \bW^{2,r}(\Omega) \times \bW^{2,r}(\Omega) \times W^{1,r}(\Omega) \times \R^I$ satisfying the estimate:
\small
\begin{eqnarray}\label{estim u2,b2,P2 strong r>6/5}
\begin{aligned}
&\norm{\bu_2}_{\bW^{2,r}(\Omega)} + \norm{\bb_2}_{\bW^{2,r}(\Omega)}+ \norm{P_2}_{W^{1,r}(\Omega)}  \\
&\leqslant C(1 + \norm{\curl\bw}_{\bL^{\frac{3}{2}}(\Omega)} + \norm{\bdd}_{\bW^{1,\frac{3}{2}}(\Omega)})(\norm{\ff_{1}}_{\bL^r(\Omega)} + \norm{\bg_{1}}_{\bL^r(\Omega)} ) 
\end{aligned}
\end{eqnarray}
with 
\small
\begin{eqnarray}\label{def alpha_i^2}
\begin{aligned}
\alpha_i^{(2)}& =  \langle (\curl (\bb_1+\bb_2)) \times \bdd, \nabla q_i^N \rangle _{\O_{_{r',p'}}}
- \langle (\curl \bw) \times (\bu_1+\bu_2), \nabla q_i^N \rangle_{\O_{_{r',p'}}} .
\end{aligned}
\end{eqnarray}
Finally, using the embedding $\bW^{2,r}(\Omega) \hookrightarrow \bW^{1,p}(\Omega)$, the solution of  the linearized MHD problem \eqref{linearized MHD-pressure} is given by  $(\bu_1 + \bu_2, \bb_1 + \bb_2, P_1 + P_2, \boldsymbol{\alpha}^{(1)} + \boldsymbol{\alpha}^{(2)})\in\bW^{1,p}(\Omega) \times \bW^{1,p}(\Omega) \times W^{1,r}(\Omega) \times \R^I$.\\

In particular, the constants $c_i=\alpha_i^{(1)}+\alpha_i^{(2)}$ are given by 
\begin{equation}\label{constantes c_i cas W^1,p avec h=0}
 c_i= \langle \ff, \nabla q_i^N \rangle_{\O_{_{r',p'}}}- \displaystyle\int_{\Gamma} P_0 \nabla q_i^N \cdot \bn\,d\sigma+ \langle (\curl \bb) \times \bdd, \nabla q_i^N \rangle _{\O_{_{r',p'}}}
- \langle (\curl \bw) \times \bu, \nabla q_i^N \rangle_{\O_{_{r',p'}}} .
\end{equation}

\textbf{B) Estimates:}
The terms on $\ff_1$ and $\bg_1$ in \eqref{estim u2,b2,P2 strong r>6/5} can be controlled as:
\begin{eqnarray}\label{estim f_1}
 \norm{\ff_1}_{\bL^{r}(\O)}\leq  C\big(\norm{\curl\bw}_{\bL^{s}(\O)}\norm{\bu_1}_{\bW^{1,p}(\O)} +\norm{\bdd}_{\bL^{3}(\O)}\norm{\bb_1}_{\bW^{1,p}(\O)} \big).
\end{eqnarray} 
\begin{eqnarray}\label{estim g_1}
 \norm{\bg_1}_{\bL^{r}(\O)}\leq  C\big(\norm{\bdd}_{\bL^{3}(\O)}\norm{\bu_1}_{\bW^{1,p}(\O)} +\norm{\nabla\bdd}_{\bL^{s}(\O)}\norm{\bu_1}_{\bW^{1,p}(\O)} \big).
\end{eqnarray} 

 Then, using the above estimates and the embeddings $\bW^{1,s}(\Omega) \hookrightarrow \bW^{1,\frac{3}{2}}(\Omega) \hookrightarrow \bL^3(\Omega)$ for $s\geq 3/2$, the estimate \eqref{estim u2,b2,P2 strong r>6/5} becomes 
 
\begin{eqnarray}\label{estim u2,b2,P2 strong r>6/5 final}
\begin{aligned}
&\norm{\bu_2}_{\bW^{2,r}(\Omega)} + \norm{\bb_2}_{\bW^{2,r}(\Omega)}+ \norm{P_2}_{W^{1,r}(\Omega)}\leqslant C \big(1+\norm{\curl\bw}_{\bL^{s}(\O)}+\norm{\bdd}_{\bW^{1,s}(\O)}\big)\\&\times\big(\norm{\curl\bw}_{\bL^{s}(\O)}+\norm{\bdd}_{\bW^{1,s}(\O)}\big) (\norm{\bu_1}_{\bW^{1,p}(\Omega)} + \norm{\bb_1}_{\bW^{1,p}(\Omega)}).
\end{aligned}
\end{eqnarray} 
Thanks to \eqref{estim u_1,p_1}, \eqref{estim b_1} and \eqref{estim u2,b2,P2 strong r>6/5 final}, the solution  $(\bu,\bb,P)$ satisfies 
\begin{eqnarray}\label{estim u,b W^1,p for p>2 not optimal}
\begin{aligned}
&\norm{\bu}_{\bW^{1,\,p}(\Omega)} + \norm{\bb}_{\bW^{1,\,p}(\Omega)}+\norm{P}_{W^{1,r}(\Omega)} \leqslant C \big(1+\norm{\curl\bw}_{\bL^{s}(\O)}+\norm{\bdd}_{\bW^{1,s}(\O)}\big)^{2}\\&\times\big(\norm{\ff}_{[\bH_0^{r',p'}(\curl, \Omega)]'} + \norm{\bg}_{[\bH_0^{r',p'}(\curl, \Omega)]'} + \norm{P_{0}}_{W^{\,1-1/r,r}(\Gamma)}\big).
\end{aligned}
\end{eqnarray}

This estimate is not optimal and can be improved. For this, we will consider $(\bu, \bb, P) \in \bW^{1,p}(\Omega) \times \bW^{1,p}(\Omega) \times W^{1,r}(\Omega)$ the solution of \eqref{linearized MHD-pressure} obtained in the existence part. \\\noindent 
Note that, due to the hypothesis on $\bdd$ and $\curl \bw$, the terms $(\curl \bw) \times \bu$, $(\curl \bb) \times \bdd$ and $\curl (\bu \times \bdd)$ belong to $\bL^r(\Omega)$ with $\frac{1}{r} = \frac{1}{p} + \frac{1}{3}$. Thus, according to the regularity of the Stokes problem $(\mathcal{S_N})$ (see Proposition \ref{thm solution W1,p and W2,p Stokes divu=h}) and the elliptic problem $(\mathcal{E}_N)$ (see Lemma \ref{Lemme elliptique H r' p'}), we have: 
\begin{eqnarray} \label{Estimation Stokes Elliptique preuve estim optimale W1p}
\begin{aligned}
&\norm{\bu}_{\bW^{1,p}(\Omega)} + \norm{\bb}_{\bW^{1,p}(\Omega)} + \norm{P}_{W^{1,r}(\Omega)} \\
&\leqslant C \Big( \norm{\ff}_{[\bH_0^{r',p'}(\curl, \Omega)]'} + \norm{\bg}_{[\bH_0^{r',p'}(\curl, \Omega)]'} + \norm{P_0}_{W^{1-\frac{1}{r},r}(\Gamma)}  + \norm{(\curl \bw) \times \bu}_{\bL^r(\Omega)} \\
&+ \norm{(\curl \bb) \times \bdd}_{\bL^r(\Omega)} + \norm{\curl (\bu \times \bdd)}_{\bL^r(\Omega)} \Big)
\end{aligned}
\end{eqnarray}

We proceed in a way similar to the proof of the Theorem \ref{thm strong solution p>6/5}: we bound the three last terms of \eqref{Estimation Stokes Elliptique preuve estim optimale W1p}, using the decomposition of $\curl \bw$, $\bdd$ and $\nabla \bdd$ given in \eqref{decomposition y}-\eqref{decomposition d} and \eqref{decomposition nabla d} respectively. \\

\noindent \textbf{(i) The term $\norm{(\curl \bw) \times \bu}_{\bL^r(\Omega)}$:}
Using the decomposition \eqref{decomposition y} for $\by = \curl \bw$, we obtain:
\begin{eqnarray} \label{Estimation y2epsi x u preuve estim W1p}
\norm{\by_2^\epsilon \times \bu}_{\bL^r(\Omega)} \leqslant \norm{\by_2^\epsilon}_{\bL^s(\Omega)} \norm{\bu}_{\bL^{p^*}(\Omega)} \leqslant C \epsilon \norm{\bu}_{\bW^{1,p}(\Omega)}
\end{eqnarray}
where $\bW^{1,p}(\Omega) \hookrightarrow \bL^{p^*}(\Omega)$ with $\frac{1}{p^*} = \frac{1}{p} - \frac{1}{3}$ is $p < 3$, for $p^* = \frac{3s}{2s-3}$ if $p=3$ and $p^* = \infty$ if $p > 3$. Next, for the term $\by_1^\epsilon \times \bu$, let us consider the first the case $p<3$. We have 
\begin{eqnarray*}
\norm{\by_1^\epsilon \times \bu}_{\bL^r(\Omega)} \leqslant \norm{\by_1^\epsilon}_{\bL^t(\Omega)} \norm{\bu}_{\bL^m(\Omega)} \leqslant \norm{\by}_{\bL^\frac{3}{2}(\Omega)} \norm{\rho_{\epsilon/2}}_{\bL^k(\Omega)} \norm{\bu}_{\bL^m(\Omega)} 
\end{eqnarray*}
with $\frac{1}{r} = \frac{1}{t} + \frac{1}{m}$ and $1 + \frac{1}{t} = \frac{2}{3} + \frac{1}{k}$. Choosing $6 < m < p^*$, the embedding $\bW^{1,p}(\Omega)\hookrightarrow \bL^{m}(\O)$ is compact. Following this choice, we have $ t \in ]\frac{3}{2},\,\frac{2p}{2+p}[$ and $k \in ]1,\frac{6p}{5p+6}[$. Then, for any $\varepsilon'>0$, there exists $C_{\varepsilon'} >0$ such that 
\begin{equation*}
 \norm{\bu}_{\bL^{m}(\O)}\leq \varepsilon' \norm{\bu}_{\bW^{1,p}(\O)}+ C_{\varepsilon'}\norm{\bu}_{\bL^{6}(\O)}.
\end{equation*}
So, we deduce 
\begin{eqnarray} \label{Estimation y1epsi x u preuve estim W1p}
\norm{\by_1^\epsilon \times \bu}_{\bL^r(\Omega)} \leqslant \varepsilon' C_{\varepsilon} \norm{\by}_{\bL^{3/2}(\O)} \norm{\bu}_{\bW^{1,p}(\O)}+ C_1C_{\varepsilon'} C_\epsilon \norm{\by}_{\bL^\frac{3}{2}(\Omega)} \norm{\bu}_{\bH^1(\Omega)} 
\end{eqnarray}
where $C_\epsilon$ is the constant which absorbes the norm of the mollifier and $C_1$ is the constant of the Sobolev embedding $\bH^{1}(\O)\hookrightarrow \bL^{6}(\O)$. If $p\geq 3$, we have
\begin{eqnarray}\label{Estimation BIS y1epsi x u preuve estim W1p}
\norm{\by_1^\epsilon \times \bu}_{\bL^r(\Omega)} \leqslant \norm{\by_1^\epsilon}_{\bL^s(\Omega)} \norm{\bu}_{\bL^m(\Omega)} \leqslant \norm{\by}_{\bL^s (\Omega)} \norm{\rho_{\epsilon/2}}_{\bL^1(\Omega)} \norm{\bu}_{\bL^m(\Omega)}, 
\end{eqnarray}
where we choose $m=\infty$ if $p>3$ and $m\in(1,\infty)$ if $p=3$. \\
\noindent \textbf{(ii) The term $\norm{(\curl \bb) \times \bdd}_{\bL^r(\Omega)}$}: Using the decomposition \eqref{decomposition d} for $\bdd$, we have: 

\begin{eqnarray} \label{Estimation curl b times bd2epsi preuve estim W1p}
\norm{(\curl \bb) \times \bdd_2^\epsilon}_{\bL^r(\Omega)} \leqslant \norm{\curl \bb}_{\bL^p(\Omega)} \norm{\bdd_2^\epsilon}_{\bL^3(\Omega)} \leqslant C \epsilon \norm{\bb}_{\bW^{1,p}(\Omega)}
\end{eqnarray}

Next, in order to bound the term $(\curl \bb) \times \bdd_1^\epsilon$, we have two cases:

\noindent ${\bullet}$  The case $2 < p \leq  6$:  we have 
\small
\begin{eqnarray} \label{Estimation curl b times d1epsi preuve estim W1p p<6}
\norm{(\curl \bb) \times \bdd_1^\epsilon}_{\bL^r(\Omega)} &\leqslant& \norm{\curl \bb}_{\bL^2(\Omega)} \norm{\bdd_1^\epsilon}_{\bL^t(\Omega)} 
\leqslant \norm{\curl \bb}_{\bL^2(\Omega)} \norm{\bdd}_{\bL^3(\Omega)} \norm{\rho_{\epsilon/2}}_{\bL^k(\R^3)} \nonumber \\
&\leqslant & C_\epsilon \norm{\bdd}_{\bL^3(\Omega)} \norm{\bb}_{\bH^1(\Omega)} 
\end{eqnarray}

with $\frac{1}{r} = \frac{1}{2} + \frac{1}{t}$ and $1 + \frac{1}{t} = \frac{1}{3} + \frac{1}{k}$, so we have to take $t = \frac{6p}{6-p}$ and $k = \frac{2p}{2 + p}$ which are well-defined. \\
\noindent ${\bullet}$  The case $p > 6$. We have:
\begin{eqnarray} \label{Estimation curl b times d1epsi preuve estim W1p p>6 num 1}
\norm{(\curl \bb) \times \bdd_1^\epsilon}_{\bL^r(\Omega)} \leqslant \norm{\curl \bb}_{\bL^q(\Omega)} \norm{\bdd_1^\epsilon}_{\bL^t(\Omega)} \leqslant \norm{\curl \bb}_{\bL^q(\Omega)} \norm{\bdd}_{\bL^3(\Omega)} \norm{\rho_{\epsilon/2}}_{\bL^k(\R^3)}
\end{eqnarray}
with $\frac{1}{r} = \frac{1}{q} + \frac{1}{t}$ and $1 + \frac{1}{t} = \frac{1}{k} + \frac{1}{3}$. We choose $3 < q < p$, and then we have that $t \in ]3,\,p[$ and $k \in ]1, \,\frac{3p}{2p+3}[$. The interpolation estimate of $\bW^{1,q}(\O)$ between $\bH^1(\Omega)$ and $\bW^{1,p}(\Omega)$ gives (cf. \cite{Niremberg}): 
\begin{eqnarray*}
\norm{\curl \bb}_{\bL^q(\Omega)} \leqslant \norm{\bb}_{\bW^{1,p}(\Omega)}^{\theta} \norm{\bb}_{\bH^1(\Omega)}^{1-\theta} ,
\end{eqnarray*}
with $\theta =\frac{p(q-2)}{q(p-2)}$. Applying the Young inequality, we obtain for small $\epsilon' > 0$:
\begin{eqnarray*}
\norm{\curl \bb}_{\bL^q(\Omega)} \leqslant \epsilon' \norm{\bb}_{\bW^{1,p}(\Omega)} + C_{\epsilon'} \norm{\bb}_{\bH^1(\Omega)} .
\end{eqnarray*}
Replacing this estimate in \eqref{Estimation curl b times d1epsi preuve estim W1p p>6 num 1}, we obtain:
\begin{eqnarray} \label{Estimation curl b times d1epsi preuve estim W1p p>6 num 2}
\norm{(\curl \bb) \times \bdd_1^\epsilon}_{\bL^r(\Omega)} \leqslant \epsilon' C_\epsilon \norm{\bdd}_{\bL^3(\Omega)} \norm{\bb}_{\bW^{1,p}(\Omega)} + C_{\epsilon'} C_\epsilon \norm{\bdd}_{\bL^3(\Omega)} \norm{\bb}_{\bH^1(\Omega)} .
\end{eqnarray}

\noindent \textbf{(iii) The term $\norm{\curl (\bu \times \bdd)}_{\bL^r(\Omega)}$:} Since $\vdiv \bu = 0$ and $\vdiv \bdd = 0$ in $\Omega$, thus we rewrite $\curl (\bu \times \bdd) = (\bdd \cdot \nabla) \bu - (\bu \cdot \nabla) \bdd$. \\
$\bullet$ \textbf{The term $\norm{(\bdd \cdot \nabla) \bu}_{\bL^r(\Omega)}$:} following the same proof as for the term $(\curl \bb) \times \bdd$ by replacing $\curl \bb$ with $\nabla \bu$, we obtain
\begin{eqnarray} \label{Estimation d2epsi nabla u preuve estim W1p}
\norm{(\bdd_2^\epsilon \cdot \nabla) \bu}_{\bL^r(\Omega)} \leqslant C \epsilon \norm{\bu}_{\bW^{1,p}(\Omega)} 
\end{eqnarray}\vspace{-.1cm}
and
\begin{eqnarray} \label{Estimation d1epsi nabla u preuve estim W1p p>6}
\norm{(\bdd_1^\epsilon \cdot \nabla) \bu}_{\bL^r(\Omega)} \leqslant C_\epsilon \norm{\bdd}_{\bL^3(\Omega)}\big( \varepsilon'\norm{\bu}_{\bW^{1,p}(\Omega)} + C_{\epsilon'} \norm{\bu}_{\bH^1(\Omega)}\big)
\end{eqnarray}
$\bullet$ \textbf{The term $\norm{(\bu \cdot \nabla) \bdd}_{\bL^r(\Omega)}$:}
In the same way, we remark that we can control this term as for $\norm{(\curl \bw) \times \bu}_{\bL^r(\Omega)}$ by replacing $\curl \bw$ with $\nabla \bdd$. Applying the decomposition \eqref{decomposition nabla d} for $\nabla \bdd$, we thus prove that: 
\begin{eqnarray} \label{Estimation u z2epsi preuve estim W1p}
\norm{\bz_2^\epsilon \cdot \bu}_{\bL^r(\Omega)} \leqslant C \epsilon \norm{\bu}_{\bW^{1,p}(\Omega)} 
\end{eqnarray}
and 
\begin{eqnarray} \label{Estimation z1epsi u preuve estim W1p p<3}
\norm{\bz_1^\epsilon \cdot \bu}_{\bL^r(\Omega)} \leqslant C C_\epsilon \norm{\nabla \bdd}_{\bL^s(\Omega)}\big( \varepsilon' \norm{\bu}_{\bW^{1,p}(\O)}+ C_{\varepsilon'}\norm{\bu}_{\bH^1(\Omega)}\big)
\end{eqnarray}
Finally, taking the estimates \eqref{Estimation y2epsi x u preuve estim W1p}-\eqref{Estimation z1epsi u preuve estim W1p p<3} together with the embedding $\bW^{1,s}(\Omega)\hookrightarrow\bL^3(\Omega)$ for $s\geq 3/2$ and \eqref{Estimation Stokes Elliptique preuve estim optimale W1p}, then  choosing $\epsilon, \epsilon' > 0$ small enough and using the estimate \eqref{estim u,b H^1}, we thus obtain \eqref{estim u,b W^1,p for p>2}.
\end{proof}
\begin{rmk}
${}$\\
\textbf{\emph{(i)}} The case $p>2$ can be analyzed in a similar way to the case $p=2$ to prove that the space $[\bH_0^{r',p'}(\curl,\Omega)]'$ with $\frac{1}{r}=\frac{1}{p}+\frac{1}{3}$ is optimal to obtain the regularity $\bW^{1,p}(\O)$.\\
 \textbf{\emph{(ii)}} Why do we take $P_0\in W^{1-1/r,r}(\Gamma)$ instead of $W^{-1/p,p}(\Gamma)$? If we take $P_0\in W^{-1/p,p}(\Gamma)$, we obtain that $P\in L^p(\O)$ as in the classic case of Navier-Stokes equations with Dirichlet boundary conditions. But we are not able to solve the Stokes problem $(\mathcal{S_N})$ because, in this case, $\ff = \curl(\curl \bu)+\nabla P  \notin [\bH^{r',p'}(\curl, \O)]'$. 
\end{rmk}
We also need to study the case where the divergence is not free for the velocity field. The following problem appears as the dual problem associated to the linearized MHD problem \eqref{linearized MHD-pressure} in the study of weak solutions for $p<2$: 

\begin{equation}\label{MHD with chi  and div u not free}
	\left\lbrace
		\begin{split}
		   - \, \Delta \bu + (\curl \bw) \times \bu + \nabla \mathrm{P} -  (\curl \bb) \times \bdd = \ff \quad &\mathrm{and}\quad  \vdiv \bu = h   \quad \mathrm{in} \,\, \Omega, \\
           \, \curl \curl \bb -  \curl (\bu \times \bdd) +\nabla \chi= \bg  \quad & \mathrm{and}\quad \vdiv \bb = 0 \quad \text{in} \,\, \Omega, \\
       \bu \times \bn = \textbf{0}, \quad  \bb \times \bn = \textbf{0} \quad &\mathrm{and}\quad \chi= 0\quad\quad \,\,\, \mathrm{on}\,\,\Gamma,\\
        \mathrm{P} = P_0 \quad \text{on} \, \,\Gamma_0\quad & \mathrm{and}\quad 
         \mathrm{P} = P_0 + c_i \,\,\,\text{on} \,\, \Gamma_i,\\
         \langle\bu \cdot \bn, 1\rangle_{\Gamma_i} = 0, \quad &\mathrm{and}\quad \langle\bb \cdot \bn, 1\rangle_{\Gamma_i} = 0.
		\end{split}\right.
\end{equation}
Observe that the second equation in \eqref{linearized MHD-pressure} is replaced by $ \, \curl \curl \bb - \curl (\bu \times \bdd) +\nabla \chi= \bg\,\,\,\mathrm{in}\,\,\Omega$ with $\chi=0$ on $\Gamma$. The scalar $\chi$ represents the Lagrange multiplier associated with magnetic divergence constraint. Note that, taking the divergence in the above equation, $\chi$ is a solution of the following Dirichlet problem:
\begin{eqnarray}\label{Dirichlet prob for chi}
\Delta \chi = \vdiv \bg \quad \mathrm{in} \,\, \Omega\quad\mathrm{and}\quad  \chi = 0 \quad\mathrm{on} \, \Gamma.
\end{eqnarray}  In particular, if $\bg$ is divergence-free, we have $\chi=0$. Nevertheless, the introduction of $\chi$ will be useful to enforce zero divergence condition over the magnetic field.
First, we give the following result for the case  $h=0$. 
\begin{corollary} \label{Corollaire double stokes}
Suppose that $p \geqslant 2$ and $h=0$. Let $\ff, \bg \in [\bH_0^{r',p'}(\curl, \Omega)]'$, $P_0 \in W^{1-\frac{1}{r}, r}(\Gamma)$ with the compatibility condition \eqref{Condition compatibilite K_N Lp} and $\bw$, $\bdd$ defined with \eqref{hypothesis on wurl w and d}-\eqref{def r}. Then the problem \eqref{MHD with chi  and div u not free} has a unique solution $(\bu, \bb, P, \chi, \bc) \in \bW^{1,p}(\Omega) \times \bW^{1,p}(\Omega) \times W^{1,r}(\Omega) \times W^{1,r}(\Omega) \times \R^I$ where $\bc=(c_1,\ldots,c_I)$ is given by \eqref{constantes c_i cas W^1,p avec h=0}. Moreover, we have the following estimates:  
\small
\begin{eqnarray}\label{estim weak p>2 chi and div u not free}
\begin{aligned}
&\norm{\bu}_{\bW^{1,p}(\Omega)} + \norm{\bb}_{\bW^{1,p}(\Omega)} + \norm{P}_{W^{1,r}(\Omega)}  \leqslant C \big(1+\norm{\curl\bw}_{\bL^{s}(\O)}+\norm{\bdd}_{\bW^{1,s}(\O)}\big)\\&\times\big(\norm{\ff}_{[\bH_0^{r',p'}(\curl, \Omega)]'} + \norm{\bg}_{[\bH_0^{r',p'}(\curl, \Omega)]'} + \norm{P_{0}}_{W^{\,1-1/r,r}(\Gamma)}\big).
\end{aligned}
\end{eqnarray}
\begin{equation}\label{estim chi}
 \norm{\chi}_{W^{1,r}(\O)}\leq C \norm{\bg}_{[\bH_0^{r',p'}(\curl, \Omega)]'}
\end{equation}

\end{corollary}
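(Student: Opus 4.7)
The strategy is to reduce the problem to the divergence-free case already handled by Theorem~\ref{Solutions faibles p grand}. Observe that, formally, if $(\bu,\bb,P,\chi,\bc)$ solves \eqref{MHD with chi  and div u not free}, then taking the divergence of the second equation and using $\vdiv\bb=0$ together with $\vdiv\curl(\bu\times\bdd)=0$ yields the Dirichlet problem \eqref{Dirichlet prob for chi} for $\chi$. This decouples the computation of $\chi$ from $(\bu,\bb,P,\bc)$ and allows me to construct the solution in two independent steps.

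\textbf{Step 1: construction of $\chi$.} I would first show that $\vdiv\bg\in W^{-1,r}(\O)$. By Proposition~\ref{charac dual H^r,p}, $\bg$ admits a decomposition $\bg=\bF+\curl\boldsymbol{\psi}$ with $\bF\in\bL^{r}(\O)$ and $\boldsymbol{\psi}\in\bL^{p}(\O)$, so $\vdiv\bg=\vdiv\bF\in W^{-1,r}(\O)$ with $\|\vdiv\bg\|_{W^{-1,r}(\O)}\le C\|\bg\|_{[\bH_0^{r',p'}(\curl,\O)]'}$. Classical $L^{r}$-regularity for the Dirichlet problem then yields a unique $\chi\in W_{0}^{1,r}(\O)$ solving \eqref{Dirichlet prob for chi} and satisfying the estimate \eqref{estim chi}.

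\textbf{Step 2: reduction and application of Theorem~\ref{Solutions faibles p grand}.} I set $\widetilde{\bg}:=\bg-\nabla\chi\in[\bH_0^{r',p'}(\curl,\O)]'$. By construction $\vdiv\widetilde{\bg}=\vdiv\bg-\Delta\chi=0$ in $\O$. Moreover, for every $\bv\in\bK_{N}^{p'}(\O)$, integration by parts gives
\begin{equation*}
\langle\nabla\chi,\bv\rangle_{\O_{r',p'}}=\int_{\O}\nabla\chi\cdot\bv\,dx=-\int_{\O}\chi\,\vdiv\bv\,dx+\langle\chi,\bv\cdot\bn\rangle_{\Gamma}=0,
\end{equation*}
because $\vdiv\bv=0$ in $\O$ and $\chi=0$ on $\Gamma$; hence $\widetilde{\bg}$ inherits from $\bg$ the compatibility condition \eqref{Condition compatibilite K_N Lp}. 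Applying Theorem~\ref{Solutions faibles p grand} to the data $(\ff,\widetilde{\bg},P_{0})$ provides a unique $(\bu,\bb,P,\bc)\in\bW^{1,p}(\O)\times\bW^{1,p}(\O)\times W^{1,r}(\O)\times\R^{I}$ solving \eqref{linearized MHD-pressure} with right-hand side $\widetilde{\bg}$, i.e.\ $\curl\curl\bb-\curl(\bu\times\bdd)=\bg-\nabla\chi$, which after rearrangement is exactly the magnetic equation of \eqref{MHD with chi  and div u not free}. Since $\langle\nabla\chi,\nabla q_{i}^{N}\rangle_{\O_{r',p'}}=0$ by the same integration by parts, the constants $c_{i}$ supplied by Theorem~\ref{Solutions faibles p grand} coincide with those given by \eqref{constantes c_i cas W^1,p avec h=0}.

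\textbf{Step 3: estimates and uniqueness.} The estimate \eqref{estim chi} has already been established in Step~1. From the triangle inequality $\|\widetilde{\bg}\|_{[\bH_0^{r',p'}(\curl,\O)]'}\le\|\bg\|_{[\bH_0^{r',p'}(\curl,\O)]'}+\|\nabla\chi\|_{\bL^{r}(\O)}\le C\|\bg\|_{[\bH_0^{r',p'}(\curl,\O)]'}$ and the estimate \eqref{estim u,b W^1,p for p>2} applied to $\widetilde{\bg}$, the bound \eqref{estim weak p>2 chi and div u not free} follows. For uniqueness, a difference of two solutions with vanishing data would have $\chi$ solving the homogeneous Dirichlet problem, so $\chi=0$, and then $(\bu,\bb,P,\bc)$ solves \eqref{linearized MHD-pressure} with zero data, so it vanishes by Theorem~\ref{Solutions faibles p grand}. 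The only point requiring any care is verifying that the compatibility conditions are preserved under the substitution $\bg\mapsto\widetilde{\bg}$; once this is settled, the corollary reduces cleanly to the previously established theory.
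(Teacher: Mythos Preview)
Your proof is correct and follows essentially the same approach as the paper: solve the Dirichlet problem \eqref{Dirichlet prob for chi} for $\chi$ first, set $\bg'=\bg-\nabla\chi$ (the paper's notation), verify that $\bg'$ is divergence-free and still satisfies the compatibility condition, and then apply Theorem~\ref{Solutions faibles p grand}. You supply more detail than the paper does on why $\vdiv\bg\in W^{-1,r}(\O)$ and why the compatibility condition is preserved, but the structure of the argument is identical.
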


 \begin{proof}
As mentioned before, the scalar $\chi$ can be found directly as a solution of the Dirchlet problem \eqref{Dirichlet prob for chi}. Since $\bg \in [\bH_0^{r',p'}(\curl, \Omega)]'$, $\vdiv \bg \in W^{-1,r}(\Omega)$ and then $\chi$ belongs to $W^{1,r}(\Omega)$ and satisfies the estimate \eqref{estim chi}. We set $\bg' = \bg - \nabla \chi$. It is clear that $\bg'$ is an element of the dual space $[\bH_0^{r',p'}(\curl,\Omega)]'$. Moreover, it is clear that $\vdiv\,\bg'=0$ in $\O$ and $\bg'$ satisfies the compatibility conditions \eqref{Condition compatibilite K_N Lp}.  So, problem \eqref{MHD with chi  and div u not free} becomes 
\begin{equation}\label{Probleme MHD g'}
	\left\lbrace
		\begin{split}
		   - \, \Delta \bu + (\curl \bw) \times \bu + \nabla \mathrm{P} -  (\curl \bb) \times \bdd = \ff \quad &\mathrm{and}\quad  \vdiv \bu = 0   \quad \mathrm{in} \,\, \Omega, \\
           \, \curl \curl \bb -  \curl (\bu \times \bdd)= \bg'  \quad & \mathrm{and}\quad \vdiv \bb = 0 \quad \text{in} \,\, \Omega, \\
       \bu \times \bn = \textbf{0} \quad &\mathrm{and}\quad \bb \times \bn = \textbf{0}\quad \mathrm{on}\,\,\Gamma,\\
        \mathrm{P} = P_0 \quad \text{on} \, \,\Gamma_0\quad & \mathrm{and}\quad 
         \mathrm{P} = P_0 + c_i \,\,\,\text{on} \,\, \Gamma_i,\\
         \langle\bu \cdot \bn, 1\rangle_{\Gamma_i} = 0, \quad &\mathrm{and}\quad \langle\bb \cdot \bn, 1\rangle_{\Gamma_i} = 0.
		\end{split}\right.
\end{equation}
Thanks to Theorem \ref{Solutions faibles p grand}, problem \eqref{Probleme MHD g'} 
has a unique solution $(\bu, \bb, P, \bc)$ in $\bW^{1,p}(\Omega) \times \bW^{1,p}(\Omega) \times W^{1,r}(\Omega) \times \R^I$ satisfying the estimate:
\begin{eqnarray}\label{estim u,b W^1,p for p>2 MHD g'}
\begin{aligned}
&\norm{\bu}_{\bW^{1,\,p}(\Omega)} + \norm{\bb}_{\bW^{1,\,p}(\Omega)}+\norm{P}_{W^{1,r}(\Omega)} \leqslant C \big(1+\norm{\curl\bw}_{\bL^{s}(\O)}+\norm{\bdd}_{\bW^{1,s}(\O)}\big)\\&\times\big(\norm{\ff}_{[\bH_0^{r',p'}(\curl, \Omega)]'} + \norm{\bg'}_{[\bH_0^{r',p'}(\curl, \Omega)]'} + \norm{P_{0}}_{W^{\,1-1/r,r}(\Gamma)}\big).
\end{aligned}
\end{eqnarray}
Using \eqref{estim chi}, the previous estimate still holds when $\bg'$ is replaced by $\bg$. 
\end{proof}
The next theorem gives a generalization for the case $h\neq 0$.

\begin{theorem}\label{thm weak sol W^1,p p>2 with phi_u and chi}
Suppose that $p \geqslant 2$. Let $\ff, \bg \in [\bH_0^{r',p'}(\curl, \Omega)]'$, $P_0 \in W^{1-\frac{1}{r}, r}(\Gamma)$ and $ h\in W^{1,r}(\O)$ with the compatibility condition \eqref{Condition compatibilite K_N Lp} and $\bw$, $\bdd$ defined with \eqref{hypothesis on wurl w and d}-\eqref{def r}. Then the problem \eqref{MHD with chi  and div u not free} has a unique solution $(\bu, \bb, P, \chi, \bc) \in \bW^{1,p}(\Omega) \times \bW^{1,p}(\Omega) \times W^{1,r}(\Omega) \times W^{1,r}(\Omega) \times \R^I$.
Moreover, we have the following estimate for $(\bu,\bb,P)$:  
\small
\begin{eqnarray}\label{estim1 weak p>2 chi and div u not free}
\begin{aligned}
&\norm{\bu}_{\bW^{1,p}(\Omega)} + \norm{\bb}_{\bW^{1,p}(\Omega)} + \norm{P}_{W^{1,r}(\Omega)}  \leqslant C \big(1+\norm{\curl\bw}_{\bL^{s}(\O)}+\norm{\bdd}_{\bW^{1,s}(\O)}\big)^{2}\\&\times\big(\norm{\ff}_{[\bH_0^{r',p'}(\curl, \Omega)]'} + \norm{\bg}_{[\bH_0^{r',p'}(\curl, \Omega)]'} + \norm{h}_{W^{1,r}(\O)}+\norm{P_{0}}_{W^{\,1-1/r,r}(\Gamma)}\big).
\end{aligned}
\end{eqnarray}
\end{theorem}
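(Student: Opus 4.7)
\textbf{Proof plan for Theorem \ref{thm weak sol W^1,p p>2 with phi_u and chi}.} The plan is to reduce the non-homogeneous divergence condition $\vdiv\bu=h$ to the homogeneous case already handled in Corollary \ref{Corollaire double stokes}, by constructing an explicit lift, exactly in the spirit of the reduction used in the proof of Proposition \ref{thm solution W1,p and W2,p Stokes divu=h}. Specifically, since $h\in W^{1,r}(\Omega)$, first solve the Dirichlet problem $\Delta\theta=h$ in $\O$, $\theta=0$ on $\Gamma$, to get $\theta\in W^{3,r}(\O)$ with $\|\theta\|_{W^{3,r}}\leq C\|h\|_{W^{1,r}}$. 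Then set $\bphi = \nabla\theta - \sum_{i=1}^{I}\langle\nabla\theta\cdot\bn,1\rangle_{\Gamma_i}\nabla q_i^N$, so that $\bphi\in\bW^{2,r}(\O)\hookrightarrow\bW^{1,p}(\O)$ with $\vdiv\bphi=h$, $\curl\bphi=\mathbf{0}$, $\bphi\times\bn=\mathbf{0}$ on $\Gamma$, and $\langle\bphi\cdot\bn,1\rangle_{\Gamma_i}=0$ for $1\leq i\leq I$.

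Next, set $\bz=\bu-\bphi$. Substituting into the system \eqref{MHD with chi  and div u not free} produces the equivalent problem with $\vdiv\bz=0$,
\begin{equation*}
\begin{aligned}
-\Delta\bz + (\curl\bw)\times\bz + \nabla P - (\curl\bb)\times\bdd &= \ff + \Delta\bphi - (\curl\bw)\times\bphi =: \ff^{\sharp},\\
\curl\curl\bb - \curl(\bz\times\bdd) + \nabla\chi &= \bg + \curl(\bphi\times\bdd) =: \bg^{\sharp},
\end{aligned}
\end{equation*}
together with the same boundary conditions on $\bz$, $\bb$, $P$, $\chi$. Since $\Delta\bphi = \nabla(\Delta\theta)-\sum_i\ldots\Delta(\nabla q_i^N) = \nabla h$, and using $\vdiv\bdd=0$ so that $\curl(\bphi\times\bdd)=-h\bdd+(\bdd\cdot\nabla)\bphi-(\bphi\cdot\nabla)\bdd$, the new sources $\ff^{\sharp}$ and $\bg^{\sharp}$ still belong to $[\bH^{r',p'}_0(\curl,\O)]'$ (in fact to $\bL^r(\O)$ for most of the added terms), and $\bg^{\sharp}$ still satisfies the compatibility condition \eqref{Condition compatibilite K_N Lp} since the added pieces are either gradients or identifiable via integration by parts on $\nabla q_i^N$.

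Apply now Corollary \ref{Corollaire double stokes} to $(\bz,\bb,P,\chi,\bc)$ with data $(\ff^{\sharp},\bg^{\sharp},P_0)$. This yields existence, uniqueness, and the estimate
\begin{equation*}
\norm{\bz}_{\bW^{1,p}}+\norm{\bb}_{\bW^{1,p}}+\norm{P}_{W^{1,r}} \leq C\bigl(1+\norm{\curl\bw}_{\bL^{s}}+\norm{\bdd}_{\bW^{1,s}}\bigr)\bigl(\norm{\ff^{\sharp}}_{[\bH^{r',p'}_0(\curl,\O)]'}+\norm{\bg^{\sharp}}_{[\bH^{r',p'}_0(\curl,\O)]'}+\norm{P_0}_{W^{1-1/r,r}(\Gamma)}\bigr).
\end{equation*}
Returning to $\bu=\bz+\bphi$ and using the bound $\|\bphi\|_{\bW^{1,p}}\leq C\|h\|_{W^{1,r}}$ together with $\|\bphi\|_{\bW^{2,r}}\leq C\|h\|_{W^{1,r}}$ gives \eqref{estim1 weak p>2 chi and div u not free}.

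\textbf{Main obstacle.} The routine but delicate part is controlling the extra right-hand side contributions
$\bigl\|(\curl\bw)\times\bphi\bigr\|_{[\bH^{r',p'}_0(\curl,\O)]'}$,
$\bigl\|h\bdd\bigr\|_{[\bH^{r',p'}_0(\curl,\O)]'}$,
$\bigl\|(\bdd\cdot\nabla)\bphi\bigr\|_{[\bH^{r',p'}_0(\curl,\O)]'}$ and
$\bigl\|(\bphi\cdot\nabla)\bdd\bigr\|_{[\bH^{r',p'}_0(\curl,\O)]'}$
in terms of $\|h\|_{W^{1,r}}\bigl(1+\|\curl\bw\|_{\bL^s}+\|\bdd\|_{\bW^{1,s}}\bigr)$. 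This must be done by a careful case analysis in $p$ (namely $p=2$, $2<p<3$, $p=3$, $p>3$) using the Sobolev embeddings of $\bW^{2,r}(\O)$ into $\bL^{\infty}(\O)$ or $\bL^{6}(\O)$ depending on whether $r$ is above or below $3/2$, combined with the embedding $\bL^r(\O)\hookrightarrow[\bH^{r',p'}_0(\curl,\O)]'$. Because the new data depend linearly on $\|h\|_{W^{1,r}}$ with an extra factor $(1+\|\curl\bw\|_{\bL^s}+\|\bdd\|_{\bW^{1,s}})$, and Corollary \ref{Corollaire double stokes} already yields a linear factor in $(1+\|\curl\bw\|_{\bL^s}+\|\bdd\|_{\bW^{1,s}})$, the composition produces the squared factor appearing in \eqref{estim1 weak p>2 chi and div u not free}, which explains why the constant in the new estimate is quadratic rather than linear. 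Uniqueness follows from uniqueness in Corollary \ref{Corollaire double stokes} applied to the difference of two solutions, which satisfies the reduced system with zero data and zero divergence.
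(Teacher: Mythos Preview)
Your proposal is correct, but the route differs from the paper's own proof of this theorem. The paper lifts the data by first solving a \emph{Stokes} problem for $(\bu_1,P_1)$ with right-hand side $\ff$ and divergence $h$ (Proposition \ref{thm solution W1,p and W2,p Stokes divu=h}), then a separate elliptic problem $-\Delta\bb_1+\nabla\chi=\bg$ (to absorb $\bg$ and produce $\chi$), and finally applies the strong-solution estimate \eqref{estim u2,b2,P2 strong r>6/5 final} for a third linearized MHD subproblem in $(\bu_2,\bb_2,P_2)$ with the cross-terms as data. Your approach instead lifts only $h$ via the Dirichlet--Laplace problem $\Delta\theta=h$, subtracts the corrected gradient $\bphi$, and reduces in one step to Corollary \ref{Corollaire double stokes}. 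This is exactly the strategy the paper adopts in the \emph{next} result (Proposition \ref{proposition to improv estim W^1,p for p>2 with phi}) to sharpen the estimate. Both decompositions are valid; yours is arguably more economical since it invokes a single existence result rather than three, while the paper's three-step split avoids having to estimate the lift's interaction with $\curl\bw$ and $\bdd$ at this stage (deferring that to Proposition \ref{proposition to improv estim W^1,p for p>2 with phi}).

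Two minor remarks. First, your justification that $\bg^{\sharp}$ satisfies \eqref{Condition compatibilite K_N Lp} is overcomplicated: the added term is $\curl(\bphi\times\bdd)$, a curl, hence automatically orthogonal to $\bK_N^{p'}(\Omega)=\mathrm{span}\{\nabla q_i^N\}$; no need to expand and argue piece by piece. Second, your flux correction $-\sum_i\langle\nabla\theta\cdot\bn,1\rangle_{\Gamma_i}\nabla q_i^N$ is the right touch (the paper's Proposition \ref{proposition to improv estim W^1,p for p>2 with phi} proof omits it, which is a slight sloppiness there since $\langle\partial_n\theta,1\rangle_{\Gamma_i}$ need not vanish).
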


\begin{proof}
The idea is to lift the data $h$ by using the Stokes problem:
\begin{eqnarray*}
\begin{cases}
- \Delta \bu_1 + \nabla P_1 = \ff \quad \mathrm{and}\quad
\vdiv \bu_1 = h \quad \mathrm{in} \,\, \Omega, \\
\bu_1 \times \bn = 0 \quad \mathrm{on} \,\, \Gamma, \\
P_1 = P_0 \quad \mathrm{on} \,\, \Gamma_0\quad \mathrm{and}\quad  P_1 = P_0 + \alpha_i^{(1)} \quad \mathrm{on} \,\, \Gamma_i, \\
\langle \bu_1 \cdot \bn, 1 \rangle_{\Gamma_i} = 0, \, \, \forall 1 \leqslant i \leqslant I
\end{cases}
\end{eqnarray*}
Thanks to Proposition \ref{thm solution W1,p and W2,p Stokes divu=h}, there exists a unique solution $(\bu_1, P_1, \boldsymbol{\alpha}^{(1)}) \in \bW^{1,p}(\Omega) \times W^{1,r}(\Omega) \times \R^I$ satisfying the estimate: 
\begin{eqnarray}\label{estim u_1,p_1 with phi_u}
\norm{\bu_1}_{\bW^{1,p}(\Omega)} + \norm{P_1}_{W^{1,r}(\Omega)} \leqslant C \Big( \norm{\ff}_{[\bH_0^{r',p'}(\curl, \Omega)]'} + \norm{h}_{W^{1,r}(\O)}+\norm{P_0}_{W^{1-\frac{1}{r},r}(\Gamma)} \Big).
\end{eqnarray}

where $\alpha_i^{(1)} = \langle \ff, \nabla q_i^N \rangle_\Omega +\displaystyle\int_{\Gamma}(h- P_0)\, \nabla q_i^N \cdot \bn \,d\sigma$.\\

Next, since $\bg$ satisfies the compatibility condition \eqref{Condition compatibilite K_N Lp}, due to \cite[Theorem 5.2]{AS_M3AS}, the following problem: 
\begin{eqnarray*}
\begin{cases}
- \Delta \bb_1+\nabla \chi = \bg \quad \mathrm{and}\quad
\vdiv \bb_1 = 0 \quad \mathrm{in} \,\, \Omega, \\
\bb_1 \times \bn = 0 \quad \mathrm{and}\quad \chi=0 \quad  \mathrm{on} \,\, \Gamma,\\
\langle \bb_1 \cdot \bn, 1 \rangle_{\Gamma_i} = 0 \quad \forall 1 \leqslant i \leqslant I
\end{cases}
\end{eqnarray*}
has a unique solution $(\bb_1 ,\chi)\in \bW^{1,p}(\Omega)\times W^{1,r}(\O)$ satisfying the estimate:
\begin{eqnarray}\label{estim b_1 and chi}
\norm{\bb_1}_{\bW^{1,p}(\Omega)}+\norm{\chi}_{W^{1,r}(\O)} \leqslant C \norm{\bg}_{[\bH_0^{r',p'}(\curl, \Omega)]'}
\end{eqnarray}
Finally, we consider $(\bu_2,\,\bb_2,\,P_2,\,\boldsymbol{\alpha}^{(2)})\in\bW^{2,r}(\Omega) \times \bW^{2,r}(\Omega) \times W^{1,r}(\Omega) \times \R^I$ the solution of \eqref{linearized MHD u_2,b_2,P_2} satisfying \eqref{def alpha_i^2} and \eqref{estim u2,b2,P2 strong r>6/5 final}. Therefore, $(\bu_1+\bu_2,\,\bb_1+\bb_2,P_1+P_2,\chi, \boldsymbol{\alpha}^{(1)}+\boldsymbol{\alpha}^{(2)})$ is the solution of \eqref{MHD with chi  and div u not free}. Estimate \eqref{estim1 weak p>2 chi and div u not free} follows from \eqref{estim u_1,p_1 with phi_u},\eqref{estim b_1 and chi} and \eqref{estim u2,b2,P2 strong r>6/5 final}. 
\end{proof}
 Note that the estimate \eqref{estim1 weak p>2 chi and div u not free} is not optimal and will be improved in the next result. 
 
 \begin{proposition}\label{proposition to improv estim W^1,p for p>2 with phi}
Under the assumptions of Theorem \ref{thm weak sol W^1,p p>2 with phi_u and chi}, the problem \eqref{MHD with chi  and div u not free} has a unique solution $(\bu, \bb, P, \chi, \bc) \in \bW^{1,p}(\Omega) \times \bW^{1,p}(\Omega) \times W^{1,r}(\Omega) \times W^{1,r}(\Omega) \times \R^I$ satisfying \eqref{estim chi} and the following estimate:  
\small
\begin{eqnarray}\label{estim weak p>2 chi and div u not free}
&{}&\norm{\bu}_{\bW^{1,p}(\Omega)} + \norm{\bb}_{\bW^{1,p}(\Omega)} + \norm{P}_{W^{1,r}(\Omega)} \nonumber\\&\leqslant&\!\! \!\!C\!\big(1+\norm{\curl\bw}_{\bL^{s}(\O)}+\norm{\bdd}_{\bW^{1,s}(\O)}\big)\Big(\norm{\ff}_{[\bH_0^{r',p'}(\curl, \Omega)]'} + \norm{\bg}_{[\bH_0^{r',p'}(\curl, \Omega)]'} \nonumber\\&+&\norm{P_{0}}_{W^{\,1-1/r,r}(\Gamma)} + \norm{h}_{W^{1,r}(\O)}\big(1+\norm{\curl\bw}_{\bL^{s}(\O)}+\norm{\bdd}_{\bW^{1,s}(\O)}\big)\Big).
\end{eqnarray}
 \end{proposition}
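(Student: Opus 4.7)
The plan is to start from the solution $(\bu,\bb,P,\chi,\bc)$ already produced by Theorem~\ref{thm weak sol W^1,p p>2 with phi_u and chi} and to re-derive the estimate by applying the Stokes and elliptic regularity directly to the coupled system, rather than to the lifted decomposition used for existence. This mirrors part B of the proof of Theorem~\ref{Solutions faibles p grand}. First I separate $\chi$, which is the solution of the Dirichlet problem \eqref{Dirichlet prob for chi}, so that $\chi$ satisfies \eqref{estim chi} linearly in $\|\bg\|_{[\bH_0^{r',p'}(\curl,\Omega)]'}$, independently of $(\bu,\bb,P)$. Writing then the first equation of \eqref{MHD with chi and div u not free} as a Stokes system $-\Delta\bu+\nabla P=\widetilde\ff$, $\vdiv\bu=h$ with $\widetilde\ff=\ff-(\curl\bw)\times\bu+(\curl\bb)\times\bdd$, Proposition~\ref{thm solution W1,p and W2,p Stokes divu=h} yields a control of $\|\bu\|_{\bW^{1,p}(\Omega)}+\|P\|_{W^{1,r}(\Omega)}$ in terms of $\|\ff\|_{[\bH_0^{r',p'}(\curl,\Omega)]'}$, $\|h\|_{W^{1,r}(\Omega)}$, $\|P_0\|_{W^{1-1/r,r}(\Gamma)}$ and the $\bL^r$-norms of the two coupling terms. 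Similarly, since $\bb$ solves $-\Delta\bb=\bg-\nabla\chi+\curl(\bu\times\bdd)$ with $\vdiv\bb=0$ and $\bb\times\bn=\mathbf{0}$, Lemma~\ref{Lemme elliptique H r' p'} supplies an analogous bound on $\|\bb\|_{\bW^{1,p}(\Omega)}$.

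Next I treat the three coupling terms $(\curl\bw)\times\bu$, $(\curl\bb)\times\bdd$ and $\curl(\bu\times\bdd)=(\bdd\cdot\nabla)\bu-(\bu\cdot\nabla)\bdd$ exactly as in items (i)--(iii) of part B of the proof of Theorem~\ref{Solutions faibles p grand}. Each of the functions $\curl\bw$, $\bdd$, $\nabla\bdd$ is split, via the mollifier decompositions \eqref{decomposition y}, \eqref{decomposition d} and \eqref{decomposition nabla d}, into a remainder of arbitrarily small $\bL^s$-norm and a mollified part. The remainder produces a contribution of the form $C\epsilon\,(\|\bu\|_{\bW^{1,p}(\Omega)}+\|\bb\|_{\bW^{1,p}(\Omega)})$ which is absorbed in the left-hand side for $\epsilon$ small; the mollified part is handled by H\"older's inequality and the interpolation bounds \eqref{interpolation ineq} and \eqref{interpolation ineq epsilon'}, giving a residual contribution of the form $C_{\epsilon,\epsilon'}(\|\curl\bw\|_{\bL^s(\Omega)}+\|\bdd\|_{\bW^{1,s}(\Omega)})(\|\bu\|_{\bH^1(\Omega)}+\|\bb\|_{\bH^1(\Omega)})$. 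Combining, and absorbing the small terms, I arrive at
\begin{equation*}
\|\bu\|_{\bW^{1,p}(\Omega)}+\|\bb\|_{\bW^{1,p}(\Omega)}+\|P\|_{W^{1,r}(\Omega)} \leq C\,\mathcal{D}+C(\|\curl\bw\|_{\bL^s}+\|\bdd\|_{\bW^{1,s}})(\|\bu\|_{\bH^1(\Omega)}+\|\bb\|_{\bH^1(\Omega)}),
\end{equation*}
where $\mathcal{D}$ denotes the data norm $\|\ff\|_{[\bH_0^{r',p'}(\curl,\Omega)]'}+\|\bg\|_{[\bH_0^{r',p'}(\curl,\Omega)]'}+\|P_0\|_{W^{1-1/r,r}(\Gamma)}+\|h\|_{W^{1,r}(\Omega)}$.

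The main obstacle is therefore to bound $\|\bu\|_{\bH^1(\Omega)}+\|\bb\|_{\bH^1(\Omega)}$ linearly in the data without re-introducing any factor in $(1+\|\curl\bw\|_{\bL^s}+\|\bdd\|_{\bW^{1,s}})$ on terms other than $\|h\|_{W^{1,r}}$. For $h=0$, Theorem~\ref{thm weak solution H1 linear MHD} applies immediately since $\ff,\bg\in[\bH_0^{r',p'}(\curl,\Omega)]'\hookrightarrow[\bH_0^{6,2}(\curl,\Omega)]'$ when $p\geq 2$. For $h\neq 0$, I would lift the divergence as in the proof of Proposition~\ref{thm solution W1,p and W2,p Stokes divu=h}: solve $\Delta\theta=h$ with $\theta=0$ on $\Gamma$, set $\widetilde\bw=\nabla\theta-\sum_{i=1}^I\langle\nabla\theta\cdot\bn,1\rangle_{\Gamma_i}\nabla q_i^N$, and decompose $\bu=\bz+\widetilde\bw$ with $\vdiv\bz=0$. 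Then $(\bz,\bb,P,\chi)$ solves a system of the form \eqref{linearized MHD-pressure} with modified data $\ff+\Delta\widetilde\bw-(\curl\bw)\times\widetilde\bw$ and $\bg+\curl(\widetilde\bw\times\bdd)-\nabla\chi$. Since $\|\widetilde\bw\|_{\bH^1(\Omega)}\leq C\|h\|_{W^{1,r}(\Omega)}$ and $\bW^{1,s}(\Omega)\hookrightarrow\bL^3(\Omega)$, these modified data are controlled in $[\bH_0^{6,2}(\curl,\Omega)]'$ by $\mathcal{D}+C(1+\|\curl\bw\|_{\bL^s}+\|\bdd\|_{\bW^{1,s}})\|h\|_{W^{1,r}(\Omega)}$. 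Applying Theorem~\ref{thm weak solution H1 linear MHD} to this lifted system yields the required Hilbert bound with the asymmetric $(1+\|\curl\bw\|_{\bL^s}+\|\bdd\|_{\bW^{1,s}})^2$ factor on $\|h\|_{W^{1,r}(\Omega)}$ and a single such factor on the remaining data. Plugging the result back into the previous inequality gives the claimed estimate \eqref{estim weak p>2 chi and div u not free}.
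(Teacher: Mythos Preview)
Your approach is valid and leads to the correct estimate, but it is more roundabout than the paper's argument and contains a small bookkeeping slip.

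The paper lifts the divergence \emph{first}: it solves $\Delta\theta=h$ in $\Omega$, $\theta=0$ on $\Gamma$, sets $\bz=\bu-\nabla\theta$, and observes that $(\bz,\bb,P,\chi,\bc)$ satisfies the $h=0$ version of \eqref{MHD with chi  and div u not free} with modified data $\ff+\nabla h-(\curl\bw)\times\nabla\theta$ and $\bg+\curl(\nabla\theta\times\bdd)$. Then the already-proved optimal estimate of Corollary~\ref{Corollaire double stokes} (equivalently Theorem~\ref{Solutions faibles p grand}) is applied as a black box to the lifted system, and one only has to bound the two correction terms $\|(\curl\bw)\times\nabla\theta\|_{\bL^r}$ and $\|\curl(\nabla\theta\times\bdd)\|_{\bL^r}$ by $C(\|\curl\bw\|_{\bL^s}+\|\bdd\|_{\bW^{1,s}})\|h\|_{W^{1,r}}$, which is immediate by H\"older. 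No mollifier decomposition needs to be redone.

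Your route instead keeps $h$ in the Stokes estimate, repeats the entire mollifier argument of part~B of Theorem~\ref{Solutions faibles p grand}, and then lifts $h$ a second time at the $\bH^1$ level. This works, but duplicates effort. Note also a minor inaccuracy in your final paragraph: the Hilbert estimate \eqref{estim u,b H^1} for $(\bz,\bb)$ coming from Theorem~\ref{thm weak solution H1 linear MHD} carries \emph{no} factor of $(1+\|\curl\bw\|_{\bL^s}+\|\bdd\|_{\bW^{1,s}})$ on $\ff,\bg,P_0$, and only \emph{one} such factor on $\|h\|_{W^{1,r}}$ (from the modified data); the second factor on $h$ and the single factor on the remaining data appear only after you multiply by the outer $(\|\curl\bw\|_{\bL^s}+\|\bdd\|_{\bW^{1,s}})$ in your penultimate display. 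The final result is nonetheless correct.
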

\begin{proof}
We can reduce the non vanishing divergence problem \eqref{MHD with chi  and div u not free} for the velocity to the case where $\vdiv \bu = 0$ in $\Omega$, by solving the following Dirichlet problem: 
\begin{eqnarray}\label{Dirichlet theta}
\Delta \theta = h \quad \text{in} \,\, \Omega \quad \mathrm{and}\quad 
\theta = 0 \quad \text{on} \, \,\Gamma
\end{eqnarray}
For $h \in W^{1,r}(\Omega)$, problem \eqref{Dirichlet theta} has a unique solution $\theta \in W^{3,r}(\Omega) \hookrightarrow W^{2,p}(\Omega)$ satisfying the following estimate: 
\begin{eqnarray}\label{estim theta}
\norm{\theta}_{W^{2,p}(\Omega)} \leqslant C \norm{h}_{W^{1,r}(\Omega)} 
\end{eqnarray}
Setting $\bz = \bu - \nabla \theta$, then \eqref{MHD with chi  and div u not free} becomes: Find $(\bz, \bb, P, \chi, \bc)$ solution of problem: 
\begin{eqnarray} \label{Probleme z b preuve cor}
\begin{cases}
- \Delta \bz + (\curl \bw) \times \bz + \nabla P -  (\curl \bb) \times \bdd = \ff + \nabla  h- (\curl \bw) \times \nabla \theta\quad \mathrm{in}\,\,\O\\
\curl \curl \bb -  \curl(\bz \times \bdd) + \nabla \chi = \bg + \curl (\nabla \theta \times \bdd)\quad \mathrm{in}\,\,\O \\
\vdiv \bz = 0, \quad \vdiv \bb = 0 \quad \text{in} \, \Omega \\
\bz \times \bn = \textbf{0},\quad  \bb \times \bn = \textbf{0}\quad \mathrm{and}\quad  \chi = 0 \quad \text{on} \, \Gamma \\
P = P_0 \quad \text{on} \, \Gamma_0, \quad P= P_0 + c_i \quad \text{on} \, \Gamma_i \\
\langle \bz \cdot \bn, 1 \rangle_{\Gamma_i} = \langle \bb \cdot \bn, 1 \rangle_{\Gamma_i} = 0, \quad \forall 1 \leqslant i \leqslant I,
\end{cases}
\end{eqnarray}
which is a problem treated in the proof of Corollary \ref{Corollaire double stokes}. Since $\nabla \theta\in \bW^{1,p}(\O)\hookrightarrow \bL^{p*}(\O)$, by using the definition of $s$ in \eqref{def of s}, we have  $(\curl \bw) \times \nabla \theta \in \bL^r(\Omega)$ with $\frac{1}{p^*}=\frac{1}{p}-\frac{1}{3}$ if $p<3$, $p^*=\frac{rs}{s-r}$ if $p=3$ and $p^*=\infty$ if $p>3$. So $\ff +\nabla h- (\curl \bw) \times \nabla \theta \in [\bH_0^{r',p'}(\curl, \Omega)]'$. Now, we consider the term $\curl (\nabla \theta \times \bdd)=\bdd\cdot\nabla\nabla \theta- \nabla\theta \cdot\nabla\bdd$ .   Since $\bdd\in\bW^{1,s}(\O) \hookrightarrow \bL^3(\O)$ ($s \geqslant 3/2$), then  $\bdd\cdot\nabla\nabla \theta$ belongs to $\bL^{r}(\O)$. Moreover, since $\nabla\bdd\in \bL^s(\O)$, using the same arguments for the term  $(\curl \bw) \times \nabla \theta$, we deduce that $\nabla\theta \cdot\nabla\bdd$ belongs to $\bL^r(\O)$. So, $\bg + \curl (\nabla \theta \times \bdd) \in [\bH_0^{r',p'}(\curl, \Omega)]'$ and satisfies \eqref{Condition compatibilite K_N Lp}. Thanks to Theorem \ref{Solutions faibles p grand}, there exists a unique solution $(\bz, \bb, P, \chi, \bc) \in \bW^{1,p}(\Omega) \times \bW^{1,p}(\Omega) \times W^{1,r}(\Omega) \times W^{1,r}(\Omega) \times \R^I$ satisfying \eqref{estim chi} and 
\small
\begin{eqnarray}\label{estim z,b W^1,p for p>2 MHD with chi}
\begin{aligned}
&\norm{\bz}_{\bW^{1,\,p}(\Omega)} + \norm{\bb}_{\bW^{1,\,p}(\Omega)}+\norm{P}_{W^{1,r}(\Omega)} \leqslant C \big(1+\norm{\curl\bw}_{\bL^{s}(\O)}+\norm{\bdd}_{\bW^{1,s}(\O)}\big)\\&\times\big(\norm{\ff}_{[\bH_0^{r',p'}(\curl, \Omega)]'} + \norm{\nabla h}_{\bL^{r}(\O)} +\norm{ (\curl \bw) \times \nabla \theta}_{\bL^{r}(\O)}+\norm{\bg}_{[\bH_0^{r',p'}(\curl, \Omega)]'}\\& + \norm{\curl(\nabla\theta\times \bdd)}_{\bL^{r}(\O)}+\norm{P_{0}}_{W^{\,1-1/r,r}(\Gamma)}\Big).
\end{aligned}
\end{eqnarray}
with \small
\begin{eqnarray*}
\begin{aligned}
&c_i = \langle \ff, \nabla q_i^N \rangle_{\O_{_{r',p'}}}+ \int_{\Gamma}(h -P_0) \nabla q^{N}_{i}\cdot\bn\,\,d\sigma - \langle (\curl \bw) \times \nabla \theta, \nabla q_i^N \rangle_{\O_{_{r',p'}}} \\&- \langle (\curl \bw) \times \bz, \nabla q_i^N \rangle_{\O_{_{r',p'}}} + \langle (\curl \bb) \times \bdd, \nabla q_i^N \rangle_{\O_{_{r',p'}}} 
\end{aligned}
\end{eqnarray*}
To bound the terms $\norm{ (\curl \bw) \times \nabla \theta}_{\bL^{r}(\O)}$ and $ \norm{\curl(\nabla\theta\times \bdd)}_{\bL^{r}(\O)}$ in \eqref{estim z,b W^1,p for p>2 MHD with chi}, we write by using \eqref{estim theta}
\begin{eqnarray}\label{estim curl w X nabla theta}
 \norm{ (\curl \bw) \times \nabla \theta}_{\bL^{r}(\O)}&\leq& \norm{ \curl \bw}_{\bL^s(\O)}\norm{ \nabla \theta}_{\bL^{p^{*}}(\O)}\leq  \norm{ \curl \bw}_{\bL^s(\O)}\norm{ \nabla \theta}_{\bW^{1,p}(\O)}\nonumber\\\leq &C& \norm{ \curl \bw}_{\bL^s(\O)}\norm{h}_{W^{1,r}(\O)}.
\end{eqnarray}
In addition we have
\begin{eqnarray}\label{estim curl (nabla theta x d)}
 \norm{\curl(\nabla\theta\times\bdd)}_{\bL^{r}(\O)}&\leq & \norm{ \bdd\cdot\nabla\nabla\theta}_{\bL^r(\O)}+\norm{\nabla\bdd\cdot\nabla\theta}_{\bL^{r}(\O)} \nonumber\\ &\leq & \norm{\bdd}_{\bL^3(\O)}\norm{ \nabla \nabla\theta}_{\bL^{p}(\O)}+ \norm{ \nabla\bdd}_{\bL^s(\O)}\norm{ \nabla \theta}_{\bL^{p^{*}}(\O)} \nonumber\\&\leq&C  \norm{ \bdd}_{\bW^{1,s}(\O)}\norm{h}_{W^{1,r}(\O)}.
\end{eqnarray}
Now plugging the estimates \eqref{estim curl w X nabla theta} and \eqref{estim curl (nabla theta x d)} in \eqref{estim z,b W^1,p for p>2 MHD with chi} gives 
\small
\begin{eqnarray}\label{estim z,b W^1,p for p>2 MHD with chi final}
\begin{aligned}
&\norm{\bz}_{\bW^{1,\,p}(\Omega)} + \norm{\bb}_{\bW^{1,\,p}(\Omega)}+\norm{P}_{W^{1,r}(\Omega)} \leqslant C \big(1+\norm{\curl\bw}_{\bL^{s}(\O)}+\norm{\bdd}_{\bW^{1,s}(\O)}\big)\\&\times\Big(\norm{\ff}_{[\bH_0^{r',p'}(\curl, \Omega)]'} +\norm{\bg}_{[\bH_0^{r',p'}(\curl, \Omega)]'} +\norm{P_{0}}_{W^{\,1-1/r,r}(\Gamma)} \\&+\norm{ h}_{W^{1,r}(\O)} \big(1+\norm{\curl\bw}_{\bL^{s}(\O)}+\norm{\bdd}_{\bW^{1,s}(\O)} \big)\Big).
\end{aligned}
\end{eqnarray}
Thus, summing the resulting estimate \eqref{estim z,b W^1,p for p>2 MHD with chi final} along with estimate \eqref{estim theta}, we get the bound for $(\bu,\bb, P)$ in \eqref{estim weak p>2 chi and div u not free}.
\end{proof}


We are interested now on the existence of solution for the linearized problem \eqref{linearized MHD-pressure} in $\bW^{1,p}(\O)$ with $p<2$. Since the problem is linear, we will use a duality argument developed by Lions-Magenes \cite{Lions-Magenes}. This way ensures the uniqueness of solutions. For this, we must derive that problem \eqref{linearized MHD-pressure} has an equivalent variational formulation. We then need adequate density lemma and Green formulae, adapted to our functional framework, to define rigorously each term. We introduce the space
\small
\begin{eqnarray*}
\begin{aligned}
&\boldsymbol{\mathcal{V}}(\Omega) := \Big\{ (\bv, \ba, \theta, \tau) \in \bW^{1,p'}(\Omega) \times \bW^{1,p'}(\Omega) \times W^{1, (p^*)'}(\Omega) \times W_0^{1,(p^*)'}(\Omega); \,\,\,\vdiv \bv \in W_0^{1,(p^*)'}(\Omega),\\& 
\bv \times \bn = \ba \times \bn = \textbf{0} \, \, \text{on} \, \, \Gamma, \, \, \theta = 0, \, \, \text{on} \, \, \Gamma_0\,\,\mathrm{and}\,\,\theta = cste \, \, \text{on} \, \, \Gamma_i,\,\,\,\langle\bv \cdot \bn,1\rangle_{\Gamma_i} =\langle \ba \cdot \bn,1\rangle_{\Gamma_i} = 0, \,\,\, \forall 1 \leqslant i \leqslant I\Big\}
\end{aligned}
\end{eqnarray*}
and we recall that $\langle \cdot, \cdot \rangle_{\Omega_{p^*,p}}$ denotes the duality between $\bH_0^{p^*,p}(\curl, \Omega)$ and $[\bH_0^{p^*,p}(\curl, \Omega)]'$ with $\frac{1}{p*}=\frac{1}{p}-\frac{1}{3}$.

\begin{lemma} \label{Lemme equiv sol pour Lp*}
We suppose $\O$ of class $\mathcal{C}^{1,1}$. Let $\frac{3}{2}<p < 2$. Assume that $\ff, \bg \in [\bH_0^{r',p'}(\curl, \Omega)]'$, $h=0$ and $P_0 \in W^{1-\frac{1}{r}, r}(\Gamma)$ satisfying the compatibility conditions \eqref{Condition compatibilite K_N Lp}-\eqref{condition div g=0 sol W^1,p linear MHD}, together with $\curl\bw\in\bL^{3/2}(\O)$ and $\bdd\in\bW^{1,3/2}_{\sigma}(\O)$. Then, the following two problems are equivalent: 

\noindent \textbf{\emph{(1).}} $(\bu, \bb, P, \bc) \in \bW^{1,p}(\Omega) \times \bW^{1,p}(\Omega) \times W^{1,r}(\Omega) \times \R^I$  satisifies the linearized problem \eqref{linearized MHD-pressure}.\\ 
\textbf{\emph{(2).}} Find $(\bu, \bb, P, \bc) \in \bW^{1,p}_{\sigma}(\Omega) \times \bW^{1,p}_{\sigma}(\Omega) \times W^{1,r}(\Omega) \times \R^I$  with $\bu \times \bn = \textbf{\emph{0}}$ and $\bb \times \bn = \textbf{\emph{0}}$ on $\Gamma$, $\langle \bu \cdot \bn, 1 \rangle_{\Gamma_i} = 0$ and $\langle \bb \cdot \bn, 1 \rangle_{\Gamma_i} = 0$ for any $1 \leqslant i \leqslant I$ such that: \\
     For any $ (\bv, \ba, \theta, \tau) \in \boldsymbol{\mathcal{V}}(\Omega)$,
     \small
\begin{eqnarray} \label{Variational lemma formulation}
\langle \bu, - \Delta \bv - (\curl \bw) \times \bv + (\curl \ba) \times \bdd + \nabla \theta \rangle_{\Omega p^*, p} - \int_\Omega P \vdiv \bv \, dx \qquad\qquad\qquad\qquad\nonumber\\+\,\langle \bb, \curl \curl \ba + \curl (\bv \times \bdd) + \nabla \tau \rangle_{\Omega p^*,p}= \langle \ff, \bv \rangle_{\Omega r', p'} + \langle \bg, \ba \rangle_{\Omega r', p'} - \int_\Gamma P_0 \bv \cdot \bn \, d\sigma, \end{eqnarray}
\begin{eqnarray}\label{def ci weak VF}
c_i = \langle \ff, \nabla q_i^N \rangle_{\Omega r',p'} - \int_\Gamma P_0 \nabla q_i^N \cdot \bn \, d\sigma + \!\int_\Omega (\curl \bb) \times \bdd \cdot \nabla q_i^N \, dx\!- \!\int_\Omega (\curl \bw) \times \bu \cdot \nabla q_i^N \, dx, \end{eqnarray}
\end{lemma}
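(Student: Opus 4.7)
My plan is to prove both implications by integration-by-parts on well-chosen test functions, after checking that every pairing in \eqref{Variational lemma formulation} is well-defined. Since $3/2<p<2$ gives $p'\in(2,3)$ and $(p^*)'\in(6/5,3/2)$, combining the characterization of $[\bH_0^{p^*,p}(\curl,\Omega)]'$ in Proposition \ref{charac dual H^r,p} with Hölder's inequality and the Sobolev embedding $\bW^{1,p'}(\Omega)\hookrightarrow\bL^{(p')^*}(\Omega)$ shows that $(\curl\bw)\times\bv$, $(\curl\ba)\times\bdd$ and $\nabla\vdiv\bv$ all lie in $\bL^{(p^*)'}(\Omega)$, while $\curl(\bv\times\bdd)$, being the $\curl$ of an $\bL^{p'}$ field, belongs to $[\bH_0^{p^*,p}(\curl,\Omega)]'$. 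Since $\bu,\bb\in\bW^{1,p}(\Omega)$ with vanishing tangential trace embed into $\bH_0^{p^*,p}(\curl,\Omega)$, each duality in \eqref{Variational lemma formulation} is legitimate.

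For $(1)\Rightarrow(2)$, I would take $(\bv,\ba,\theta,\tau)\in\boldsymbol{\mathcal{V}}(\Omega)$, multiply the first equation of \eqref{linearized MHD-pressure} by $\bv$ and the second by $\ba$, and integrate. Writing $-\Delta=\curl\curl-\nabla\vdiv$, two successive Green formulas transfer all differential operators onto the test functions; the boundary terms vanish because $\bu\times\bn=\bb\times\bn=\bv\times\bn=\ba\times\bn=\mathbf{0}$ on $\Gamma$, and the $\nabla\vdiv$ piece disappears thanks to $\vdiv\bu=\vdiv\bb=0$ together with $\vdiv\bv=0$ on $\Gamma$ (from $\vdiv\bv\in W_0^{1,(p^*)'}$). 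The three cross-product terms are rewritten using the scalar triple product $\ba\cdot(\bb\times\bc)=\bb\cdot(\bc\times\ba)$, together with one Green formula for $\curl$ in the case $\curl(\bu\times\bdd)\cdot\ba$, producing exactly the operators of \eqref{Variational lemma formulation}. The pressure generates $-\int P\vdiv\bv+\int_\Gamma P\bv\cdot\bn$; the boundary piece collapses to $\int_\Gamma P_0\bv\cdot\bn$ since the constants $c_j$ are annihilated by $\langle\bv\cdot\bn,1\rangle_{\Gamma_j}=0$. The auxiliary terms $\langle\bu,\nabla\theta\rangle$ and $\langle\bb,\nabla\tau\rangle$ vanish identically: the first because $\vdiv\bu=0$ and $\int_\Gamma\theta\,\bu\cdot\bn=\sum_j\theta|_{\Gamma_j}\langle\bu\cdot\bn,1\rangle_{\Gamma_j}=0$, in view of $\theta=0$ on $\Gamma_0$ and $\langle\bu\cdot\bn,1\rangle_{\Gamma_j}=0$; the second because $\vdiv\bb=0$ and $\tau=0$ on $\Gamma$. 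Formula \eqref{def ci weak VF} is then recovered by testing the momentum equation separately with $\nabla q_i^N$, using $\Delta q_i^N=0$, $\nabla q_i^N\times\bn=\mathbf{0}$ on $\Gamma$, and the normalizations $\langle\partial_n q_i^N,1\rangle_{\Gamma_j}=\delta_{ij}$, $\langle\partial_n q_i^N,1\rangle_{\Gamma_0}=-1$.

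For $(2)\Rightarrow(1)$, I would first test \eqref{Variational lemma formulation} with $(\bv,0,0,0)$ for $\bv\in\boldsymbol{\mathcal{D}}(\Omega)$ (admissible since $\vdiv\bv\in\mathcal{D}(\Omega)\subset W_0^{1,(p^*)'}$) to recover $-\Delta\bu+(\curl\bw)\times\bu+\nabla P-(\curl\bb)\times\bdd=\ff$ in $\boldsymbol{\mathcal{D}}'(\Omega)$; the analogous test with $(0,\ba,0,0)$ gives $\curl\curl\bb-\curl(\bu\times\bdd)=\bg$. The divergence and tangential boundary conditions on $(\bu,\bb)$ are built into (2). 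To identify the pressure boundary condition, I would take a general $\bv$ with $(\bv,0,0,0)\in\boldsymbol{\mathcal{V}}(\Omega)$, apply the Green formulas above to the strong equation just derived (which now produces $\int_\Gamma P\bv\cdot\bn$ rather than $\int_\Gamma P_0\bv\cdot\bn$), and subtract from \eqref{Variational lemma formulation}; the internal terms cancel, leaving $\int_\Gamma(P-P_0)\,\bv\cdot\bn\,d\sigma=0$ for all admissible $\bv$. Since $\vdiv\bv\in W_0^{1,(p^*)'}$ is otherwise unconstrained, the traces $\bv\cdot\bn$ span the whole of $W^{1-1/p',p'}(\Gamma)$ subject only to $\langle\bv\cdot\bn,1\rangle_{\Gamma_i}=0$ for $i=1,\dots,I$: concentrating $\bv\cdot\bn$ on $\Gamma_0$ (where no mean condition is imposed) forces $P=P_0$ there, while concentrating it on one $\Gamma_i$ with zero mean shows $P-P_0=a_i$ is constant on $\Gamma_i$. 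Testing the strong equation with the admissible $\bv=\nabla q_i^N$ and comparing with \eqref{def ci weak VF} identifies $a_i=c_i$, which completes the derivation of (1).

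The main obstacle is not computational but conceptual: ensuring the well-posedness of each duality product at the borderline regularity $3/2<p<2$ (already addressed via Proposition \ref{charac dual H^r,p} and the Sobolev embeddings), and justifying the distributional Green formulas. The latter relies on the density of $\boldsymbol{\mathcal{D}}(\overline{\Omega})$ in the relevant subspaces of $\bH^{p^*,p}(\curl,\Omega)$ and on a density statement for the pressure pairing analogous to that of $\boldsymbol{\mathcal{D}}_\sigma(\overline{\Omega})\times\boldsymbol{\mathcal{D}}(\overline{\Omega})$ in $\mathcal{E}(\Omega)$ exploited in Proposition \ref{Proposition Equivalence sol hilb sol var}; both rely on the $\mathcal{C}^{1,1}$ smoothness of $\partial\Omega$.
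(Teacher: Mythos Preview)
Your proposal is correct and follows essentially the same route as the paper: both directions are obtained by integration by parts after verifying that every pairing is well-defined via Proposition~\ref{charac dual H^r,p} and the Sobolev embeddings $\bW^{1,p}(\Omega)\hookrightarrow\bL^{p^*}(\Omega)$, $\bW^{1,p'}(\Omega)\hookrightarrow\bL^{r'}(\Omega)$.

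There is one noteworthy difference in the $(2)\Rightarrow(1)$ direction. The paper restricts to divergence-free test fields $\ba\in\bW^{1,p'}_\sigma(\Omega)$ with $\ba\times\bn=\mathbf{0}$, obtains the second equation up to an additive gradient $\nabla\chi$ via the De~Rham-type lemma of \cite{Pan}, and then eliminates $\chi$ using the compatibility condition $\vdiv\bg=0$. Your choice to test directly with $\ba\in\boldsymbol{\mathcal{D}}(\Omega)$ (which is admissible in $\boldsymbol{\mathcal{V}}(\Omega)$ since all boundary constraints are trivially satisfied) yields the equation at once and bypasses this step; this is a genuine simplification. Similarly, since $P$ is already part of the unknown in $(2)$, your direct recovery of the first equation avoids the paper's somewhat redundant appeal to De~Rham's theorem. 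For the pressure boundary condition, the paper proceeds via the explicit decomposition $\bv=\bv_0+\sum_i\langle\bv\cdot\bn,1\rangle_{\Gamma_i}\nabla q_i^N$ and defers the final trace identification to \cite[Proposition~3.7]{AS_DCDS}; your argument based on the range of the normal trace map is equivalent but would need the same surjectivity statement made explicit to be fully self-contained.
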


\begin{proof}

 $\textbf{\emph{(1)}}\boldsymbol{\Rightarrow  } \textbf{\emph{(2)}}$ Let $(\bu, \bb, P, \bc) \in \bW^{1,p}_{\sigma}(\Omega) \times \bW^{1,p}_{\sigma}(\Omega) \times W^{1,r}(\Omega) \times \R^I$ solution of the linearized problem \eqref{linearized MHD-pressure}. Let us take $(\bv, \ba, \theta, \tau) \in \boldsymbol{\mathcal{V}}(\Omega)$. We want to   multiply the system \eqref{linearized MHD-pressure} by $(\bv, \ba, \theta, \tau)$ and integrate by parts. Let us study these terms one by one. \\
Firstly, we note that the duality pairing $\langle -\Delta \bu, \bv \rangle_{\Omega r',p'}$ is well defined. Indeed, since $-\Delta \bu = \curl \curl \bu$ and $\curl \bu \in \bL^p(\Omega)$, it follows that $- \Delta \bu \in [\bH_0^{r',p'}(\curl, \Omega)]'$. Besides, recall that $\bv \in \bW^{1,p'}(\Omega)$, so $\curl \bv \in \bL^{p'}(\Omega)$, and since
\begin{eqnarray} \label{Relation r' et p'}
\frac{1}{r'}=1-\frac{1}{r}=1-\frac{3+p}{3p} = \frac{1}{p'} - \frac{1}{3},
\end{eqnarray}
then we have $\bW^{1,p'}(\Omega) \hookrightarrow \bL^{r'}(\Omega)$. Hence $\bv \in \bH_0^{r',p'}(\curl, \Omega)$. 
Now, by the density of $\boldsymbol{\mathcal{D}}({\Omega})$ in $\bH_0^{r',p'}(\curl, \Omega)$ and $\bH_0^{p^*, p}(\curl, \Omega)$, we have
\begin{eqnarray} \label{Premier terme lemme}
\langle - \Delta \bu, \bv \rangle_{\Omega r',p'} = \int_\Omega \curl \bu \cdot \curl \bv \, dx = \langle \bu, \curl \curl \bv \rangle_{\Omega p^*,p}
\end{eqnarray}
The last duality pairing is again well defined: the embedding $\bW^{1,p}(\Omega) \hookrightarrow \bL^{p^*}(\Omega)$ implies $\bu \in \bH_0^{p^*,p}(\curl,\Omega)$, and since $\curl \bv \in \bL^{p'}(\Omega)$ then $\curl \curl \bv \in [\bH_0^{p^*,p}(\curl, \Omega)]'$. 
Thus, due to the relation $\curl \curl \bv = - \Delta \bv + \nabla \vdiv \bv$, we deduce that:
\begin{eqnarray*}
\langle - \Delta \bu, \bv \rangle_{\Omega r', p'} = \langle \bu, - \Delta \bv + \nabla \vdiv \bv \rangle_{\Omega p^*,p}
\end{eqnarray*}
Observe that since $\bv\in\boldsymbol{\mathcal{V}}(\Omega)$, we have $\vdiv \bv \in \bW_0^{1, (p^*)'}(\Omega)$, and it follows that $\nabla \vdiv \bv \in \bL^{(p^*)'}(\Omega) \hookrightarrow [\bH_0^{p^*,p}(\curl, \Omega)]'$. Therefore, since $\curl\curl\bv$ belongs to $[\bH_0^{p^*,p}(\curl, \Omega)]'$, we deduce that $\Delta \bv$ also belongs to $[\bH_0^{p^*,p}(\curl, \Omega)]'$. This proves that the last duality makes sense. 
Next, since $\vdiv \bv = 0$ on $\Gamma$ and $\vdiv \bu = 0$ in $\Omega$, we have
\begin{eqnarray*}
\begin{aligned}
\langle \bu, \nabla \vdiv \bv \rangle_{\Omega p^*,p} &=\int_{\O}\bu\cdot\nabla\vdiv\bv \,d\bx= - \int_\Omega \vdiv \bu \vdiv \bv \, d\bx +\int_\Gamma \bu \cdot \bn \, \vdiv \bv \, d\sigma= 0 
\end{aligned}.
\end{eqnarray*}
We conclude that 
\begin{eqnarray*}
\langle - \Delta \bu, \bv \rangle_{\Omega r',p'} = \langle \bu, - \Delta \bv \rangle_{\Omega p^*, p}
\end{eqnarray*}

\vspace{5mm}

\noindent We now treat the term $\langle (\curl \bw) \times \bu, \bv \rangle_{\Omega r',p'}$. Since we have $\curl \bw \in \bL^\frac{3}{2}(\Omega)$ and $\bu \in \bW^{1,p}(\Omega) \hookrightarrow \bL^{p^*}(\Omega)$, then, by definition of $r$, $(\curl \bw) \times \bu \in \bL^r(\Omega)$. Besides, $\bv \in \bW^{1,p'}(\Omega) \hookrightarrow \bL^{(p')^*}(\Omega)$. So
\begin{eqnarray*}
\begin{aligned}
\langle (\curl \bw) \times \bu , \bv \rangle_{\Omega r',p'} = \int_\Omega (\curl \bw) \times \bu \cdot \bv \, dx = - \int_\Omega (\curl \bw) \times \bv \cdot \bu \, dx  
\end{aligned}
\end{eqnarray*}
with the integral well defined thanks to \eqref{Relation r' et p'} and
\begin{eqnarray*}
\frac{1}{r} + \frac{1}{(p')^*} = \frac{1}{r} + \frac{1}{p'} - \frac{1}{3} = \frac{1}{r} + \frac{1}{r'} = 1.
\end{eqnarray*} 

\vspace{2mm}

\noindent For the term $\langle - (\curl \bb) \times \bdd,\, \bv \rangle_{\Omega_{r',p'}}$, we proceed as for the previous pairing: we have $\curl \bb \in \bL^p(\Omega)$ and $\bdd \in \bW^{1,\frac{3}{2}}(\Omega) \hookrightarrow \bL^3(\Omega)$, so $(\curl \bb) \times \bdd \in \bL^r(\Omega)$. Therefore:
\begin{eqnarray*}
\langle - (\curl \bb) \times \bdd, \bv \rangle_{\Omega r',p'} = - \int_\Omega (\curl \bb) \times \bdd \cdot \bv \, dx = - \int_\Omega \curl \bb \cdot (\bdd \times \bv) \, dx 
\end{eqnarray*}
Using again the density of $\boldsymbol{\mathcal{D}}({\Omega})$ in $\bH_0^{p^*,p}(\curl, \Omega)$, we obtain
\begin{eqnarray*}
\begin{aligned}
-\int_\Omega \curl \bb \cdot (\bdd \times \bv) \, dx  = \int_\Omega \bb \cdot \curl (\bv \times \bdd) \, dx.
\end{aligned}
\end{eqnarray*} 
It remains us to treat the pressure term. Since $\nabla P \in \bL^r(\Omega)$, we have as for the both previous terms the well defined of the integral:
\begin{eqnarray*}
\langle \nabla P, \bv \rangle_{\Omega r', p'} = \int_\Omega \nabla P \cdot \bv \, dx 
\end{eqnarray*}
Using the same arguments as in \cite[Proposition 3.7]{AS_DCDS}, and taking into account the boundary conditions on the pressure $P$, we have
\begin{eqnarray*}
\begin{aligned}
\langle \nabla P, \bv \rangle_{\Omega r',p'} &= \int_\Gamma P \, \bv \cdot \bn \, d\sigma - \int_\Omega P \, \vdiv \bv \, dx \\
&= \int_{\Gamma_0} P_0 \, \bv \cdot \bn \, d\sigma + \sum_{i=1}^I \int_{\Gamma_i} (P_0 + c_i) \, \bv \cdot \bn \, d\sigma - \int_\Omega P \, \vdiv \bv \, dx 
\end{aligned}
\end{eqnarray*}
However, since $\langle\bv \cdot \bn ,\,1\rangle_{\Gamma_{i}} = 0$ for all $1 \leqslant i \leqslant I$, we have $\displaystyle\sum_{i=1}^I \int_{\Gamma_i} c_i \, \bv \cdot \bn \, d\sigma=0$. Therefore, we obtain: 
\begin{eqnarray*}
\begin{aligned}
\langle \nabla P, \bv \rangle_{\Omega r',p'} = \int_\Gamma P_0 \, \bv \cdot \bn \, d\sigma - \int_\Omega P \, \vdiv \bv \, dx 
\end{aligned}
\end{eqnarray*}
Now, multiplying the equation $\vdiv \bu = 0$ in $\Omega$ by $\theta$, we obtain by the density of $\boldsymbol{\mathcal{D}}(\bar{\Omega})$ in $\bH^{p^*,p}(\vdiv, \Omega)$\begin{eqnarray*}
0 = - \int_\Omega \theta \vdiv \bu \, dx = \int_\Omega \bu \cdot \nabla \theta \, dx - \int_\Gamma \theta \, \bu \cdot \bn \, d\sigma,
\end{eqnarray*}
where we have used the fact that $\bu\in\bW^{1,p}(\O)\hookrightarrow \bL^{p*}(\O)$ which implies that $\bu\in\bH^{p^{*},\,p}(\vdiv,\O)$. Combining the boundary conditions of $\theta$ on $\Gamma_i$, $0 \leqslant i \leqslant I$, with zero fluxs of the velocity $\langle\bu \cdot \bn,\,1\rangle_{\Gamma_{i}} = 0$ for $1 \leqslant i \leqslant I$, we have:
$\displaystyle\int_\Gamma \theta \, \bu \cdot \bn \, d\sigma = 0.$
Thus, summing the above resulting terms, we obtain: 
\begin{eqnarray}\label{part 1 FV p<2}
\begin{aligned}
\langle \ff, \bv \rangle_{\Omega r', p'} &= \langle \bu, \curl \curl \bv \rangle_{\Omega p^*, p} - \int_\Omega (\curl \bw) \times \bv \cdot \bu \, dx -  \int_\Omega \curl (\bdd \times \bv) \cdot \bb \, dx \\
&- \int_\Omega P \vdiv \bv \, dx  + \int_\Gamma P_0 \, \bv \cdot \bn \, d\sigma + \int_\Omega \bu \cdot \nabla \theta \, dx.
\end{aligned}
\end{eqnarray}

Now, we treat the terms of the second equation of \eqref{linearized MHD-pressure}. For the term $\langle \curl \curl \bb, \ba \rangle_{\Omega r', p'}$, the duality pairing is well defined and following the same reasoning than for \eqref{Premier terme lemme}, we have  \begin{eqnarray*}
\langle \curl \curl \bb, \ba \rangle_{\Omega_{r',p'}} = \int_\Omega \curl \bb \cdot \curl \ba \, dx = \langle \bb, \curl \curl \ba \rangle_{\Omega_{p^*, p}}
\end{eqnarray*}
Next, for the term $\langle \curl (\bu \times \bdd), \ba \rangle_{\Omega r', p'}$, the duality pairing is again well defined: since $\bu \in \bW^{1,p}(\Omega) \hookrightarrow \bL^{p^*}(\Omega)$ and $\bdd \in \bW^{1,\frac{3}{2}}(\Omega) \hookrightarrow \bL^3(\Omega)$, then  $\bu \times \bdd \in \bL^p(\Omega)$ so $\curl (\bu \times \bdd) \in [\bH_0^{r',p'}(\curl, \Omega)]'$. Similarly, by the density of $\boldsymbol{\mathcal{D}}(\Omega)$ in $\bH_0^{r',p'}(\curl, \Omega)$, we have
\begin{eqnarray*}
\begin{aligned}
\langle \curl (\bu \times \bdd), \ba \rangle_{\Omega r',p'} = \int_\Omega \curl \ba \cdot (\bu \times \bdd) \, dx = \int_\Omega \bu \cdot (\bdd \times \curl \ba) \, dx
\end{aligned}
\end{eqnarray*}
Here also the integrals are well defined. Indeed, for the first integral, $\curl \ba \in \bL^{p'}(\Omega)$, $\bu \in \bL^{p^*}(\Omega)$, $\bdd \in \bW^{1,\frac{3}{2}}(\Omega) \hookrightarrow \bL^3(\Omega)$ and $\frac{1}{p'} + \frac{1}{p^*} + \frac{1}{3} = 1$.

It remains us to multiply the equation $\vdiv\,\bb=0$ in $\O$ by $\tau$. By density of $\mathcal{D}(\Omega)$ in $W_0^{1,(p^*)'}(\Omega)$, we have the Green formula: for any $\tau\in W_0^{1,(p^*)'}(\Omega)$
\begin{eqnarray*}
- \int_\Omega (\vdiv \bb) \, \tau \, dx = \int_\Omega \bb \cdot \nabla \tau \, dx
\end{eqnarray*}
In summary, these terms provided by the second equation of \eqref{linearized MHD-pressure} give:
\begin{eqnarray}\label{Part 2 FV p<2}
\langle \bg, \ba \rangle_{\Omega r',p'} = \langle \bb, \curl \curl \ba \rangle_{\Omega p^*,p} -  \int_\Omega \bu \cdot (\bdd \times \curl \ba) \, dx + \int_\Omega \bb \cdot \nabla \tau \, dx
\end{eqnarray}

\noindent Finally, adding \eqref{part 1 FV p<2} and \eqref{Part 2 FV p<2}, we obtain the variational formulation  \eqref{Variational lemma formulation}. 

\vspace{5mm}

We now want to determine the constants $c_i$ in \eqref{def ci weak VF}. Let us take $\bv \in \bW^{1,p}_\sigma(\Omega)$ with $\bv \times \bn = 0$ on $\Gamma$, and set: 
\begin{eqnarray} \label{Def v0}
\bv_0 = \bv - \sum_{i=1}^I \Big( \int_{\Gamma_i} \bv \cdot \bn \, d\sigma \Big) \nabla q_i^N
\end{eqnarray}
So, $\bv_0$ belongs to $\bW^{1,p}_\sigma(\Omega)$ and satisfies  $\bv \times \bn = 0$ on $\Gamma$, $\langle\bv_0\cdot\bn,\,1\rangle_{\Gamma_{i}}=0$, $1\leq i \leq I$.
Multiplying the first equation of \eqref{linearized MHD-pressure} by $\bv$ and integrating by parts, we obtain 
\begin{eqnarray} \label{Egalite preuve lemme p^*}
\begin{aligned}
&\int_\Omega \curl \bu \cdot \curl \bv \, dx - \int_\Omega (\curl \bw) \times \bv \cdot \bu \, dx +  \int_\Omega \curl (\bv \times \bdd) \cdot \bb \, dx \\
&+ \int_{\Gamma_0} P_0 \, \bv \cdot \bn \, d\sigma + \sum_{i=1}^I \int_{\Gamma_i} (P_0 + c_i) \bv \cdot \bn \, d\sigma =  \int_\Omega \ff \cdot \bv \, dx
\end{aligned}
\end{eqnarray}
We now take a test function $(\bv_0, 0, 0, 0)$ in \eqref{Variational lemma formulation}. Note that it is possible because of the definition of \eqref{Def v0}:
\begin{eqnarray*}
\begin{aligned}
&\langle \bu, - \Delta \bv_0 \rangle_{\Omega p^*,p} - \int_\Omega \bu \cdot (\curl \bw) \times \bv_0 \, dx - \int_\Omega P \vdiv \bv_0 \, dx +  \int_\Omega \bb \cdot \curl (\bv_0 \times \bdd) \, dx \\
&= \langle \ff, \bv_0 \rangle_{\Omega r',p'} - \int_\Gamma P_0 \, \bv_0 \cdot \bn \, d\sigma
\end{aligned}
\end{eqnarray*}
By definition of $q_i^N$, we have $\vdiv \bv_0 = 0$. Besides, $\curl \bv_0 = \curl \bv$, so it follows from the same density argument used previously:
\begin{eqnarray*}
\begin{aligned}
\int_\Omega \curl \bu \cdot \curl \bv \, dx - \int_\Omega (\curl \bw) \times \bv_0 \cdot \bu \, dx +  \int_\Omega \bb \cdot \curl(\bv_0 \times \bdd) \, dx = \int_\Omega \ff \cdot \bv_0 \, dx - \int_\Gamma P_0 \, \bv_0 \cdot \bn \, d\sigma 
\end{aligned}
\end{eqnarray*}
Decomposing $\bv_0$ with \eqref{Def v0} in the previous equality, we have:
{\footnotesize
\begin{eqnarray*}
\begin{aligned}
& \int_\Omega \curl \bu \cdot \curl \bv \, dx - \int_\Omega (\curl \bw) \times \bv \cdot \bu +  \int_\Omega \bb \cdot \curl(\bv \times \bdd) \, dx 
-\int_\Omega \ff \cdot \bv \, dx + \int_\Gamma P_0 \, \bv \cdot \bn \, d\sigma \\\small
&= \sum_{i=1}^I (\int_{\Gamma_i} \bv \cdot \bn \, d\sigma) \Big[ -\int_\Omega (\curl \bw) \times \nabla q_i^N \cdot \bu \, dx +  \int_\Omega \bb \cdot \curl (\nabla q_i^N \times \bdd) \, dx - \int_\Omega \ff \cdot \nabla q_i^N \, dx + \int_\Gamma P_0 \, \nabla q_i^N \cdot \bn \, d\sigma \Big]
\end{aligned}
\end{eqnarray*}}
Injecting \eqref{Egalite preuve lemme p^*} in this calculus, we thus obtain: 
\begin{eqnarray*}
- \sum_{i=1}^I c_i \int_{\Gamma_i} \bv \cdot \bn \, d\sigma & =&\sum_{i=1}^I (\int_{\Gamma_i} \bv \cdot \bn \, d\sigma) \Big[ - \int_\Omega (\curl \bw) \times \nabla q_i^N \cdot \bu \, dx +  \int_\Omega \bb \cdot \curl (\nabla q_i^N \times \bdd) \, dx
\\&-& \int_\Omega \ff \cdot \nabla q_i^N \, dx + \int_\Gamma P_0 \, \nabla q_i^N \cdot \bn \, d\sigma  \Big]
\end{eqnarray*}
Therefore, taking $\bv = \nabla q_i^N$ and since, for all $1 \leqslant i,k \leqslant I$, $\langle \nabla q_i^N \cdot \bn, 1 \rangle_{\Gamma_k} = \delta_{i,k}$, we have: 
\begin{eqnarray*}
\begin{aligned}
c_i = - \int_\Omega (\curl \bw) \times \bu \cdot \nabla q_i^N \, dx - \underbrace{ \int_\Omega \bb \cdot \curl (\nabla q_i^N \times \bdd) \, dx }_{ \displaystyle\int_\Omega (\curl \bb) \times \bdd \cdot \nabla q_i^N \, dx }+ \int_\Omega \ff \cdot \nabla q_i^N \, dx - \int_\Gamma P_0 \, \nabla q_i^N \cdot \bn \, d\sigma
\end{aligned}
\end{eqnarray*}
which gives the relation \eqref{def ci weak VF}.\medskip

 $\textbf{\emph{(2)}}\boldsymbol{\Rightarrow  } \textbf{\emph{(1)}}$ Conversely, let $(\bu, \bb, P, \bc) \in \bW^{1,p}_{\sigma}(\Omega) \times \bW^{1,p}_{\sigma}(\Omega) \times W^{1,r}(\Omega) \times \R^I$ solution of \eqref{Variational lemma formulation}-\eqref{def ci weak VF} with $\bu \times \bn =\bb \times \bn = \textbf{0}$ on $\Gamma$, and $\langle \bu \cdot \bn, 1 \rangle_{\Gamma_i} = \langle \bb \cdot \bn, 1 \rangle_{\Gamma_i} = 0$ for all $1 \leqslant i \leqslant I$. We want to prove that $(\bu, \bb, P, \bc)$ satisfies \eqref{linearized MHD-pressure}.
Let us take $\bv \in \boldsymbol{\mathcal{D}}(\Omega)$, $\ba = \boldsymbol{0}$, $\theta = \tau = 0$ as test functions in \eqref{Variational lemma formulation}. We obtain 
\begin{eqnarray*}
\langle - \Delta \bu + (\curl \bw) \times \bu -  (\curl \bb) \times \bdd  - \ff, \bv \rangle_{\boldsymbol{\mathcal{D}'}(\Omega) \times \boldsymbol{\mathcal{D}}(\Omega)} = 0, \quad \forall \bv \in \boldsymbol{\mathcal{D}}(\Omega)
\end{eqnarray*}
So by De Rham's theorem, there exists $P\in \L^{p}(\O)$ such that 
\begin{eqnarray*}
- \Delta \bu + (\curl \bw) \times \bu -  (\curl \bb) \times \bdd + \nabla P = \ff 
\end{eqnarray*}
So, $(\bu, \bb, P)$ satisfies the first equation of \eqref{linearized MHD-pressure}. 
Let us now take $\ba \in \bW^{1,p'}_{\sigma}(\Omega)$ with $\ba \times \bn = 0$ on $\Gamma$, $\bv = \boldsymbol{0}$, $\theta = \tau = 0$ as test functions in \eqref{Variational lemma formulation}. We obtain
\begin{eqnarray*}
\langle \curl \curl \bb -  \curl (\bu \times \bdd) - \bg, \ba \rangle_{\Omega_{p^*,p}}
\end{eqnarray*}
Applying a De Rham Lemma version for functionals acting on vector fields with vanishing
tangential components (see \cite[Lemma 2.2]{Pan}), there exists $\chi \in L^2(\Omega)$ defined uniquely up to an additive constant such that: 
\begin{eqnarray} \label{2eme equation MHD lemme}
\curl \curl \bb -  \curl (\bu \times \bdd) + \nabla \chi = \bg \quad \mathrm{in}\,\,\O\quad \mathrm{and}\quad \chi=0\quad \mathrm{on}\,\,\Gamma.
\end{eqnarray}
But taking the divergence in \eqref{2eme equation MHD lemme}, $\chi$ is solution of the following Dirichlet problem: 
\begin{eqnarray*}
\Delta \chi = \vdiv \bg=0 \quad \text{in} \, \, \Omega  \quad \mathrm{and}\quad  \chi= 0 \, \, \text{on} \, \, \Gamma
\end{eqnarray*}
Since $\bg$ satisfies the compatibility condition \eqref{condition div g=0 sol W^1,p linear MHD}, then $\chi$ is equal to zero and we have: 
\begin{eqnarray*}
\curl \curl \bb -  \curl (\bu \times \bdd) = \bg 
\end{eqnarray*}
So $(\bu, \bb)$ satisfies the second equation in \eqref{linearized MHD-pressure}. 

\noindent Next, if we choose $\bv = \ba = \boldsymbol{0}$, $\tau = 0$ and $\theta \in \mathcal{D}(\Omega)$, then we obtain $\vdiv \bu = 0$ in $\Omega$. Similarly, if we choose $\bv = \ba = \boldsymbol{0}$, $\theta = 0$ and $\tau \in \mathcal{D}(\Omega)$, we obtain $\vdiv \bb = 0$ in $\Omega$. 

\vspace{2mm}

It remains to prove the boundary condition given on the pressure $P$. To this end, we follow a method from \cite[Proposition 3.7]{AS_DCDS}. Let us take as test functions $\bv \in \bW^{1,p'}(\Omega)$ with $\bv \times \bn = 0$ on $\Gamma$ and $\vdiv \bv \in W_0^{1,p^*}(\Omega)$, $\ba = \boldsymbol{0}$, $\theta \in W^{1,(p^*)'}(\Omega)$ and $\tau = 0$ in the variational formulation \eqref{Variational lemma formulation}. Thus, applying Green formulae as previously, we have: 
\begin{eqnarray*}
\begin{aligned}
\langle \ff, \bv \rangle_{\Omega_{r',p'}} &= \langle \bu, - \Delta \bv \rangle_{\Omega_{r',p'}} - \int_\Omega (\curl \bw) \times \bv \cdot \bu \, dx +  \int_\Omega \bb \cdot \curl (\bv \times \bdd) \, dx \\
& +\int_\Omega \nabla \theta \cdot \bu \, dx - \int_\Omega \nabla \theta \cdot \bu \, dx - \int_\Omega P \, \vdiv \bv \, dx + \int_\Gamma P \, \bv \cdot \bn \, d\sigma
\end{aligned}
\end{eqnarray*}
We decompose $\bv$ as in \eqref{decomposition of v} and to simplify the presentation, we set $\bz=\displaystyle\sum_{i=1}^I \Big( \int_{\Gamma_i} \bv \cdot \bn \, d\sigma \Big) \nabla q_i^N$. So, $\bv= \bv_0 + \bz$. By definition of $q_i^N$ for $1 \leqslant i \leqslant I$, we have $\Delta \bz = 0$ and $\vdiv \bz = 0$ in $\Omega$. Thus: 
\small
\begin{eqnarray} \label{Eq *** constantes}
\langle \ff, \bv_0 \rangle_{\Omega_{r',p'}} + \langle \ff, \bz \rangle_{\Omega_{r',p'}}\! =\! \langle \bu, - \Delta \bv_0 - (\curl \bw) \times \bv_0 + \nabla \theta \rangle_{\Omega_{p^*,p}} 
+  \langle \bb, \curl (\bv_0 \times \bdd) \rangle_{\Omega_{p^*,p}}\!\!-\!\int_\Omega \!\nabla \theta \cdot \bu \, dx \hspace{4.2cm}\\-\! \int_\Omega \!(\curl \bw) \times \bz \cdot \bu \, dx 
+  \int_\Omega \bb \cdot \curl (\bz \times \bdd) \, dx - \int_\Omega P \, \vdiv \bv_0 \, dx + \int_\Gamma P \bv_0 \cdot \bn \, d\sigma + \int_\Omega P \, \bz \cdot \bn \, d\sigma \hspace{5cm}
\end{eqnarray}
Taking $(\bv_0, \boldsymbol{0}, \theta, 0)$ as a test function in the variational formulation \eqref{Variational lemma formulation}, we obtain
\begin{eqnarray*}
\int_\Gamma P \, \bv_0 \cdot \bn \, d\sigma - \int_\Gamma P_0 \, \bv_0 \cdot \bn \, d\sigma - \int_\Omega \nabla \theta \cdot \bu \, dx = 0
\end{eqnarray*}
Note that, since $\vdiv \bu = 0$ in $\Omega$, $\theta = 0$ on $\Gamma_0$, $\theta = \beta_i$ on $\Gamma_i$ and $\langle \bu \cdot \bn, 1 \rangle_{\Gamma_i} = 0$, it follows that $\displaystyle\int_\Omega \nabla \theta \cdot \bu \, dx = - \int_\Omega \theta \vdiv \bu \, dx + \int_\Gamma \theta \bu \cdot \bn \, d\sigma = 0$. Therefore: 
\begin{eqnarray*}
\int_\Gamma (P - P_0) \bv_0 \cdot \bn \, d\sigma = 0
\end{eqnarray*}
Then, we deduce that
\begin{eqnarray} \label{Eq ** lemme}
\begin{aligned}
\langle \ff, \bz \rangle_{\Omega_{r',p'}} + \int_\Omega (\curl \bw) \times \bz \cdot \bu \, dx -  \int_\Omega \bb \cdot \curl (\bz \times \bdd) \, dx - \int_\Omega P \, \bz \cdot \bn \, d\sigma = 0.
\end{aligned}
\end{eqnarray}
Now, using \eqref{Eq ** lemme} and the fact that $\displaystyle\int_{\Gamma_i} \bv_0 \cdot \bn \, d\sigma = 0$ for all $1 \leqslant i \leqslant I$, we have
\begin{eqnarray} \label{Eq preuve cstes lemme}
\begin{aligned}
\int_\Gamma P \bv \cdot \bn \, d\sigma &= \int_\Gamma P \, \bv_0 \cdot \bn \, d\sigma + \int_\Gamma P \, \bz \cdot \bn \, d\sigma \\
&= \int_\Gamma P_0 \, \bv_0 \cdot \bn \, d\sigma + \langle \ff, \bz \rangle_{\Omega_{r',p'}} + \int_\Omega (\curl \bw) \times \bz \cdot \bu \, dx -  \int_\Omega \bb \cdot \curl (\bz \times \bdd) \, dx \\
&= \int_\Gamma P_0 \, \bv_0 \cdot \bn \, d\sigma + \sum_{i=1}^I \Big( \int_{\Gamma_i} \bv \cdot \bn \, d\sigma \Big)\Big [ \langle \ff, \nabla q_i^N \rangle_{\Omega_{r',p'}} - \int_\Omega (\curl \bw) \times \bu \cdot \nabla q_i^N \, dx \\
&-  \int_\Omega \bb \cdot \curl (\nabla q_i^N \times \bdd) \, dx \Big].
\end{aligned}
\end{eqnarray}
However, we have from \eqref{Variational lemma formulation}, for all $1 \leqslant i \leqslant I$, 
\begin{eqnarray*}
\begin{aligned}
c_i &= \langle \ff, \nabla q_i^N \rangle_{\Omega_{r',p'}} - \int_\Gamma P_0 \, \nabla q_i^N \cdot \bn \, d\sigma +  \int_\Omega (\curl \bb) \times \bdd \cdot \nabla q_i^N \, dx - \int_\Omega (\curl \bw) \times \bu \cdot \nabla q_i^N \, dx \\
&= \langle \ff, \nabla q_i^N \rangle_{\Omega_{r',p'}} - \int_\Gamma P_0 \, \nabla q_i^N \cdot \bn \, d\sigma -  \int_\Omega \bb \cdot \curl (\nabla q_i^N \times \bdd) \, dx - \int_\Omega (\curl \bw) \times \bu \cdot \nabla q_i^N \, dx .
\end{aligned}
\end{eqnarray*}
Therefore, replacing in \eqref{Eq preuve cstes lemme}, we have: 
\begin{eqnarray*}
\int_\Gamma P \, \bv \cdot \bn \, d\sigma = \int_\Gamma P_0 \, \bv_0 \cdot \bn d\sigma + \sum_{i=1}^I \Big( \int_{\Gamma_i} \bv \cdot \bn \, d\sigma \Big) \Big[c_i + \int_\Gamma P_0 \, \nabla q_i^N \cdot \bn \, d\sigma \Big]
\end{eqnarray*}
Moreover, applying directly the decomposition \eqref{decomposition of v}, we have: 
\begin{eqnarray*}
\int_\Gamma P_0 \, \bv \cdot \bn \, d\sigma = \int_\Gamma P_0 \, \bv_0 \cdot \bn d\sigma + \sum_{i=1}^I \Big( \int_{\Gamma_i} \bv \cdot \bn \, d\sigma \Big)\int_\Gamma P_0 \, \nabla q_i^N \cdot \bn \, d\sigma 
\end{eqnarray*}
Thus, combining the last two equations, we obtain: 
\begin{eqnarray*}
\int_\Gamma P \, \bv \cdot \bn \, d\sigma = \int_\Gamma P_0 \bv \cdot \bn \, d\sigma + \sum_{i=1}^I \Big( \int_{\Gamma_i} \bv \cdot \bn \, d\sigma \Big) c_i = \int_\Gamma (P_0 + c) \bv \cdot \bn \, d\sigma,
\end{eqnarray*}
with $c = 0$ on $\Gamma_0$ and $c = c_i$ on $\Gamma_i$ for all $1 \leqslant i \leqslant I$. 
We conclude as in \cite[Theorem 3.2.]{AS_DCDS} to prove that  $P = P_0$ on $\Gamma_0$ and $P=P_0+c_i$ on $\Gamma_i$ . 
%
%
%
%
%
%
\end{proof}

We are now in position to prove the following theorem 

\begin{theorem}\label{thm: weak W1,p for p<2}
  We suppose $\O$ of classe $\mathcal{C}^{1,1}$. Let $\frac{3}{2}<p < 2$. Assume that $\ff, \bg \in [\bH_0^{r',p'}(\curl, \Omega)]'$, $P_0 \in W^{1-\frac{1}{r}, r}(\Gamma)$, $h\in W^{1,r}(\O)$ with the compatibility conditions \eqref{Condition compatibilite K_N Lp}-\eqref{condition div g=0 sol W^1,p linear MHD}, together with $\curl\bw\in\bL^{3/2}(\O)$ and $\bdd\in\bW^{1,3/2}_{\sigma}(\O)$ . Then the linearized problem \eqref{linearized MHD-pressure} has a unique solution $(\bu, \bb, P, \bc) \in \bW^{1,p}(\Omega) \times \bW^{1,p}(\Omega) \times W^{1,r}(\Omega) \times \R^I$. Moreover, we have the following estimates:  
 \begin{eqnarray}\label{estim u,b W^1,p for p<2}
 \begin{aligned} 
&\norm{\bu}_{\bW^{1,\,p}(\Omega)} + \norm{\bb}_{\bW^{1,\,p}(\Omega)}\leq  C(1 + \norm{\curl\bw}_{\bL^{3/2}(\Omega)} + \norm{\bdd}_{\bW^{1,3/2}(\O)})\Big(\norm{\ff}_{[\bH_0^{r',p'}(\curl,\Omega)]'}\\& +  \norm{P_0}_{W^{1-\frac{1}{r},r}(\Gamma)} +\norm{\bg}_{[\bH_0^{r',p'}(\curl, \Omega)]}+(1 +  \norm{\curl\bw}_{\bL^{3/2}(\Omega)} + \norm{\bdd}_{\bW^{1,3/2}(\O)})\norm{h}_{W^{1,r}(\O)}\Big) 
\end{aligned}
\end{eqnarray}
 \begin{eqnarray}\label{estim P W^1,r for p<2}
 \begin{aligned} 
 &\norm{P}_{W^{1,r}(\Omega)} \leq C(1 + \norm{\curl\bw}_{\bL^{3/2}(\Omega)} + \norm{\bdd}_{\bW^{1,3/2}(\O)})^2\times\Big(\norm{\ff}_{[\bH_0^{r',p'}(\curl,\Omega)]'} \\&+ \norm{\bg}_{[\bH_0^{r',p'}(\curl, \Omega)]}+ \norm{P_0}_{W^{1-\frac{1}{r},r}(\Gamma)}+(1+\norm{\curl\bw}_{\bL^{3/2}(\Omega)} + \norm{\bdd}_{\bW^{1,3/2}(\O)})\norm{h}_{W^{1,r}(\O)}\Big).
\end{aligned}
\end{eqnarray} 
\end{theorem}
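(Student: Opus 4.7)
The plan is to use a duality argument in the spirit of Lions--Magenes, exploiting the fact that when $p<2$ we have $p'>2$ so the adjoint problem falls into the regime already handled by Proposition \ref{proposition to improv estim W^1,p for p>2 with phi}. First I would reduce to the case $h=0$: given $h\in W^{1,r}(\O)$, solve the Dirichlet problem $\Delta\theta=h$ in $\O$, $\theta=0$ on $\Gamma$, so that $\theta\in W^{3,r}(\O)$, then set $\bz=\bu-\nabla\theta$. This transforms \eqref{linearized MHD-pressure} into the same system with zero divergence for $\bz$ and with a modified right-hand side $\widetilde\ff=\ff+\nabla h-(\curl\bw)\times\nabla\theta\in[\bH^{r',p'}_0(\curl,\O)]'$ and $\widetilde\bg=\bg+\curl(\nabla\theta\times\bdd)\in[\bH^{r',p'}_0(\curl,\O)]'$ (using $\bdd\in\bW^{1,3/2}_\sigma(\O)\hookrightarrow\bL^3(\O)$ and $\curl\bw\in\bL^{3/2}(\O)$ together with $\nabla\theta\in\bW^{1,p}(\O)\hookrightarrow\bL^{p^*}(\O)$). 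The compatibility condition on $\bg$ is preserved. So it suffices to treat $h=0$.

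\textbf{Duality construction.} Once $h=0$, I would define a continuous linear form on a suitable space of test data by
\begin{equation*}
L(\bF,\bG,\varphi,\psi)=\langle\ff,\bv\rangle_{\O_{r',p'}}+\langle\bg,\ba\rangle_{\O_{r',p'}}-\int_\Gamma P_0\,\bv\cdot\bn\,d\sigma,
\end{equation*}
where, given $(\bF,\bG)\in\bL^{p'}(\O)\times\bL^{p'}(\O)$ (with $\bG$ divergence-free and satisfying the compatibility orthogonality to $\bK^p_N(\O)$) and $(\varphi,\psi)\in W^{1,(p^*)'}(\O)\times W^{1,(p^*)'}_0(\O)$, the pair $(\bv,\ba)$ is the unique solution in $\bW^{1,p'}(\O)\times\bW^{1,p'}(\O)$ of the adjoint linearized problem
\begin{equation*}
\left\{
\begin{aligned}
-\Delta\bv-(\curl\bw)\times\bv+(\curl\ba)\times\bdd+\nabla\theta&=\bF,\quad \vdiv\bv=-\varphi&&\text{in }\O,\\
\curl\curl\ba+\curl(\bv\times\bdd)+\nabla\tau&=\bG,\quad \vdiv\ba=0&&\text{in }\O,\\
\bv\times\bn=\boldsymbol{0},\ \ba\times\bn=\boldsymbol{0},\ \tau=0&\text{ on }\Gamma,\ \theta=0\text{ on }\Gamma_0,\ \theta=\text{cst on }\Gamma_i,\\
\langle\bv\cdot\bn,1\rangle_{\Gamma_i}=\langle\ba\cdot\bn,1\rangle_{\Gamma_i}&=0\qquad\forall\,1\le i\le I.
\end{aligned}\right.
\end{equation*}
Since $p'>2$, Proposition \ref{proposition to improv estim W^1,p for p>2 with phi} (applied with $\bw$ and $-\bdd$ replaced by their conjugate roles, which still satisfy $\curl\bw\in\bL^{3/2}(\O)$ and $\bdd\in\bW^{1,3/2}_\sigma(\O)$) gives a unique $(\bv,\ba,\theta,\tau)\in\boldsymbol{\mathcal{V}}(\O)$ together with the quantitative bound
\begin{equation*}
\|\bv\|_{\bW^{1,p'}}+\|\ba\|_{\bW^{1,p'}}+\|\theta\|_{W^{1,(p^*)'}}\le C(1+\|\curl\bw\|_{\bL^{3/2}}+\|\bdd\|_{\bW^{1,3/2}})\bigl(\|\bF\|_{\bL^{p'}}+\|\bG\|_{\bL^{p'}}+\|\varphi\|_{W^{1,(p^*)'}}\bigr).
\end{equation*}
This shows that $L$ is continuous on $\bL^{p'}(\O)\times\bL^{p'}(\O)\times W^{1,(p^*)'}(\O)\times W^{1,(p^*)'}_0(\O)$ with norm controlled by $C(1+\|\curl\bw\|_{\bL^{3/2}}+\|\bdd\|_{\bW^{1,3/2}})(\|\ff\|_{[\bH^{r',p'}_0(\curl,\O)]'}+\|\bg\|_{[\bH^{r',p'}_0(\curl,\O)]'}+\|P_0\|_{W^{1-1/r,r}(\Gamma)})$. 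By the Riesz representation theorem, there exist $(\bu,\bb,P)\in\bL^p(\O)\times\bL^p(\O)\times L^{p^*}(\O)$ representing $L$, with the natural bound from the operator norm of $L$. Going back through Lemma \ref{Lemme equiv sol pour Lp*} (which characterises solutions of \eqref{linearized MHD-pressure} as those of the variational formulation \eqref{Variational lemma formulation}--\eqref{def ci weak VF}), this representative is a weak solution of the linearized MHD system.

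\textbf{Upgrading regularity and the pressure estimate.} The duality gives a priori $(\bu,\bb)\in\bL^p\times\bL^p$, but the variational identity then forces $-\Delta\bu+(\curl\bw)\times\bu+\nabla P-(\curl\bb)\times\bdd=\ff$ and $\curl\curl\bb-\curl(\bu\times\bdd)=\bg$ in the distributional sense. Reinjecting this into the Stokes problem $(\mathcal S_N)$ of Proposition \ref{thm solution W1,p and W2,p Stokes divu=h} and the elliptic problem $(\mathcal E_N)$ of Lemma \ref{Lemme elliptique H r' p'}, with right-hand sides $\ff-(\curl\bw)\times\bu+(\curl\bb)\times\bdd$ and $\bg+\curl(\bu\times\bdd)$, and using the Sobolev embeddings $\bW^{1,p}\hookrightarrow\bL^{p^*}$ and $\bW^{1,3/2}\hookrightarrow\bL^3$ to check that these terms belong to $[\bH^{r',p'}_0(\curl,\O)]'$, I would bootstrap $(\bu,\bb)$ into $\bW^{1,p}(\O)\times\bW^{1,p}(\O)$ and obtain $P\in W^{1,r}(\O)$. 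The estimate \eqref{estim u,b W^1,p for p<2} follows by keeping track of the factor $(1+\|\curl\bw\|_{\bL^{3/2}}+\|\bdd\|_{\bW^{1,3/2}})$ in the dual bound, and the estimate \eqref{estim P W^1,r for p<2} picks up one more such factor when one plugs the bound for $(\bu,\bb)$ into the right-hand side of the Stokes estimate, which explains the square. Uniqueness is automatic from the linearity and the duality.

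\textbf{Main obstacle.} The delicate point is ensuring that the adjoint problem is well-posed in $\bW^{1,p'}$ with $p'>2$ and that the duality pairings in \eqref{Variational lemma formulation} are all justified when one side has only $\bL^p$ regularity. This is precisely why Lemma \ref{Lemme equiv sol pour Lp*} was prepared with the exact functional setting $\boldsymbol{\mathcal{V}}(\O)$ and the duality $\langle\cdot,\cdot\rangle_{\O_{p^*,p}}$: every term in \eqref{Variational lemma formulation} is seen to make sense thanks to $\bW^{1,p}\hookrightarrow\bL^{p^*}$ combined with the hypothesis $\curl\bw\in\bL^{3/2}$ and $\bdd\in\bW^{1,3/2}$, exactly because $\tfrac1{p^*}+\tfrac1{3/2}+\tfrac1{p'}\le 1$ when $p>3/2$; the endpoint $p=3/2$ is borderline, which is why the range is strictly $3/2<p<2$.
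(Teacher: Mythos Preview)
Your overall strategy---Lions--Magenes duality, using that $p'>2$ puts the adjoint problem into the regime of Proposition \ref{proposition to improv estim W^1,p for p>2 with phi}---is exactly the paper's. The reduction to $h=0$ is fine and is what the paper does too (though organizationally it defers that to Corollary \ref{cor: weak W1,p for p<2}).

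The genuine gap is in your choice of data space for the dual problem and the subsequent ``bootstrap''. You take $\bF,\bG\in\bL^{p'}(\O)$, so Riesz only returns $(\bu,\bb)\in\bL^p(\O)\times\bL^p(\O)$. You then propose to feed $\ff-(\curl\bw)\times\bu+(\curl\bb)\times\bdd$ into Proposition \ref{thm solution W1,p and W2,p Stokes divu=h} and $\bg+\curl(\bu\times\bdd)$ into Lemma \ref{Lemme elliptique H r' p'} to recover $\bW^{1,p}$ regularity, invoking $\bW^{1,p}\hookrightarrow\bL^{p^*}$ to place those right-hand sides in $[\bH^{r',p'}_0(\curl,\O)]'$. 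But this is circular: at that stage you do not yet have $\bu\in\bL^{p^*}$ or $\curl\bb\in\bL^p$; you only have $\bu,\bb\in\bL^p$, which is not enough to control $(\curl\bw)\times\bu$ in $\bL^r$ or $(\curl\bb)\times\bdd$ anywhere useful.

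The paper avoids this circularity by taking the dual data $\bF,\bG$ in the \emph{larger} space $[\bH_0^{p^*,p}(\curl,\O)]'$. The adjoint solution map is still bounded into $\bW^{1,p'}(\O)$ by Proposition \ref{proposition to improv estim W^1,p for p>2 with phi}, so the functional extends, and now Riesz delivers $(\bu,\bb)\in\bH_0^{p^*,p}(\curl,\O)$---that is, $\bu\in\bL^{p^*}$, $\curl\bu\in\bL^p$, $\bu\times\bn=\textbf{0}$, and similarly for $\bb$, all for free. After verifying $\vdiv\bu=\vdiv\bb=0$ and the flux conditions by suitable choices of test functions, one applies the embedding $\bX_N^p(\O)\hookrightarrow\bW^{1,p}(\O)$ of Theorem \ref{injection continue X_N} to conclude directly, with no bootstrap. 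The pressure is then upgraded to $W^{1,r}(\O)$ by solving the Dirichlet problem $\Delta P=\vdiv\ff+\vdiv((\curl\bb)\times\bdd-(\curl\bw)\times\bu)$, which is where the extra factor in \eqref{estim P W^1,r for p<2} arises.
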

\begin{proof}
Since $2<p'<3$, thanks to Theorem \ref{thm weak sol W^1,p p>2 with phi_u and chi}, we have for any $(\bF, \bG, \phi) \in [\bH_0^{p^*,p}(\curl, \Omega)]'\times [\bH_0^{p^*,p}(\curl, \Omega)]'\bot \bK^p_N(\O)\times  W_0^{1,(p^*)'}(\Omega)$ that the following problem 
\begin{eqnarray}\label{dual MHD}
		\left\lbrace
		\begin{aligned}
		 & -\, \Delta \bv -(\curl \bw) \times \bv + \nabla \theta+ (\curl \ba) \times \bdd = \bF \quad \mathrm{and}\quad  \vdiv \bv = \phi   \,\,\ \mathrm{in} \,\, \Omega, \\
        & \, \curl \curl \ba + \curl (\bv \times \bdd)+\nabla \tau = \bG  \quad  \mathrm{and}\quad \vdiv \ba = 0 \quad \text{in} \,\, \Omega, \\
       &\bv \times \bn = \textbf{0}, \quad \ba \times \bn = \textbf{0}\quad \mathrm{and}\quad \tau=0\quad\mathrm{on}\,\,\Gamma,\quad
        \theta =0 \quad \text{on} \, \,\Gamma_0\quad \mathrm{and}\quad 
         \theta= \beta_i \,\,\,\text{on} \,\, \Gamma_i,\\
        &\langle\bv \cdot \bn, 1\rangle_{\Gamma_i} = 0, \quad \mathrm{and}\quad \langle\ba \cdot \bn, 1\rangle_{\Gamma_i} = 0,\,\,\,1\leq i \leq I.
        \end{aligned}\right.
			\end{eqnarray}
 has a unique solution $(\bv, \ba, \theta, \tau, \boldsymbol{\beta}) \in \bW^{1,p'}(\Omega) \times \bW^{1,p'}(\Omega) \times W^{1,(p^*)'}(\Omega) \times W^{1,(p^*)'}_{0}(\Omega) \times \R^I$ with $\vdiv \bv \in W_0^{1,(p^*)'}(\Omega)$ and $\boldsymbol{\beta} = (\beta_1, \cdots , \beta_I)$ such that: 
\begin{eqnarray*}
\beta_i = \langle \bF, \nabla q_i^N \rangle_{\Omega_{ p^*, p}} + \langle (\curl \ba)\times \bdd, \nabla q_i^N \rangle_{\Omega_{ p^*, p}} - \langle (\curl \bw) \times \bv, \nabla q_i^N \rangle_{\Omega_{ p^*, p}}+\int_{\Gamma}\phi\nabla q_i^N\cdot\bn\,d\sigma.
\end{eqnarray*}
Moreover, this solution also satisfies the estimates:
\begin{eqnarray} \label{Estimation provenant du lemme}
\begin{aligned}
&\norm{\bv}_{\bW^{1,p'}(\Omega)} + \norm{\ba}_{\bW^{1,p'}(\Omega)} + \norm{\theta}_{W^{1,(p^*)'}(\Omega)} \leqslant C \Big( 1 + \norm{\curl \bw}_{\bL^\frac{3}{2}(\Omega)} + \norm{\bdd}_{\bW^{1,\frac{3}{2}}(\Omega)} \Big) \\
&\times \Big( \norm{\bF}_{[\bH_0^{p^*,p}(\curl, \Omega)]'} + \norm{\bG}_{[\bH_0^{p^*,p}(\curl, \Omega)]'} + \big(1 + \norm{\curl \bw}_{\bL^\frac{3}{2}(\Omega)} + \norm{\bdd}_{\bW^{1,\frac{3}{2}}(\Omega)}\big) \norm{\phi}_{W^{1,(p^*)'}(\Omega)} \Big).
\end{aligned}
\end{eqnarray}
We note that, from Theorem \ref{thm weak sol W^1,p p>2 with phi_u and chi} for $2<p'<3$, the value of $s$ is $3/2$. Using \eqref{Estimation provenant du lemme}, we have 
{\footnotesize
\begin{eqnarray} \label{Estimation preuve prop}
\begin{aligned}
&| \langle \ff, \bv \rangle_{\Omega r',p'} + \langle \bg, \ba \rangle_{\Omega r', p'} - \int_\Gamma P_0 \, \bv \cdot \bn \, d\sigma | \\
& \leqslant \norm{\ff}_{[\bH_0^{r',p'}(\curl,\Omega)]'} \norm{\bv}_{\bH_0^{r',p'}(\curl,\Omega)} + \norm{\bg}_{[\bH_0^{r',p'}(\curl,\Omega)]'} \norm{\ba}_{\bH_0^{r',p'}(\curl,\Omega)} + \norm{P_0}_{W^{1-\frac{1}{r},r}(\Gamma)} \norm{\bv \cdot \bn }_{\bW^{1-\frac{1}{p'},p'}(\Gamma)} \\
&\leqslant C \Big( \norm{\ff}_{[\bH_0^{r',p'}(\curl, \Omega)]'} + \norm{\bg}_{[\bH_0^{r',p'}(\curl, \Omega)]'} + \norm{P_0}_{W^{1-\frac{1}{r},r}(\Gamma)} \Big) \Big( \norm{\bv}_{\bW^{1,p'}(\Omega)} + \norm{\ba}_{\bW^{1,p'}(\Omega)} \Big) \\
&\leqslant C \Big( \norm{\ff}_{[\bH_0^{r',p'}(\curl,\Omega)]'} + \norm{\bg}_{[\bH_0^{r',p'}(\curl, \Omega)]'} + \norm{P_0}_{W^{1-\frac{1}{r},r}(\Gamma)} \Big) \Big( 1 + \norm{\curl \bw}_{\bL^\frac{3}{2}(\Omega)} + \norm{\bdd}_{\bW^{1,\frac{3}{2}}(\Omega)} \Big) \\
&\times \Big( \norm{\bF}_{\bH_0^{p^*,p}(\curl, \Omega)} + \norm{\bG}_{\bH_0^{p^*,p}(\curl, \Omega)} + (1 + \norm{\curl \bw}_{\bL^\frac{3}{2}(\Omega)} + \norm{\bdd}_{\bW^{1,\frac{3}{2}}(\Omega)} ) \norm{\phi}_{W^{1,(p^*)'}(\Omega)} \Big)  
\end{aligned}
\end{eqnarray}}
We deduce that the linear mapping $(\bF, \bG, \phi) \rightarrow \langle \ff, \bv \rangle_{\Omega _{r',p'}} + \langle \bg, \ba \rangle_{\Omega_{r',p'}} - \int_\Gamma P_0 \, \bv \cdot \bn \, d\sigma$ defines an element of the dual space of $\bH_0^{p^*, p}(\curl, \Omega) \times \bH_0^{p^*,p}(\curl, \Omega) \times W^{-1,p^*}(\Omega)$. It follows from Riesz' representation theorem that there exists a solution $(\bu, \bb, P)$ in $\bH_0^{p^*, p}(\curl, \Omega) \times \bH_0^{p^*,p}(\curl, \Omega) \times W^{-1,p^*}(\Omega)$ of the problem
\begin{eqnarray*}
\begin{aligned}
& \langle \bu, \bF \rangle_{\Omega p^*, p} + \langle \bb, \bG \rangle_{\Omega p^*,p} - \langle P, \phi \rangle_{W^{-1,p^*}(\Omega) \times W_0^{1,(p^*)'}(\Omega)} = \langle \ff, \bv \rangle_{\Omega r',p'} + \langle \bg, \ba \rangle_{\Omega r',p'} - \int_\Gamma P_0 \, \bv \cdot \bn \, d\sigma
\end{aligned}
\end{eqnarray*}
which is the variational formulation \eqref{Variational lemma formulation}. Moreover, it satisfies the estimate:
\begin{eqnarray} \label{estim u,b,P with duality p<2}
\begin{aligned}
&\norm{\bu}_{\bH_0^{p^*,p}(\curl, \Omega)} + \norm{\bb}_{\bH_0^{p^*,p}(\curl, \Omega)} + \big( 1 +  \norm{\curl\bw}_{\bL^{3/2}(\Omega)} + \norm{\bdd}_{\bW^{1,3/2}(\Omega)} \big)^{-1}\norm{P}_{W^{-1,p^*}(\Omega)}\\
&\!\leqslant \!C\big( 1 \!+ \! \norm{\curl\bw}_{\bL^{3/2}(\Omega)} \!+\! \norm{\bdd}_{\bW^{1,3/2}(\Omega)}\big) \!\big( \norm{\ff}_{[\bH_0^{r',p'}(\curl,\Omega)]'} \!+\! \norm{\bg}_{[\bH_0^{r',p'}(\curl,\Omega)]'}\! +\! \norm{P_0}_{W^{1-\frac{1}{r},r}(\Gamma)} \big).
\end{aligned}
\end{eqnarray} 
In order to recover the solution of \eqref{linearized MHD-pressure} through the equivalence result given in Lemma \ref{Lemme equiv sol pour Lp*}, it remains us to prove that $\bu, \bb \in \bW^{1,p}(\Omega)$, $P \in W^{1,r}(\Omega)$, that $\langle \bu \cdot \bn, 1 \rangle_{\Gamma_i} = 0$, $\langle \bb \cdot \bn, 1 \rangle_{\Gamma_i} = 0 $ for all $1 \leqslant i \leqslant I$ and to recover the relation of \eqref{def ci weak VF}. 
We firstly want to show that $\displaystyle\int_{\Gamma_i} \bu \cdot \bn \, d\sigma = 0$ and $\displaystyle\int_{\Gamma_i} \bb \cdot \bn \, d\sigma = 0$. We choose $(\boldsymbol{0}, \boldsymbol{0}, \theta, 0)$ with $\theta \in W^{1,(p^*)'}(\Omega)$ satisfying $\theta = 0$ on $\Gamma_0$ and $\theta = \delta_{ij}$ on $\Gamma_j$ for all $1 \leqslant j \leqslant I$ and a fixed $1 \leqslant i \leqslant I$. Then: 
\begin{eqnarray*}
0 = \langle \bu, \nabla \theta \rangle_{\Omega_{p^*,p}} = \int_\Omega \bu \cdot \nabla \theta \, dx = \int_{\Gamma} \theta \bu \cdot \bn \, d\sigma - \int_\Omega \vdiv \bu \, \theta \, dx = \int_{\Gamma_i} \bu \cdot \bn \, d\sigma 
\end{eqnarray*}
For the condition $\displaystyle\int_{\Gamma_{i}} \bb \cdot \bn\,d\sigma = 0$ for all $1 \leqslant i \leqslant I$, we set $\tilde{\bb} = \bb - \displaystyle\sum_{i=1}^I \langle \bb \cdot \bn, 1 \rangle_{\Gamma_i} \nabla q_i^N$. Observe that by the definition of $q_i^N$,  $\tilde{\bb}$ is also solution of \eqref{Variational lemma formulation} and satisfies the condition $\langle \tilde{\bb} \cdot \bn, 1 \rangle_{\Gamma_i} = 0$.

\vspace{2mm} 

Next, taking test functions $(\boldsymbol{0},\boldsymbol{0},\theta,0)$ and $(\boldsymbol{0},\boldsymbol{0},0,\tau)$ with $\theta \in W^{1,(p^*)'}(\Omega)$ as above and $\tau \in \mathcal{D}(\Omega)$, we respectly recover $\vdiv \bu = 0$ and $\vdiv \bb = 0$ in $\Omega$. Besides, since $\bu, \bb \in \bH_0^{p^*,p}(\curl, \Omega)$, we have $\bu$ and $\bb$ belong to $\bX^{p}_{N}(\O)$. From Theorem \ref{injection continue X_N}, we deduce that $\bu, \bb \in \bW^{1,p}(\Omega)$. Thus, the estimate \eqref{estim u,b W^1,p for p<2} follows from \eqref{inequality injection in X_N non homogene cas general} and \eqref{estim u,b,P with duality p<2}. 
\vspace{2mm}
Finally, in order to prove that $P \in W^{1,r}(\Omega)$, we take the test functions $(\bv, \boldsymbol{0}, 0, 0)$ with $\bv \in \boldsymbol{\mathcal{D}}({\Omega})$, and we obtain as in the proof of Lemma \ref{Lemme equiv sol pour Lp*} that: 
\begin{eqnarray*}
\nabla P = \ff + \Delta \bu - (\curl \bw) \times \bu +  (\curl \bb) \times \bdd \quad \text{in} \, \, \Omega.
\end{eqnarray*}
Then taking the divergence, $P$ is solution of the following problem
\begin{eqnarray} \label{Delta P preuve prop}
\Delta P = \vdiv \ff + \vdiv ( (\curl \bb) \times \bdd - (\curl \bw) \times \bu) \quad \text{in} \, \, \Omega,\nonumber\\
P=P_0\quad \mathrm{on}\,\,\Gamma_0\quad \mathrm{and}\quad P=P_0+c_i\quad \mathrm{on}\,\,\Gamma_i.\hspace{1.3cm}
\end{eqnarray}
Since $\curl \bw \in \bL^\frac{3}{2}(\Omega)$ and $\bu \in \bW^{1,p}(\Omega) \hookrightarrow \bL^{p^*}(\Omega)$, then $(\curl \bw) \times \bu \in \bL^r(\Omega)$. Besides, $\curl \bb \in \bL^p(\Omega)$ and $\bdd \in \bW^{1,\frac{3}{2}}(\Omega) \hookrightarrow \bL^3(\Omega)$. So $(\curl \bb) \times \bdd \in \bL^r(\Omega)$. Hence, we obtain that $\Delta P \in W^{-1,r}(\Omega)$. Since $P_0$ belongs to $W^{1-1/r,r}(\Gamma)$, we deduce that the solution $P$ of \eqref{Delta P preuve prop} belongs to $\in W^{1,r}(\Omega)$. Moreover, it satisfies the estimate
\begin{eqnarray*}
\begin{aligned}
\norm{P}_{W^{1,r}(\Omega)} &\leqslant \norm{\vdiv \ff}_{W^{-1,r}(\Omega)} + \norm{\vdiv ( (\curl \bb) \times \bdd - (\curl \bw) \times \bu)}_{W^{-1,r}(\Omega)}+\norm{P_0}_{W^{1-1/r,r}(\Gamma)}
\end{aligned}
\end{eqnarray*}
Applying the characterization of $[\bH_0^{r',p'}(\curl, \Omega)]'$ given in Proposition \ref{charac dual H^r,p}, we write $\ff = \bF + \curl \boldsymbol{\Psi}$ with $\bF \in \bL^r(\Omega)$ and $\boldsymbol{\Psi} \in \bL^p(\Omega)$. So
\begin{eqnarray*}
\begin{aligned}
\norm{\vdiv \ff}_{W^{-1,r}(\Omega)} = \norm{\vdiv \bF}_{W^{-1,r}(\Omega)} = \sup_{\theta \in W_0^{1,r'}(\Omega)} \frac{\abs{\langle \vdiv \bF, \theta \rangle}}{\norm{\theta}_{W^{1,r'}(\Omega)}} = \sup_{\theta \in W_0^{1,r'}(\Omega)} \frac{\abs{\langle \bF, \nabla \theta \rangle}}{\norm{\theta}_{W^{1,r'}(\Omega)}} \leqslant \norm{\bF}_{\bL^r(\Omega)},
\end{aligned}
\end{eqnarray*} 
which implies that 
\begin{eqnarray} \label{Estim div f}
\norm{\vdiv \ff}_{W^{-1,r'}(\Omega)} \leqslant \norm{\ff}_{[\bH_0^{r',p'}(\curl, \Omega)]'} 
\end{eqnarray}
In the same way, we have:
\small
\begin{eqnarray}\label{estim RHS1 for estim pressure}
 \norm{\vdiv ( (\curl \bb) \times \bdd )}_{W^{-1,r}(\Omega)} &\leqslant& \norm{ (\curl \bb) \times \bdd}_{\bL^r(\Omega)}\leq  \norm{\curl \bb}_{\bL^p(\Omega)} \norm{\bdd}_{\bL^3(\Omega)}\nonumber\\&\leq & C_d  \norm{\bb}_{\bW^{1,p}(\Omega)} \norm{\bdd}_{\bW^{1,\frac{3}{2}}(\Omega)},
\end{eqnarray}
where $C_d$ is the constant related to the Sobolev embedding $\bW^{1,\frac{3}{2}}(\Omega) \hookrightarrow \bL^3(\Omega)$. 
Next, 
\begin{eqnarray}\label{estim RHS2 for estim pressure}
 \norm{\vdiv((\curl \bw) \times \bu)}_{W^{-1,r}(\Omega)} &\leqslant& \norm{(\curl \bw) \times \bu}_{\bL^r(\Omega)}\leqslant  \norm{\curl \bw}_{\bL^\frac{3}{2}(\Omega)} \norm{\bu}_{\bL^{p^*}(\Omega)} \nonumber\\&\leq& C  \norm{\curl \bw}_{\bL^\frac{3}{2}(\Omega)} \norm{\bu}_{\bW^{1,p}(\Omega)}, 
\end{eqnarray}
where we have used the Sobolev embedding $\bW^{1,p}(\Omega) \hookrightarrow \bL^{p^*}(\Omega)$. Using estimates \eqref{Estim div f}, \eqref{estim RHS1 for estim pressure}, \eqref{estim RHS2 for estim pressure} combined with the estimate \eqref{estim u,b W^1,p for p<2}, we obtain the estimate \eqref{estim P W^1,r for p<2} for the pressure. 
\end{proof}
The following result gives the regularity $\bW^{1,p}(\O)$ with $p<2$ when the divergence of the velocity field $\bu$ does not vanish.
\begin{corollary}\label{cor: weak W1,p for p<2}
  Let $\frac{3}{2}<p < 2$. Assume that $\ff, \bg \in [\bH_0^{r',p'}(\curl, \Omega)]'$, $P_0 \in W^{1-\frac{1}{r}, r}(\Gamma)$, $h\in W^{1,r}(\O)$ with the compatibility conditions \eqref{Condition compatibilite K_N Lp}-\eqref{condition div g=0 sol W^1,p linear MHD}, together with $\curl\bw\in\bL^{3/2}(\O)$ and $\bdd\in\bW^{1,3/2}_{\sigma}(\O)$ . Then the linearized problem \eqref{linearized MHD-pressure} has a unique solution $(\bu, \bb, P, \bc) \in \bW^{1,p}(\Omega) \times \bW^{1,p}(\Omega) \times W^{1,r}(\Omega) \times \R^I$. Moreover, we have the following estimates:  
 \begin{eqnarray}\label{estim cor u,b W^1,p for p<2}
 \begin{aligned} 
&\norm{\bu}_{\bW^{1,\,p}(\Omega)} + \norm{\bb}_{\bW^{1,\,p}(\Omega)}\leq  C(1 + \norm{\curl\bw}_{\bL^{3/2}(\Omega)} + \norm{\bdd}_{\bW^{1,3/2}(\O)})\Big(\norm{\ff}_{[\bH_0^{r',p'}(\curl,\Omega)]'}+\\& +  \norm{P_0}_{W^{1-\frac{1}{r},r}(\Gamma)} +\norm{\bg}_{[\bH_0^{r',p'}(\curl, \Omega)]}+(1 +  \norm{\curl\bw}_{\bL^{3/2}(\Omega)} + \norm{\bdd}_{\bW^{1,3/2}(\O)})\norm{h}_{W^{1,r}(\O)}\Big) 
\end{aligned}
\end{eqnarray}
and
 \begin{eqnarray}\label{estim cor P W^1,r for p<2}
 \begin{aligned} 
 &\norm{P}_{W^{1,r}(\Omega)} \leq C(1 + \norm{\curl\bw}_{\bL^{3/2}(\Omega)} + \norm{\bdd}_{\bW^{1,3/2}(\O)})^2\times\Big(\norm{\ff}_{[\bH_0^{r',p'}(\curl,\Omega)]'} +\\&+ \norm{\bg}_{[\bH_0^{r',p'}(\curl, \Omega)]}+ \norm{P_0}_{W^{1-\frac{1}{r},r}(\Gamma)}+(1+\norm{\curl\bw}_{\bL^{3/2}(\Omega)} + \norm{\bdd}_{\bW^{1,3/2}(\O)})\norm{h}_{W^{1,r}(\O)}\Big).
\end{aligned}
\end{eqnarray} 
\end{corollary}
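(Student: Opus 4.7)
The plan is to reduce to the solenoidal case already handled in Theorem \ref{thm: weak W1,p for p<2} by lifting the non-vanishing divergence $h$, exactly along the lines of Proposition \ref{proposition to improv estim W^1,p for p>2 with phi}. First I would solve the auxiliary Dirichlet problem
$\Delta\theta=h\text{ in }\Omega,\ \theta=0\text{ on }\Gamma$;
since $h\in W^{1,r}(\Omega)$ with $\tfrac{1}{r}=\tfrac{1}{p}+\tfrac{1}{3}$, the $L^q$-regularity of the Laplacian yields a unique $\theta\in W^{3,r}(\Omega)\hookrightarrow W^{2,p}(\Omega)$ together with the bound $\|\theta\|_{W^{2,p}(\Omega)}\leq C\|h\|_{W^{1,r}(\Omega)}$. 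Setting $\bz=\bu-\nabla\theta$, the problem \eqref{linearized MHD-pressure} for $(\bu,\bb,P,\bc)$ becomes the analogous problem for $(\bz,\bb,P,\bc)$ with $\vdiv\bz=0$, the only changes being that $\ff$ is replaced by $\tilde\ff=\ff+\nabla h-(\curl\bw)\times\nabla\theta$ and $\bg$ by $\tilde\bg=\bg+\curl(\nabla\theta\times\bdd)$.

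Next I would check that these modified data meet the hypotheses of Theorem \ref{thm: weak W1,p for p<2}. For $\tilde\ff$: the term $\nabla h$ lies in $\bL^r(\Omega)\hookrightarrow[\bH_0^{r',p'}(\curl,\Omega)]'$, and for $(\curl\bw)\times\nabla\theta$ one uses $\curl\bw\in\bL^{3/2}(\Omega)$ with $\nabla\theta\in\bW^{1,p}(\Omega)\hookrightarrow\bL^{p^*}(\Omega)$ and the identity $\tfrac{1}{3/2}+\tfrac{1}{p^*}=\tfrac{1}{r}$ to get an $\bL^r$ bound. For $\tilde\bg$: writing $\curl(\nabla\theta\times\bdd)=(\bdd\cdot\nabla)\nabla\theta-(\nabla\theta\cdot\nabla)\bdd$, the first piece is estimated in $\bL^r$ via $\bdd\in\bW^{1,3/2}(\Omega)\hookrightarrow\bL^3(\Omega)$ and $\nabla^2\theta\in\bL^p(\Omega)$, the second via $\nabla\theta\in\bL^{p^*}(\Omega)$ and $\nabla\bdd\in\bL^{3/2}(\Omega)$. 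Moreover $\tilde\bg$ still satisfies \eqref{Condition compatibilite K_N Lp}-\eqref{condition div g=0 sol W^1,p linear MHD}: $\vdiv\tilde\bg=0$ is immediate since $\vdiv\curl=0$, and for any $\bv\in\bK_N^{p'}(\Omega)$ integration by parts using $\curl\bv=0$ and $\bv\times\bn=\mathbf 0$ gives $\int_\Omega\curl(\nabla\theta\times\bdd)\cdot\bv=0$.

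Having verified the hypotheses, Theorem \ref{thm: weak W1,p for p<2} provides a unique $(\bz,\bb,P,\bc)\in\bW^{1,p}_\sigma(\Omega)\times\bW^{1,p}_\sigma(\Omega)\times W^{1,r}(\Omega)\times\R^I$ solving the reduced system, satisfying \eqref{estim u,b W^1,p for p<2} and \eqref{estim P W^1,r for p<2} with $\ff,\bg$ replaced by $\tilde\ff,\tilde\bg$. Then $\bu=\bz+\nabla\theta\in\bW^{1,p}(\Omega)$ is the sought solution of \eqref{linearized MHD-pressure}. To obtain \eqref{estim cor u,b W^1,p for p<2}-\eqref{estim cor P W^1,r for p<2}, I would insert the bounds
\[
\|\tilde\ff\|_{[\bH_0^{r',p'}(\curl,\Omega)]'}\leq\|\ff\|_{[\bH_0^{r',p'}(\curl,\Omega)]'}+C\bigl(1+\|\curl\bw\|_{\bL^{3/2}(\Omega)}\bigr)\|h\|_{W^{1,r}(\Omega)},
\]
\[
\|\tilde\bg\|_{[\bH_0^{r',p'}(\curl,\Omega)]'}\leq\|\bg\|_{[\bH_0^{r',p'}(\curl,\Omega)]'}+C\|\bdd\|_{\bW^{1,3/2}(\Omega)}\|h\|_{W^{1,r}(\Omega)},
\]
and absorb $\|\nabla\theta\|_{\bW^{1,p}(\Omega)}\leq C\|h\|_{W^{1,r}(\Omega)}$ into the estimate of $\bu$; the extra factor $(1+\|\curl\bw\|_{\bL^{3/2}}+\|\bdd\|_{\bW^{1,3/2}})$ multiplying $\|h\|_{W^{1,r}(\Omega)}$ in \eqref{estim cor u,b W^1,p for p<2} arises precisely from these lifting terms.

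The main technical point — though not a real obstacle since the ingredients are already available — is the bookkeeping of Sobolev exponents when bounding $(\curl\bw)\times\nabla\theta$ and $\curl(\nabla\theta\times\bdd)$ under the minimal hypotheses $\curl\bw\in\bL^{3/2}(\Omega)$ and $\bdd\in\bW^{1,3/2}(\Omega)$; the embeddings $W^{3,r}(\Omega)\hookrightarrow W^{2,p}(\Omega)$ and $\bW^{1,3/2}(\Omega)\hookrightarrow\bL^3(\Omega)$ together with the scaling $\tfrac{1}{r}=\tfrac{1}{p}+\tfrac{1}{3}$ make every Hölder product fall exactly into $\bL^r(\Omega)$, so the argument goes through verbatim and uniqueness follows from the uniqueness in Theorem \ref{thm: weak W1,p for p<2} applied to $\bz$.
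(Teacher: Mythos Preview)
Your proposal is correct and follows essentially the same route as the paper: lift $h$ via the Dirichlet problem $\Delta\theta=h$, $\theta|_\Gamma=0$, set $\bz=\bu-\nabla\theta$, verify that the modified data $\tilde\ff,\tilde\bg$ lie in $[\bH_0^{r',p'}(\curl,\Omega)]'$ with the required compatibility, and then invoke Theorem~\ref{thm: weak W1,p for p<2}. One cosmetic remark: in your expansion $\curl(\nabla\theta\times\bdd)=(\bdd\cdot\nabla)\nabla\theta-(\nabla\theta\cdot\nabla)\bdd$ you have silently dropped the term $-h\,\bdd$ coming from $\vdiv(\nabla\theta)=h$ (only $\vdiv\bdd=0$); this extra piece is harmless since $h\bdd\in\bL^r(\Omega)$, but the paper sidesteps the issue entirely by bounding $\|\tilde\bg-\bg\|_{[\bH_0^{r',p'}(\curl,\Omega)]'}\le\|\nabla\theta\times\bdd\|_{\bL^p(\Omega)}$ directly from the characterization of the dual space, which is slightly cleaner.
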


\begin{proof}
We can reduce the non-vanishing divergence problem for the velocity to the case where $\vdiv \bu = 0$, by solving the Dirichlet problem:
\begin{eqnarray*}
\Delta \theta = h \quad \mathrm{in} \, \, \Omega \quad \mathrm{and} \quad \theta = 0 \, \, \mathrm{on} \, \, \Gamma.
\end{eqnarray*}
For $h\in W^{1,r}(\O)$, the solution $\theta$ belongs to $W^{3,r}(\O)$ and satisfies 
the estimate
\begin{eqnarray}\label{estim theta liftign h}
\norm{\theta}_{W^{3,r}(\O)}\leq C \norm{h}_{W^{1,r}(\O)}.
 \end{eqnarray}
Setting $\bz = \bu - \nabla \theta$, we obtain that $(\bz, \bb, P, \bc)$ is the solution of the problem treated in the Theorem \ref{thm: weak W1,p for p<2} with $\ff$ and $\bg$ replaced by $\tilde{\ff} = \ff + \nabla h - (\curl \bw) \times \nabla \theta$ and $\tilde{\bg} = \bg + \curl (\nabla \theta \times \bdd)$ respectively. 

\noindent Indeed, we have $\nabla h \in \bL^r(\Omega) \hookrightarrow [\bH_0^{r',p'}(\curl, \Omega)]'$, and since $\nabla \theta \in \bW^{2,r}(\Omega) \hookrightarrow \bL^{p^*}(\Omega)$, then $(\curl \bw) \times \nabla \theta \in \bL^r(\Omega) \hookrightarrow [\bH_0^{r',p'}(\curl, \Omega)]'$, and $\nabla \theta \times \bdd \in \bL^p(\Omega)$  so $\curl (\nabla \theta \times \bdd) \in [\bH_0^{r',p'}(\curl, \Omega)]'$. Therefore, $\tilde{\ff}, \tilde{\bg} \in [\bH_0^{r',p'}(\curl, \Omega)]'$. Besides, since we add a $\curl$, then $\tilde{\bg}$ still satisfies the compatibility conditions \eqref{Condition compatibilite K_N Lp}-\eqref{condition div g=0 sol W^1,p linear MHD}. Thus, applying Theorem \ref{thm: weak W1,p for p<2}, we have the estimate:
\begin{eqnarray*}
\begin{aligned}
\norm{\bz}_{\bW^{1,p}(\Omega)} + \norm{\bb}_{\bW^{1,p}(\Omega)} &\leqslant C \Big( 1 + \norm{\curl \bw}_{\bL^\frac{3}{2}(\Omega)} + \norm{\bdd}_{\bW^{1,\frac{3}{2}}(\Omega)} \Big) \\
&\times \Big( \norm{\tilde{\ff}}_{[\bH_0^{r',p'}(\curl, \Omega)]'} + \norm{\tilde{\bg}}_{[\bH_0^{r',p'}(\curl, \Omega)]'} + \norm{P_0}_{W^{1-\frac{1}{r},r}(\Gamma)} \Big)
\end{aligned}
\end{eqnarray*}
We want to control the terms on $\tilde{\ff}$ and $\tilde{\bg}$.
\begin{eqnarray*}
\begin{aligned}
\norm{\tilde{\ff}}_{[\bH_0^{r',p'}(\curl, \Omega)]'} &\leqslant \norm{\ff}_{[\bH_0^{r',p'}(\curl,\Omega)]'} + \norm{\nabla h}_{[\bH_0^{r',p'}(\curl,\Omega)]'} + \norm{(\curl \bw) \times \nabla \theta}_{[\bH_0^{r',p'}(\curl,\Omega)]'} \\
&\leqslant \norm{\ff}_{[\bH_0^{r',p'}(\curl,\Omega)]'} + \norm{\nabla h}_{\bL^r(\Omega)} + \norm{(\curl \bw) \times \nabla \theta}_{\bL^r(\Omega)} \\
&\leqslant \norm{\ff}_{[\bH_0^{r',p'}(\curl,\Omega)]'} + \norm{h}_{W^{1,r}(\Omega)} + C \norm{\curl \bw}_{\bL^\frac{3}{2}(\Omega)} \norm{\nabla \theta}_{\bW^{1,p}(\Omega)} 
\end{aligned}
\end{eqnarray*}
and 
\begin{eqnarray*}
\begin{aligned}
\norm{\tilde{\bg}}_{[\bH_0^{r',p'}(\curl,\Omega)]'} &\leqslant \norm{\bg}_{[\bH_0^{r',p'}(\curl,\Omega)]'} + \norm{\curl (\bdd \times \nabla \theta)}_{[\bH_0^{r',p'}(\curl,\Omega)]'} \\
&\leqslant \norm{\bg}_{[\bH_0^{r',p'}(\curl,\Omega)]'} + \norm{\bdd \times \nabla \theta}_{\bL^p(\Omega)} \\
&\leqslant \norm{\bg}_{[\bH_0^{r',p'}(\curl,\Omega)]'} + C_d \norm{\bdd}_{\bW^{1,\frac{3}{2}}(\Omega)} \norm{\nabla \theta}_{\bL^{p^*}(\Omega)}. 
\end{aligned}
\end{eqnarray*}
Therefore, from these two last estimates combined with \eqref{estim theta liftign h}, we obtain the estimate \eqref{estim cor u,b W^1,p for p<2}. 

Moreover, we also obtain from the Theorem \ref{thm: weak W1,p for p<2}: 
\begin{eqnarray*}
\begin{aligned}
\norm{P}_{W^{1,r}(\Omega)} &\leqslant C \Big(1 + \norm{\curl \bw}_{\bL^\frac{3}{2}(\Omega)} + \norm{\bdd}_{\bW^{1,\frac{3}{2}}(\Omega)} \Big)^2 \\
&\times \Big( \norm{\tilde{\ff}}_{[\bH_0^{r',p'}(\curl,\Omega)]'} + \norm{\tilde{\bg}}_{[\bH_0^{r',p'}(\curl, \Omega)]'} + \norm{P_0}_{W^{1-\frac{1}{r},r}(\Gamma)} \Big).
\end{aligned}
\end{eqnarray*}
Using the same arguments as previously, we can obtain the estimate \eqref{estim cor P W^1,r for p<2}.
\end{proof}

In this subsection, we always take $P_0 \in W^{1-\frac{1}{r},r}(\Gamma)$ to obtain $P \in W^{1,r}(\Omega)$. However, since the pressure is decoupled from the system, we can improve its regularity given in the previous results by choosing a convenient boundary condition. For this, we begin by the following regularity concerning the Stokes problem $(\mathcal{S_N})$  which is an improvement of \cite[Theorem 2.2.6]{Bri-thesis}: 

\begin{theorem} \label{thm:pression Stokes div non nulle}Let $\O$ be of class $\mathcal{C}^{1,1}$. Let us assume $\ff \in [\bH_0^{r',p'}(\curl, \Omega)]'$ and $h \in W^{1,r}(\Omega)$ with $1 \leqslant r \leqslant p$ and $\frac{1}{r} \leqslant \frac{1}{p} + \frac{1}{3}$. Then 
\begin{enumerate}
\item If $r < 3$ and $P_0 \in W^{-\frac{1}{r^*}, r^*}(\Gamma)$, the Stokes problem $(\mathcal{S_N})$ has a unique solution $(\bu,\,P) \in \bW^{1,p}(\O)\times L^{r^*}(\Omega)$. 
\item If $r \geqslant 3$ and $P_0 \in W^{-\frac{1}{q},q}(\Gamma)$ for any finite number $q > 1$, the Stokes problem $(\mathcal{S_N})$ has a unique solution $(\bu,\,P) \in \bW^{1,p}(\O)\times L^q(\Omega)$.
\end{enumerate}
\end{theorem}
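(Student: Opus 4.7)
The pressure in $(\mathcal{S}_N)$ is decoupled from the velocity: $P$ satisfies a scalar Dirichlet Laplace problem whose boundary data involves $P_0$, while $\bu$ is governed by a div--curl system whose variational formulation on $\bV^{p'}_N(\Omega)$ contains $P_0$ only through a surface duality pairing. The plan is to regularize $P_0$, apply Proposition \ref{thm solution W1,p and W2,p Stokes divu=h} for the regularized data, and then pass to the limit using two independent uniform estimates. Choose $\{P_0^{(n)}\}\subset W^{1-1/r,r}(\Gamma)$ converging to $P_0$ in $W^{-1/r^*,r^*}(\Gamma)$ (respectively in $W^{-1/q,q}(\Gamma)$ in case 2), and let $(\bu^{(n)},P^{(n)},\bc^{(n)})\in\bW^{1,p}(\Omega)\times W^{1,r}(\Omega)\times\R^I$ be the corresponding solutions supplied by Proposition \ref{thm solution W1,p and W2,p Stokes divu=h}.

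Testing the momentum equation against $\bv\in\bV^{p'}_N(\Omega)$ and using $\vdiv\bv=0$, $\bv\times\bn=\textbf{0}$, $\langle\bv\cdot\bn,1\rangle_{\Gamma_i}=0$ collapses the pressure contribution to the boundary and leaves
\begin{equation*}
\int_\Omega\curl\bu^{(n)}\cdot\curl\bv\,d\bx=\langle\ff,\bv\rangle_{\Omega_{r',p'}}+\int_\Gamma h(\bv\cdot\bn)\,d\sigma-\langle P_0^{(n)},\bv\cdot\bn\rangle_\Gamma.
\end{equation*}
The hypothesis $1/r\leq 1/p+1/3$ gives $1/p\geq 1/r^*$, so the trace chain $\bW^{1,p'}(\Omega)\hookrightarrow W^{1/p,p'}(\Gamma)\hookrightarrow W^{1/r^*,(r^*)'}(\Gamma)$ makes the last pairing continuous on $\bV^{p'}_N(\Omega)$. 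After lifting $h$ exactly as in the proof of Proposition \ref{thm solution W1,p and W2,p Stokes divu=h}, the inf-sup condition \cite[Lemma 5.1]{AS_M3AS} then furnishes a uniform $\bW^{1,p}(\Omega)$-bound on $\bu^{(n)}$. The constants $c_i^{(n)}$ given by \eqref{constantes Stokes divu=h} converge to finite limits $c_i$, since $\nabla q_i^N\cdot\bn$ belongs to $W^{1-1/q,q}(\Gamma)$ for every finite $q$ and hence pairs continuously with $P_0^{(n)}\to P_0$ in $W^{-1/r^*,r^*}(\Gamma)$.

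For the pressure, taking the divergence of $-\Delta\bu^{(n)}+\nabla P^{(n)}=\ff$ and using $\vdiv\bu^{(n)}=h$ produces the Dirichlet Laplace problem $\Delta P^{(n)}=\vdiv\ff+\Delta h$ in $\Omega$, with $P^{(n)}|_{\Gamma_0}=P_0^{(n)}$ and $P^{(n)}|_{\Gamma_i}=P_0^{(n)}+c_i^{(n)}$. Writing $\ff=\bF+\curl\boldsymbol{\psi}$ through Proposition \ref{charac dual H^r,p} gives $\vdiv\ff=\vdiv\bF\in W^{-1,r}(\Omega)$, and the critical Sobolev embedding $W^{2,(r^*)'}(\Omega)\hookrightarrow W^{1,r'}(\Omega)$ dualizes to $W^{-1,r}(\Omega)\hookrightarrow W^{-2,r^*}(\Omega)$; the same inclusion holds for $\Delta h$. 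The classical Lions--Magenes transposition method for the Dirichlet Laplace problem with boundary data in $W^{-1/r^*,r^*}(\Gamma)$ then yields a uniform bound $\|P^{(n)}\|_{L^{r^*}(\Omega)}\leq C(\|\vdiv\bF+\Delta h\|_{W^{-2,r^*}(\Omega)}+\|P_0^{(n)}\|_{W^{-1/r^*,r^*}(\Gamma)}+\sum_i|c_i^{(n)}|)$. Extracting a weakly convergent subsequence and passing to the limit in $(\mathcal{S}_N)$ delivers the sought solution $(\bu,P,\bc)\in\bW^{1,p}(\Omega)\times L^{r^*}(\Omega)\times\R^I$; uniqueness follows by linearity from the uniqueness of each of the two decoupled subproblems. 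Case 2 ($r\geq 3$) is handled identically, with $r^*$ replaced by an arbitrary finite $q>1$ and using the inclusion $W^{-1,r}(\Omega)\hookrightarrow W^{-2,q}(\Omega)$ which holds without restriction when $r\geq 3$.

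The main obstacle is the $L^{r^*}$-estimate for $P^{(n)}$: it requires the Lions--Magenes transposition theory applied to the scalar Laplace Dirichlet problem with singular boundary data, and demands careful tracking of the critical Sobolev embedding at the exponent $r^*$ so that both the right-hand side $\vdiv\ff+\Delta h$ is accommodated by $W^{-2,r^*}(\Omega)$ and the surface duality $\langle P_0,\bv\cdot\bn\rangle_\Gamma$ remains continuous on $\bV^{p'}_N(\Omega)$ at exactly the same exponent.
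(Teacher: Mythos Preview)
Your approach is correct and genuinely different from the paper's. The paper never regularizes $P_0$; instead it exploits the decoupling directly by splitting $P=P_1+P_2$, where $P_1$ solves $\Delta P_1=\vdiv\ff+\Delta h$ with homogeneous Dirichlet data (yielding $P_1\in W^{1,r}(\Omega)\hookrightarrow L^{r^*}(\Omega)$) and $P_2$ is harmonic with boundary datum $P_0+c_i$ (yielding $P_2\in L^{r^*}(\Omega)$ by the very-weak theory for the Laplacian). This is shorter for the pressure because the transposition argument is confined to the harmonic piece, whereas you invoke Lions--Magenes on the full pressure problem and must verify the embedding $W^{-1,r}(\Omega)\hookrightarrow W^{-2,r^*}(\Omega)$ for the right-hand side.

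On the other hand, the paper's proof is completely silent about how one obtains $\bu\in\bW^{1,p}(\Omega)$ when $P_0$ is only in $W^{-1/r^*,r^*}(\Gamma)$: Proposition~\ref{thm solution W1,p and W2,p Stokes divu=h} does not apply, and the paper offers no substitute. Your argument actually fills this gap, since your variational identity on $\bV_N^{p'}(\Omega)$ together with the trace embedding $W^{1/p,p'}(\Gamma)\hookrightarrow W^{1/r^*,(r^*)'}(\Gamma)$ and the inf--sup condition gives the uniform $\bW^{1,p}$-bound on $\bu^{(n)}$. In fact, once you have written that variational identity, the regularization step for the velocity is unnecessary: the right-hand side is already a bounded linear functional on $\bV_N^{p'}(\Omega)$ for the limiting $P_0$, so the inf--sup condition produces $\bu$ directly. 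The regularization is only genuinely needed to make sense of the pressure boundary condition in the limit, and even there the paper's splitting avoids it.
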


\begin{proof}
Taking the divergence in the first equation of $(\mathcal{S_N})$, we have: 
\begin{eqnarray*}\begin{cases}
\Delta P = \vdiv \ff + \Delta h \qquad \mathrm{in} \, \, \Omega, \\P=P_0 \quad \mathrm{on} \, \, \Gamma_0 \quad \mathrm{and}\quad  P=P_0 + c_i \, \, \mathrm{on} \, \, \Gamma_i.\end{cases}
\end{eqnarray*} 
We split this problem in two parts:
find $P_1$ such that
\begin{eqnarray*}
(\mathcal{P}_1)\qquad \qquad \Delta P_1 = \vdiv \ff + \Delta h \quad\text{in} \, \, \Omega \qquad \mathrm{and}\quad P_1 = 0 \, \, \text{on} \, \, \Gamma,
\end{eqnarray*}
and find $P_2$ such that
\begin{eqnarray*}
(\mathcal{P}_2)\qquad \qquad \Delta P_2 = 0 \quad \text{in} \, \, \Omega,\,\,\quad P_2 = P_0 \, \, \text{on} \,\, \Gamma_0 \quad \mathrm{and}\quad P_2 = P_0 + c_i \, \, \text{on} \, \, \Gamma_i.
\end{eqnarray*}
We note that the regularity of $P_1$ is only dependent of $\vdiv \ff$ and $\Delta h$, and then we choose $P_0$ in order to recover for $P_2$ the same regularity as than for $P_1$. Then, we obtain the regularity of  $P$ by adding $P_1$ and $P_2$. 
Let us analyze  problem $(\mathcal{P}_1)$. Since $\ff \in [\bH_0^{r',p'}(\curl, \Omega)]'$, there exists $\bF \in \bL^r(\Omega)$ and $\boldsymbol{\Psi} \in \bL^p(\Omega)$ such that $\ff = \bF + \curl \boldsymbol{\Psi}$. So $\vdiv \ff = \vdiv \bF \in W^{-1,r}(\Omega)$. Moreover, we have $\Delta h \in W^{-1,r}(\Omega)$. Then, $\vdiv \ff + \Delta h$ belongs to  $\in W^{-1,r}(\Omega)$ which implies that problem $(\mathcal{P}_1)$  has a unique solution $P_1 \in W^{1,r}(\Omega)$. Next, we determine the regularity of $P_2$ with respect to the data $P_0$. We note that $P_0$ must be chosen so that the solution $P_2$ of
$(\mathcal{P}_2)$ could belong to a class of spaces containing spaces for $P_1$. We distinguish the following cases:

\underline{\textbf{Case $r<3$:}}
If $P_0 \in W^{-\frac{1}{r^*},r^*}(\Gamma)$ with $r^*=\frac{3r}{3-r}$, the solution $P_2$ of problem $(\mathcal{P}_2)$ belongs to $L^{r^*}(\Omega)$. Since $P_1 \in W^{1,r}(\Omega) \hookrightarrow L^{r^*}(\Omega)$, we deduce that $P=P_1+P_2$ belongs to $L^{r^*}(\O)$. 

\underline{\textbf{Case $r \geqslant 3$:}}

We have $P_1 \in W^{1,r}(\Omega) \hookrightarrow L^q(\Omega)$ for any finite number $q > 1$ if $r=3$, for $q = \infty$ if $r > 3$. Thus, taking $P_0 \in W^{-\frac{1}{q},q}(\Gamma)$ for any $q > 1$ we have $P_2 \in L^q(\Omega)$ and then $P \in L^q(\Omega)$. 
\end{proof}
\begin{rmk}
 Observe that, using the above splitting, if $P_0\in W^{1-1/r,r}(\Gamma)$, we have immediately $P\in W^{1,r}(\O)$. 
\end{rmk}

The regularity result given in Theorem \ref{thm:pression Stokes div non nulle} enables us to improve the pressure in the linearized MHD system \eqref{linearized MHD-pressure}. In particular, we have the following result
\begin{corollary}\label{Corollary improvement pressure}
Let $p>\frac{3}{2}$, $\ff, \bg \in [\bH_0^{r',p'}(\curl, \Omega)]'$, $h \in W^{1,r}(\Omega)$, $P_0 \in W^{-\frac{1}{r^*}, r^*}(\Gamma)$ satisfying the compatibility condition \eqref{Condition compatibilite K_N Lp}-\eqref{condition div g=0 sol W^1,p linear MHD}. We suppose that  
\begin{itemize}
\item $\curl \bw \in \bL^\frac{3}{2}(\Omega)$ and $\bdd \in \bW^{1,\frac{3}{2}}_{\sigma}(\Omega)$ if $\frac{3}{2} < p < 2$. 
\item $\curl \bw \in \bL^s(\Omega)$ and $\bdd \in \bW_\sigma^{1,s}(\Omega)$ if $p \geqslant 2$, where $s$ is defined in \eqref{def of s} 
\end{itemize}

Then, the solution $(\bu, \bb, P)$ given in Proposition \ref{proposition to improv estim W^1,p for p>2 with phi} and Theorem \ref{thm: weak W1,p for p<2} of the linearized MHD problem \eqref{linearized MHD-pressure} belongs to $\bW^{1,p}(\Omega) \times \bW^{1,p}(\Omega) \times L^{r^*}(\Omega) $. 
\end{corollary}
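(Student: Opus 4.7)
The strategy is to reproduce the constructions of Proposition~\ref{proposition to improv estim W^1,p for p>2 with phi} and Theorem~\ref{thm: weak W1,p for p<2}, but substitute the pressure-splitting of Theorem~\ref{thm:pression Stokes div non nulle} whenever the boundary datum $P_0$ appears. A direct computation from $\frac{1}{r}=\frac{1}{p}+\frac{1}{3}$ gives $r^{\ast}=p$, hence $W^{-1/r^{\ast},r^{\ast}}(\Gamma)=W^{-1/p,p}(\Gamma)$ is exactly the dual of the trace space in which $\bv\cdot\bn$ lives for $\bv\in\bW^{1,p'}(\Omega)$. This identity is what makes the weakened assumption on $P_0$ compatible with the variational/dual formulations used in the earlier proofs.

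For $p\geq 2$, I would rerun the three-step splitting in the proof of Proposition~\ref{proposition to improv estim W^1,p for p>2 with phi}, but replace the Stokes step (Proposition~\ref{thm solution W1,p and W2,p Stokes divu=h}) by Theorem~\ref{thm:pression Stokes div non nulle}: this yields $(\bu_1,P_1)\in\bW^{1,p}(\Omega)\times L^{r^{\ast}}(\Omega)$. The elliptic step (Lemma~\ref{Lemme elliptique H r' p'}) is untouched and produces $\bb_1\in\bW^{1,p}(\Omega)$, and the remaining strong-MHD correction $(\bu_2,\bb_2,P_2)$ is obtained via Theorem~\ref{thm strong solution p>6/5} in $\bW^{2,r}(\Omega)\times\bW^{2,r}(\Omega)\times W^{1,r}(\Omega)$. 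Summing gives $(\bu,\bb)\in\bW^{1,p}\times\bW^{1,p}$ and $P=P_1+P_2\in L^{r^{\ast}}(\Omega)+W^{1,r}(\Omega)\hookrightarrow L^{r^{\ast}}(\Omega)$ by Sobolev embedding (valid because $p>3/2$ forces $r<3$).

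For $\frac{3}{2}<p<2$, I would revisit the duality argument of Theorem~\ref{thm: weak W1,p for p<2}. The weaker datum $P_0$ affects only the linear functional
\[
(\bF,\bG,\phi)\longmapsto \langle\ff,\bv\rangle_{\O_{r',p'}}+\langle\bg,\ba\rangle_{\O_{r',p'}}-\int_{\Gamma}P_0\,\bv\cdot\bn\,d\sigma,
\]
used for Riesz representation, where $(\bv,\ba,\theta,\tau)$ solves the dual linearized MHD~\eqref{dual MHD} in $\bW^{1,p'}\times\bW^{1,p'}\times\cdots$. The new boundary pairing remains continuous precisely because $r^{\ast}=p$, so $P_0\in W^{-1/p,p}(\Gamma)$ pairs continuously with $\bv\cdot\bn\in W^{1-1/p',p'}(\Gamma)$. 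This yields $(\bu,\bb)\in\bW^{1,p}\times\bW^{1,p}$. De Rham then produces $P\in\mathcal{D}'(\Omega)$; taking the divergence of the momentum equation gives
\[
\Delta P = \vdiv\ff+\Delta h+\vdiv\bigl((\curl\bb)\times\bdd\bigr)-\vdiv\bigl((\curl\bw)\times\bu\bigr)\quad\text{in }\Omega,
\]
with boundary data $P_0$ and jumps $c_i$. I split $P=P_1+P_2$ as in Theorem~\ref{thm:pression Stokes div non nulle}: the right-hand side lies in $W^{-1,r}(\Omega)$ (using Proposition~\ref{charac dual H^r,p} for $\vdiv\ff$ and the already-established $\bL^r$-bounds on the nonlinear couplings), so $P_1\in W^{1,r}(\Omega)\hookrightarrow L^{r^{\ast}}(\Omega)$, while the harmonic piece satisfies $P_2\in L^{r^{\ast}}(\Omega)$ from the Dirichlet problem with $P_0\in W^{-1/r^{\ast},r^{\ast}}(\Gamma)$.

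The main obstacle is the continuity check in the $p<2$ case, namely that the weakened boundary datum is still admissible in the dual formulation of Lemma~\ref{Lemme equiv sol pour Lp*}; this is exactly what the identity $r^{\ast}=p$ resolves. Once $(\bu,\bb)$ is obtained, the remaining splitting and harmonic regularity are routine applications of already-available results.
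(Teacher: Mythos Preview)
Your proposal is correct and follows the same essential strategy as the paper: substitute Theorem~\ref{thm:pression Stokes div non nulle} for the standard Stokes regularity wherever the boundary datum $P_0$ intervenes. The paper's own proof is very terse---it simply rewrites the linearized system as the coupled Stokes problem $(\widetilde{\mathcal{S_N}})$ with source $\widetilde{\ff}=\ff-(\curl\bw)\times\bu+(\curl\bb)\times\bdd$ and the elliptic problem $(\widetilde{\mathcal{E_N}})$ with source $\widetilde{\bg}=\bg+\curl(\bu\times\bdd)$, observes that these sources lie in $[\bH_0^{r',p'}(\curl,\Omega)]'$, and invokes Theorem~\ref{thm:pression Stokes div non nulle} and Lemma~\ref{Lemme elliptique H r' p'}---leaving implicit the fact that the existence of $(\bu,\bb)\in\bW^{1,p}\times\bW^{1,p}$ must itself be re-established under the weakened hypothesis on $P_0$.

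Your version is more explicit on this point, and for $\frac32<p<2$ you give the additional justification (the identity $r^{\ast}=p$, hence $P_0\in W^{-1/p,p}(\Gamma)$ pairs continuously with $\bv\cdot\bn\in W^{1-1/p',p'}(\Gamma)$) needed to see that the Riesz-representation step in Theorem~\ref{thm: weak W1,p for p<2} still goes through. This is exactly the missing link the paper glosses over, so your account is in fact slightly more complete than the original while remaining the same argument in spirit.
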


\begin{proof}
We are going to take advantage of the regularity results for the Stokes problem $(\mathcal{S_N})$ given in Theorem \ref{thm:pression Stokes div non nulle}. Then, we can rewrite \eqref{linearized MHD-pressure} in the following way:
Find $(\bu,P,\bc)$ such that 

\begin{equation*}\label{MHD with part1 Stokes problem}
(\mathcal{\widetilde{S_N}})
	\left\lbrace
		\begin{aligned}
		&   - \, \Delta \bu  + \nabla P = \widetilde{\ff}\quad \mathrm{and}\quad  \vdiv \bu = h  \quad \mathrm{in} \,\, \Omega, \\
       & \bu \times \bn = \textbf{0} \quad  \mathrm{on}\,\,\Gamma,\\
        & P = P_0 \quad \text{on} \, \,\Gamma_0\quad  \mathrm{and}\quad 
         P = P_0 + c_i \,\,\,\text{on} \,\, \Gamma_i,\\
         & \langle\bu \cdot \bn, 1\rangle_{\Gamma_i} = 0,\,\, 1\leq i \leq I,
		\end{aligned}\right.
		\end{equation*}
		with $\widetilde{\ff}=\ff - (\curl \bw) \times \bu +  (\curl \bb) \times \bdd $ and find $\bb$ solution of the following elliptic problem
				\begin{eqnarray*} \label{Probleme elliptique}
(\mathcal{\widetilde{E_N}})\begin{cases}
 \curl\curl\bb= \widetilde{\bg} \quad \mathrm{in} \,\, \Omega \quad \mathrm{and}\quad
\vdiv \bb = 0 \quad \mathrm{in} \,\, \Omega, \\
\bb \times \bn = 0 \quad \mathrm{on} \, \Gamma \\
\langle \bb \cdot \bn, 1 \rangle_{\Gamma_i} = 0, \quad \forall 1 \leqslant i \leqslant I,
\end{cases}
\end{eqnarray*} 
with $\widetilde{\bg}=\bg+\curl(\bu\times\bdd)$.
As in the previous proofs, we can easily verify that the assumptions on $\ff$, $\curl \bw$ and $\bdd$ imply that the term $\widetilde{\ff}$ belongs to $[\bH_0^{r',p'}(\curl, \Omega)]'$ for both cases $p<2$ and $p\geq 2$. Thanks to Theorem \ref{thm:pression Stokes div non nulle}, there exists a unique solution $(\bu,P,\bc)\in \bW^{1,p}(\Omega) \times L^{r^*}(\Omega) \times \R^I$ for the problem $(\mathcal{\widetilde{S_N}})$. Besides, the existence of $\bb$ is independent of the pressure. Indeed, $\widetilde{\bg}$ belongs to $[\bH_0^{r',p'}(\curl, \Omega)]'$ and satisfies the compatibility conditions \eqref{Condition compatibilite K_N Lp}-\eqref{condition div g=0 sol W^1,p linear MHD}. It follows from Lemma \ref{Lemme elliptique H r' p'} that problem $(\mathcal{\widetilde{E_N}})$ has a unique solution $\bb\in \bW^{1,p}(\O)$. 
\end{proof}


\subsection{Strong solution in $\bW^{\,2,p}(\Omega)$; $1<p< 6/5$}
The aim of this subsection is to complete the $L^p-$theory for the linearized MHD problem \eqref{linearized MHD-pressure} by the proof of strong solutions in $\bW^{2,p}(\O)$ with $1<p< 6/5$. One of the approach that we can use is to consider $\bw$ and $\bdd$ more regular in a first step and then remove this regularity in a second step. Since the proof with this approach highly mimics that of Theorem \ref{thm strong solution p>6/5}, we put it in the Appendix (see Section \ref{section Appendix}). We are going to give a shorter and different proof where we take advantage of the regularity $\bW^{1,p}(\O)$ with $1< p < 2$ for the linearized MHD problem \eqref{linearized MHD-pressure}.

\begin{theorem}[Strong solution in $\bW^{2,p}(\Omega)$ with $1 < p < \frac{6}{5}$] \label{thm: strong W2,p for 1<p<6/5}
Suppose that $\O$ is of class $\mathcal{C}^{2,1}$ and  $1 < p < \frac{6}{5}$. Assume $h=0$, and let $\ff, \bg \in \bL^p(\Omega)$, $P_0 \in W^{1-\frac{1}{p},p}(\Gamma)$, $\curl\bw\in \bL^{3/2}(\O)$ and $\bdd\in \bW^{1,3/2}_{\sigma}(\O)$ with the compatibility conditions \eqref{Condition div nulle}-\eqref{Condition compatibilite K_N Lp p>6/5}. 

Then the linearized problem \eqref{linearized MHD-pressure} has a unique solution $(\bu, \bb, P, c) \in \bW^{2,p}(\Omega) \times \bW^{2,p}(\Omega) \times W^{1,p}(\Omega) \times \R^I$ satisfying the following estimate:
\begin{eqnarray} \label{Estimation theorem strong solution p small}
\begin{aligned}
\norm{\bu}_{\bW^{2,p}(\Omega)} + \norm{\bb}_{\bW^{2,p}(\Omega)} + \norm{P}_{W^{1,p}(\Omega)} &\leqslant C \Big( 1 + \norm{\curl \bw}_{\bL^\frac{3}{2}(\Omega)} + \norm{\bdd}_{\bW^{1,\frac{3}{2}}(\Omega)} \Big)^2 \\
&\times \Big( \norm{\ff}_{\bL^p(\Omega)} + \norm{\bg}_{\bL^p(\Omega)} + \norm{P_0}_{W^{1-\frac{1}{p},p}(\Gamma)} \Big)
\end{aligned}
\end{eqnarray}

\end{theorem}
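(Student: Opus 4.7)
The plan is a two-stage bootstrap: first obtain a weak $\bW^{1,q}$-regularity for $(\bu,\bb,P)$ via Theorem \ref{thm: weak W1,p for p<2} at an intermediate exponent $q=p^{\ast}=\frac{3p}{3-p}$, and then lift to $\bW^{2,p}$ by invoking separately the strong regularity of the Stokes problem $(\mathcal{S}_N)$ for $(\bu,P)$ and of the elliptic problem $(\mathcal{E}_N)$ for $\bb$.

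For $p\in(1,\tfrac{6}{5})$ the choice $q=p^{\ast}$ lies strictly in $(\tfrac{3}{2},2)$, and the exponent $r$ associated to $q$ by $\frac{1}{r}=\frac{1}{q}+\frac{1}{3}$ is exactly $r=p$. Since $\Omega$ is bounded and $\ff,\bg\in\bL^{p}(\Omega)=\bL^{r}(\Omega)$, the characterization of Proposition \ref{charac dual H^r,p} (with $\bF=\ff$, $\boldsymbol{\psi}=\mathbf{0}$, respectively $\bF=\bg$) places both source terms in $[\bH_0^{r',q'}(\curl,\Omega)]'$; the boundary datum $P_0$ already sits in $W^{1-1/r,r}(\Gamma)$; and the compatibility assumptions \eqref{Condition div nulle}--\eqref{Condition compatibilite K_N Lp p>6/5} coincide with those required by Theorem \ref{thm: weak W1,p for p<2} once $\bg\in\bL^{p}$, because the duality pairing reduces to the $\bL^{p}$--$\bL^{p'}$ integral. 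Applying that theorem at the exponent $q$ then produces a unique weak solution $(\bu,\bb,P,\bc)\in\bW^{1,q}(\Omega)\times\bW^{1,q}(\Omega)\times W^{1,p}(\Omega)\times\R^{I}$, with $\|\bu\|_{\bW^{1,q}}+\|\bb\|_{\bW^{1,q}}$ bounded by $(1+\|\curl\bw\|_{\bL^{3/2}}+\|\bdd\|_{\bW^{1,3/2}})D$ and $\|P\|_{W^{1,p}}$ by the squared factor times $D$, where $D:=\|\ff\|_{\bL^{p}}+\|\bg\|_{\bL^{p}}+\|P_0\|_{W^{1-1/p,p}(\Gamma)}$.

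Using $\bW^{1,q}(\Omega)\hookrightarrow\bL^{q^{\ast}}(\Omega)$ with $\frac{1}{q^{\ast}}=\frac{1}{p}-\frac{2}{3}$ (so $q^{\ast}\in(3,6)$) together with H\"older's inequality and $\bdd\in\bW^{1,3/2}\hookrightarrow\bL^{3}$, each of the nonlinear terms
\[
(\curl\bw)\times\bu,\qquad (\curl\bb)\times\bdd,\qquad \curl(\bu\times\bdd)=(\bdd\cdot\nabla)\bu-(\bu\cdot\nabla)\bdd
\]
turns out to belong to $\bL^{p}(\Omega)$: the exponents close because in every product one factor is in $\bL^{3/2}$ or $\bL^{3}$ and the other in the precisely matched Sobolev space, thanks to $\frac{1}{p}=\frac{1}{q}+\frac{1}{3}$. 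Consequently $(\bu,P,\bc)$ satisfies $(\mathcal{S}_N)$ with right-hand side $\bF:=\ff-(\curl\bw)\times\bu+(\curl\bb)\times\bdd\in\bL^{p}(\Omega)$, and Proposition \ref{thm solution W1,p and W2,p Stokes divu=h} yields $(\bu,P)\in\bW^{2,p}(\Omega)\times W^{1,p}(\Omega)$. Likewise, $\bb$ solves $(\mathcal{E}_N)$ with right-hand side $\bG:=\bg+\curl(\bu\times\bdd)\in\bL^{p}(\Omega)$; its divergence vanishes and the compatibility with $\bK^{p'}_{N}(\Omega)$ is preserved since integration by parts of $\curl(\bu\times\bdd)$ against $\nabla q_i^{N}$ gives zero. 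Theorem \ref{thm strong soulution elliptic} then provides $\bb\in\bW^{2,p}(\Omega)$.

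Combining the strong Stokes and elliptic estimates with the weak $\bW^{1,q}$ bounds, every contribution of the form $\|(\curl\bw)\times\bu\|_{\bL^{p}}$, $\|(\curl\bb)\times\bdd\|_{\bL^{p}}$, $\|(\bdd\cdot\nabla)\bu\|_{\bL^{p}}$ or $\|(\bu\cdot\nabla)\bdd\|_{\bL^{p}}$ is controlled by $(\|\curl\bw\|_{\bL^{3/2}}+\|\bdd\|_{\bW^{1,3/2}})\cdot(1+\|\curl\bw\|_{\bL^{3/2}}+\|\bdd\|_{\bW^{1,3/2}})D$, which delivers exactly the quadratic prefactor of \eqref{Estimation theorem strong solution p small}. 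Uniqueness descends directly from the uniqueness statement of Theorem \ref{thm: weak W1,p for p<2}. The main delicate point I expect is the careful verification that all Sobolev/H\"older exponents in the bootstrap close consistently, but this reduces entirely to the single identity $\frac{1}{p}=\frac{1}{q}+\frac{1}{3}$ once $q=p^{\ast}$ is fixed, so it is essentially mechanical rather than conceptual.
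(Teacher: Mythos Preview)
Your proof is correct and follows essentially the same two-stage bootstrap as the paper's main argument: first applying Theorem~\ref{thm: weak W1,p for p<2} at the exponent $q=p^{\ast}\in(\tfrac{3}{2},2)$ (with associated $r=p$) to get $(\bu,\bb,P)\in\bW^{1,p^{\ast}}\times\bW^{1,p^{\ast}}\times W^{1,p}$, and then lifting to $\bW^{2,p}$ via the strong Stokes and elliptic regularities once the coupling terms are shown to lie in $\bL^{p}$. The paper also gives an alternative proof in the Appendix using regularization of the data and mollifier decompositions of $\curl\bw$ and $\bdd$, but your approach coincides with the shorter primary proof.
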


\begin{proof}
Observe that 
\begin{equation}\label{embedding to pass to p*}
 \bL^{p}(\O)\hookrightarrow [\bH^{r',(p^{*})'}(\curl,\O)]',\quad P_0\in W^{1-1/p,p}(\Gamma)\hookrightarrow W^{1-1/r,r}(\Gamma),\end{equation}
 where $\frac{1}{p^{*}}=\frac{1}{p}-\frac{1}{3}$ and $\frac{1}{r}=\frac{1}{p}+\frac{1}{3}$.  Since $1<p<6/5$, we have $\frac{3}{2} < p^{*} <2$. Then applying  the regularity $\bW^{1,p}(\O)$ for the MHD system \eqref{linearized MHD-pressure} for small values of $p$ (see Theorem \ref{thm: weak W1,p for p<2} with $h=0$), we can deduce the existence of a solution $(\bu,\bb,P,\bc)\in \bW^{1,p^{*}}(\O)\times \bW^{1,p^{*}}(\O)\times W^{1,r}(\O)\times \R^{I}$ satisfying the estimate
 \small
 \begin{eqnarray}\label{estim u,b W^1,p*}
&{}&\norm{\bu}_{\bW^{1,\,p^*}(\Omega)} + \norm{\bb}_{\bW^{1,\,p^*}(\Omega)}\nonumber\\&\leq&  C(1 + \norm{\curl\bw}_{\bL^{3/2}(\Omega)} + \norm{\bdd}_{\bW^{1,3/2}(\O)})\Big(\norm{\ff}_{\bL^{p}(\O)}+  \norm{P_0}_{W^{1-\frac{1}{r},r}(\Gamma)} +\norm{\bg}_{\bL^{p}(\O)}\Big), \qquad
\end{eqnarray}
where we have used the embedding \eqref{embedding to pass to p*}
and 
\small
 \begin{eqnarray*}\label{estim P W^1,r for p*}
 \begin{aligned} 
 &\norm{P}_{W^{1,r}(\Omega)} \!\leq\! C(1 + \norm{\curl\bw}_{\bL^{3/2}(\Omega)} + \norm{\bdd}_{\bW^{1,3/2}(\O)})^2\Big(\!\norm{\ff}_{\bL^{p}(\O)}+  \norm{P_0}_{W^{1-\frac{1}{r},r}(\Gamma)} +\norm{\bg}_{\bL^{p}(\O)}\!\Big), \,\,
 \end{aligned}
\end{eqnarray*} 
Since $\bW^{1,p^*}(\O)\hookrightarrow \bL^{p^{**}}(\O)$ with $\frac{1}{p^{**}}=\frac{1}{p^*}-\frac{1}{3}$, the terms $(\curl\bw)\times \bu$ and $(\curl\bb)\times \bdd$ belong to $\bL^{p}(\O)$. $(\bu,\,P,\bc)$ is then a solution of the problem $(\mathcal{\widetilde{S_N}})$ with $h=0$ and a RHS $\widetilde{\ff}$ in $\bL^{p}(\O)$. We deduce from Proposition \ref{thm solution W1,p and W2,p Stokes divu=h} that $(\bu,P)$ belongs to $\bW^{\,2,p}(\Omega)\times W^{\,1,p}(\Omega)$ and satisfies the estimate 
\begin{equation}\label{estim W^2,p b after p*}
 \norm{\bu}_{\bW^{\,2,p}(\Omega)} + \norm{P}_{W^{\,1,p}(\Omega)} \leqslant C_{S} \Big(\norm{\ff}_{[\bL^{p}(\Omega)} +\norm{(\curl\bw)\times\bu}_{\bL^p(\O)}+\norm{(\curl\bb)\times\bdd}_{\bL^p(\O)}+\norm{P_0}_{W^{1-\frac{1}{p}, p}(\Gamma)}\Big).
\end{equation}
Next, $\bb$ is a solution of the elleptic problem $(\mathcal{\widetilde{E_N}})$ with a RHS $\bg+\curl(\bu\times\bdd)$ which belongs to $\bL^p(\O)$. Thanks to Theorem \ref{thm strong soulution elliptic}, the solution $\bb$ belongs to $\bW^{2,p}(\O)$ and satisfies the estimate
\begin{equation}\label{estim W^2,p elliptic after p*}
  \norm{\bb}_{\bW^{\,2,p}(\Omega)} \leqslant C_{E} \big(\norm{\bg}_{[\bL^{p}(\Omega)}+\norm{\curl(\bu\times \bdd)}_{\bL^p(\O)}\big).
\end{equation}
Moreover, we have the following bounds 
\begin{eqnarray}
 \norm{(\curl\bw)\times\bu}_{\bL^p(\O)}\leq \norm{\curl\bw}_{\bL^{3/2}(\O)}\norm{\bu}_{\bL^{{p}^{**}}(\O)}\leq C \norm{\curl\bw}_{\bL^{3/2}(\O)}\norm{\bu}_{\bW^{1,{p^{*}}}(\O)},
\end{eqnarray}
\begin{eqnarray}
 \norm{(\curl\bb)\times\bdd}_{\bL^p(\O)}\leq \norm{\curl\bb}_{\bL^{{p}^{*}}(\O)}\norm{\bdd}_{\bL^{3}(\O)}\leq C \norm{\bdd}_{\bL^{3}(\O)}\norm{\bb}_{\bW^{1,p^*}(\O)},
\end{eqnarray}
and similarly,
\begin{eqnarray*}
 \norm{\curl(\bu\times\bdd)}_{\bL^{p}(\O)}\leq \norm{(\bdd\cdot\nabla)\bu }_{\bL^p(\O)}+\norm{(\bu\cdot\nabla)\bdd}_{\bL^p(\O)}\leq C\norm{\bu}_{\bW^{1,{p}^{*}}(\O)}(\norm{\bdd}_{\bL^3(\O)}+\norm{\nabla\bdd}_{\bL^{3/2}(\O)}).
\end{eqnarray*}

Collecting the above bounds together with \eqref{estim u,b W^1,p*} in \eqref{estim W^2,p b after p*}-\eqref{estim W^2,p elliptic after p*} leads to the bound \eqref{Estimation theorem strong solution p small}.

\end{proof}

Proceeding as in the proof of Corollary \ref{cor: weak W1,p for p<2}, we can extend the previous result to the non-vanishing divergence case. The proof of the following result can be found in the Appendix (see Section \ref{section Appendix}). 

\begin{corollary}\label{corollary strong sol p<6/5 with h}
Let $\O$ be of class $\mathcal{C}^{2,1}$ and $1 < p < \frac{6}{5}$. Assume that $\ff, \bg \in \bL^p(\Omega)$, $P_0 \in W^{1-\frac{1}{p},p}(\Gamma)$, $h \in W^{1,p}(\Omega)$, $\curl\bw\in \bL^{3/2}(\O)$ and $\bdd\in \bW^{1,3/2}_{\sigma}(\O)$ with the compatibility conditions \eqref{Condition div nulle}-\eqref{Condition compatibilite K_N Lp p>6/5}. Then the linearized problem \eqref{linearized MHD-pressure} has a unique solution $(\bu,\bb,P,\bc) \in \bW^{2,p}(\Omega) \times \bW^{2,p}(\Omega) \times W^{1,p}(\Omega) \times \R^I$. Moreover, we have the following estimates: 
\begin{eqnarray} \label{estim cor sol fortes div non nulle p petit}
\begin{aligned}
&\norm{\bu}_{\bW^{2,p}(\Omega)} + \norm{\bb}_{\bW^{2,p}(\Omega)} + \norm{P}_{W^{1,p}(\Omega)} \\
&\leqslant C (1 + \norm{\curl \bw}_{\bL^\frac{3}{2}(\Omega)} + \norm{\bdd}_{\bW^{1,\frac{3}{2}}(\Omega)})^2 (\norm{\ff}_{\bL^p(\Omega)} + \norm{\bg}_{\bL^p(\Omega)} + \norm{P_0}_{W^{1-\frac{1}{p},p}(\Gamma)}\\
& + (1 + \norm{\curl \bw}_{\bL^\frac{3}{2}(\Omega)} + \norm{\bdd}_{\bW^{1,\frac{3}{2}}(\Omega)}) \norm{h}_{W^{1-\frac{1}{p},p}(\Gamma)})
\end{aligned}
\end{eqnarray}
\end{corollary}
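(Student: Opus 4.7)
The strategy is to lift the divergence constraint exactly as in Corollary \ref{cor: weak W1,p for p<2}, reducing to the solenoidal setting treated by Theorem \ref{thm: strong W2,p for 1<p<6/5}. I would first solve the auxiliary Dirichlet problem
\begin{equation*}
\Delta \theta = h \quad \text{in } \Omega, \qquad \theta = 0 \quad \text{on } \Gamma,
\end{equation*}
obtaining $\theta \in W^{3,p}(\Omega)$ with $\|\theta\|_{W^{3,p}(\Omega)} \leq C\|h\|_{W^{1,p}(\Omega)}$, and then set $\bz = \bu - \nabla\theta$. A direct substitution shows that $(\bz, \bb, P, \bc)$ must solve the linearized MHD system \eqref{linearized MHD-pressure} with $h$ replaced by $0$ and right-hand sides
\begin{equation*}
\widetilde{\ff} = \ff + \nabla h - (\curl \bw) \times \nabla \theta, \qquad \widetilde{\bg} = \bg + \curl(\nabla \theta \times \bdd),
\end{equation*}
the boundary data being preserved.

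The main technical step is to check that $\widetilde{\ff}, \widetilde{\bg} \in \bL^p(\Omega)$ and that $\widetilde{\bg}$ still satisfies the compatibility conditions \eqref{Condition div nulle}--\eqref{Condition compatibilite K_N Lp p>6/5}. Since $\nabla \theta \in \bW^{2,p}(\Omega) \hookrightarrow \bL^{p^{**}}(\Omega)$ with $\frac{1}{p^{**}} = \frac{1}{p} - \frac{2}{3}$, Hölder's inequality with $\curl \bw \in \bL^{3/2}(\Omega)$ gives $(\curl \bw) \times \nabla \theta \in \bL^p(\Omega)$; combined with $\nabla h \in \bL^p(\Omega)$, this yields $\widetilde{\ff} \in \bL^p(\Omega)$. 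For $\widetilde{\bg}$, I would expand
\begin{equation*}
\curl(\nabla \theta \times \bdd) = -\bdd\, h + (\bdd \cdot \nabla) \nabla \theta - (\nabla \theta \cdot \nabla) \bdd,
\end{equation*}
using $\vdiv \bdd = 0$ and $\Delta \theta = h$. The first two terms lie in $\bL^p(\Omega)$ by Hölder with $\bdd \in \bL^3(\Omega)$ and $h, \nabla^2 \theta \in W^{1,p}(\Omega) \hookrightarrow \bL^{p^*}(\Omega)$; the third term is handled using $\nabla \bdd \in \bL^{3/2}(\Omega)$ and $\nabla \theta \in \bL^{p^{**}}(\Omega)$. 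Hence $\widetilde{\bg} \in \bL^p(\Omega)$, and since $\widetilde{\bg} - \bg$ is a $\curl$, its divergence vanishes, while testing against $\bv \in \bK_N^{p'}(\Omega)$ gives $\int_\Omega \curl(\nabla \theta \times \bdd) \cdot \bv \, d\bx = \int_\Omega (\nabla \theta \times \bdd) \cdot \curl \bv \, d\bx = 0$ after integration by parts (the boundary term vanishes because $\bv \times \bn = 0$ on $\Gamma$).

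Applying Theorem \ref{thm: strong W2,p for 1<p<6/5} to the resulting solenoidal problem, we obtain a unique solution $(\bz, \bb, P, \bc) \in \bW^{2,p}(\Omega) \times \bW^{2,p}(\Omega) \times W^{1,p}(\Omega) \times \R^I$ with
\begin{equation*}
\|\bz\|_{\bW^{2,p}} + \|\bb\|_{\bW^{2,p}} + \|P\|_{W^{1,p}} \leq C(1 + \|\curl \bw\|_{\bL^{3/2}} + \|\bdd\|_{\bW^{1,3/2}})^{2}\bigl(\|\widetilde{\ff}\|_{\bL^p} + \|\widetilde{\bg}\|_{\bL^p} + \|P_0\|_{W^{1-1/p,p}(\Gamma)}\bigr).
\end{equation*}
Setting $\bu = \bz + \nabla \theta$ gives the required strong solution in $\bW^{2,p}(\Omega)$, uniqueness being inherited from the solenoidal problem. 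To close the estimate \eqref{estim cor sol fortes div non nulle p petit}, I would bound
\begin{equation*}
\|(\curl \bw) \times \nabla \theta\|_{\bL^p} \leq C\|\curl \bw\|_{\bL^{3/2}}\|h\|_{W^{1,p}}, \quad \|\curl(\nabla \theta \times \bdd)\|_{\bL^p} \leq C\|\bdd\|_{\bW^{1,3/2}}\|h\|_{W^{1,p}},
\end{equation*}
plug these in along with $\|\nabla h\|_{\bL^p} \leq \|h\|_{W^{1,p}}$ and $\|\nabla \theta\|_{\bW^{2,p}} \leq C\|h\|_{W^{1,p}}$, and add $\|\nabla \theta\|_{\bW^{2,p}}$ to the bound on $\|\bz\|_{\bW^{2,p}}$ to recover $\bu$. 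The factor $(1 + \|\curl \bw\|_{\bL^{3/2}} + \|\bdd\|_{\bW^{1,3/2}})$ accompanying $\|h\|_{W^{1,p}}$ in \eqref{estim cor sol fortes div non nulle p petit} arises precisely from this bookkeeping. The only delicate point is tracking Sobolev exponents when $p$ is close to $1$, but since $1 < p < 6/5$ forces $p^{**}$ to be finite and $p^{*} < 3$, all the Hölder products involved remain admissible.
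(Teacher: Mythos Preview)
Your proposal is correct and follows essentially the same approach as the paper: lift the divergence via the Dirichlet problem $\Delta\theta=h$, set $\bz=\bu-\nabla\theta$, verify that the modified data $\widetilde{\ff},\widetilde{\bg}\in\bL^p(\Omega)$ with the same compatibility conditions (using the same expansion $\curl(\nabla\theta\times\bdd)=-h\bdd+(\bdd\cdot\nabla)\nabla\theta-(\nabla\theta\cdot\nabla)\bdd$), apply Theorem~\ref{thm: strong W2,p for 1<p<6/5}, and track the H\"older bounds to obtain the stated estimate. The H\"older pairings and Sobolev exponents you record match those in the paper's proof.
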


 
\section{The nonlinear MHD system }
In this section, we consider the nonlinear problem and we study the existence of generalized and strong solutions for \eqref{eqn:MHD}.

\subsection{Existence and uniqueness: $L^{2}$-theory}\label{subsection L2 theory nonlinear}
In this subsection, we establish the existence and uniqueness for the weak solution in the Hilbert case for the problem \eqref{eqn:MHD}. 
The following result is one of the main results. First, we recall that $\langle\cdot,\cdot\rangle_{\O_{_{r,p}}}$ denotes the duality product between $[\bH_0^{r,p}(\curl, \Omega)]'$ and $\bH_0^{r,p}(\curl, \Omega)$ and $\langle\cdot,\cdot\rangle_{\Gamma}$ denotes the duality product between $H^{-1/2}(\Gamma)$ and $H^{1/2}(\Gamma)$.
\begin{theo}\emph{(Weak solutions of \eqref{eqn:MHD} system in $\bH^{1}(\O)$)}\label{thm:existence_weak_sol}. Let $\O$ be of classe $\mathcal{C}^{1,1}$ and let 
$$ \ff, \,\,\bg \in [\bH_0^{6,2}(\curl, \Omega)]',\quad h=0\quad \mathrm{ and}\quad P_0 \in H^{-\frac{1}{2}}(\Gamma)$$
satisfying the compatibility conditions 
\begin{eqnarray} 
\forall \, \bv \in \bK_N^2(\Omega),\quad\langle \bg, \bv \rangle_{\O_{_{6,2}}} = 0,  \label{Condition compatibilite K_N hilbert for non linear} \\
\vdiv \bg = 0 \quad \mathrm{in}  \,\, \Omega.\label{Condition div nulle for non linear}
 \end{eqnarray}
Then the \eqref{eqn:MHD} problem has at least one weak solution $
(\bu, \bb, P, \boldsymbol{\alpha}) \in \bH^1(\Omega) \times \bH^1(\Omega) \times L^2(\Omega) \times \R^{I}$ such that 
\begin{equation}\label{estim weak H1 solution MHD}
 \norm{\bu}_{\bH^1(\O)}+\norm{\bb}_{\bH^1(\O)}+\norm{P}_{\bL^2(\O)}\leq M,
\end{equation}
where $M=C(\norm{\ff}_{[\bH_0^{6,2}(\curl, \Omega)]'} + \norm{\bg}_{[\bH_0^{6,2}(\curl, \Omega)]'} + \norm{P_0}_{H^{-1/2}(\Gamma)}) $ with 
\begin{equation}\label{def alpha_i}
 \alpha_i=\langle \ff, \nabla q_i^N, \rangle_{\O_{_{6,2}}} - \langle P_0, \nabla q_i^N \cdot \bn\rangle_{\Gamma} +  \int_\Omega (\curl \bb) \times \bb \cdot \nabla q_i^N \, dx- \int_\Omega (\curl \bu) \times \bu \cdot \nabla q_i^N \, dx.
 \end{equation}
In addition, suppose that $\ff$, $\bg$ and $P_0$ are small in the sense that 
\begin{eqnarray}\label{condition uniqueness sol H^1 nonlinear}
C_1C_2^2 M \leq \frac{2}{3 C_{\mathcal{P}} ^2},
\end{eqnarray}
where ${C_{\mathcal{P}}}$ is the constant in \eqref{Poincaré inequality} and $C_1$, $C_2$ are given in \eqref{def C_1 and C_2}. Then the weak solution $(\bu,\bb,P)$ of \eqref{eqn:MHD} is unique.
\end{theo}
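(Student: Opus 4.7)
I would establish existence by a Leray--Schauder fixed point argument applied to the operator
\[
T : \bV_N(\O)\times \bV_N(\O) \longrightarrow \bV_N(\O)\times \bV_N(\O), \qquad T(\bw,\bdd)=(\bu,\bb),
\]
where $(\bu,\bb,P,\boldsymbol{\alpha})$ is the unique weak solution of the linearized system \eqref{linearized MHD-pressure} associated with $(\bw,\bdd)$, as provided by Theorem~\ref{thm weak solution H1 linear MHD}. The hypotheses of that theorem hold: $(\bw,\bdd)\in\bV_N(\O)^2\hookrightarrow\bH^1(\O)^2$ gives $\curl\bw\in\bL^2(\O)\hookrightarrow\bL^{3/2}(\O)$ and $\bdd\in\bL^3(\O)$ with $\vdiv\,\bdd=0$. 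Continuity of $T$ follows from the linear estimate \eqref{estim u,b H^1}; compactness comes from the compact Sobolev embedding $\bH^1(\O)\hookrightarrow\bL^q(\O)$ for $q<6$, which makes the dependence of the right-hand sides of \eqref{linearized MHD-pressure} (viewed in $[\bH_0^{6,2}(\curl,\O)]'$) on $(\bw,\bdd)$ a compact operation.

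\textbf{A priori bound and pressure.} The crucial step is a uniform a priori estimate for any $(\bu,\bb)\in\bV_N(\O)^2$ satisfying $(\bu,\bb)=\lambda T(\bu,\bb)$ with $\lambda\in[0,1]$. I would take $(\bv,\boldsymbol{\Psi})=(\bu,\bb)$ in the variational formulation \eqref{Var Form} (with $\bw=\bu$, $\bdd=\bb$). The pointwise identity $((\curl\bu)\times\bu)\cdot\bu=0$ and the exact algebraic cancellation of the two magnetic trilinear terms
\[
-\int_\O(\curl\bb)\times\bb\cdot\bu\,dx \;+\; \int_\O(\curl\bb)\times\bb\cdot\bu\,dx \;=\;0
\]
wipe out every cubic contribution, while $\sum_i \alpha_i\langle\bu\cdot\bn,1\rangle_{\Gamma_i}=0$ eliminates the constants $\alpha_i$. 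Only
\[
\|\curl\bu\|_{\bL^2(\O)}^2 + \|\curl\bb\|_{\bL^2(\O)}^2 = \lambda\bigl(\langle\ff,\bu\rangle_{\O_{_{6,2}}} + \langle\bg,\bb\rangle_{\O_{_{6,2}}} - \langle P_0,\bu\cdot\bn\rangle_\Gamma\bigr)
\]
remains. The Poincaré inequality \eqref{Poincaré inequality} then yields $\|\bu\|_{\bH^1}+\|\bb\|_{\bH^1}\le M$ independently of $\lambda$, and Leray--Schauder delivers a fixed point. The pressure $P\in L^2(\O)$ together with the formula \eqref{def alpha_i} for $\alpha_i$ is recovered via the equivalence in Proposition~\ref{Proposition Equivalence sol hilb sol var}, and its $L^2$-bound follows from De~Rham's theorem applied to $\nabla P\in\bH^{-1}(\O)$, exactly as in the proof of Theorem~\ref{thm weak solution H1 linear MHD}.

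\textbf{Uniqueness.} For two solutions $(\bu_j,\bb_j,P_j)$, $j=1,2$, with the same data I would set $(\boldsymbol{u},\boldsymbol{b},\pi)=(\bu_1-\bu_2,\bb_1-\bb_2,P_1-P_2)$, subtract the two variational formulations, and test by $(\boldsymbol{u},\boldsymbol{b})$. Expanding the differences $(\curl\bu_1)\times\bu_1-(\curl\bu_2)\times\bu_2=(\curl\boldsymbol{u})\times\bu_2+(\curl\bu_1)\times\boldsymbol{u}$ (and similarly for the magnetic and induction terms), the same cancellations as above eliminate the terms where $\boldsymbol{u}$ or $\boldsymbol{b}$ appears in two factors; the surviving cross terms are bounded by Hölder's inequality and the embedding $\bH^1(\O)\hookrightarrow\bL^6(\O)$ by a quantity of the form $C_1 C_2^2(\|\bu_2\|_{\bH^1}+\|\bb_2\|_{\bH^1})(\|\boldsymbol{u}\|_{\bH^1}^2+\|\boldsymbol{b}\|_{\bH^1}^2)$. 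Plugging in \eqref{estim weak H1 solution MHD} for the second solution and invoking the smallness assumption \eqref{condition uniqueness sol H^1 nonlinear}, the right-hand side is absorbed by the Poincaré-controlled left-hand side $\tfrac{2}{3C_\mathcal{P}^2}(\|\boldsymbol{u}\|_{\bH^1}^2+\|\boldsymbol{b}\|_{\bH^1}^2)$, forcing $\boldsymbol{u}=\boldsymbol{b}=\mathbf{0}$; then $\nabla\pi=\mathbf{0}$ with $\pi=0$ on $\Gamma_0$ gives $P_1=P_2$.

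\textbf{Main obstacle.} The delicate issue is not the fixed point machinery itself, which is standard once the right spaces are chosen, but rather the functional framework: the nonlinear terms $(\curl\bu)\times\bu$ and $(\curl\bb)\times\bb$ lie naturally in $\bL^{6/5}(\O)$, which embeds into $[\bH_0^{6,2}(\curl,\O)]'$ but not usefully into $\bH^{-1}(\O)$ for this mixed boundary-value problem. It is precisely the space $[\bH_0^{6,2}(\curl,\O)]'$ together with its characterization in Proposition~\ref{charac dual H^r,p} that simultaneously makes the linear theory of Theorem~\ref{thm weak solution H1 linear MHD} applicable to the nonlinearities and makes $T$ compact; this choice silently drives the entire Hilbertian argument.
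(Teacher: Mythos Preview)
Your overall strategy is sound and the cancellation-based a~priori estimate and uniqueness argument are essentially the ones in the paper, but two points deserve comment.

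First, the paper uses the \emph{Schauder} fixed point theorem rather than Leray--Schauder. Because the linear estimate \eqref{estim u,b H^1} is independent of $(\bw,\bdd)$, the operator $G=T$ automatically maps the closed ball $\bB_M$ into itself; the paper then proves that $G$ is compact on $\bB_M$ by extracting a weakly convergent subsequence $(\bw_k,\bdd_k)\rightharpoonup(\bw,\bdd)$ in $\bH^1\times\bH^1$, using the compact embedding $\bH^1(\O)\hookrightarrow\bL^4(\O)$ to pass to the limit in the trilinear terms and conclude strong convergence of $G(\bw_k,\bdd_k)$ in $\bH^1\times\bH^1$. Your Leray--Schauder route is legitimate, but it still requires this same compactness argument for $T$, which you only sketch; also, your claim that ``continuity of $T$ follows from the linear estimate \eqref{estim u,b H^1}'' is not quite right, since that estimate gives boundedness, not continuity --- continuity is precisely what the $\bL^4$ argument establishes.

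Second, for uniqueness the paper uses Hölder with the splitting $\bL^2\times\bL^4\times\bL^4$ (not $\bL^6$): the constants $C_1,C_2$ in \eqref{def C_1 and C_2} are defined by $\|\curl\bv\|_{\bL^2}\le C_1\|\bv\|_{\bH^1}$ and $\|\bv\|_{\bL^4}\le C_2\|\bv\|_{\bH^1}$, and the surviving cross terms (e.g.\ $((\curl\bu)\times\bu_1,\bu)$, $((\curl\bb)\times\bb,\bu_2)$, $((\curl\bb_2)\times\bb,\bu)$) are each bounded by $C_1C_2^2 M\,(\|\bu\|_{\bH^1}^2+\|\bb\|_{\bH^1}^2)/2$ or similar, yielding exactly the factor $\tfrac{3}{2}C_1C_2^2 M$ that makes \eqref{condition uniqueness sol H^1 nonlinear} the sharp threshold. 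Your $\bL^6$ route would work but produces a different constant, so it does not recover the stated smallness condition verbatim.
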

We first recall that the space $\bV_N(\O)$ denotes 
$$\bV_N(\Omega) := \{ \bv \in \bH^1(\Omega); \, \vdiv \bv = 0 \, \, \text{in} \, \Omega, \, \, \bv \times \bn = 0 \, \, \text{on} \, \Gamma, \, \langle \bv \cdot \bn, 1 \rangle_{\Gamma_i} = 0, \, \, \forall 1 \leqslant i \leqslant I \}$$
 and we give the following definition. 

\begin{defi}\label{definition weak solution nonlinear}
 Given  $\ff, \bg \in [\bH_0^{6,2}(\curl, \Omega)]'$ and $P_0 \in H^{-\frac{1}{2}}(\Gamma)$ with the compatibility conditions \eqref{Condition compatibilite K_N hilbert for non linear}-\eqref{Condition div nulle for non linear}, $(\bu,\bb, \alpha_i)\in \bV_N(\Omega) \times \bV_N(\Omega)\times \R$ is called a weak solution of \eqref{eqn:MHD} problem if it satisfies: for all  $(\bv, \boldsymbol{\Psi})\in \bV_N(\Omega) \times \bV_N(\Omega)$,
 \begin{eqnarray}
\label{FormVar1a}
\begin{aligned}
& \int_{\O}\curl \bu\cdot\curl \bv\,d\bx+ \int_{\O}(\curl \bu \times \bu)\cdot \bv\,d\bx -  \int_{\O}(\curl \bb \times \bb)\cdot \bv\,d\bx +   \int_{\O}\curl \bb\cdot \curl \boldsymbol{\Psi}\,d\bx \\
&+  \int_{\O}(\curl \boldsymbol{\Psi} \times \bb)\cdot \bu\,d\bx 
= \langle \ff, \bv \rangle_{\O_{_{6,2}}} + \langle \bg, \boldsymbol{\Psi} \rangle_{\O_{_{6,2}}} - \langle P_0, \bv \cdot \bn \rangle_{\Gamma_0} - \sum_{i=1}^I \langle P_0 + \alpha_i, \bv \cdot \bn \rangle_{\Gamma_i}
\end{aligned}
\end{eqnarray}
and
\begin{eqnarray}
\label{FormVar1b}
\begin{aligned}
\alpha_i = \langle \ff, \nabla q_i^N \rangle_{\O_{_{6,2}}}\! -\! \langle P_0, \nabla q_i^N \cdot \bn \rangle_{\Gamma} \!+\!  \int_{\O}(\curl \bb \times \bb)\cdot\nabla q_i^N\,d\bx \!-\! \int_{\O}(\curl \bu \times \bu)\cdot \nabla q_i^N\,d\bx.
\end{aligned}
\end{eqnarray}
\end{defi}
To interpret \eqref{FormVar1a}-\eqref{FormVar1b}, it is convenient to remove the constraint of fluxs of the test functions through $\Gamma_i$. In the following Lemma, we prove that \eqref{FormVar1a} can be extended to any test function $(\bv, \boldsymbol{\Psi})\in\bX_N^{2}(\Omega) \times \bX_N^{2}(\Omega)$. 

\begin{lemma} \label{Equivalence de formulations var}
Let $\ff, \bg \in [\bH_0^{6,2}(\curl, \Omega)]'$ and $P_0 \in H^{-\frac{1}{2}}(\Gamma)$ with the compatibility conditions \eqref{Condition compatibilite K_N hilbert for non linear}-\eqref{Condition div nulle for non linear}. Then, the following two statements are equivalent:

(i) $(\bu,\bb,\alpha_i)\in\bV_N(\Omega) \times \bV_N(\Omega)\times \R$ satisfies \eqref{FormVar1a}-\eqref{FormVar1b} for any $(\bv, \boldsymbol{\Psi})\in\bV_N(\Omega) \times \bV_N(\Omega)$.

(ii) $(\bu,\bb,\alpha_i)\!\in\bV_N(\Omega) \times \bV_N(\Omega)\times \R$ satisfies \eqref{FormVar1a}-\eqref{FormVar1b} for any $(\bv, \boldsymbol{\Psi})\in \bX_N^{2}(\Omega) \times \bX_N^{2}(\Omega)$ .
\end{lemma}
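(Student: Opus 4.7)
The implication (ii)$\Rightarrow$(i) is immediate, since $\bV_N(\O)\subset \bX_N^{2}(\O)$, so restricting the test pair in (ii) to $\bV_N(\O)\times \bV_N(\O)$ yields precisely (i). The substance of the lemma is the converse (i)$\Rightarrow$(ii), and my strategy mirrors the analogous linear step used in the proof of Proposition~\ref{Proposition Equivalence sol hilb sol var}.

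Given $(\bv,\boldsymbol{\Psi})\in \bX_N^{2}(\O)\times \bX_N^{2}(\O)$ (understood, as in Proposition~\ref{Proposition Equivalence sol hilb sol var}, as divergence-free elements of $\bX_N^{2}$), I would correct along the finite-dimensional kernel $\bK_N^2(\O)=\mathrm{span}\{\nabla q_i^N\}_{1\leq i \leq I}$ by setting
\begin{equation*}
\bv_0 := \bv - \sum_{i=1}^{I}\mu_i\,\nabla q_i^N,\qquad \boldsymbol{\Psi}_0 := \boldsymbol{\Psi} - \sum_{i=1}^{I}\nu_i\,\nabla q_i^N,
\end{equation*}
with $\mu_i := \langle \bv\cdot\bn, 1\rangle_{\Gamma_i}$ and $\nu_i := \langle \boldsymbol{\Psi}\cdot\bn,1\rangle_{\Gamma_i}$. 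The Neumann normalization $\langle \partial_n q_i^N,1\rangle_{\Gamma_k}=\delta_{ik}$ from \eqref{Definition des q_i^N} forces $\langle \bv_0\cdot\bn,1\rangle_{\Gamma_i}=\langle \boldsymbol{\Psi}_0\cdot\bn,1\rangle_{\Gamma_i}=0$; coupled with $\vdiv\bv_0=0$ and $\bv_0\times\bn=\textbf{0}$ on $\Gamma$ (and likewise for $\boldsymbol{\Psi}_0$), this places $(\bv_0,\boldsymbol{\Psi}_0)$ in $\bV_N(\O)\times \bV_N(\O)$, so that the hypothesis (i) applies to this test pair.

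I would then plug $(\bv_0,\boldsymbol{\Psi}_0)$ into \eqref{FormVar1a} and exploit the bilinearity together with two cancellations: $\curl(\nabla q_i^N)=\textbf{0}$, which gives $\curl\bv_0=\curl\bv$ and $\curl\boldsymbol{\Psi}_0=\curl\boldsymbol{\Psi}$; and $\langle \bv_0\cdot\bn,1\rangle_{\Gamma_i}=0$, so that the $\alpha_i$-part of $\langle P_0+\alpha_i,\bv_0\cdot\bn\rangle_{\Gamma_i}$ drops out. Rewriting everything in terms of $\bv$ and $\boldsymbol{\Psi}$ produces two remainder sums. The one carried by $\boldsymbol{\Psi}$, namely $\sum_i\nu_i\langle \bg,\nabla q_i^N\rangle_{\O_{_{6,2}}}$, vanishes by the compatibility condition \eqref{Condition compatibilite K_N hilbert for non linear} since $\nabla q_i^N\in\bK_N^2(\O)$. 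The one carried by $\bv$ equals
\begin{equation*}
-\sum_{i=1}^{I}\mu_i\Big(\langle \ff,\nabla q_i^N\rangle_{\O_{_{6,2}}} - \langle P_0,\nabla q_i^N\cdot\bn\rangle_{\Gamma} + \int_{\O}(\curl\bb\times\bb)\cdot\nabla q_i^N\,dx - \int_{\O}(\curl\bu\times\bu)\cdot\nabla q_i^N\,dx\Big),
\end{equation*}
which by the very definition \eqref{FormVar1b} of $\alpha_i$ simplifies to $-\sum_i\mu_i\alpha_i=-\sum_i\langle \alpha_i,\bv\cdot\bn\rangle_{\Gamma_i}$, exactly the missing boundary contribution in the RHS of \eqref{FormVar1a} for $(\bv,\boldsymbol{\Psi})$. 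This yields (ii).

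The main obstacle is the careful bookkeeping of the boundary duality pairings, in particular the separation between $\Gamma_0$ and the $\Gamma_i$ ($i\geq 1$) and between the volume duality $\langle \cdot,\cdot\rangle_{\O_{_{6,2}}}$ and the surface duality $\langle \cdot,\cdot\rangle_{\Gamma}$. The crucial algebraic point is the recognition that the formula \eqref{FormVar1b} for $\alpha_i$ is precisely tailored so as to absorb the defect generated by the kernel correction along $\nabla q_i^N$, while the compatibility condition on $\bg$ takes care of the analogous defect on the $\boldsymbol{\Psi}$-side. No further functional-analytic input beyond what already appears in Proposition~\ref{Proposition Equivalence sol hilb sol var} is needed.
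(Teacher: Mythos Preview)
Your proposal is correct and follows essentially the same approach as the paper: both argue the trivial direction (ii)$\Rightarrow$(i) by inclusion, and for (i)$\Rightarrow$(ii) both subtract the kernel corrections $\sum_i\langle\bv\cdot\bn,1\rangle_{\Gamma_i}\nabla q_i^N$ (and similarly for $\boldsymbol{\Psi}$) to land in $\bV_N(\O)\times\bV_N(\O)$, then use $\curl(\nabla q_i^N)=\textbf{0}$, the compatibility condition \eqref{Condition compatibilite K_N hilbert for non linear} on $\bg$ to kill the $\boldsymbol{\Psi}$-remainder, and the defining relation \eqref{FormVar1b} for $\alpha_i$ to absorb the $\bv$-remainder into the boundary term. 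Your presentation is in fact slightly cleaner in that you identify the $\bv$-remainder directly as $-\sum_i\mu_i\alpha_i$, whereas the paper unpacks the $\Gamma_0/\Gamma_j$ decomposition of $\langle P_0,\nabla q_i^N\cdot\bn\rangle_\Gamma$ explicitly before invoking \eqref{FormVar1b}.
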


\begin{proof} 

Since $\bV_N(\Omega) \subset \textbf{\textit{X}}_N^{2}(\Omega)$, then (ii) implies (i). Conversely, let $(\bu,\bb,\alpha_i)\in\bV_N(\Omega) \times \bV_N(\Omega)\times \R$ satisfying \eqref{FormVar1a}-\eqref{FormVar1b} for any $(\bv, \boldsymbol{\Psi})\in\bV_N(\Omega) \times \bV_N(\Omega)$ and we want to show it implies (ii). The proof is similar to than given for the linearized problem (see Proposition \ref{Proposition Equivalence sol hilb sol var}). \\
Let $(\widetilde{\boldsymbol{\Psi}},\widetilde{\bv})\in\textbf{\textit{X}}_N^{2}(\Omega)\times\textbf{\textit{X}}_N^{2}(\Omega)$. We set 

$$\boldsymbol{\Psi} = \widetilde{\boldsymbol{\Psi}} - \sum_{i=1}^I \langle \widetilde{\boldsymbol{\Psi}} \cdot \bn, 1 \rangle_{\Gamma_i} \nabla q_i^N\quad \mathrm{and}\quad \bv = \widetilde{\bv} - \sum_{i=1}^I \langle \widetilde{\bv} \cdot \bn, 1 \rangle_{\Gamma_i} \nabla q_i^N.$$ 
\vspace{2mm}
Then, $(\boldsymbol{\Psi},\bv)$ belongs to $\bV_N(\Omega) \times \bV_N(\Omega)$. Replacing in (\ref{FormVar1a})-(\ref{FormVar1b}), we obtain 
\begin{eqnarray}
\label{EqLem1}
\begin{aligned}
&  \int_{\O}\curl \bb\cdot \curl \widetilde{\boldsymbol{\Psi}}\,d\bx +  \int_{\O}(\curl \widetilde{\boldsymbol{\Psi}} \times \bb)\cdot \bu\,d\bx - \langle \bg,\widetilde{\boldsymbol{\Psi}} \rangle_{\O_{_{6,2}}} 
+ \int_{\O}\curl \bu\cdot\curl \widetilde{\bv}\,d\bx \\
&+ \int_{\O}(\curl \bu \times \bu)\cdot\widetilde{\bv}\,d\bx -  \int_{\O}(\curl \bb \times \bb)\cdot \widetilde{\bv}\,d\bx  -\langle \ff, \widetilde{\bv} \rangle_{\O_{_{6,2}}}  + \langle P_0, \widetilde{\bv} \cdot \bn \rangle_{\Gamma_0} + \sum_{i=1}^I \langle P_0 + \alpha_i, \widetilde{\bv} \cdot \bn \rangle_{\Gamma_i} \\
&= \sum_{i=1}^I \langle \widetilde{\boldsymbol{\Psi}} \cdot \bn, 1 \rangle_{\Gamma_i} \Big(   \int_{\O}\curl \bb\cdot\underbrace{\curl(\nabla q_i^N)}_{=0}\,d\bx +  \int_{\O} \underbrace{(\curl \nabla q_i^N)}_{=0} \times \bb\cdot \bu\,d\bx - \underbrace{\langle \bg, \nabla q_i^N \rangle_{\O_{_{6,2}}}}_{=0 \,\mathrm{due\,\, to}\,\,\eqref{Condition compatibilite K_N hilbert for non linear}}\Big) \\
&+\sum_{i=1}^I \langle \widetilde{\bv} \cdot \bn, 1 \rangle_{\Gamma_i} \Big( \int_ {\O} \curl \bu\cdot \underbrace{(\curl \nabla q_i^N)}_{=0} \, d\bx -  \int_{\O}(\curl \bb \times \bb)\cdot \nabla q_i^N\,d\bx 
+ \int_{\O}(\curl \bu \times \bu)\cdot \nabla q_i^N\,d\bx \\
&- \langle \ff, \nabla q_i^N \rangle_{\O_{_{6,2}}} +\langle P_0, \nabla q_i^N \cdot \bn \rangle_{\Gamma_0}
+ \sum_{j=1}^I  \langle P_0 + \alpha_j, \nabla q_i^N \cdot \bn \rangle_{\Gamma_j} \Big)
\end{aligned}
\end{eqnarray}
So, we have 
\begin{eqnarray*}
\begin{aligned}
&\sum_{i=1}^I \langle \tilde{\bv} \cdot \bn, 1 \rangle_{\Gamma_i} \Big ( -  \int_{\O}(\curl \bb \times \bb)\cdot \nabla q_i^N\,d\bx + \int_{\O}(\curl \bu \times \bu)\cdot\nabla q_i^N\,d\bx - \langle \ff, \nabla q_i^N \rangle_{\O_{_{6,2}}} \\
& +\langle P_0, \nabla q_i^N \cdot \bn \rangle_{\Gamma_0} + \sum_{j=1}^I \langle P_0 + \alpha_j, \nabla q_i^N \cdot \bn \rangle_{\Gamma_j} \Big)
\end{aligned}
\end{eqnarray*}
Since $q_i^N$ satisfies \eqref{Definition des q_i^N}, we have in particular that  $\sum_{j=1}^I \langle \alpha_j, \nabla q_i^n \cdot \bn \rangle_{\Gamma_j} = \alpha_i \delta_{ij}$. Then, we obtain 
\begin{eqnarray}\label{calcul on alpha_i}
\sum_{j=1}^I \langle P_0 + \alpha_j, \nabla q_i^N \cdot \bn \rangle_{\Gamma_j} = \sum_{j=1}^I \langle P_0, \nabla q_i^N \cdot \bn \rangle_{\Gamma_j} + \alpha_i
\end{eqnarray} 

Replacing \eqref{FormVar1b} in \eqref{calcul on alpha_i}, we obtain from \eqref{EqLem1} that  
\begin{eqnarray}
\label{FormVar1a with phi and v tild}
\begin{aligned}
& \int_{\O}\curl \bu\cdot\curl \widetilde{\bv}\,d\bx+ \int_{\O}(\curl \bu \times \bu)\cdot \widetilde{\bv}\,d\bx -  \int_{\O}(\curl \bb \times \bb)\cdot \widetilde{\bv}\,d\bx +   \int_{\O}\curl \bb\cdot \curl \widetilde{\boldsymbol{\Psi}}\,d\bx \\
&+  \int_{\O}(\curl \widetilde{\boldsymbol{\Psi}} \times \bb)\cdot \bu\,d\bx 
= \langle \ff, \widetilde{\bv} \rangle_{\O_{_{6,2}}} + \langle \bg, \widetilde{\boldsymbol{\Psi}} \rangle_{\O_{_{6,2}}} - \langle P_0, \bv \cdot \bn \rangle_{\Gamma_0} - \sum_{i=1}^I \langle P_0 + \alpha_i, \widetilde{\bv} \cdot \bn \rangle_{\Gamma_i}
\end{aligned}
\end{eqnarray}
which is \eqref{FormVar1a} with test functions $(\widetilde{\boldsymbol{\Psi}},\widetilde{\bv})\in\textbf{\textit{X}}_N^{2}(\Omega)\times\textbf{\textit{X}}_N^{2}(\Omega)$. This completes the proof. 

\end{proof}

\vspace{4mm}
%
%
%
%
%
We can now prove the following result

\begin{theorem}\label{Thm equi def and MHD nonlinear}
 Let $\ff, \bg \in [\bH_0^{6,2}(\curl, \Omega)]'$ and $P_0 \in H^{-\frac{1}{2}}(\Gamma)$ with the compatibility conditions \eqref{Condition compatibilite K_N hilbert for non linear}-\eqref{Condition div nulle for non linear}. Then, the following two statements are equivalent:

 \noindent \textbf{(i)} $(\bu,\bb,P,\alpha_i)\in\bH^{1}(\Omega) \times \bH^{1}(\Omega)\times L^{2}(\O)\times \R$ is a solution of \eqref{eqn:MHD},
 
\noindent \textbf{(ii)} $(\bu,\bb,\alpha_i)\in\bV_N(\Omega) \times \bV_N(\Omega)\times \R$ is a weak solution of \eqref{eqn:MHD}, in the sense of Definition \eqref{definition weak solution nonlinear}.

\end{theorem}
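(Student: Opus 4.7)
The plan is to adapt the proof of Proposition \ref{Proposition Equivalence sol hilb sol var} (the linearized counterpart) to the nonlinear setting. The nonlinear terms $(\curl\bu)\times\bu$ and $(\curl\bb)\times\bb$ belong to $\bL^{6/5}(\O)\hookrightarrow \bH^{-1}(\O)\cap[\bH_0^{6,2}(\curl,\O)]'$ because $\bu,\bb\in\bH^1(\O)\hookrightarrow\bL^6(\O)$ and $\curl\bu,\curl\bb\in\bL^2(\O)$. Hence all duality products and integrals appearing in \eqref{FormVar1a}--\eqref{FormVar1b} are well defined, and the role they play is exactly that of the linear couplings $(\curl\bw)\times\bu$ and $(\curl\bb)\times\bdd$ in the proof of Proposition \ref{Proposition Equivalence sol hilb sol var} with $\bw=\bu$ and $\bdd=\bb$.

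\textbf{(i) $\Rightarrow$ (ii).} Assume $(\bu,\bb,P,\alpha_i)\in\bH^1(\O)\times\bH^1(\O)\times L^2(\O)\times\R^I$ satisfies \eqref{eqn:MHD}. First I would verify that $(\bu,\bb)\in\bV_N(\O)\times\bV_N(\O)$: divergence-freeness, the tangential boundary conditions, and the flux conditions are all part of \eqref{eqn:MHD}. Next I would take $(\bv,\boldsymbol{\Psi})\in\bV_N(\O)\times\bV_N(\O)$ as a test function, multiply the first equation by $\bv$, the second by $\boldsymbol{\Psi}$, and apply the Green formula \eqref{Green Formula for VF} (with $-\Delta\bu+\nabla P$ and $-\Delta\bb$ being in $[\bH_0^{6,2}(\curl,\O)]'$ thanks to the above mapping properties). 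The pressure boundary terms split as $P_0$ on $\Gamma_0$ and $P_0+\alpha_i$ on $\Gamma_i$; the $\alpha_i\langle\bv\cdot\bn,1\rangle_{\Gamma_i}$ terms vanish because $\bv\in\bV_N(\O)$. For the second equation, the $\curl\curl\bb-\curl(\bu\times\bb)$ pairing against $\boldsymbol{\Psi}$ yields $\int_\O \curl\bb\cdot\curl\boldsymbol{\Psi}\,dx + \int_\O(\curl\boldsymbol{\Psi}\times\bb)\cdot\bu\,dx$ after integration by parts, which is exactly the form in \eqref{FormVar1a}. To recover formula \eqref{FormVar1b}, I would use the decomposition $\bv = \bv_0 + \sum_{i=1}^I\langle\bv\cdot\bn,1\rangle_{\Gamma_i}\nabla q_i^N$ (with $\bv_0\in\bV_N(\O)$), test against $\bv$, and then specialize to $\bv=\nabla q_j^N$ using $\langle\nabla q_i^N\cdot\bn,1\rangle_{\Gamma_k}=\delta_{ik}$ to isolate $\alpha_j$.

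\textbf{(ii) $\Rightarrow$ (i).} Assume $(\bu,\bb,\alpha_i)$ is a weak solution in the sense of Definition \ref{definition weak solution nonlinear}. By Lemma \ref{Equivalence de formulations var} I may extend \eqref{FormVar1a} to all test functions $(\bv,\boldsymbol{\Psi})\in\bX_N^2(\O)\times\bX_N^2(\O)$. Choosing first $\boldsymbol{\Psi}=\bzero$ and $\bv\in\boldsymbol{\mathcal{D}}_\sigma(\O)$, and invoking De Rham's theorem, I obtain $P\in L^2(\O)$ (with trace in $H^{-1/2}(\Gamma)$, as in \cite{ARB_ARMA2011}) such that $-\Delta\bu+(\curl\bu)\times\bu-(\curl\bb)\times\bb+\nabla P=\ff$ in $\O$. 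Similarly, taking $\bv=\bzero$ and $\boldsymbol{\Psi}\in\boldsymbol{\mathcal{D}}_\sigma(\O)$ and applying the De Rham-type result \cite[Lemma 2.2]{Pan}, there exists $\chi\in L^2(\O)$ with $\chi=0$ on $\Gamma$ satisfying $\curl\curl\bb-\curl(\bu\times\bb)+\nabla\chi=\bg$ in $\O$. Taking the divergence and using $\vdiv\bg=0$ shows that $\chi$ solves $\Delta\chi=0$ in $\O$ with $\chi=0$ on $\Gamma$, hence $\chi\equiv0$. The divergence and tangential trace conditions on $\bu,\bb$ are built into $\bV_N(\O)$.

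\textbf{Boundary conditions on $P$.} This is the step I expect to be the most delicate, but it parallels exactly the argument in Proposition \ref{Proposition Equivalence sol hilb sol var}. Multiplying the recovered equation for $\bu$ by an arbitrary $\bv\in\bX_N^2(\O)$, using the decomposition $\bv=\bv_0+\sum_i\langle\bv\cdot\bn,1\rangle_{\Gamma_i}\nabla q_i^N$, and subtracting the identity obtained from \eqref{FormVar1a} with test function $(\bv,\bzero)$, I get, after cancellations (using $\langle\bv_0\cdot\bn,1\rangle_{\Gamma_i}=0$) and the expression \eqref{FormVar1b} for $\alpha_i$, the relation
\begin{equation*}
\langle P,\bv\cdot\bn\rangle_\Gamma = \langle P_0,\bv\cdot\bn\rangle_{\Gamma_0} + \sum_{i=1}^I\langle P_0+\alpha_i,\bv\cdot\bn\rangle_{\Gamma_i}.
\end{equation*}
The conclusion $P=P_0$ on $\Gamma_0$ and $P=P_0+\alpha_i$ on $\Gamma_i$ then follows by the very same argument as in \cite[Proposition 3.7]{AS_DCDS}, which I would invoke verbatim to avoid repetition. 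This closes the equivalence.
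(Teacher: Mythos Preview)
Your proposal is correct and follows essentially the same approach as the paper's proof, which is itself very brief: the paper omits \textbf{(i)}$\Rightarrow$\textbf{(ii)} as being ``very similar to that of Proposition~\ref{Proposition Equivalence sol hilb sol var}'', and for \textbf{(ii)}$\Rightarrow$\textbf{(i)} it invokes Lemma~\ref{Equivalence de formulations var}, De~Rham's theorem, \cite[Lemma~2.2]{Pan} with the harmonic argument for $\chi$, and finally refers to \cite[Proposition~3.7]{AS_DCDS} for the pressure boundary conditions --- exactly the steps you outline. If anything, your write-up is more detailed than the paper's own proof.
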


\begin{proof}
The proof that \textbf{(i)} implies \textbf{(ii)} is very similar to that of Proposition \ref{Proposition Equivalence sol hilb sol var}, hence we omit it. Let $(\bu,\bb,\alpha_i)\in\bV_N(\Omega) \times \bV_N(\Omega)\times \R$ satisfying \eqref{FormVar1a}-\eqref{FormVar1b} for any $(\bv,\boldsymbol{\Psi}) \in \bV_N(\Omega) \times \bV_N(\Omega)$. Due to Lemma \ref{Equivalence de formulations var}, we have that $(\bu,\bb,\alpha_i)\in\bV_N(\Omega) \times \bV_N(\Omega)\times \R$ satisfies also \eqref{FormVar1a}-\eqref{FormVar1b} for any $(\bv,\boldsymbol{\Psi}) \in \bX_N^{2}(\Omega) \times \bX_N^{2}(\Omega)$. 
Choosing $\bv \in \boldsymbol{\mathcal{D}}_\sigma(\Omega)$  and $\boldsymbol{\Psi} = \textbf{0}$ as test functions in \eqref{FormVar1a}, we have 
\begin{eqnarray}
\nonumber
\begin{aligned}
\langle -  \Delta \bu + (\curl \bu) \times \bu -  (\curl \bb) \times \bb - \ff, \bv \rangle_{\boldsymbol{\mathcal{D}}'(\Omega) \times \boldsymbol{\mathcal{D}}(\Omega)} = 0.
\end{aligned}
\end{eqnarray}
So, by De Rham's theorem, there exists an unique $ P \in \bL^{2}(\O)$ such that 
\begin{eqnarray}
\label{ExistPress}
\begin{aligned}
-  \Delta \bu +\nabla P+ (\curl \bu) \times \bu -  (\curl \bb) \times \bb - \ff =0 \quad \mathrm{in}\,\,\Omega.
\end{aligned}
\end{eqnarray}
Next,  choosing $(\boldsymbol{0}, \boldsymbol{\Psi})$ with $\boldsymbol{\Psi} \in \boldsymbol{\mathcal{D}}_\sigma(\Omega)$ in \eqref{FormVar1a}, we have
\begin{eqnarray*}
\langle   \curl \curl \bb -  \curl(\bu \times \bb) - \bg, \boldsymbol{\Psi} \rangle_{\boldsymbol{\mathcal{D}'}(\Omega) \times \boldsymbol{\mathcal{D}}(\Omega)}= 0.
\end{eqnarray*}
Then, applying \cite[Lemma 2.2]{Pan}, we have $\chi \in L^{2}(\Omega)$ defined uniquely up to an additive constant such that 
\begin{eqnarray*}
  \curl \curl \bb -  \curl (\bu \times \bb) - \bg = \nabla \chi \quad \mathrm{in}\,\,\O \qquad \mathrm{and}\qquad \chi=0\quad \mathrm{on}\,\,\Gamma
\end{eqnarray*}
Since $\chi$ is solution of the following harmonic problem 
\begin{eqnarray*}
\Delta \chi = 0 \quad \mathrm{in} \, \, \Omega \qquad \mathrm{and} \qquad \chi = 0 \quad \mathrm{on} \, \, \Gamma. 
\end{eqnarray*}
We deduce that $\chi = 0$ in $\Omega$ which gives the second equation in \eqref{eqn:MHD}. Moreover, by the fact that $\bu$ and $\bb$ belong to the space $\bV_{N}(\O)$, we have $\vdiv\bu=\vdiv\bb=0$ in $\O$ and $\bu\times\bn=\bb\times\bn=\mathbf{0}$ on $\Gamma$.  The proof of the boundary conditions on the pressure is fairly similar to that given in \cite[Proposition 3.7]{AS_DCDS}.
\end{proof}

\textbf{Proof of Theorem \ref{thm:existence_weak_sol}:}

We use the Schauder fixed point Theorem. We make use of the product space $\textit{\textbf{Z}}_N(\O)=\bV_{N}(\O)\times\bV_{N}(\O)$ defined in \eqref{norm on Z_N}. We define the mapping  $G : \textit{\textbf{Z}}_N(\O) \rightarrow \textit{\textbf{Z}}_N(\O)$ such that  $G(\textit{\textbf{w}},\textit{\textbf{d}})=(\bu,\bb)$ with  $(\bu,\bb)\in\bZ_N(\O)$ a solution of the linearized problem \eqref{Var form compact}. By Theorem \ref{thm weak solution H1 linear MHD}, for each pair $(\bw,\bdd)\in\bZ_N(\O)$ the solution $(\bu,\bb)\in \bZ_N(\O)$ of problem \eqref{Var form compact} exists, is unique and satisfies the following estimate:
\begin{eqnarray}\label{estim u,b H^1 for non linear}
\norm{(\bu,\bb)}_{_{\textbf{\textit{Z}}_N(\O)}}^{2}\!\!\!=\!(\norm{\bu}_{_{\bH^1(\Omega)}}^{2}\!\! +\! \norm{\bb}_{_{\bH^1(\Omega)}}^2)^{1/2} \!\leqslant C\! \big(\norm{\ff}_{_{[\bH_0^{6,2}(\curl, \Omega)]'}}\! \!\!+ \norm{\bg}_{_{[\bH_0^{6,2}(\curl, \Omega)]'}} \!\!\!+\! \norm{P_{0}}_{_{H^{-\frac{1}{2}}(\Gamma)}}\!\!\!\!\!\!\big):=M\quad 
\end{eqnarray}
for some constant $C>0$ independent of $\bw$ and $\bdd$.

We define the ball $$\bB_r =\displaystyle\{ (\bv, \boldsymbol{\Psi}) \in \textit{\textbf{Z}}_N(\O);\quad \,\, \norm{(\bv, \boldsymbol{\Psi})}_{\textbf{\textit{Z}}_N(\O)}\leqslant r \displaystyle\},$$
where $r=M=C \big(\norm{\ff}_{[\bH_0^{6,2}(\curl, \Omega)]'} + \norm{\bg}_{[\bH_0^{6,2}(\curl, \Omega)]'} + \norm{P_{0}}_{H^{-\frac{1}{2}}(\Gamma)}\big)$. By the definition  of $G$ and \eqref{estim u,b H^1 for non linear}, it follows that 
$G(\bB_r) \subset \bB_r$ and then $G$ is a mapping of the ball $\bB_r$ into itself. Now, we want to prove that the operator $G$ is compact on $\bB_r$.  For this, let  
$\{(\bw_k, \bdd_k)\}_{k\geq 1}$ be an arbitrary sequence in $\bB_r$. Since $\bH^{1}(\O)$ is reflexive, there exists a subsequence still denoted $\{(\bw_k, \bdd_k)\}_{k\geq 1}$ and a pair $(\bw,\bdd)$ in $\bB_r$ such that $(\textbf{\textit{w}}_k, \textbf{\textit{d}}_k)$ converges weakly to  $(\textbf{\textit{w}},\textbf{\textit{d}})$ in $\bH^{1}(\O)\times\bH^{1}(\O)$ as $k\rightarrow \infty$. We set $(\bu_k,\bb_k)=G(\textbf{\textit{w}}_k, \textbf{\textit{d}}_k)$ and $(\widetilde{\bu},\widetilde{\bb})=G(\bw,\bdd)$. We have to prove that $(\textbf{\textit{u}}_k, \textbf{\textit{b}}_k)$ converges strongly to  $(\widetilde{\bu},\widetilde{\bb})$ in $\bH^{1}(\O)\times\bH^{1}(\O)$ as $k\rightarrow \infty$.
By the compactness of the embedding $\bH^1(\Omega) \hookrightarrow \bL^4(\Omega)$, we have that $\textbf{\textit{w}}_k \rightarrow \textbf{\textit{w}}$ strongly in $\bL^4(\O)$ and  $\textbf{\textit{d}}_k \rightarrow \textbf{\textit{d}}$ strongly in $\bL^4(\O)$.
By definition $(\bu_k,\bb_k)$ and $(\widetilde{\bu},\widetilde{\bb})$ are solutions of
\begin{eqnarray}
\nonumber
\begin{aligned}
a((\bu_k,\bb_k),(\bv,\boldsymbol{\Psi})) + a_{\textbf{\textit{w}}_k,\textbf{\textit{d}}_k}((\bu_k,\bb_k),(\bv,\boldsymbol{\Psi})) = \mathcal{L} (\bv,\boldsymbol{\Psi}),
\end{aligned}
\end{eqnarray}
and 
\begin{eqnarray}\label{non linear problem on u tild and b tild}
\begin{aligned}
a((\widetilde{\bu},\widetilde{\bb}),(\bv,\boldsymbol{\Psi})) + a_{\textbf{\textit{w}},\textbf{\textit{d}}}((\widetilde{\bu},\widetilde{\bb}),(\bv,\boldsymbol{\Psi})) = \mathcal{L} (\bv,\boldsymbol{\Psi}). 
\end{aligned}
\end{eqnarray} 
where the forms $a$ and $a_{\textbf{\textit{w}},\textbf{\textit{d}}}$ are defined in \eqref{forms a}. By subtracting the above problems, we obtain 
\begin{eqnarray}\label{Compacity}
&{}& (\curl (\bu_k-\widetilde{\bu}), \curl \bv) +   (\curl (\bb_k-\widetilde{\bb}),\curl \boldsymbol{\Psi})+ ((\curl \textbf{\textit{w}}_k) \times \bu_k - (\curl \textbf{\textit{w}}) \times \widetilde{\bu}, \bv)\hspace{1cm}\nonumber\\& +\!\!&  (\bu_k, (\curl \boldsymbol{\Psi}) \times \textbf{\textit{d}}_k) -  (\widetilde{\bu}, (\curl \boldsymbol{\Psi}) \times \textbf{\textit{d}})-  ((\curl \bb_k) \times \textbf{\textit{d}}_k, \bv) +  ((\curl \widetilde{\bb}) \times \textbf{\textit{d}}, \bv) = 0\hspace{2cm}
\end{eqnarray}
Since 
\begin{eqnarray*}
((\curl \textbf{\textit{w}}_k) \times \bu_k - (\curl \textbf{\textit{w}}) \times \widetilde{\bu},\bv) = (\curl \textbf{\textit{w}} \times (\bu_k - \widetilde{\bu}), \bv) + (\curl (\textbf{\textit{w}}_k - \textbf{\textit{w}}) \times \bu_k, \bv)
\end{eqnarray*}
and
\begin{eqnarray*}
&{}& (\bu_k, (\curl \boldsymbol{\Psi}) \times \textbf{\textit{d}}_k) -  (\widetilde{\bu}, (\curl \boldsymbol{\Psi}) \times \textbf{\textit{d}}) -  ((\curl \bb_k) \times \textbf{\textit{d}}_k, \bv) + ((\curl \widetilde{\bb}) \times \textbf{\textit{d}}, \bv) \\
&=&  (\bu_k - \widetilde{\bu}, (\curl \boldsymbol{\Psi}) \times \textbf{\textit{d}}) -  (\bu_k, (\curl \boldsymbol{\Psi}) \times \textit{\textbf{d}}) -  (\curl (\bb_k - \widetilde{\bb}) \times \textit{\textbf{d}}, \bv) \\
&+& ((\curl \bb_k) \times \textit{\textbf{d}}, \bv) +  (\bu_k, (\curl \boldsymbol{\Psi}) \times \textit{\textbf{d}}_k) -  ((\curl \bb_k) \times \textbf{\textit{d}}_k, \bv).
\end{eqnarray*}
Then, replacing in \eqref{Compacity}, we obtain
\begin{eqnarray}
\nonumber
\begin{aligned}
&a((\bu_k - \widetilde{\bu}, \bb_k - \widetilde{\bb}),(\bv,\boldsymbol{\Psi})) + a_{\textbf{\textit{w}},\textbf{\textit{d}}}((\bu_k - \widetilde{\bu}, \bb_k - \widetilde{\bb}),(\bv,\boldsymbol{\Psi})) \\
&= -(\curl (\textbf{\textit{w}}_k - \textbf{\textit{w}}) \times \bu_k, \bv) +  (\bu_k, \curl \boldsymbol{\Psi} \times (\textbf{\textit{d}}-\textbf{\textit{d}}_k))-  (\curl \bb_k \times (\textit{\textbf{d}} - \textit{\textbf{d}}_k),\bv)
\end{aligned}
\end{eqnarray}

By H\"{o}lder inequality and the fact that $(\bu_k,\bb_k)$ belongs to  $\bB_r$, we have
 \begin{eqnarray}
\nonumber
\begin{aligned}
\abs{(\curl(\textbf{\textit{w}}_k - \textbf{\textit{w}}) \times \bu_k, \bv)} &\leqslant C_2\norm{\textbf{\textit{w}}_k - \textbf{\textit{w}}}_{\bL^4(\Omega)} \norm{\bu_k}_{\bH^{1}(\Omega)} \norm{\bv}_{\bH^{1}(\Omega)} \\
&\leqslant C_2 M\norm{\textbf{\textit{w}}_k - \textbf{\textit{w}}}_{\bL^4(\Omega)} \norm{\bv}_{\bH^1(\Omega)},
\end{aligned}
\end{eqnarray}

\begin{eqnarray}
\nonumber
\begin{aligned}
\abs{ (\bu_k, \curl \boldsymbol{\Psi} \times (\textbf{\textit{d}} - \textbf{\textit{d}}_k))} &\leqslant \norm{\bu_k}_{\bL^4(\Omega)} \norm{\curl \boldsymbol{\Psi}}_{\bL^2(\Omega)} \norm{\textbf{\textit{d}} - \textbf{\textit{d}}_k}_{\bL^4(\Omega)} \\
&\leqslant C_1C_2 M \norm{\boldsymbol{\Psi}}_{\bH^1(\Omega)} \norm{\textbf{\textit{d}} - \textbf{\textit{d}}_k}_{\bL^4(\Omega)}
\end{aligned}
\end{eqnarray}
and
\begin{eqnarray}
\nonumber
\begin{aligned}
\abs{ (\curl \bb_k \times (\textbf{\textit{d}} - \textbf{\textit{d}}_k), \bv)} &\leqslant C \norm{\curl \bb_k}_{\bL^2(\Omega)} \norm{\textbf{\textit{d}} - \textbf{\textit{d}}_k}_{\bL^4(\Omega)} \norm{\bv}_{\bL^4(\Omega)} \\
&\leqslant C_1C_2 M \norm{\textbf{\textit{d}} - \textbf{\textit{d}}_k}_{\bL^4(\Omega)} \norm{\bv}_{\bH^1(\Omega)}
\end{aligned}
\end{eqnarray}
where $C_1>0$ and $C_2>0$ are such that
\begin{eqnarray}\label{def C_1 and C_2}
\norm{\curl\bv}_{\bL^{2}(\O)}\leq C_1\norm{\bv}_{\bH^{1}(\O)}\quad  \mathrm{and}\quad \norm{\bv}_{\bL^{4}(\O)}\leq C_2\norm{\bv}_{\bH^{1}(\O)}.
\end{eqnarray}

Choosing $(\bv, \boldsymbol{\Psi}) = (\bu_k - \widetilde{\bu},\bb_k - \widetilde{\bb})$, thanks to the coercivity of the form $a$ in \eqref{coercivity A} and the fact that $a_{\textbf{\textit{w}},\textbf{\textit{d}}}((\bu_k - \widetilde{\bu},\bb_k - \widetilde{\bb}), (\bu_k - \widetilde{\bu}, \bb_k - \widetilde{\bb})) = 0$, we obtain 
\begin{eqnarray}
\begin{aligned}
& \frac{2}{C_{\mathcal{P}}^2}\big(\norm{\bu_k - \widetilde{\bu}}^2_{\bH^1(\Omega)} + \Vert\bb_k - \widetilde{\bb}\Vert^2_{\bH^1(\Omega)}\big) \leqslant \displaystyle\vert\,a((\bu_k - \widetilde{\bu}, \bb_k - \widetilde{\bb}),(\bu_k - \widetilde{\bu},\bb_k - \widetilde{\bb}))\,\vert \nonumber\\
&\leqslant  C_2 M \norm{\bw_k - \bw}_{\bL^4(\Omega)}\norm{\bu_k - \tilde{\bu}}_{\bH^1(\Omega)}+C_1C_2 M\norm{\bdd - \bdd_k}_{\bL^4(\Omega)} (\norm{\bu_k - \widetilde{\bu}}_{\bH^1(\Omega)} + \Vert\bb_k - \widetilde{\bb}\Vert_{\bH^1(\Omega)}) \nonumber \\
&\leqslant
\frac{1}{2} \norm{\bu_k - \tilde{\bu}}_{\bH^1(\Omega)}^2 + \frac{1}{2} \Vert\bb_k - \widetilde{\bb}\Vert_{\bH^1(\Omega)}^2 +C_2^2 M^2 \norm{\bw_k - \bw}_{\bL^4(\Omega)}^{2}   
+ \frac{3}{2} C_1^2 C_2^2 M^2\norm{\bdd_k - \bdd}_{\bL^4(\Omega)}^2.
\end{aligned}
\end{eqnarray}
So, we have
\begin{eqnarray*}
\norm{\bu_k - \widetilde{\bu}}^2_{\bH^1(\Omega)} +  \Vert\bb_k - \widetilde{\bb}\Vert^2_{\bH^1(\Omega)} \leqslant C\big( \norm{\bw_k - \bw}_{\bL^4(\Omega)}^2 + \norm{\bdd_k - \bdd}_{\bL^4(\Omega)}^2\big)\longrightarrow 0\quad \mathrm{as}\,\,k\longrightarrow 0,
\end{eqnarray*}
where $C=C_{\mathcal{P}}^2\max(C_2^2 M^2, \,\frac{3}{2}C_1^2 C_2^2  M^2 )$ is independent of $k$. Hence, this gives the compactness of $G$. From Schauder's theorem we then find that $G$ has a fixed point $(\widetilde{\bu},\widetilde{\bb})=G(\widetilde{\bu},\widetilde{\bb})\in \bB_r$. This fixed point is solution of \eqref{non linear problem on u tild and b tild}. Moreover, $(\widetilde{\bu},\widetilde{\bb})$ satisfies \eqref{estim u,b H^1 for non linear}.

Now, we establish the uniqueness of the solution of \eqref{eqn:MHD}. For this, let $(\bu_1,\bb_1,P_1)$ and $(\bu_2,\bb_2,P_2)$ in $\bV_N(\Omega) \times \bV_N(\Omega) \times L^2(\Omega)$ be two solutions of \eqref{eqn:MHD}. We set  $\bu = \bu_1 - \bu_2$, $\bb = \bb_1 - \bb_2$ et $P = P_1 - P_2$ and we want to prove that $\bu=\bb=\textbf{0}$ and $P=0$. Choose $\bv = \bu$ and $\boldsymbol{\Psi} = \bb$ in \eqref{FormVar1a}, then $(\bu,\bb)$ satisfies
 \begin{eqnarray}
 \nonumber
 \begin{aligned}
 & \norm{\curl \bu}_{\bL^2(\Omega)}^2 +   \norm{\curl \bb}_{\bL^2(\Omega)}^2 + ((\curl \bu_1) \times \bu_1 - (\curl \bu_2) \times \bu_2, \bu) \\
 &-  ((\curl \bb_1 )\times \bb_1 - (\curl \bb_2) \times \bb_2, \bu) +  ((\curl \bb) \times \bb_1, \bu_1) -  ((\curl \bb) \times \bb_2,\bu_2) = 0
 \end{aligned}
 \end{eqnarray}
Observe that 
\begin{eqnarray*}
((\curl \bu_1) \times \bu_1, \bu) - ((\curl \bu_2) \times \bu_2, \bu) = ((\curl \bu) \times \bu_1, \bu).
\end{eqnarray*}
and
\begin{eqnarray*}
\begin{aligned}
&- ((\curl \bb_1) \times \bb_1 - (\curl \bb_2) \times \bb_2, \bu)  +  ((\curl \bb) \times \bb_1, \bu_1) -  ((\curl \bb) \times \bb_2, \bu_2) \\
&=  ((\curl \bb) \times \bb, \bu_2) -  ((\curl \bb_2) \times \bb, \bu)
\end{aligned}
\end{eqnarray*}
which gives 
\begin{eqnarray}
\nonumber
\begin{aligned}
& \norm{\curl \bu}_{\bL^2(\Omega)}^2 +   \norm{\curl \bb}_{\bL^2(\Omega)}^2 + ((\curl \bu_1) \times \bu_1 - (\curl \bu_2) \times \bu_2, \bu) \\
&-  ((\curl \bb_1) \times \bb_1 - (\curl \bb_2) \times \bb_2, \bu) +  ((\curl \bb) \times \bb_1, \bu_1) -  ((\curl \bb) \times \bb_2, \bu_2) \\
&=  \norm{\curl \bu}^2 +   \norm{\curl \bb}^2 + ((\curl \bu) \times \bu_1, \bu) \\
&+  ((\curl \bb) \times \bb, \bu_2) -  ((\curl \bb_2) \times \bb, \bu)  
\end{aligned}
\end{eqnarray}
Then,
\begin{eqnarray}
\label{EqUni}
\begin{aligned}
 \norm{\curl \bu}^2 +   \norm{\curl \bb}^2 =  ((\curl \bb_2) \times \bb, \bu)-((\curl \bu) \times \bu_1, \bu) -  ((\curl \bb) \times \bb, \bu_2) 
\end{aligned}
\end{eqnarray}
We want to bound the terms in the RHS of \eqref{EqUni}. We have
 \begin{eqnarray}
\nonumber
\begin{aligned}
\abs{((\curl \bu) \times \bu_1, \bu)} &\leqslant \norm{\curl \bu}_{\bL^2(\Omega)} \norm{\bu_1}_{\bL^4(\Omega)} \norm{\bu}_{\bL^4(\Omega)}  \\
&\leqslant C_1C_2 \norm{\bu}_{\bH^1(\Omega)}^2 \norm{\bu_1}_{\bL^4(\Omega)} \\
&\leqslant C_1C_2^2 \norm{\bu}_{\bH^1(\Omega)}^2 \norm{\bu_1}_{\bH^1(\Omega)} \leqslant C_1C_2^2 M\norm{\bu}_{\bH_1(\Omega)}^2,
\end{aligned}
\end{eqnarray}
\begin{eqnarray}
\nonumber
\begin{aligned}
\abs{ ((\curl \bb) \times \bb, \bu_2)} &\leqslant \norm{\curl \bb}_{\bL^2(\Omega)} \norm{\bb}_{\bL^4(\Omega)} \norm{\bu_2}_{\bL^4(\Omega)} \\
&\leqslant C_1C_2^2 \norm{\bb}_{\bH^1(\Omega)}^2 \norm{\bu_2}_{\bH^1(\Omega)} \\
&\leqslant C_1 C_2^2  M \norm{\bb}_{\bH^1(\Omega)}^2,
\end{aligned}
\end{eqnarray}
and
\begin{eqnarray}
\nonumber
\begin{aligned}
\abs{ ((\curl \bb_2) \times \bb, \bu)} &\leqslant  \norm{\curl \bb_2}_{\bL^2(\Omega)} \norm{\bb}_{\bL^4(\Omega)} \norm{\bu}_{\bL^4(\Omega)} \\ 
&\leqslant C_1C_2^2 \norm{\bb_2}_{\bH^1(\Omega)} \norm{\bb}_{\bH^1(\Omega)} \norm{\bu}_{\bH^1(\Omega)} \\
&\leqslant \frac{1}{2}C_1C_2^2 M(\norm{\bu}_{\bH^1(\Omega)}^2 + \norm{\bb}_{\bH^1(\Omega)}^2).
\end{aligned}
\end{eqnarray}
Using these estimates in \eqref{EqUni} together with Poincar\'e's type inequality \eqref{Poincaré inequality}, we obtain 
\begin{eqnarray}
\nonumber
\begin{aligned}
\frac{1}{C_{\mathcal{P}}^2}\big(\norm{\bu}_{\bH^1(\Omega)}^2 +\norm{\bb}_{\bH^1(\Omega)}^2\big) &\leqslant \norm{\curl \bu}^2 + \norm{\curl \bb}^2\leqslant \frac{3}{2}C_1C_2^2 M ( \norm{\bu}^2_{\bH^1(\Omega)} + \norm{\bb}_{\bH^1(\Omega)}^2 ),
\end{aligned}
\end{eqnarray}
where $C_{\mathcal{P}}$ is the constant in \eqref{Poincaré inequality}. From this relation, we obtain
\begin{eqnarray*}
 (\frac{1}{C_{\mathcal{P}}^2}-\frac{3}{2}C_1C_2^2M)\big(\norm{\bu}_{\bH^1(\Omega)}^2 +\norm{\bb}_{\bH^1(\Omega)}^2\big)\leq 0.
\end{eqnarray*}

This, together with condition \eqref{condition uniqueness sol H^1 nonlinear}, implies that $\bu=\bb=\textbf{0}$. 
\vspace{2mm}
The construction of the pressure $P\in L^{2}(\O)$ follows from  De Rham's Theorem (see  \ref{Thm equi def and MHD nonlinear}).


\subsection{Weak solution: $L^{p}$-theory,\,\, $p\geq2$}
 In this subsection, we study the regularity of weak solution of system \eqref{eqn:MHD} in $L^p$-theory. We start with the case $p\geq 2$. The proof is done essentially using the existence of weak solution in the hilbertian case and a bootstrap argument. To take advantage of the regularity of the Stokes problem $(\mathcal{S_N})$ and the elliptic problem $(\mathcal{E_N})$, we can rewrite the \eqref{eqn:MHD} problem in the following way:
 \begin{equation*}
  \begin{aligned}
\begin{cases}
       &   - \, \Delta \bu  + \nabla P = \ff - (\curl \bu) \times \bu +  (\curl \bb) \times \bdd \quad  \quad \mathrm{in} \,\, \Omega, \\
       & \vdiv \bu = h \quad \mathrm{in} \,\, \Omega,\\
       & \bu \times \bn = \textbf{0} \quad  \mathrm{on}\,\,\Gamma,\\
        & P = P_0 \quad \text{on} \, \,\Gamma_0\quad  \mathrm{and}\quad 
         P = P_0 + c_i \,\,\,\text{on} \,\, \Gamma_i,\\
         & \langle\bu \cdot \bn, 1\rangle_{\Gamma_i} = 0,\,\, 1\leq i \leq I,\end{cases}   
  \end{aligned}
 \end{equation*}
and 
 \begin{equation*}
  \begin{aligned}
\begin{cases}
\curl\curl\bb= {\bg}+ \curl(\bu\times\bb)\quad \mathrm{in} \,\, \Omega, \\ 
\vdiv \bb = 0 \quad \mathrm{in} \,\, \Omega, \\
\bb \times \bn = \textbf{0} \quad \mathrm{on} \, \Gamma \\
\langle \bb \cdot \bn, 1 \rangle_{\Gamma_i} = 0, \quad \forall 1 \leqslant i \leqslant I.
 \end{cases}   
  \end{aligned}
 \end{equation*}
The following result can be improved in the same way as in Corollary \ref{Corollary improvement pressure} by considering a data $P_0$ less regular. 

\begin{theorem}[Regularity $\bW^{1,p}(\O)$ with $p>2$]\label{thm:regularity_weak_sol1} Let $p>2$ and $r=\frac{3p}{3+p}$. Suppose that $\ff, \bg\in [\bH_0^{r',p'}(\curl,\Omega)]'$, $h=0$,  $P_0 \in W^{1-\frac{1}{r},r}(\Gamma)$ with the compatibility conditions \eqref{Condition compatibilite K_N Lp}-\eqref{condition div g=0 sol W^1,p linear MHD}. Then the weak solution for the \eqref{eqn:MHD} system given by Theorem \ref{thm:existence_weak_sol} satisfies $$(\bu, \bb, P) \in \bW^{1,p}(\Omega) \times \bW^{1,p}(\Omega) \times W^{1,r}(\Omega).$$ Moreover, we have the following estimate:
{\footnotesize \begin{eqnarray}\label{estim weak p>2 MHD}
\begin{aligned}
\!\norm{\bu}_{\bW^{1,p}(\Omega)} \!+\! \norm{\bb}_{\bW^{1,p}(\Omega)}\!+\! \norm{P}_{W^{1,r}(\Omega)} \!\leqslant\! C (\norm{\ff}_{(\bH_0^{r',p'}(\curl, \Omega))'} \!+\! \norm{\bg}_{(\bH_0^{r',p'}(\curl, \Omega))'} \!+\! \norm{P_0}_{W^{1/r',r}(\Gamma)}) 
\end{aligned}
\end{eqnarray}}
\end{theorem}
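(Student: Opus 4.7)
The strategy is to freeze the nonlinearity, viewing the weak solution $(\bu,\bb,P,\boldsymbol{\alpha})\in\bH^1(\O)\times\bH^1(\O)\times L^2(\O)\times\R^I$ provided by Theorem~\ref{thm:existence_weak_sol} as a solution of the \emph{linearized} system \eqref{linearized MHD-pressure} with the specific choice $\bw=\bu$, $\bdd=\bb$, and then to invoke Theorem~\ref{Solutions faibles p grand}. The hypotheses of that theorem request $\curl\bw\in\bL^s(\O)$ and $\bdd\in\bW^{1,s}_\sigma(\O)$, where $s=\tfrac{3}{2}$ if $2<p<3$, $s>\tfrac{3}{2}$ if $p=3$, and $s=r=\tfrac{3p}{3+p}$ if $p>3$. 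Since $\bu,\bb\in\bH^1(\O)$ with $\vdiv\bu=\vdiv\bb=0$, we automatically have $\curl\bu\in\bL^2(\O)\hookrightarrow\bL^s(\O)$ and $\bb\in\bW^{1,2}_\sigma(\O)\hookrightarrow\bW^{1,s}_\sigma(\O)$ as soon as $s\le 2$, which covers the whole range $2<p\le 6$ (indeed, $r\le 2\iff p\le 6$). Uniqueness in the linearized theorem then identifies this solution with the weak one, so $(\bu,\bb,P)\in\bW^{1,p}(\O)\times\bW^{1,p}(\O)\times W^{1,r}(\O)$, together with the estimate \eqref{estim u,b W^1,p for p>2} applied with $\bw=\bu$, $\bdd=\bb$.

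For the remaining range $p>6$ I would argue by a one-step bootstrap: apply the preceding paragraph with $p=6$ (so $s=r=2$) to obtain $\bu,\bb\in\bW^{1,6}(\O)$, which in particular gives $\curl\bu\in\bL^{6}(\O)\hookrightarrow\bL^{r}(\O)$ and $\bb\in\bW^{1,6}_\sigma(\O)\hookrightarrow\bW^{1,r}_\sigma(\O)$ for every finite $p$, since $r<3$. One more application of Theorem~\ref{Solutions faibles p grand} with $\bw=\bu$, $\bdd=\bb$ now yields the $\bW^{1,p}\times\bW^{1,p}\times W^{1,r}$ regularity for every $p>6$. The compatibility conditions \eqref{Condition compatibilite K_N Lp}–\eqref{condition div g=0 sol W^1,p linear MHD} needed to invoke the linearized theorem are inherited directly from the hypotheses \eqref{Condition compatibilite K_N Lp}–\eqref{condition div g=0 sol W^1,p linear MHD} on $\bg$.

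For the estimate \eqref{estim weak p>2 MHD}, the first bootstrap yields
\begin{equation*}
\|\bu\|_{\bW^{1,p}(\O)}+\|\bb\|_{\bW^{1,p}(\O)}+\|P\|_{W^{1,r}(\O)}
\le C\bigl(1+\|\curl\bu\|_{\bL^{s}(\O)}+\|\bb\|_{\bW^{1,s}(\O)}\bigr)\,\mathcal{D},
\end{equation*}
with $\mathcal{D}:=\|\ff\|_{[\bH_0^{r',p'}(\curl,\O)]'}+\|\bg\|_{[\bH_0^{r',p'}(\curl,\O)]'}+\|P_0\|_{W^{1-1/r,r}(\Gamma)}$. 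The $\bH^1$ estimate \eqref{estim weak H1 solution MHD} controls $\|\curl\bu\|_{\bL^2}+\|\bb\|_{\bH^1}$ by $C\,\mathcal{D}$ (using the continuous embeddings $[\bH_0^{r',p'}(\curl,\O)]'\hookrightarrow[\bH_0^{6,2}(\curl,\O)]'$ and $W^{1-1/r,r}(\Gamma)\hookrightarrow H^{-1/2}(\Gamma)$ valid for $p>2$), giving \eqref{estim weak p>2 MHD} in the range $2<p\le 6$. For $p>6$ the same bound at the intermediate level $p=6$ provides a quantitative control of $\|\curl\bu\|_{\bL^{6}}+\|\bb\|_{\bW^{1,6}}$, which absorbs the factor $(1+\|\curl\bu\|_{\bL^s}+\|\bb\|_{\bW^{1,s}})$ produced by the second application. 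The only delicate point, and the one that would need a careful verification, is that the value of $s$ prescribed in Theorem~\ref{Solutions faibles p grand} is always $\le 2$ at the first step and $\le 6$ at the second, so that a two-step bootstrap is sufficient to cover every $p\in(2,\infty)$; no further iteration is needed because $r=\tfrac{3p}{3+p}<3$ uniformly in $p$.
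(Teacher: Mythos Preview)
Your argument is correct but takes a genuinely different route from the paper. The paper does \emph{not} invoke the linearized MHD result (Theorem~\ref{Solutions faibles p grand}); instead, having the $\bH^1$ solution in hand, it views $(\bu,P)$ as a solution of the Stokes problem $(\mathcal{S_N})$ with right-hand side $\ff-(\curl\bu)\times\bu+(\curl\bb)\times\bb$ and $\bb$ as a solution of the elliptic problem $(\mathcal{E_N})$ with right-hand side $\bg+\curl(\bu\times\bb)$, and applies Proposition~\ref{thm solution W1,p and W2,p Stokes divu=h} and Lemma~\ref{Lemme elliptique H r' p'} directly. The bootstrap split is at $p=3$: for $2<p\le 3$ the quadratic terms lie in $\bL^{3/2}\hookrightarrow\bL^r$, while for $p>3$ one first passes through $\bW^{1,3}$ (so that $\bu,\bb\in\bL^q$ for every $q<\infty$) and then reaches $\bL^r$ in one more step. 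Your approach packages all of this into a single call to the linearized theorem with $\bw=\bu$, $\bdd=\bb$, splitting instead at $p=6$ because that is where the constraint $s\le 2$ becomes active. Both are valid; the paper's route is more elementary in that it avoids the $\bW^{2,r}$ strong-solution machinery hidden inside the proof of Theorem~\ref{Solutions faibles p grand}, whereas yours reuses the linearized theory cleanly and makes the identification via the $\bH^1$-uniqueness of Theorem~\ref{thm weak solution H1 linear MHD} explicit. One minor remark: substituting the $\bH^1$ bound into the linearized estimate yields a factor $C(1+M)$, so the resulting bound is quadratic in the data; the paper's own proof does not detail the estimate either, so this is not a discrepancy in rigor but an intrinsic feature of the nonlinear problem.
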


\begin{proof}
Since $p>2$, we have  $[\bH_0^{r', p'}(\curl, \Omega)]' \hookrightarrow [\bH_0^{6,2}(\curl, \Omega)]'$ and $W^{1-1/r,r}(\Gamma)\hookrightarrow H^{-1/2}(\Gamma)$. Thanks to Theorem \ref{thm:existence_weak_sol}, there exists $(\bu, \bb, P, \bc) \in \bH^1(\Omega) \times \bH^1(\Omega) \times L^2(\Omega) \times \R^I$ solution of \eqref{eqn:MHD}.
By using the embedding $\bH^1(\Omega) \hookrightarrow \bL^6(\Omega)$, it follows that $(\curl \bu) \times \bu$ and  $(\curl \bb) \times \bb$ belong to $\bL^\frac{3}{2}(\Omega)$. To apply the regularity of Stokes problem and obtain weak solutions $(\bu, P) \in \bW^{1,p}(\Omega) \times W^{1,r}(\Omega)$, we must justify that the RHS $\ff-(\curl\bu)\times\bu+(\curl\bb)\times\bb$ belongs to  $[\bH_0^{r',p'}(\curl, \Omega)]'$. Similarly, to apply the regularity of the elliptic problem $(\mathcal{E_N})$ and obtain a solution $\bb\in \bW^{1,p}(\O)$, we also need to justify that the RHS $\bg+\curl(\bu\times\bb)$ belongs to $[\bH_0^{r',p'}(\curl, \Omega)]'$. As a consequence, we distinguish according to the values of $p$ the following cases:\\

(i) If $p\leq 3$,  then $\ff-(\curl\bu)\times\bu+(\curl\bb)\times\bb\in\bL^{3/2}(\O)\hookrightarrow \bL^{\frac{3p}{3+p}}(\O)=\bL^{r}(\O)$. Since $\bL^{r}(\O) \hookrightarrow [\bH_0^{r',p'}(\curl, \Omega)]'$, thanks to the regularity of the Stokes problem $(\mathcal{S_N})$, see Proposition \ref{thm solution W1,p and W2,p Stokes divu=h}, we have that $\bu\in\bW^{1,p}(\O)$ and $P\in W^{1,r}(\O)$. Since $\curl(\bu\times\bb)=(\bb\cdot\nabla)\bu-(\bu\cdot\nabla)\bb$, we have with the same argument above that $\curl(\bu\times\bb)\in[\bH_0^{r',p'}(\curl, \Omega)]'$. It follows that $\bg+\curl(\bu\times\bb)\in [\bH_0^{r',p'}(\curl, \Omega)]' $. Moreover, $\bg+\curl(\bu\times\bb)$ satisfies the compatibility conditions \eqref{Condition compatibilite K_N Lp}-\eqref{condition div g=0 sol W^1,p linear MHD}. Consequently, thanks to the regularity of the elliptic problem $(\mathcal{E_N})$, see Lemma \ref{Lemme elliptique H r' p'}, we have that $\bb\in\bW^{1,p}(\O)$.\\

(ii) If $p >3$, from the previous case, we have that $(\bu,\,\bb,P)\in \bW^{1,3}(\O)\times \bW^{1,3}(\O)\times W^{1,3/2}(\O)$. Therefore, $(\bu,\bb)\in \bL^{q}(\O)\times \bL^{q}(\O)$, for any $1< q < \infty$. Then, $(\curl\bu)\times\bu\in \bL^{m}(\O)$ for all $1\leq m < 3$. In particular, we take $\frac{1}{m}=\frac{1}{p}+\frac{1}{3}$ with $3/2 < m <3$. So, we have that $(\curl\bu)\times\bu\in \bL^{\frac{3p}{3+p}}(\O)\hookrightarrow [\bH_0^{r',p'}(\curl, \Omega)]'$. Using the same arguments, we obtain that $(\curl\bb)\times\bb\in \bL^{\frac{3p}{3+p}}(\O)\hookrightarrow [\bH_0^{r',p'}(\curl, \Omega)]'$. Then, the required regularity for $(\bu,\bb,P)$ follows by applying the regularity of the Stokes problem $(\mathcal{S_N})$. Further, we have $\bu\times\bb\in \bL^{t}(\O)$ for any $1< t < \infty$. In particular, taking $t=p$ with $3<t< \infty$, we have that $\bu\times\bb\in \bL^{p}(\O)$. So, due to the characterization of the space $[\bH_0^{r',p'}(\curl, \Omega)]'$, we have $\curl(\bu\times\bb)$ belongs to $[\bH_0^{r',p'}(\curl, \Omega)]'$ and we finish the proof by applying the regularity of the elliptic problem $(\mathcal{E_N})$.

\end{proof}

\subsection{Strong solution: $L^{p}$-theory,\,\, $p\geq 6/5$}

In this subsection, we study the existence of strong solutions for more regular data. The following theorem gives the regularity $\bW^{2,p}(\O)$ with $p\geq 6/5$.

\begin{theorem}[Regularity $\bW^{2,p}(\O)$ with $p\geq\frac{6}{5}$]\label{thm:regularity_weak_sol1_W2p}
Let us suppose that $\O$ is of class $\mathcal{C}^{2,1}$ and $p\geq \frac{6}{5}$. Let $\ff$, $\bg$ and $P_0$ satisfy \eqref{Condition compatibilite K_N Lp}-\eqref{condition div g=0 sol W^1,p linear MHD} and  
$$\ff \in \bL^p(\Omega), \quad \bg \in \bL^p(\Omega), \quad h=0\quad  \mathrm{and}\quad P_0 \in W^{1-\frac{1}{p},p}(\Gamma).$$
Then the weak solution  $(\bu, \bb, P)$ for the \eqref{eqn:MHD} system given by Theorem \ref{thm:existence_weak_sol} belongs to \\ $ \bW^{2,p}(\Omega) \times \bW^{2,p}(\Omega) \times W^{\,1,p}(\Omega)$ and satisfies the following estimate:
\begin{eqnarray}\label{estim strong p>6/5 MHD}
\begin{aligned}
&\norm{\bu}_{\bW^{2,p}(\Omega)} + \norm{\bb}_{\bW^{2,p}(\Omega)}+ \norm{P}_{W^{1,p}(\Omega)} \leqslant C(\norm{\ff}_{\bL^p(\Omega)} + \norm{\bg}_{\bL^p(\Omega)} + \norm{P_0}_{W^{1-\frac{1}{p},p}(\Gamma)}) 
\end{aligned}
\end{eqnarray}
\end{theorem}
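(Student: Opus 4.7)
The plan is to recast the nonlinear system as a particular instance of the linearized MHD problem~\eqref{linearized MHD-pressure} in which the advection coefficients are supplied by the weak solution itself, and then to invoke the strong-solution regularity Theorem~\ref{thm strong solution p>6/5}. Theorem~\ref{thm:existence_weak_sol} first furnishes a weak solution $(\bu,\bb,P,\boldsymbol{\alpha})\in\bH^1(\Omega)\times\bH^1(\Omega)\times L^2(\Omega)\times\mathbb{R}^I$ of~\eqref{eqn:MHD} which in particular satisfies $\vdiv\bu=\vdiv\bb=0$, $\bu\times\bn=\bb\times\bn=\boldsymbol{0}$ on $\Gamma$, and the zero-flux constraints on each $\Gamma_i$. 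Setting $\bw:=\bu$ and $\bdd:=\bb$, the triple $(\bu,\bb,P)$ evidently solves~\eqref{linearized MHD-pressure} with $h=0$ and data $(\ff,\bg,P_0)$.

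Next I would verify that this choice meets the hypotheses of Theorem~\ref{thm strong solution p>6/5}. Since $\bu\in\bH^1(\Omega)$, the embedding $\bL^2(\Omega)\hookrightarrow\bL^{3/2}(\Omega)$ on the bounded domain $\Omega$ yields $\curl\bw=\curl\bu\in\bL^{3/2}(\Omega)$; likewise $\bb\in\bH^1(\Omega)\hookrightarrow\bW^{1,3/2}(\Omega)$ with $\vdiv\bb=0$, so $\bdd\in\bW^{1,3/2}_{\sigma}(\Omega)$. Because $\ff,\bg\in\bL^p(\Omega)$ satisfy the compatibility conditions~\eqref{Condition compatibilite K_N Lp}--\eqref{condition div g=0 sol W^1,p linear MHD} and $P_0\in W^{1-1/p,p}(\Gamma)$, all the data assumptions for the linearized strong-regularity result are met. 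By the uniqueness of the weak solution of the linearized system in $\bH^1\times\bH^1\times L^2$ (Theorem~\ref{thm weak solution H1 linear MHD}), applied with the above $(\bw,\bdd)$, our nonlinear weak solution coincides with the solution produced by the hilbertian linearized theory, so Theorem~\ref{thm strong solution p>6/5} applies directly and delivers $(\bu,\bb,P)\in\bW^{2,p}(\Omega)\times\bW^{2,p}(\Omega)\times W^{1,p}(\Omega)$ together with the estimate~\eqref{estim u,b,P strong p>6/5}.

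To derive the final bound~\eqref{estim strong p>6/5 MHD}, I would insert into~\eqref{estim u,b,P strong p>6/5} the hilbertian control of the factor $(1+\|\curl\bu\|_{\bL^{3/2}(\Omega)}+\|\bb\|_{\bW^{1,3/2}(\Omega)})$ provided by the estimate~\eqref{estim weak H1 solution MHD}, together with the embeddings $\bL^p(\Omega)\hookrightarrow[\bH_0^{6,2}(\curl,\Omega)]'$ and $W^{1-1/p,p}(\Gamma)\hookrightarrow H^{-1/2}(\Gamma)$, both valid for $p\geq 6/5$. This absorbs the nonlinear coefficient into a constant $C=C(\Omega,p,\ff,\bg,P_0)$ depending polynomially on the data norms. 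The main technical obstacle is the consistency identification alluded to above: one must check that the weak formulation~\eqref{FormVar1a}--\eqref{FormVar1b} of~\eqref{eqn:MHD} indeed coincides, when read with $(\bw,\bdd)=(\bu,\bb)$, with the variational formulation~\eqref{Var Form} of the linearized problem, so that the uniqueness in Theorem~\ref{thm weak solution H1 linear MHD} legitimately glues the nonlinear solution to the linearized one. Once this identification is secured, no further bootstrap beyond what is already encoded in Theorem~\ref{thm strong solution p>6/5} is required.
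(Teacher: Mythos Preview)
Your argument is correct, and it is in fact a cleaner route than the one the paper takes. The paper does not invoke Theorem~\ref{thm strong solution p>6/5} at all in this proof: instead it first upgrades the Hilbert solution to $\bW^{1,p^*}(\Omega)$ via Theorem~\ref{thm:regularity_weak_sol1} (using that for $6/5\le p\le 3/2$ one has $r(p^*)=p$, so $\ff,\bg\in\bL^{r(p^*)}$), and then runs a hands-on bootstrap directly through the Stokes and elliptic regularity results (Proposition~\ref{thm solution W1,p and W2,p Stokes divu=h} and Theorem~\ref{thm strong soulution elliptic}), splitting into the cases $6/5\le p<3/2$, $p=3/2$, $3/2<p<3$, and $p\ge 3$. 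Your idea of freezing $(\bw,\bdd)=(\bu,\bb)$ and appealing to the uniqueness in Theorem~\ref{thm weak solution H1 linear MHD} to identify the nonlinear weak solution with the linearized one is exactly the right glue, and it lets you offload the entire bootstrap to the already-packaged Theorem~\ref{thm strong solution p>6/5}. The paper's approach is more self-contained (it never re-enters the linearized MHD machinery once in the nonlinear section), while yours is shorter and exploits the linearized theory more fully.

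One small remark on the estimate: as you note, your route produces a constant with quadratic dependence on the data norms, coming from the factor $(1+\|\curl\bu\|_{\bL^{3/2}}+\|\bb\|_{\bW^{1,3/2}})$ bounded via~\eqref{estim weak H1 solution MHD}. The paper's proof does not explicitly derive the estimate~\eqref{estim strong p>6/5 MHD} either, and its bootstrap would yield the same nonlinear dependence; so this is not a defect of your approach but a feature of the nonlinear problem, and the displayed estimate should be read with $C$ implicitly depending on the data through the Hilbert bound.
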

\begin{proof}
To start the proof, the idea is to use the regularity result for weak solutions in $\bW^{1,p}(\O)$ with $p>2$ given in Theorem \ref{thm:regularity_weak_sol1} instead of the weak solutions in the Hilbert case $\bH^{1}(\O)$. Observe that if $6/5 \leq p \leq 3/2$, we have $2<p^*<3$, where $\frac{1}{p^{*}}=\frac{1}{p}-\frac{1}{3}$. Now, denoting $r(p^{*})=\frac{3p^{*}}{3+p^{*}}$, we obtain that $r(p^{*})=p$. Then, we have from the hypothesis of this Theorem that $\ff\in\bL^{r(p^{*})}(\O)$, $\bg\in\bL^{r(p^{*})}(\O)$ and $P_0\in W^{1-1/r(p^{*}),r(p^{*})}(\Gamma)$. Since $\bL^{r(p^{*})}(\O)\hookrightarrow [\bH_0^{(r^{p^{*}})',(p^{*})'}(\curl, \Omega)]'$ and $p^{*}>2$, we deduce from the regularity result of the \eqref{eqn:MHD} problem (see Theorem \ref{thm:regularity_weak_sol1_W2p}) that $(\bu,\bb,P)\in\bW^{1,p^{*}}(\O)\times\bW^{1,p^{*}}(\O)\times W^{1,r(p^{*})}(\O)$. Then, we have the following three cases:

(i) Case $\frac{6}{5}\leq p < \frac{3}{2}$: We have $(\curl\bu)\times\bu\in \bL^{t}(\O)$ with $\frac{1}{t}=\frac{2}{p}-1$. Since $t>p$, it follows that $(\curl\bu)\times\bu$ belongs to $\bL^{p}(\O)$. The same argument gives that $(\curl\bb)\times\bb$ belongs to $\bL^{p}(\O)$. Consequently, thanks to the existence of strong solutions for the Stokes problem $(\mathcal{S_N})$ (see Proposition \ref{thm solution W1,p and W2,p Stokes divu=h}), we deduce that $(\bu,P)\in \bW^{2,p}(\O)\times W^{1,p}(\O)$. Since $\curl(\bu\times\bb)$ belongs also to $\bL^{t}(\O)$ with $t>p$, we deduce from the regularity result of the elliptic problem $(\mathcal{E_N})$ (see Theorem \ref{thm strong soulution elliptic}) that $\bb\in\bW^{2,p}(\O)$.    \\

(ii) Case $p=\frac{3}{2}$: We have in this case $p^{*}=3$. From above, we know that  $(\bu,\bb,P)\in\bW^{1,3}(\O)\times\bW^{1,3}(\O)\times W^{1,\frac{3}{2}}(\O)$. Since $\bW^{1,3}(\O)\hookrightarrow \bL^{q}(\O)$ for any $1< q< \infty$, we deduce that $(\curl \bu)\times \bu$ and $(\curl \bb)\times \bb$ belong to $\bL^{t}(\O)$ with $\frac{1}{t}=\frac{1}{3}+\frac{1}{q}$. Choosing $q>3$ gives $t>\frac{3}{2}$. Thanks to the regularity of the Stokes problem $(\mathcal{S_N})$, we have that $(\bu,P)\in\bW^{2,\frac{3}{2}}(\O)\times W^{1,\frac{3}{2}}(\O)$. Using the same arguments, we have $\curl(\bu\times\bu)\in \bL^{t}(\O)$ with $t>\frac{3}{2}$. Then, we apply the regularity of the elliptic problem $(\mathcal{E_N})$ to obtain $\bb\in \bW^{2,\frac{3}{2}}(\O)$.

(iii) Case $p>\frac{3}{2}$ : We know that $(\bu,\bb)\in \bW^{2,\frac{3}{2}}(\O)\times \bW^{2,\frac{3}{2}}(\O) $. Then, $(\bu,\bb)\in\bL^{q}(\O)\times \bL^{q}(\O)$ with $1<q<\infty$. We deduce that the terms $(\curl\bu)\times\bu$ and $(\curl\bb)\times \bb$ belong to $\bL^{t}(\O)$ with $1 \leq t < 3$. So, we have the following two cases:

 \indent (a) If $\frac{3}{2} <p <3$, we have $(\curl\bu)\times\bu$, $(\curl\bb)\times\bb$ and $\curl(\bu\times\bb)$ belong to $\bL^{p}(\O)$. Thanks to the regularity of $(\mathcal{S_N})$ and $(\mathcal{E_N})$, we deduce that $(\bu,\bb,P)$ belongs to $\bW^{2,p}(\O)\times \bW^{2,p}(\O)\times W^{1,p}(\O)$. 

 \indent (b) If $p \geq 3$, from the above result, we have $(\bu,\bb,P)\in\bW^{2,3-\varepsilon}(\O)\times \bW^{2,3-\varepsilon}(\O)\times W^{1,3-\varepsilon}(\O)$ with $0<\varepsilon <\frac{3}{2}$. Observe that $(3-\varepsilon)^*=\frac{3(3-\varepsilon)}{\varepsilon}>3$. This implies that $\bu\in\bL^{\infty}(\O)$ and $\bb\in\bL^{\infty}(\O)$. Since $\curl\bu\in \bW^{1,3-\varepsilon}(\O)\hookrightarrow \bL^{(3-\varepsilon)^*}(\O)$, it follows that $(\curl\bu)\times\bu$ and $(\curl\bb)\times\bb$ belong to $\bL^{(3-\varepsilon)^*}(\O)\hookrightarrow\bL^{3}(\O)$. Again, according the regularity of
Stokes problem, we have $(\bu,P)\in\bW^{2,3}(\O)\times W^{1,3}(\O)$. Similarly, $\curl(\bu\times\bb)\in \bL^3(\O)$ and the regularity of the elliptic problem $(\mathcal{E_N})$ implies that $\bb\in\bW^{1,3}(\O)$. Finally, using the embeddings $\bW^{2,3}(\O)\hookrightarrow \bL^{\infty}(\O)$ and $ \bW^{1,3}(\O)\hookrightarrow \bL^q(\O)$ for $1<q<\infty$, all the terms $(\curl\bu)\times\bu$, $(\curl\bb)\times\bb$ and $\curl(\bu\times\bb)$ belong to $\bL^{q}(\O)$. To conclude, we apply once again the regularity
of Stokes problem $(\mathcal{S_N})$ and elliptic problem $(\mathcal{E_N})$.

\end{proof}

\subsection{Existence result of the MHD system for $3/2< p < 2$}

The next theorem tells us that it is possible to extend the regularity $\bW^{1,p}(\O)$ of the solution of the nonlinear \eqref{eqn:MHD} problem for $\frac{3}{2} < p<2$. For this, we apply Banach's fixed-point theorem over the linearized problem \eqref{linearized MHD-pressure}. 


\begin{theorem}[Regularity $\bW^{1,p}(\O)$ with $\frac{3}{2}< p<2$]\label{thm:regularity_weak_ MHD p<2}Assume that $\frac{3}{2} < p < 2$ and let $r$ be defined by $\frac{1}{r}=\frac{1}{p}+\frac{1}{3}$. 
Let us consider $\ff, \bg \in [\bH_0^{r',p'}(\curl, \Omega)]'$, $P_0 \in W^{1-\frac{1}{r},r}(\Gamma)$ and $h \in W^{1,r}(\O)$ with the compatibility conditions \eqref{Condition compatibilite K_N Lp}-\eqref{condition div g=0 sol W^1,p linear MHD}. 

\emph{\textbf{(i)}} There exists a constant $\delta_1$ such that, if 
\begin{eqnarray*}
\norm{\ff}_{[\bH_0^{r',p'}(\curl, \Omega)]'} + \norm{\bg}_{[\bH_0^{r',p'}(\curl,\Omega)]'} + \norm{P_0}_{W^{1-\frac{1}{r},r}(\Gamma)}+\norm{h}_{W^{1,r}(\O)} \leq \delta_1
\end{eqnarray*}
Then, the \eqref{eqn:MHD} problem has at least a solution $(\bu, \bb, P, \boldsymbol{\alpha}) \in \bW^{1,p}(\Omega) \times \bW^{1,p}(\Omega) \times W^{1,r}(\O) \times \R^I$. Moreover, we have the following estimates:\small
\begin{eqnarray} \label{estim u,b non linear p<2}
\!\!\norm{\bu}_{_{\bW^{1,p}(\Omega)}}\!\! \!\!\!+\!\norm{\bb}_{_{\bW^{1,p}(\Omega)}}\!\!\!\!\!\leqslant \!C_1\! \big(\! \norm{\ff}_{[\bH_0^{r',p'}(\curl, \Omega)]'} \!+ \!\norm{\bg}_{[\bH_0^{r',p'}(\curl,\Omega)]'}\! +\! \norm{P_0}_{W^{1-\frac{1}{r},r}(\Gamma)} \!\!+ \!\norm{h}_{W^{1,r}(\O)} \!\!\big)
\end{eqnarray}
\begin{eqnarray} \label{estim P non linear p<2}
\begin{aligned}
\norm{P}_{_{W^{1,r}(\O)}}\!\!\!\!\leqslant \!C_1\!(1+C^*\eta)\big( \!\norm{\ff}_{[\bH_0^{r',p'}(\curl, \Omega)]'} \!+\! \norm{\bg}_{[\bH_0^{r',p'}(\curl,\Omega)]'} \!+\! \norm{P_0}_{W^{1-\frac{1}{r},r}(\Gamma)}\!\! +\!\norm{h}_{W^{1,r}(\O)}\!\! \big)
\end{aligned}
\end{eqnarray}
where $\delta_1=(2C^2C^*)^{-1}$, $C_1=C(1+C^*\eta)^2$ with $C>0$, $C^*>0$ are the constants given in \eqref{definition C^* preuve W1p p<2 non lin} and $\eta$ defined by \eqref{definition eta preuve W1p p<2 non lin}. Furthermore, $\boldsymbol{\alpha}=(\alpha_1,\ldots,\alpha_I)$ satisfies
\begin{eqnarray*}
\begin{aligned}
&\alpha_i = \langle \ff, \nabla q_i^N \rangle_{\O}- \int_{\O} (\curl \bw) \times \bu\cdot \nabla q_i^N \,d \bx + \int_{\O}(\curl \bb) \times \bdd \cdot \nabla q_i^N \,d\bx+ \int_{\Gamma}(h -P_0) \nabla q^{N}_{i}\cdot\bn\,\,d\sigma 
\end{aligned}
\end{eqnarray*}
\emph{\textbf{(ii)}} Moreover, if the data satisfy that 
\begin{eqnarray*}
\norm{\ff}_{[\bH_0^{r',p'}(\curl, \Omega)]'} + \norm{\bg}_{[\bH_0^{r',p'}(\curl,\Omega)]'} + \norm{P_0}_{W^{1-\frac{1}{r},r}(\Gamma)}  +\norm{h}_{W^{1,r}(\O)} \leq \delta_2,
\end{eqnarray*}
for some $\delta_2\in]0,\,\delta_1]$, then the weak solution of \eqref{eqn:MHD} is unique.
\end{theorem}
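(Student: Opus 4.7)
The approach is a Banach fixed-point argument applied to the linearized system \eqref{linearized MHD-pressure: main results}, using Corollary \ref{cor: weak W1,p for p<2} as the central tool. Define the map
\begin{equation*}
T: \bW^{1,p}_\sigma(\O) \times \bW^{1,p}_\sigma(\O) \longrightarrow \bW^{1,p}_\sigma(\O) \times \bW^{1,p}_\sigma(\O), \qquad T(\bw,\bdd)=(\bu,\bb),
\end{equation*}
where $(\bu,\bb,P,\bc)$ is the unique solution of the linearized problem associated to $(\bw,\bdd)$. This map is well defined because $\frac{3}{2}<p<2$ gives the embeddings $\bW^{1,p}(\O)\hookrightarrow \bL^{p^*}(\O)\hookrightarrow \bL^{3}(\O)$ and $\curl\bw\in\bL^{p}(\O)\hookrightarrow\bL^{3/2}(\O)$, so the regularity hypotheses of Corollary \ref{cor: weak W1,p for p<2} on $(\bw,\bdd)$ are automatically satisfied whenever $(\bw,\bdd)\in\bW^{1,p}_\sigma(\O)^2$. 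Denote by $C>0$ the constant appearing in estimates \eqref{estim cor u,b W^1,p for p<2}--\eqref{estim cor P W^1,r for p<2} and by $M$ the norm of the data:
\begin{equation*}
M=\norm{\ff}_{[\bH_0^{r',p'}(\curl,\Omega)]'}+\norm{\bg}_{[\bH_0^{r',p'}(\curl,\Omega)]'}+\norm{P_0}_{W^{1-\frac{1}{r},r}(\Gamma)}+\norm{h}_{W^{1,r}(\Omega)}.
\end{equation*}

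The first step is to find a closed ball left invariant by $T$. Set $\eta=2CM$ and $C^{*}$ the Hölder/embedding constant controlling the nonlinear terms in the norms used below; the precise values play the role of the quantities $\eta$ and $C^{*}$ in \eqref{estim u,b non linear p<2}--\eqref{estim P non linear p<2}. If $(\bw,\bdd)\in B_\eta:=\{(\bw,\bdd)\in\bW^{1,p}_\sigma(\O)^2;\,\norm{(\bw,\bdd)}_{\bW^{1,p}\times\bW^{1,p}}\leq \eta\}$, then by \eqref{estim cor u,b W^1,p for p<2},
\begin{equation*}
\norm{T(\bw,\bdd)}_{\bW^{1,p}\times\bW^{1,p}}\leq C(1+C^{*}\eta)M,
\end{equation*}
and choosing $\delta_1$ so that $C^{*}\eta\leq 1$ (which is the role of the condition $\delta_1=(2C^2C^{*})^{-1}$) one obtains $T(B_\eta)\subset B_\eta$.

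The main step, and the most delicate one, is showing that $T$ is a strict contraction on $B_\eta$. Taking $(\bw_i,\bdd_i)\in B_\eta$, $i=1,2$, and $(\bu_i,\bb_i,P_i,\bc_i)=T(\bw_i,\bdd_i)$, the differences $(\bU,\bB,\Pi,\bc):=(\bu_1-\bu_2,\bb_1-\bb_2,P_1-P_2,\bc_1-\bc_2)$ satisfy the linearized problem \eqref{linearized MHD-pressure: main results} with $(\bw,\bdd)=(\bw_2,\bdd_2)$, zero boundary data, $h=0$, and right-hand sides
\begin{align*}
\ff' &= -(\curl(\bw_1-\bw_2))\times \bu_1+(\curl \bb_1)\times(\bdd_1-\bdd_2), \\
\bg' &= \curl\bigl(\bu_1\times(\bdd_1-\bdd_2)\bigr),
\end{align*}
obtained by splitting the quadratic differences as
\begin{equation*}
(\curl\bw_1)\times\bu_1-(\curl\bw_2)\times\bu_2=(\curl\bw_2)\times\bU+(\curl(\bw_1-\bw_2))\times\bu_1,
\end{equation*}
and analogously for the Lorentz and induction terms. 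Using $\frac{1}{r}=\frac{1}{p}+\frac{1}{3}$, Hölder's inequality, and $\bW^{1,p}(\O)\hookrightarrow \bL^{3}(\O)$, one controls each piece: for instance $\norm{(\curl(\bw_1-\bw_2))\times\bu_1}_{\bL^r(\O)}\leq\norm{\bw_1-\bw_2}_{\bW^{1,p}(\O)}\norm{\bu_1}_{\bL^{3}(\O)}$, and the remaining terms are similar, while $\bg'$ is handled via the characterization of $[\bH_0^{r',p'}(\curl,\O)]'$ in Proposition \ref{charac dual H^r,p} by writing $\bg'$ as the curl of an $\bL^p(\O)$ field. Applying Corollary \ref{cor: weak W1,p for p<2} to $(\bU,\bB)$ gives
\begin{equation*}
\norm{\bU}_{\bW^{1,p}(\O)}+\norm{\bB}_{\bW^{1,p}(\O)}\leq C(1+C^{*}\eta)\cdot C^{*}\eta\cdot\norm{(\bw_1-\bw_2,\bdd_1-\bdd_2)}_{\bW^{1,p}\times\bW^{1,p}},
\end{equation*}
where $\eta$ absorbs $\norm{\bu_1}_{\bW^{1,p}}$, $\norm{\bb_1}_{\bW^{1,p}}$. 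Strengthening $\delta_1$ if necessary so that $C(1+C^{*}\eta)C^{*}\eta<1$, the map $T$ is a strict contraction on $B_\eta$, and Banach's theorem produces a unique fixed point in $B_\eta$, which is exactly a weak solution $(\bu,\bb)$ of \eqref{eqn:MHD} in $\bW^{1,p}(\O)\times\bW^{1,p}(\O)$. The corresponding pressure $P\in W^{1,r}(\O)$ and the constants $\alpha_i$ are recovered directly from the linearized solution, and the estimates \eqref{estim u,b non linear p<2}--\eqref{estim P non linear p<2} follow by reapplying \eqref{estim cor u,b W^1,p for p<2}--\eqref{estim cor P W^1,r for p<2} to the fixed point, with the explicit formula for $\alpha_i$ coming from \eqref{constantes c_i cas W^1,p avec h=0} adapted to include the term $\int_{\Gamma}h\nabla q_i^N\cdot\bn\,d\sigma$ as in the proof of Corollary \ref{cor: weak W1,p for p<2}.

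For part \emph{(ii)}, uniqueness follows by taking any two weak solutions $(\bu_i,\bb_i,P_i,\boldsymbol{\alpha}^{(i)})$ of \eqref{eqn:MHD} in $\bW^{1,p}(\O)\times\bW^{1,p}(\O)\times W^{1,r}(\O)\times\R^I$ whose norms satisfy the a priori bound \eqref{estim u,b non linear p<2} corresponding to $\delta_2\leq\delta_1$ small. Their difference satisfies the same linearized system as in the contraction step, with $(\bw,\bdd)=(\bu_2,\bb_2)$; the same computation yields
\begin{equation*}
\norm{\bu_1-\bu_2}_{\bW^{1,p}(\O)}+\norm{\bb_1-\bb_2}_{\bW^{1,p}(\O)}\leq \kappa\bigl(\norm{\bu_1-\bu_2}_{\bW^{1,p}(\O)}+\norm{\bb_1-\bb_2}_{\bW^{1,p}(\O)}\bigr),
\end{equation*}
with $\kappa<1$ provided $\delta_2$ is chosen small enough, forcing $\bu_1=\bu_2$ and $\bb_1=\bb_2$; the pressure identity \eqref{ExistPress}, applied to the difference, gives $P_1=P_2$, and the formula \eqref{def alpha_i} gives $\boldsymbol{\alpha}^{(1)}=\boldsymbol{\alpha}^{(2)}$. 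The main obstacle throughout is the careful bookkeeping of the Hölder exponents to ensure that the products $(\curl(\bw_1-\bw_2))\times\bu_1$ and $\bu_1\times(\bdd_1-\bdd_2)$ actually land in $\bL^r(\O)$ and in the dual space $[\bH_0^{r',p'}(\curl,\O)]'$ respectively, which is precisely what forces the lower bound $p>\frac{3}{2}$.
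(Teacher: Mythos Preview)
Your proposal is correct and follows essentially the same route as the paper: a Banach fixed-point argument on the linearized map $T(\bw,\bdd)=(\bu,\bb)$, using the $\bW^{1,p}$ estimates from Corollary~\ref{cor: weak W1,p for p<2} (and Theorem~\ref{thm: weak W1,p for p<2} for the difference system with $h=0$), with the same splitting of the quadratic terms and the same H\"older bookkeeping. One small correction: your domain $\bW^{1,p}_\sigma(\O)\times\bW^{1,p}_\sigma(\O)$ is too restrictive when $h\neq 0$, since then $\vdiv\bu=h$ and $T$ does not land back in that space; the paper works in $\bZ^p(\Omega)=\bW^{1,p}(\Omega)\times\bW^{1,p}_\sigma(\Omega)$, requiring divergence-free only on the second (magnetic) component, which is all that Corollary~\ref{cor: weak W1,p for p<2} needs for $\bdd$.
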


\begin{proof}
${}$\\
\noindent\textbf{(i) Existence}: Let us define the space 
\begin{eqnarray*}
\bZ^p(\Omega) = \bW^{1,p}(\Omega) \times \bW_\sigma^{1,p}(\Omega)
\end{eqnarray*}
For given $(\bw,\bdd)\in \bB_{\eta}$, define the operator $T$ by $T(\bw,\bdd)=(\bu,\bb)$ where $(\bu,\bb)$ is the unique solution of the linearized problem \eqref{linearized MHD-pressure} given by Theorem \ref{thm: weak W1,p for p<2} and the neighbourhood $\bB_{\eta}$ is defined by
$$\bB_{\eta}=\{(\bw,\bdd)\in \bZ^{p}(\O),\,\Vert(\bw,\bdd)\Vert_{\bZ^{p}(\O)}\leq \eta\}, \quad \eta >0.$$
 Here $\bZ^p(\O)$ is equipped with the norm 
$$\Vert(\bw,\bdd)\Vert_{\bZ^{p}(\O)}=\Vert\bw\Vert_{\bW^{1,p}(\O)}+\Vert\bdd\Vert_{\bW^{1,p}(\O)}.$$
 We have to prove that $T$ is a contraction from $\bB_\eta$ to itself. Let $(\bw_1, \bdd_1), (\bw_2, \bdd_2) \in \bB_\eta$. We show that there exists $\theta \in (0,1)$ such that:
\begin{eqnarray}\label{ineq for T}
\begin{aligned}
\norm{T(\bw_1, \bdd_1) - T(\bw_2, \bdd_2)}_{\bZ^{p}(\O)} &= \norm{(\bu_1, \bb_1) - (\bu_2, \bb_2)}_{\bZ^{p}(\O)} \leqslant \theta \norm{(\bw_1, \bdd_1) - (\bw_2, \bdd_2)}_{\bZ^{p}(\O)}	
\end{aligned}
\end{eqnarray}

Thanks to Corollary \ref{cor: weak W1,p for p<2}, each $(\bu_i, \bb_i, P_i, \bc^{i})$, $i=1,2$, belongs to $ \bW^{1,p}(\Omega) \times \bW^{1,p}(\Omega) \times W^{1,r}(\Omega)$ and verifies: 
\begin{eqnarray} \label{Systeme en uk et bk}
\begin{cases}
- \Delta \bu_i + (\curl \bw_i) \times \bu_i + \nabla P_i -  (\curl \bb_i) \times \bdd_i = \ff \, \, \text{and} \, \, \vdiv \bu_i = h \, \, \, \text{in} \, \,  \Omega \\
\curl \curl \bb_i - \curl(\bu_i \times \bdd_i) = \bg \, \, \text{and} \, \, \vdiv \bb_i = 0 \, \, \, \text{in} \, \, \Omega \\
\bu_i \times \bn = 0 \, \, \text{and} \, \, \bb_i \times \bn = 0 \, \, \, \text{on} \, \, \Gamma \\
P_i = P_0 \, \, \text{on} \, \, \Gamma_0 \, \, \text{and} \, \, P_i = P_0 + c_j^{(i)} \, \, \, \text{on} \, \, \Gamma_j \\
\langle \bu_i \cdot \bn, 1 \rangle_{\Gamma_j} = 0 \, \, \text{and} \, \, \langle \bb_i \cdot \bn, 1 \rangle_{\Gamma_j} = 0, \, \, \, \forall \, 1 \leqslant j \leqslant I
\end{cases}
\end{eqnarray}
together with following estimate for $i=1,2$: 
{\footnotesize
\begin{eqnarray} \label{Estimation de uk et bk}
\norm{(\bu_i, \bb_i)}_{_{\bZ^p(\Omega)}} \!\!\!\leqslant C \Big( 1 + \norm{\curl \bw_i}_{\bL^\frac{3}{2}(\Omega)} + \norm{\bdd_i}_{\bW^{1,\frac{3}{2}}(\Omega)} \!\!\Big)  
\Big( \gamma_1 + (1 + \norm{\curl \bw_i}_{_{\bL^\frac{3}{2}(\Omega)}} \!\!\!\!\!\!+ \norm{\bdd_i}_{\bW^{1,\frac{3}{2}}(\Omega)}) \gamma_2 \Big) 
\end{eqnarray}}
where $\gamma_1 = \norm{\ff}_{[\bH_0^{r',p'}(\curl, \Omega)]'} + \norm{\bg}_{[\bH_0^{r',p'}(\curl, \Omega)]'} + \norm{P_0}_{W^{1-\frac{1}{r},r}(\Gamma)}$ and $\gamma_2 = \norm{h}_{W^{1,r}(\Omega)}$. \\

\noindent Next, the differences $(\bu, \bb, P, \bc) = (\bu_1 - \bu_2, \bb_1 - \bb_2, P_1 - P_2, \bc^{1}-\bc^{2})$ satisfy
\begin{eqnarray} \label{Systeme differences de u1 avec u2}
\begin{cases}
- \Delta \bu + (\curl \bw_{1}) \times \bu + \nabla P - (\curl \bb) \times \bdd_1 = \ff_2 \, \, \text{and} \, \, \vdiv \bu = 0 \, \, \text{in} \, \, \Omega \\
\curl \curl \bb -  \curl (\bu \times \bdd_1) = \bg_2 \, \, \text{and} \, \, \vdiv \bb = 0 \, \, \text{in} \, \, \Omega \\
\bu \times \bn = \textbf{0} \, \, \text{and} \, \, \bb \times \bn = \textbf{0} \, \, \, \text{on} \, \, \Gamma \\
P= 0 \, \, \, \text{on} \, \, \Gamma_0\quad \mathrm{and}\quad P=c_j\quad \mathrm{on}\,\,\Gamma_j \\
\langle \bu \cdot \bn, 1 \rangle_{\Gamma_j} = 0 \, \, \text{and} \, \, \langle \bb \cdot \bn, 1 \rangle_{\Gamma_j} = 0 \, \, \, \forall \, 1 \leqslant j \leqslant I 
\end{cases}
\end{eqnarray}
with $\ff_2 = - (\curl \bw) \times \bu_2 + (\curl \bb_2) \times \bdd$ and $\bg_2 = \curl(\bu_2 \times \bdd)$. Observe that  $\ff_2$ and $\bg_2$ belong to $[\bH_0^{r',p'}(\curl, \Omega)]'$. Indeed, Since $\bu_2, \bb_2 \in \bW^{1,p}(\Omega) \hookrightarrow \bL^{p^*}(\Omega)$, $\curl \bw \in \bL^\frac{3}{2}(\Omega)$ and $\bdd \in \bW^{1,\frac{3}{2}}(\Omega)$, then $(\curl \bw) \times \bu_2$ and $(\curl \bb_2) \times \bdd$ belong to $\bL^r(\Omega) \hookrightarrow [\bH_0^{r',p'}(\curl, \Omega)]'$. Besides, $\bu_2 \times \bdd \in \bL^p(\Omega)$ so $\curl (\bu_2 \times \bdd) \in [\bH_0^{r',p'}(\curl, \Omega)]'$. Moreover, since $\bg_2$ is a $\curl$, then it satisfies the conditions \eqref{Condition compatibilite K_N Lp}-\eqref{condition div g=0 sol W^1,p linear MHD}. Hence, we apply the Theorem \ref{thm: weak W1,p for p<2} and we have the estimate: 
\begin{eqnarray*}
\begin{aligned}
\norm{(\bu, \bb)}_{\bZ^p(\Omega)} &\leqslant C ( 1 + \norm{\curl \bw_1}_{\bL^\frac{3}{2}(\Omega)} + \norm{\bdd_1}_{\bW^{1,\frac{3}{2}}(\Omega)}) (\norm{\ff_2}_{[\bH_0^{r',p'}(\curl, \Omega)]'} + \norm{\bg_2}_{[\bH_0^{r',p'}(\curl, \Omega)]'})
\end{aligned}
\end{eqnarray*}
By the definition of the norm on $[\bH_0^{r',p'}(\curl, \Omega)]'$, the Hölder inequality and the embeddings
\begin{eqnarray*}
\bW^{1,p}(\Omega) \hookrightarrow \bL^{p^*}(\Omega) \, \, \text{and} \, \, \bW^{1,p}(\Omega) \hookrightarrow \bW^{1,\frac{3}{2}}(\Omega) \hookrightarrow \bL^3(\Omega),
\end{eqnarray*}
it follows: 
\begin{eqnarray*}
\begin{aligned}
\norm{\ff_2}_{[\bH_0^{r',p'}(\curl, \Omega)]'} &\leqslant \norm{(\curl \bw) \times \bu_2}_{\bL^r(\Omega)} + \norm{(\curl \bb_2) \times \bdd}_{\bL^r(\Omega)} \\
&\leqslant \norm{\curl \bw}_{\bL^\frac{3}{2}(\Omega)} \norm{\bu_2}_{\bL^{p^*}(\Omega)} + \norm{\curl \bb_2}_{\bL^p(\Omega)} \norm{\bdd}_{\bL^3(\Omega)} \\
&\leqslant C (C_w \norm{\bw}_{\bW^{1,p}(\Omega)} \norm{\bu_2}_{\bW^{1,p}(\Omega)} + C_d \norm{\bdd}_{\bW^{1,p}(\Omega)} \norm{\bb_2}_{\bW^{1,p}(\Omega)} ) 
\end{aligned}
\end{eqnarray*}
and 
\begin{eqnarray*}
\begin{aligned}
\norm{\bg_2}_{[\bH_0^{r',p'}(\curl, \Omega)]'} = \norm{\bu_2 \times \bdd}_{\bL^p(\Omega)} &\leqslant \norm{\bu_2}_{\bL^{p^*}(\Omega)} \norm{\bdd}_{\bL^3(\Omega)} \leqslant C C_d \norm{\bu_2}_{\bW^{1,p}(\Omega)} \norm{\bdd}_{\bW^{1,p}(\Omega)} 
\end{aligned}
\end{eqnarray*}
where $C_w > 0$ and $C_d > 0$ are respectively defined by $\norm{\curl \bw}_{\bL^\frac{3}{2}(\Omega)} \leqslant C_w \norm{\bw}_{\bW^{1,p}(\Omega)}$ and $\norm{\bdd}_{\bL^3(\Omega)} \leqslant C_d \norm{\bdd}_{\bW^{1,p}(\Omega)}$. Therefore, recalling that $(\bw_1, \bdd_1)$ belongs to $\bB_\eta$, we have: 
\begin{eqnarray*}
\norm{\bu}_{\bW^{1,p}(\Omega)} + \norm{\bb}_{\bW^{1,p}(\Omega)} \!\!&\leqslant &\!\! C (1 + C^* \norm{(\bw_1, \bdd_1)}_{\bZ^p(\Omega)}) (C_w \norm{\bw}_{\bW^{1,p}(\Omega)} + C_d \norm{\bdd}_{\bW^{1,p}(\Omega)}) \norm{(\bu_2, \bb_2)}_{\bZ^p(\Omega)} \\
&\leqslant& \!\!C (1 + C^* \eta) C^* \norm{(\bw, \bdd)}_{\bZ^p(\Omega)} \norm{(\bu_2, \bb_2)}_{\bZ^p(\Omega)}
\end{eqnarray*}
with $C^* = C_w + C_d$. Combining with \eqref{Estimation de uk et bk}, we thus obtain: 
\begin{eqnarray} \label{definition C^* preuve W1p p<2 non lin}
\norm{\bu}_{\bW^{1,p}(\Omega)} + \norm{\bb}_{\bW^{1,p}(\Omega)} \leqslant C_1 C^* \norm{(\bw, \bdd)}_{\bZ^p(\Omega)} (\gamma_1 + (1+C^* \eta) \gamma_2) 
\end{eqnarray}
where $C_1 = C (1 + C^* \eta)^2$. Hence, if we choose, for instance: 
\begin{eqnarray} \label{definition eta preuve W1p p<2 non lin}
\begin{aligned}
\eta = (C^*)((2C C^* \gamma)^{-\frac{1}{3}} - 1) \, \, \text{and} \, \, \gamma = \gamma_1 + \gamma_2 < (2C C^*)^{-1}
\end{aligned}
\end{eqnarray} 
then $C_1 C^* (\gamma_1 + (1 + C^* \eta) \gamma_2) < \frac{1}{2}$. Therefore $T$ is a contraction and we obtain the unique fixed-point $(\bu^*, \bb^*) \in \bW^{1,p}(\Omega) \times \bW^{1,p}_{\sigma}(\Omega)$ satisfying 
\begin{eqnarray*}
\begin{aligned}
\norm{(\bu^*, \bb^*)}_{\bZ^p(\Omega)} &\leqslant C \Big( 1 + C^* \norm{(\bu^*, \bb^*)}_{\bZ^p(\Omega)} \Big) \Big(\gamma_1 + (1 + C^* \norm{(\bu^*, \bb^*)}_{\bZ^p(\Omega)}) \gamma_2 \Big)
\end{aligned}
\end{eqnarray*}
Next, since $(\bu^*, \bb^*) \in \bB_\eta$, we obtain 
\begin{eqnarray} \label{Estim u^* and b^*}
\begin{aligned}
\norm{(\bu^*, \bb^*)}_{\bZ^p(\Omega)} &\leqslant C (1 + C^* \eta) (\gamma_1 + (1 + C^* \eta) \gamma_2) \leqslant C_1 \gamma
\end{aligned}
\end{eqnarray}
which implies the estimate \eqref{estim u,b non linear p<2}: 

\noindent Now, we want to prove the estimate for the associated pressure. Taking the divergence in the first equation of problem \eqref{eqn:MHD}, we have that $P^*$ is a solution of the following problem: 
\begin{eqnarray*}
\begin{cases}
\Delta P^* = \vdiv \ff + \vdiv( (\curl \bb^*) \times \bb^* - (\curl \bu^*) \times \bu^*)+\Delta h\qquad \mathrm{in}\,\,\O,\\
P^*=P_0\quad \mathrm{on}\,\,\Gamma_0\quad \mathrm{and}\quad P^*=P_0+c_i\quad \mathrm{on}\,\,\Gamma_i\qquad \qquad
\end{cases}
\end{eqnarray*}
with 
{\footnotesize
\begin{eqnarray*}
\norm{P^*}_{W^{1,r}(\Omega)}\leqslant \norm{\vdiv \ff}_{W^{-1,r}(\Omega)} + \norm{\vdiv ((\curl \bb^*) \times \bb^* - (\curl \bu^*) \times \bu^*)}_{W^{-1,r}(\Omega)}+\norm{\Delta h}_{W^{-1,r}(\O)}+\norm{P_0}_{W^{1-1/r,r}(\Gamma)}
\end{eqnarray*}}
Following the same calculus as in the proof of Theorem \ref{thm: weak W1,p for p<2}, we obtain
\begin{eqnarray*}
\norm{P^*}_{W^{1,r}(\Omega)}\leqslant C(1+C^*\eta)^2(\gamma_1+(1+C^*\eta)\gamma_2).
\end{eqnarray*}
which implies \eqref{estim P non linear p<2} and the proof of existence is completed. 

\noindent \textbf{ (ii) Uniqueness}:

Let $(\bu_1, \bb_1, P_1)$ and $(\bu_2, \bb_2, P_2)$ two solutions of the problem \eqref{eqn:MHD}. Then, we set $(\bu, \bb, P) = (\bu_1 - \bu_2, \bb_1	 - \bb_2, P_1 - P_2)$ which satisfies the problem: 
\begin{eqnarray*}
\begin{cases}
- \Delta \bu + (\curl \bu_1) \times \bu + \nabla P -  (\curl \bb) \times \bb_1 = \ff_2 \, \, \text{and} \, \, \vdiv \bu = 0 \, \, \text{in} \, \, \Omega \\
\curl \curl \bb - \curl (\bu \times \bb_1) = \bg_2 \, \, \text{and} \, \, \vdiv \bb = 0 \, \, \text{in} \, \, \Omega \\
\bu \times \bn = \textbf{0} \, \, \text{and} \, \, \bb \times \bn = \textbf{0} \, \, \, \text{on} \, \, \Gamma \\
P = 0 \, \, \text{on} \, \, \Gamma_0\quad \mathrm{and}\quad P=\alpha_i^{(1)}-\alpha_i^{(2)}\quad \mathrm{on}\,\,\Gamma_i,\\
\langle \bu \cdot \bn, 1 \rangle_{\Gamma_i} = 0 \, \, \text{and} \, \, \langle \bb \cdot \bn, 1 \rangle_{\Gamma_i} = 0 \, \, \, \forall \, 1 \leqslant i \leqslant I
\end{cases}
\end{eqnarray*}
where $\ff_2, \bg_2 \in [\bH_0^{r',p'}(\curl, \Omega)]'$ are already given in \eqref{Systeme differences de u1 avec u2} and satisfy the hypothesis of Theorem \ref{thm: weak W1,p for p<2}. Applying this theorem, we have the estimate
{\footnotesize
\begin{eqnarray*}
\norm{(\bu, \bb)}_{\bZ^p(\Omega)}\!\!\!& \!\!\!\!\leqslant\!\! &\!\!\!\!\!C(1 + \norm{\curl \bu_1}_{\bL^\frac{3}{2}(\Omega)} + \norm{\bb_1}_{\bW^{1,\frac{3}{2}}(\Omega)}) (C_w \norm{\bu}_{\bW^{1,p}(\Omega)} \norm{\bu_2}_{\bW^{1,p}(\Omega)} 
+ C_d \norm{\bb}_{\bW^{1,p}(\Omega)} \norm{\bb_2}_{\bW^{1,p}(\Omega)} ) \\
&\!\!\leqslant& \!\!\!\!\!C(1 + C^* \norm{(\bu_1, \bb_1)}_{\bZ^p(\Omega)}) C^* \norm{(\bu_2, \bb_2)}_{\bZ^p(\Omega)} \norm{(\bu, \bb)}_{\bZ^p(\Omega)} .
\end{eqnarray*}}
From \eqref{Estim u^* and b^*}, we obtain that:
\begin{eqnarray*}
\norm{(\bu, \bb)}_{\bZ^p(\Omega)} \leqslant C (1 + C^* C_1 \gamma) C^* C_1 \gamma \norm{(\bu, \bb)}_{\bZ^p(\Omega)}
\end{eqnarray*}
Thus, for $\gamma$ small enough such that 
\begin{eqnarray*}
C (1 + C^* C_1 \gamma) C^* C_1 \gamma < 1 
\end{eqnarray*}
we deduce that $\bu=\bb=\textbf{0}$ and then we obtain the uniqueness of the velocity and the magnetic field which implies the uniqueness of the pressure $P$.
\end{proof}

\section{Appendix}\label{section Appendix}
We begin this section by giving another proof of Theorem \ref{thm: strong W2,p for 1<p<6/5}.\\
\textbf{A second proof of Theorem \ref{thm: strong W2,p for 1<p<6/5}:}  Let $\lambda > 0$, and let us assume $\ff_\lambda, \bg_\lambda \in \boldsymbol{D}(\Omega)$ such that $\ff_\lambda$ and $\bg_\lambda$ respectively converge to $\ff$ and $\bg$ in $\bL^p(\Omega)$, and $P_0^\lambda \in C^\infty(\Gamma)$ which converges to $P_0$ in $W^{1-\frac{1}{p},p}(\Gamma)$. 

We consider the problem: find $(\bu_\lambda, \bb_\lambda, P_\lambda, c_i^\lambda)$ solution of:
\begin{eqnarray} \label{MHD f_lambda g_lambda problem}
\begin{cases}
-\Delta \bu_\lambda + (\curl \bw) \times \bu_\lambda + \nabla P_\lambda -  (\curl \bb_\lambda) \times \bdd  = \ff_\lambda \, \, \text{and} \, \, \vdiv \bu_\lambda = 0 \, \, \text{in} \, \, \Omega \\
\curl \curl \bb_\lambda -  \curl(\bu_\lambda \times \bdd) = \bg_\lambda \, \, \text{and} \, \, \vdiv \bb_\lambda = 0 \, \, \text{in} \, \, \Omega \\
P_\lambda=P_0^\lambda, \quad \text{on} \, \Gamma_0, \, P_\lambda = P_0^\lambda + c_i^\lambda \quad \text{on} \, \Gamma_i\\
\bu_\lambda \times \bn = 0, \quad \bb_\lambda \times \bn = 0 \quad \text{on} \, \Gamma \\
\langle \bu_\lambda \cdot \bn, 1 \rangle_{\Gamma_i} = 0, \quad  \langle \bb_\lambda \cdot \bn, 1 \rangle_{\Gamma_i} = 0, \, \, \forall \, 1 \leqslant i \leqslant I
\end{cases}
\end{eqnarray}
Note that, since $\ff_\lambda, \bg_\lambda \in \boldsymbol{D}(\Omega)$, in particular they belong to $\bL^\frac{6}{5}(\Omega)$. Thus, applying Theorem \ref{thm strong solution p>6/5}, we have $(\bu_\lambda, \bb_\lambda) \in \bW^{2,\frac{6}{5}}(\Omega) \times \bW^{2,\frac{6}{5}}(\Omega) \hookrightarrow \bH^1(\Omega) \times \bH^1(\Omega) \hookrightarrow \bL^6(\Omega) \times \bL^6(\Omega)$. Therefore, $(\curl \bw) \times \bu_\lambda$, $(\curl \bb_\lambda) \times \bdd$ and $\curl (\bu_\lambda \times \bdd)$ belong to $\bL^\frac{6}{5}(\Omega) \hookrightarrow \bL^p(\Omega)$. Hence, using the regularity results of the Stokes and elliptic problems, we obtain that the problem \eqref{MHD f_lambda g_lambda problem} has a unique solution $(\bu_\lambda, \bb_\lambda, P_\lambda) \in \bW^{2,p}(\Omega) \times \bW^{2,p}(\Omega) \times W^{1,p}(\Omega)$ which also satisfies the estimate: 
\begin{eqnarray} \label{Estimation forte Stokes Ellipt preuve}
\begin{aligned}
&\norm{\bu_\lambda}_{\bW^{2,p}(\Omega)} + \norm{\bb_\lambda}_{\bW^{2,p}(\Omega)} + \norm{P_\lambda}_{W^{1,p}(\Omega)} \\
&\leqslant C_{SE} \Big( \norm{\ff_\lambda}_{\bL^p(\Omega)} + \norm{\bg_\lambda}_{\bL^p(\Omega)} + \norm{P_0^\lambda}_{W^{1-\frac{1}{p},p}(\Gamma)} + \sum_{i=1}^I |c_i^\lambda| + \norm{(\curl \bw) \times \bu_\lambda}_{\bL^p(\Omega)} \\
&+ \norm{(\curl \bb_\lambda) \times \bdd}_{\bL^p(\Omega)} + \norm{\curl (\bu_\lambda \times \bdd)}_{\bL^p(\Omega)} \Big)
\end{aligned}
\end{eqnarray}
with $C_{SE} = \max (C_S, C_E)$ where $C_S$ is the constant given in the Proposition \ref{thm solution W1,p and W2,p Stokes divu=h} and $C_E$ the constant given in the Theorem \ref{thm strong soulution elliptic}. We now want to bound the right hand side terms $\norm{(\curl \bw) \times \bu_\lambda}_{\bL^p(\Omega)}$, $\norm{(\curl \bb_\lambda) \times \bdd}_{\bL^p(\Omega)}$, $\norm{\curl (\bu_\lambda \times \bdd)}_{\bL^p(\Omega)}$ and $\sum_{i=1}^I \abs{c_i^\lambda}$ to obtain the estimate \eqref{Estimation theorem strong solution p small}. In this purpose, we decompose $\curl \bw$ and $\bdd$ as in \eqref{decomposition y}-\eqref{decomposition d}. 

\vspace{2mm}

Let $\epsilon > 0$ and $\rho_{\epsilon/2}$ the classical mollifier. We consider $\tilde{\by} = \widetilde{\curl \bw}$ and $\tilde{\bdd}$ the extensions by $0$ of $\by = \curl \bw$ and $\bdd$ in $\R^3$ respectively. We take: 

\begin{equation} \label{Decomposition of bw and bd}
\begin{aligned}
\curl \bw = \by_1^\epsilon + \by_2^\epsilon \, \, &\text{where} \, \, \by_1^\epsilon = \tilde{\by} \ast \rho_{\epsilon/2} \, \, \text{and} \, \, \by_2^\epsilon = \curl \bw - \by_1^\epsilon \\
\bdd = \bdd_1^\epsilon + \bdd_2^\epsilon \, \, &\text{where} \, \, \bdd_1^\epsilon = \tilde{\bdd} \ast \rho_{\epsilon/2} \, \, \text{and} \, \, \bdd_2^\epsilon = \bdd - \bdd_1^\epsilon
\end{aligned}
\end{equation}
For each term, we start by bounding the part depending on $\bdd_2^\epsilon$ (resp. $\by_2^\epsilon$), and then we look at $\bdd_1^\epsilon$ (resp. $\by_1^\epsilon$). 

\noindent \textbf{(i) Estimate of the term $\norm{(\curl \bw) \times \bu_\lambda}_{\bL^p(\Omega)}$:}

First, following the definition of the mollifier, we classically obtain: 
\begin{eqnarray} \label{Estimate of y_2^epsilon}
\norm{\by_2^\epsilon}_{\bL^\frac{3}{2}(\Omega)} = \norm{\by - \tilde{\by} \ast \rho_{\epsilon/2}}_{\bL^\frac{3}{2}(\Omega)} \leqslant \epsilon
\end{eqnarray}
Then, since we have $\bW^{2,p}(\Omega) \hookrightarrow \bL^{p^{**}}(\Omega)$ and the Hölder inequality, we obtain: 
\begin{eqnarray} \label{Estimate of y_2^epsilon times u_lambda}
\norm{\by_2^\epsilon \times \bu_\lambda}_{\bL^p(\Omega)} \leqslant \norm{\by_2^\epsilon}_{\bL^\frac{3}{2}(\Omega)} \norm{\bu_\lambda}_{\bL^{p^{**}}(\Omega)} \leqslant C_1 \norm{\by_2^\epsilon}_{\bL^\frac{3}{2}(\Omega)} \norm{\bu_\lambda}_{\bW^{2,p}(\Omega)}
\end{eqnarray}
where $C_1$ is the constant related to the previous Sobolev embedding $\bW^{2,p}(\Omega) \hookrightarrow \bL^{p^{**}}(\Omega)$.  Thus, injecting \eqref{Estimate of y_2^epsilon} in \eqref{Estimate of y_2^epsilon times u_lambda}, it follows:
\begin{eqnarray*}
\norm{\by_2^\epsilon \times \bu_\lambda}_{\bL^p(\Omega)} \leqslant C_1 \epsilon \norm{\bu_\lambda}_{\bW^{2,p}(\Omega)}
\end{eqnarray*}
Now, for the term in $\by_1^\epsilon$, we apply the Hölder inequality to obtain:  
\begin{eqnarray*}
\norm{\by_1^\epsilon \times \bu_\lambda}_{\bL^p(\Omega)} \leqslant \norm{\by_1^\epsilon}_{\bL^m(\Omega)} \norm{\bu_\lambda}_{\bL^q(\Omega)} \leqslant \norm{\by}_{\bL^\frac{3}{2}(\Omega)} \norm{\rho_{\epsilon/2}}_{\bL^t(\Omega)} \norm{\bu_\lambda}_{\bL^q(\Omega)} 
\end{eqnarray*}
with $m, q \geqslant p$ such that $\frac{1}{p} = \frac{1}{m} + \frac{1}{q}$ and $t > 1$ defined by $1 + \frac{1}{m} = \frac{2}{3} + \frac{1}{t}$. Note that this definition imposes $m > \frac{3}{2}$, and we can take in particular $m =3$, and hence $q=p^*$. Since, following the properties of the mollifier, there exists $C_\epsilon > 0$ such that, for all $t > 1$:  
\begin{eqnarray} \label{Mollifier bound}
\norm{\rho_{\epsilon/2}}_{\bL^t(\Omega)} \leqslant C_\epsilon
\end{eqnarray}
so we have
\begin{eqnarray*}
\norm{\by_1^\epsilon \times \bu_\lambda}_{\bL^p(\Omega)} \leqslant C_\epsilon \norm{\by}_{\bL^\frac{3}{2}(\Omega)} \norm{\bu_\lambda}_{\bL^{p^*}(\Omega)} 
\end{eqnarray*}

\noindent \textbf{(ii) Estimate of the term $\norm{(\curl \bb_\lambda) \times \bdd}_{\bL^p(\Omega)}$:}
As previously, we have from the definition of the mollifier: 
\begin{eqnarray} \label{Estim d_2^epsi sol p petit}
\norm{\bdd_2^\epsilon}_{\bW^{1,\frac{3}{2}}(\Omega)} = \norm{\bdd - \tilde{\bdd} \ast \rho_{\epsilon/2}}_{\bW^{1,\frac{3}{2}}(\Omega)} \leqslant \epsilon
\end{eqnarray}
Then, combining the Hölder inequality and the Sobolev embedding $\bW^{1,\frac{3}{2}}(\Omega) \hookrightarrow \bL^3(\Omega)$, we have:
\begin{eqnarray*}
\norm{(\curl \bb_\lambda) \times \bdd_2^\epsilon}_{\bL^p(\Omega)} \leqslant \norm{\curl \bb_\lambda}_{\bL^{p^*}(\Omega)} \norm{\bdd_2^\epsilon}_{\bL^3(\Omega)} \leqslant C_2 \epsilon \norm{\bb_\lambda}_{\bW^{1,p^*}(\Omega)}
\end{eqnarray*}
where $C_2$ is the constant related to the Sobolev embedding $\bW^{1,\frac{3}{2}}(\Omega) \hookrightarrow \bL^3(\Omega)$. 
We finally recall the Sobolev embedding $\bW^{2,p}(\Omega) \hookrightarrow \bW^{1,p^*}(\Omega)$ to obtain: 
\begin{eqnarray} \label{Estimate curl bb_lambda times d_2^epsilon} 
\norm{(\curl \bb_\lambda) \times \bdd_2^\epsilon}_{\bL^p(\Omega)} \leqslant C_2 C_3 \epsilon \norm{\bb_\lambda}_{\bW^{2,p}(\Omega)}
\end{eqnarray}
with $C_3$ the constant related to the Sobolev embedding $\bW^{2,p}(\Omega) \hookrightarrow \bW^{1,p^*}(\Omega)$. 
It remains to bound the term in $\bdd_1^\epsilon$. Applying the Holdër inequality, we have:
\begin{eqnarray} \label{Estimate curl bb_lambda times d_1^epsilon}
\begin{aligned}
\norm{(\curl \bb_\lambda) \times \bdd_1^\epsilon}_{\bL^p(\Omega)} &\leqslant \norm{\curl \bb_\lambda}_{\bL^q(\Omega)} \norm{\bdd_1^\epsilon}_{\bL^m(\Omega)} \leqslant \norm{\curl \bb_\lambda}_{\bL^q(\Omega)} \norm{\bdd}_{\bL^3(\Omega)} \norm{\rho_\epsilon}_{\bL^t(\Omega)} 
\end{aligned}
\end{eqnarray}
with $m, q \geqslant p$ such that $\frac{1}{p} = \frac{1}{m} + \frac{1}{q}$ and $t > 1$ such that $1 + \frac{1}{m} = \frac{1}{3} + \frac{1}{t}$. Note that these relations require $m > 3$, then $\frac{1}{p} < \frac{1}{q} + \frac{1}{3}$ so $q < p^*$. Therefore, we have the Sobolev embeddings: 
\begin{equation*}\bW^{2,p}(\Omega) \underset{\mathrm{compact}}{\hookrightarrow}\bW^{1,q}(\Omega)\underset{\mathrm{continuous}}{\hookrightarrow} \bL^{p^*}(\O)\end{equation*}
hence there exists $\eta > 0$ and $C_\eta > 0$ such that
\begin{eqnarray} \label{Estimation interpolation}
\norm{\bb_\lambda}_{\bW^{1,q}(\Omega)} \leqslant \eta \norm{\bb_\lambda}_{\bW^{2,p}(\Omega)} +C_\eta \norm{\bb_\lambda}_{\bL^{p^*}(\Omega)}
\end{eqnarray}
Injecting \eqref{Estimation interpolation} in \eqref{Estimate curl bb_lambda times d_1^epsilon}, combining with \eqref{Mollifier bound} and the Sobolev embedding $\bW^{1,\frac{3}{2}}(\Omega) \hookrightarrow \bL^3(\Omega)$, we obtain:
\begin{eqnarray*}
\begin{aligned}
\norm{(\curl \bb_\lambda) \times \bdd_1^\epsilon}_{\bL^p(\Omega)}\leqslant C_\epsilon C_2 \norm{\bdd}_{\bW^{1,\frac{3}{2}}(\Omega)} (\eta \norm{\bb_\lambda}_{\bW^{2,p}(\Omega)} + C_\eta \norm{\bb_\lambda}_{\bL^{p^*}(\Omega)} )
\end{aligned}
\end{eqnarray*}

\noindent \textbf{(iii) Estimate of the term $\norm{\curl (\bu_\lambda \times \bdd)}_{\bL^p(\Omega)}$:}

Since $\vdiv \bu_\lambda = 0$ and $\vdiv \bdd = 0$ in $\Omega$, then $\curl (\bu_\lambda \times \bdd) = (\bdd \cdot \nabla) \bu_\lambda - (\bu_\lambda \cdot \nabla) \bdd$. We thus bound these two terms. 
\begin{itemize}

\item \textbf{Estimate of the term $\norm{(\bdd \cdot \nabla) \bu_\lambda}_{\bL^p(\Omega)}$:}

The reasoning is exactly the same as for $(ii)$ with $\curl \bb_\lambda$ replacing by $\nabla \bu_\lambda$. Then we have: 
\begin{eqnarray*}
\norm{(\bdd_2^\epsilon \cdot \nabla) \bu_\lambda}_{\bL^p(\Omega)} \leqslant C_2 C_3 \epsilon \norm{\bu_\lambda}_{\bW^{2,p}(\Omega)} 
\end{eqnarray*}
and 
\begin{eqnarray*}
\norm{(\bdd_1^\epsilon \cdot \nabla) \bu_\lambda}_{\bL^p(\Omega)} \leqslant C_2 C_\epsilon \norm{\bdd}_{\bW^{1,\frac{3}{2}}(\Omega)} (\eta \norm{\bu_\lambda}_{\bW^{2,p}(\Omega)} + C_\eta \norm{\bu_\lambda}_{\bL^{p^*}(\Omega)})
\end{eqnarray*}

\item \textbf{Estimate of the term $\norm{(\bu_\lambda \cdot \nabla) \bdd}_{\bL^p(\Omega)}$:}

Again, the reasoning is the same as for $(i)$, with $\nabla \bdd$ instead of $\curl \bw$. Then we have:
\begin{eqnarray*}
\norm{(\bu_\lambda \cdot \nabla) \bdd_2^\epsilon}_{\bL^p(\Omega)} \leqslant C_1 \epsilon \norm{\bu_\lambda}_{\bW^{2,p}(\Omega)}
\end{eqnarray*}
and
\begin{eqnarray*}
\norm{(\bu_\lambda \cdot \nabla) \bdd_1^\epsilon}_{\bL^p(\Omega)}  \leqslant C_\epsilon \norm{\bdd}_{\bW^{1,\frac{3}{2}}(\Omega)} \norm{\bu_\lambda}_{\bL^{p^*}(\Omega)} 
\end{eqnarray*}
\end{itemize}

\noindent \textbf{(iv) Estimate of the term $\sum_{i=1}^I |c_i^\lambda|$}: 

With a triangle inequality, we have: 
\begin{eqnarray*}
\begin{aligned}
|c_i^\lambda| &\leqslant |\langle \ff_\lambda, \nabla q_i^N \rangle| + |\langle P_0^\lambda, \nabla q_i^N \cdot \bn \rangle | + |\int_\Omega (\curl \bw) \times \bu_\lambda \cdot \nabla q_i^N \, dx| \\
&+ |\int_\Omega (\curl \bb_\lambda) \times \bdd \cdot  \nabla q_i^N \, dx| 
\end{aligned}
\end{eqnarray*}
We can't directly bound $\abs{\int_\Omega (\curl \bw) \times \bu_\lambda \cdot \nabla q_i^N \, dx}$ and $\abs{\int_\Omega (\curl \bb_\lambda) \times \bdd \cdot \nabla q_i^N \, dx}$ with an Hölder inequality: we must use again the decomposition of $\curl \bw$ and $\bdd$ in \eqref{Decomposition of bw and bd}. 
\begin{itemize}
\item \textbf{Estimate of the term $\abs{\int_\Omega (\curl \bw) \times \bu_\lambda \cdot \nabla q_i^N \, dx}$:}

From \eqref{Estimate of y_2^epsilon}, the Sobolev embedding $\bW^{2,p}(\Omega) \hookrightarrow \bL^{p^{**}}(\Omega)$ and the Hölder inequality, we obtain:
\begin{eqnarray*}
\begin{aligned}
\abs{\int_\Omega \by_2^\epsilon \times \bu_\lambda \cdot \nabla q_i^N \, dx} &\leqslant \norm{\by_2^\epsilon}_{\bL^\frac{3}{2}(\Omega)} \norm{\bu_\lambda}_{\bL^{p^{**}}(\Omega)} \norm{\nabla q_i^N}_{\bL^{p'}(\Omega)} \\
&\leqslant \epsilon C_1 \norm{\bu_\lambda}_{\bW^{2,p}(\Omega)} \norm{\nabla q_i^N}_{\bL^{p'}(\Omega)}  
\end{aligned}
\end{eqnarray*}
with $C_1$ defined in \eqref{Estimate of y_2^epsilon times u_lambda}. Next, applying an Hölder inequality, we have: 
\begin{eqnarray*}
\begin{aligned}
\abs{\int_\Omega \by_1^\epsilon \times \bu_\lambda \cdot \nabla q_i^N \, dx} &\leqslant \norm{\by_1^\epsilon}_{\bL^q(\Omega)} \norm{\bu_\lambda}_{\bL^{p^*}(\Omega)} \norm{\nabla q_i^N}_{\bL^m(\Omega)} \\
&\leqslant \norm{\by}_{\bL^\frac{3}{2}(\Omega)} \norm{\rho_{\epsilon/2}}_{\bL^t(\Omega)} \norm{\bu_\lambda}_{\bL^{p^*}(\Omega)} \norm{\nabla q_i^N}_{\bL^m(\Omega)} \\
&\leqslant C_\epsilon \norm{\by}_{\bL^\frac{3}{2}(\Omega)} \norm{\bu_\lambda}_{\bL^{p^*}(\Omega)} \norm{\nabla q_i^N}_{\bL^m(\Omega)}
\end{aligned}
\end{eqnarray*}
with $m, q \geqslant p$ such that $\frac{1}{q} + \frac{1}{p^*} + \frac{1}{m} = 1$ and $t > 1$ such that $1 + \frac{1}{q} = \frac{2}{3} + \frac{1}{t}$, and $C_\epsilon$ already defined in \eqref{Mollifier bound}. 

\item \textbf{Estimate of the term $\int_\Omega (\curl \bb_\lambda) \times \bdd \cdot \nabla q_i^N \, dx$:}

Again, combining \eqref{Estim d_2^epsi sol p petit}, the Sobolev embedding $\bW^{2,p}(\Omega) \hookrightarrow \bW^{1,p^*}(\Omega)$ and the Hölder inequality, we have: 
\begin{eqnarray*}
\begin{aligned}
\abs{\int_\Omega (\curl \bb_\lambda) \times \bdd_2^\epsilon \cdot \nabla q_i^N \, dx} &\leqslant \norm{\curl \bb_\lambda}_{\bL^{p^*}(\Omega)} \norm{\bdd_2^\epsilon}_{\bL^\frac{3}{2}(\Omega)} \norm{\nabla q_i^N}_{\bL^{p'}(\Omega)} \\
&\leqslant C_3 \epsilon \norm{\nabla q_i^N}_{\bL^{p'}(\Omega)} \norm{\bb_\lambda}_{\bW^{2,p}(\Omega)}
\end{aligned}
\end{eqnarray*}
where $C_3$ is defined in \eqref{Estimate curl bb_lambda times d_2^epsilon}.

Now, we look at the part with $\bdd_1^\epsilon$, with the Hölder inequality: 
\begin{eqnarray*}
\begin{aligned}
\abs{\int_\Omega (\curl \bb_\lambda) \times \bdd_1^\epsilon \cdot \nabla q_i^N \, dx} &\leqslant \norm{\curl \bb_\lambda}_{\bL^q(\Omega)} \norm{\bdd_1^\epsilon}_{\bL^m(\Omega)} \norm{\nabla q_i^N}_{\bL^\alpha(\Omega)} \\
&\leqslant \norm{\bb_\lambda}_{\bW^{1,q}(\Omega)} \norm{\bdd}_{\bL^3(\Omega)} \norm{\rho_{\epsilon/2}}_{\bL^t(\Omega)} \norm{\nabla q_i^N}_{\bL^\alpha(\Omega)}
\end{aligned}
\end{eqnarray*}
with $q, m, \alpha \geqslant p$ such that $\frac{1}{q}+\frac{1}{m}+\frac{1}{\alpha} = 1$ and $t > 1$ such that $1 + \frac{1}{m} = \frac{1}{3} + \frac{1}{t}$. In order to recover \eqref{Estimation interpolation}, note that, taking $\alpha$ such that $1 - \frac{1}{\alpha} = \frac{1}{p}$, we obtain the same assumptions on $q$ and $m$. Therefore, since we can take the same $q$ as in \eqref{Estimation interpolation}, we apply it with \eqref{Mollifier bound} to obtain:
\begin{eqnarray*}
\begin{aligned}
\abs{\int_\Omega (\curl \bb_\lambda) \times \bdd_1^\epsilon \cdot \nabla q_i^N \, dx} \leqslant C_\epsilon C_2 \norm{\bdd}_{\bW^{1,\frac{3}{2}}(\Omega)} \norm{\nabla q_i^N}_{\bL^\alpha(\Omega)} (\eta \norm{\bb_\lambda}_{\bW^{2,p}(\Omega)} + C_\eta \norm{\bb_\lambda}_{\bL^{p^*}(\Omega)})
\end{aligned}
\end{eqnarray*}
\end{itemize}
Finally, noting that there exists $C_q > 0$ such that, for all $m \geqslant 1$, $\norm{\nabla q_i^N}_{\bL^m(\Omega)} \leqslant C_q$, we obtain from the previous calculus that: 
\begin{eqnarray*}
\begin{aligned}
\sum_{i=1}^I |c_i^\lambda| &\leqslant I C_q \Big( \norm{\ff_\lambda}_{\bL^p(\Omega)} + \norm{P_0^\lambda}_{W^{1-\frac{1}{p},p}(\Gamma)} + \epsilon C_1 \norm{\bu_\lambda}_{\bW^{2,p}(\Omega)} + \epsilon C_3 \norm{\bb_\lambda}_{\bW^{2,p}(\Omega)} \\
&+ \eta C_\epsilon C_2 \norm{\bdd}_{\bW^{1,\frac{3}{2}}(\Omega)} \norm{\bb_\lambda}_{\bW^{2,p}(\Omega)} + C_\epsilon \norm{\curl \bw}_{\bL^\frac{3}{2}(\Omega)} \norm{\bu_\lambda}_{\bL^{p^*}(\Omega)} \\
&+ C_\eta C_\epsilon C_2 \norm{\bdd}_{\bW^{1,\frac{3}{2}}(\Omega)} \norm{\bb_\lambda}_{\bL^{p^*}(\Omega)} \Big).
\end{aligned}
\end{eqnarray*}
Then, injecting $(i)-(iv)$ in \eqref{Estimation forte Stokes Ellipt preuve}, and taking $\epsilon$ small enough such that: 
\begin{eqnarray*}
\epsilon \Big( C_2 C_3 + \max (2 C C_1, I C_q C_3) \Big) < \frac{1}{4}
\end{eqnarray*}
and $\eta$ such that
\begin{eqnarray*}
\eta \norm{\bdd}_{\bW^{1,\frac{3}{2}}(\Omega)} C C_2 C_\epsilon  < \frac{1}{4}
\end{eqnarray*}
where $C > 0$ denotes the constant $C = (1 + I C_q)$, we obtain 
\begin{eqnarray} \label{estim presque finale preuve sol forte}
\begin{aligned}
&\norm{\bu_\lambda}_{\bW^{2,p}(\Omega)} + \norm{\bb_\lambda}_{\bW^{2,p}(\Omega)} + \norm{P_\lambda}_{W^{1,p}(\Omega)} \\
&\leqslant C_{SE} \Big[ C (\norm{\ff_\lambda}_{\bL^p(\Omega)} + \norm{\bg_\lambda}_{\bL^p(\Omega)} + \norm{P_0^\lambda}_{W^{1-\frac{1}{p},p}(\Gamma)}) \\
&+ \Big( C_\epsilon (1 + 2 C C_2 C_\eta ) \norm{\bdd}_{\bW^{1,\frac{3}{2}}(\Omega)} + C_\epsilon C \norm{\curl \bw}_{\bL^\frac{3}{2}(\Omega)} \Big) (\norm{\bu_\lambda}_{\bL^{p^*}(\Omega)} + \norm{\bb_\lambda}_{\bL^{p^*}(\Omega)}) \Big] 
\end{aligned}
\end{eqnarray}
Applying the estimate \eqref{estim u,b W^1,p for p<2} in \eqref{estim presque finale preuve sol forte}, we finally obtain the estimate: 
\begin{eqnarray} \label{estim finale preuve sol forte}
\begin{aligned}
&\norm{\bu_\lambda}_{\bW^{2,p}(\Omega)} + \norm{\bb_\lambda}_{\bW^{2,p}(\Omega)} + \norm{P_\lambda}_{W^{1,p}(\Omega)} \\
&\leqslant C_{SE} \Big( \norm{\ff_\lambda}_{\bL^p(\Omega)} + \norm{\bg_\lambda}_{\bL^p(\Omega)} + \norm{P_0^\lambda}_{W^{1-\frac{1}{p},p}(\Gamma)} \Big) \Big[ C + C_3 C_f \Big( C_\epsilon (1 + 2 C C_2 C_\eta ) \norm{\bdd}_{\bW^{1,\frac{3}{2}}(\Omega)} \\
&+ C C_\epsilon \norm{\curl \bw}_{\bL^\frac{3}{2}(\Omega)} \Big) \Big( 1 + \norm{\curl \bw}_{\bL^\frac{3}{2}(\Omega)} + \norm{\bdd}_{\bW^{1,\frac{3}{2}}(\Omega)} \Big) \Big] \\
&\leqslant C_{SE} \Big( \norm{\ff_\lambda}_{\bL^p(\Omega)} + \norm{\bg_\lambda}_{\bL^p(\Omega)} + \norm{P_0^\lambda}_{W^{1-\frac{1}{p},p}(\Gamma)} \Big) \max \Big( C, C_3 C_f C_\epsilon (1 + 2 C C_2 C_\epsilon), C C_\epsilon \Big) \\
&\times \Big( 1 + \norm{\curl \bw}_{\bL^\frac{3}{2}(\Omega)} + \norm{\bdd}_{\bW^{1,\frac{3}{2}}(\Omega)} \Big)^2
\end{aligned}
\end{eqnarray}
where $C_f$ is the constant of the estimate \eqref{estim u,b W^1,p for p<2} and $C_3$ is defined in \eqref{Estimate curl bb_lambda times d_2^epsilon}.

To conclude, from the estimate \eqref{estim finale preuve sol forte} we can extract subsequences of $\bu_\lambda$, $\bb_\lambda$ and $P_\lambda$, which are still denoted $\bu_\lambda$, $\bb_\lambda$ and $P_\lambda$, such that: 

\begin{eqnarray*}
\begin{aligned}
\bu_\lambda \rightharpoonup \bu \, \, \text{and} \, \, \bb_\lambda \rightharpoonup \bb \, \, \text{in} \, \, \bW^{2,p}(\Omega), \, \, P_\lambda \rightharpoonup P \, \, \text{in} \, \, W^{1,p}(\Omega)
\end{aligned}
\end{eqnarray*}

where $(\bu, \bb, P) \in \bW^{2,p}(\Omega) \times \bW^{2,p}(\Omega) \times W^{1,p}(\Omega)$ is solution of \eqref{linearized MHD-pressure} and satisfies the estimate  \eqref{Estimation theorem strong solution p small}. \hspace{12cm}\qquad \qquad \qquad $\square$\\
Next, we give here the proof of Corollary \ref{corollary strong sol p<6/5 with h} which is the extension of the previous result to the case of non-zero divergence condition for $\bu$.\\
\textbf{Proof of Corollary \ref{corollary strong sol p<6/5 with h}:} We proceed with the same reasoning as in the Corollary \ref{cor: weak W1,p for p<2}. We recover the solution of a linearized problem with a vanishing divergence by considering the Dirichlet problem:

\begin{eqnarray*}
\Delta \theta = h \, \, \text{in} \, \, \Omega \, \, \text{and} \, \, \theta = 0 \, \, \text{on} \, \, \Gamma 
\end{eqnarray*}
and setting $\bz = \bu - \nabla \theta$, thus $(\bz, \bb, P, \bc)$ is the solution of \eqref{linearized MHD-pressure} in the Theorem \ref{thm: strong W2,p for 1<p<6/5} with $\ff$ and $\bg$ replaced by $\tilde{\ff} = \ff + \nabla h - (\curl \bw) \times \nabla \theta$ and $\tilde{\bg} = \bg + \curl (\nabla \theta \times \bdd)$. 

Indeed, the assumptions of the Theorem \ref{thm: strong W2,p for 1<p<6/5} are satisfied: since $\nabla \theta \in \bW^{2,p}(\Omega) \hookrightarrow \bL^{p^{**}}(\Omega)$ and $\curl \bw \in \bL^\frac{3}{2}(\Omega)$, then $(\curl \bw) \times \nabla \theta \in \bL^p(\Omega)$. Moreover, we have by hypothesis $\nabla h \in \bL^p(\Omega)$ so $\tilde{\ff} \in \bL^p(\Omega)$. In the same way, we have $\tilde{\bg} \in \bL^p(\Omega)$ and since we only add a curl to $\bg$, then $\tilde{\bg}$ satisfies the conditions \eqref{Condition compatibilite K_N Lp}-\eqref{condition div g=0 sol W^1,p linear MHD}. Hence, we recover the estimate: 
\begin{eqnarray} \label{Estimation 1 preuve cor div non nulle sol forte}
\begin{aligned}
&\norm{\bz}_{\bW^{2,p}(\Omega)} + \norm{\bb}_{\bW^{2,p}(\Omega)} + \norm{P}_{W^{1,p}(\Omega)} \\
&\leqslant C_F \Big( 1 + \norm{\curl \bw}_{\bL^\frac{3}{2}(\Omega)} + \norm{\bdd}_{\bW^{1,\frac{3}{2}}(\Omega)} \Big)^2 \Big( \Vert\tilde{\ff}\Vert_{\bL^p(\Omega)} + \norm{\tilde{\bg}}_{\bL^p(\Omega)} + \norm{P_0}_{W^{1-\frac{1}{p},p}(\Gamma)} \Big)
\end{aligned}
\end{eqnarray}
We must bound $\Vert\tilde{\ff}\Vert_{\bL^p(\Omega)}$ and $\norm{\tilde{\bg}}_{\bL^p(\Omega)}$ term by term: 
\begin{itemize}
\item Applying the Hölder inequality and the Sobolev embedding $\bW^{2,p}(\Omega) \hookrightarrow \bL^{p^{**}}(\Omega)$, we have: 
\begin{eqnarray} \label{Estimation 2 preuve cor div non nulle sol forte}
\begin{aligned}
\Vert\tilde{\ff}\Vert_{\bL^p(\Omega)} &\leqslant \norm{\ff}_{\bL^p(\Omega)} + \norm{\nabla h}_{\bL^p(\Omega)} + \norm{\curl \bw}_{\bL^\frac{3}{2}(\Omega)} \norm{\nabla \theta}_{\bL^{p^{**}}(\Omega)} \\
&\leqslant \norm{\ff}_{\bL^p(\Omega)} + \norm{h}_{W^{1,p}(\Omega)} + C_1 \norm{\curl \bw}_{\bL^\frac{3}{2}(\Omega)} \norm{h}_{W^{1,p}(\Omega)} 
\end{aligned}
\end{eqnarray}
where $C_1$ denotes the constant of the Sobolev embedding $\bW^{2,p}(\Omega) \hookrightarrow \bL^{p^{**}}(\Omega)$. 

\item Note that $\vdiv \nabla \theta = \Delta \theta = h$, thus we rewrite $\curl (\nabla \theta \times \bdd) = - h \bdd + (\bdd \cdot \nabla) \nabla \theta - (\nabla \theta \cdot \nabla) \bdd$. Thus, we have: 
{\footnotesize
\begin{eqnarray} \label{Estimation 3 preuve cor div non nulle sol forte}
\begin{aligned}
\norm{\tilde{\bg}}_{\bL^p(\Omega)} &\leqslant \norm{\bg}_{\bL^p(\Omega)} + \norm{h \bdd}_{\bL^p(\Omega)} + \norm{(\bdd \cdot \nabla) \nabla \theta}_{\bL^p(\Omega)} + \norm{(\nabla \theta \cdot \nabla) \bdd}_{\bL^p(\Omega)} \\
&\leqslant \norm{\bg}_{\bL^p(\Omega)} + \norm{h}_{L^{p^{**}}(\Omega)} \norm{\bdd}_{\bL^3(\Omega)} + \norm{\bdd}_{\bL^3(\Omega)} \norm{\theta}_{W^{2,p^*}(\Omega)} + \norm{\nabla \theta}_{\bL^{p^{**}}(\Omega)} \norm{\nabla \bdd}_{\bL^\frac{3}{2}(\Omega)} \\
&\leqslant \norm{\bg}_{\bL^p(\Omega)} + 2 C_2 C_3 \norm{h}_{W^{1,p}(\Omega)} \norm{\bdd}_{\bW^{1,\frac{3}{2}}(\Omega)} + C_1 \norm{h}_{W^{1,p}(\Omega)} \norm{\bdd}_{\bW^{1,\frac{3}{2}}(\Omega)} 
\end{aligned}
\end{eqnarray}}
where $C_2$ and $C_3$ are the constants respectively related to the Sobolev embeddings $\bW^{1,\frac{3}{2}}(\Omega) \hookrightarrow \bL^3(\Omega)$ and $\bW^{1,p}(\Omega) \hookrightarrow \bL^{p^*}(\Omega)$. 
\end{itemize}
Hence, combining \eqref{Estimation 2 preuve cor div non nulle sol forte} and \eqref{Estimation 3 preuve cor div non nulle sol forte} with \eqref{Estimation 1 preuve cor div non nulle sol forte}, it follows that: 
\begin{eqnarray*}
\begin{aligned}
&\norm{\bz}_{\bW^{2,p}(\Omega)} + \norm{\bb}_{\bW^{2,p}(\Omega)} + \norm{P}_{W^{1,p}(\Omega)} \\
&\leqslant C_F \Big( 1 + \norm{\curl \bw}_{\bL^\frac{3}{2}(\Omega)} + \norm{\bdd}_{\bW^{1,\frac{3}{2}}(\Omega)} \Big)^2 \Big( \norm{\ff}_{\bL^p(\Omega)} + \norm{\bg}_{\bL^p(\Omega)} + \norm{P_0}_{W^{1-\frac{1}{p},p}(\Gamma)} \\
&+ \norm{h}_{W^{1,p}(\Omega)} \Big( 1 + C_1 \norm{\curl \bw}_{\bL^\frac{3}{2}(\Omega)} + (2C_2 C_3 + C_1) \norm{\bdd}_{\bW^{1,\frac{3}{2}}(\Omega)} \Big) \\
&\leqslant C_F \Big( 1 + \norm{\curl \bw}_{\bL^\frac{3}{2}(\Omega)} + \norm{\bdd}_{\bW^{1,\frac{3}{2}}(\Omega)} \Big)^2 \Big( \norm{\ff}_{\bL^p(\Omega)} + \norm{\bg}_{\bL^p(\Omega)} + \norm{P_0}_{W^{1-\frac{1}{p},p}(\Gamma)} \\
&+ \max \Big( 1, 2C_2 C_3 + C_1 \Big) \norm{h}_{W^{1,p}(\Omega)} (1 + \norm{\curl \bw}_{\bL^\frac{3}{2}(\Omega)} + \norm{\bdd}_{\bW^{1,\frac{3}{2}}(\Omega)}) \Big)  
\end{aligned}
\end{eqnarray*}
Finally, we use triangle inequality and we get the estimate \eqref{estim cor sol fortes div non nulle p petit}.  \hspace{5cm} $\square$

%
\bibliographystyle{plain}

\end{document}